\documentclass[12pt]{amsart}
\usepackage{blkarray}
\usepackage{multicol}
\usepackage{mypreamble}
\usepackage{spverbatim}

\title[HMS on coordinate rings for a complex genus 2 curve]{Categorical mirror symmetry on cohomology for a complex genus 2 curve}
\author{Catherine Cannizzo}
\address{Simons Center for Geometry and Physics, State University of New York, Stony Brook, NY 11794-3636}
\date{\today}
\email{ccannizzo@scgp.stonybrook.edu}
\urladdr{https://cantab.net/users/catherine.cannizzo/}
\keywords{Differential geometry, Symplectic geometry, Symplectic aspects of mirror symmetry, homological mirror symmetry, derived categories, and Fukaya category}

\begin{document}

\begin{abstract}
Motivated by observations in physics, mirror symmetry is the concept  that certain manifolds come in pairs $X$ and $Y$ such that the complex geometry on $X$ mirrors the symplectic geometry on $Y$. It allows one to deduce symplectic information about $Y$ from known complex properties of $X$. Strominger-Yau-Zaslow \cite{syz} described how such pairs arise geometrically as torus fibrations with the same base and related fibers, known as SYZ mirror symmetry. Kontsevich \cite{hms} conjectured that a complex invariant on $X$ (the bounded derived category of coherent sheaves) should be equivalent to a symplectic invariant of $Y$ (the Fukaya category, see~\cite{fuk_intro}, \cite{fooo1}, \cite{new}, \cite{polyfold_lab}). This is known as homological mirror symmetry. In this project, we first use the construction of ``generalized SYZ mirrors" for hypersurfaces in toric varieties following Abouzaid-Auroux-Katzarkov \cite{AAK}, in order to obtain $X$ and $Y$ as manifolds. The complex manifold is the genus 2 curve $\Sigma_2$ (so of general type $c_1<0$) as a hypersurface in its Jacobian torus. Its generalized SYZ mirror is a Landau-Ginzburg model $(Y,v_0)$ equipped with a holomorphic function $v_0:Y \to \mb{C}$ which we put the structure of a symplectic fibration on. We then describe an embedding of a full subcategory of $D^bCoh(\Sigma_2)$ into a cohomological Fukaya-Seidel category of $Y$ as a symplectic fibration. While our fibration is one of the first nonexact, non-Lefschetz fibrations to be equipped with a Fukaya category, the main geometric idea in defining it is the same as in Seidel's construction for Fukaya categories of Lefschetz fibrations in \cite{seidel} and in Abouzaid-Seidel \cite{ab_seid}. 
\end{abstract}
 \dedicatory{I dedicate this paper to my mother and father, Catherine and John, for being the best role models, paving the way, and making everything in my life possible.}
\maketitle
Declarations of interest: none.
Highlights:
\begin{itemize}
    \item Homological mirror symmetry result for the genus 2 curve
    \item Proved fully faithful embedding on the cohomological level from complex side to sympectic side
\end{itemize}
\titlepage
\tableofcontents
\newpage
\listoffigures

\newpage

\section{Context and main result}

\subsection{Context} Progress in mirror symmetry began with compact Calabi-Yau manifolds ($c_1=0$). In particular, the geometric mirror for those of complex dimension three can be constructed from T-duality three times \cite{syz} by inverting the radius of each $S^1$ in a torus fiber to go from the A-model $\rightarrow$ B-model $\rightarrow$ A-model $\rightarrow$ B-model.  

For Fano manifolds with $c_1>0$, \cite{hori_vafa} describe a physical reason why a mirror should be a \emph{Landau-Ginzburg model}, which for mathematicians is a non-compact complex manifold $M$ equipped with a holomorphic function $W: M \to \mb{C}$ called a \emph{superpotential}. In \cite{cho_oh}, they explicitly compute the superpotential in the case of Fano toric varieties to be a weighted sum of discs according to their intersections with the toric divisors.

Homological mirror symmetry (HMS) \cite{hms} has been proven in the Calabi-Yau case \cite{sherid_CY}, \cite{seidel_quartic}, \cite{fuk_abel}. Proven examples in the Fano case include \cite{moh2}, \cite{ueda_del_pezzo}, \cite{auroux_weight_proj}, \cite{sherid_Fano}. In the case of general type ($c_1<0$), Landau-Ginzburg models are also natural candidates to satisfy HMS. The example of general type in this paper is a hypersurface of an abelian variety, based on work of \cite{AAK} for hypersurfaces of toric varieties. 

In this paper we consider homological mirror symmetry for a genus 2 curve as a hypersurface in an abelian variety based on \cite{AAK}, which was speculated in \cite{seidel_gen2_specul}. Its mirror Landau-Ginzburg model is one of the first non-exact (symplectic fibers are compact), non-monotone, non-Lefschetz symplectic fibrations to be equipped with a Fukaya category. The method follows that of \cite{seidel} (for Lefschetz fibrations), \cite{ushape} (introduction of U-shaped curves for non-compact Lagrangians), and \cite{ab_seid} (using categorical localization to define the morphism groups).  

We consider the B-model on the genus 2 curve. Seidel \cite{genus2} proves HMS when the genus 2 curve is on the A-model. One connection between his mirror and our mirror is that their superpotentials have the same critical locus given by three $\mb{P}^1$'s identifying their north poles to a point and their south poles to a point. This is known as the ``banana manifold."

\subsection{Main result} 

\begin{definition} A \emph{symplectic fibration} is a symplectic manifold $(Y,\omega)$ with a fibration such that fibers of the fibration are symplectic with respect to $\omega$.\end{definition}

\begin{theorem}[{\cite{me}}]\label{theorem: me} Let $V$ be the abelian variety $(\mb{C}^*)^2/\Gamma_B$ where $\Gamma_B:=\mb{Z}\left<\gamma', \gamma''\right>$ for
\begingroup \allowdisplaybreaks \begin{equation}
    \gamma':=\left( \begin{matrix} 2 \\ 1 \end{matrix} \right), \gamma'' := \left( \begin{matrix} 1 \\2 \end{matrix} \right)
\end{equation} \endgroup
acts on $(\mb{C}^*)^2$ by
\begingroup \allowdisplaybreaks \begin{equation}
    \begin{aligned}
    \mb{Z}^2 \times (\mb{C}^*)^2 \ni (\gamma_1, \gamma_2) \cdot (x_1,x_2) \mapsto (\tau^{-\gamma_1}x_1, \tau^{-\gamma_2}x_2) \in (\mb{C}^*)^2.
    \end{aligned}
\end{equation} \endgroup
for $\tau \in \mb{R}^+_{\ll1}$. Let $\mc{L} \to V$ be the ample line bundle $(\mb{C}^*)^2 \times \mb{C} / \Gamma_B$ where
\begingroup \allowdisplaybreaks \begin{equation}\label{eqn: mc{L}}
    \gamma \cdot (x_1,x_2,v) := (\gamma\cdot(x_1,x_2), x^{\tiny{\left( \begin{matrix} 2 & 1\\ 1 & 2\\ \end{matrix} \right)}^{-1}\gamma}\tau^{-\frac{1}{2}\gamma^t \tiny{ \left(\begin{matrix} 2 & 1 \\ 1 & 2 \end{matrix} \right)}^{-1}\gamma} v)
\end{equation} \endgroup
and with nonzero section $s: V \to \mc{L}$. Then $H:=s^{-1}(0)$ is a complex genus 2 curve and the following diagram commutes, with fully-faithful vertical embeddings corresponding to HMS on cohomological categories.
\begin{diagram}
D^b_{\mc{L}}Coh(V) & \rTo^{\iota^*} & D^b_{\mc{L}}Coh(H)\\
\dInto^{\mbox{HMS on $V$}} && \dInto_{\mbox{HMS on $H=\Sigma_2$}}\\
H^0Fuk(V^\vee) & \rTo^{\cup} & H^0 FS(Y,v_0)
\end{diagram}

\begin{compactitem}[\textbullet]
    \item where $V^\vee$ is the SYZ dual abelian variety to $V$,
    \item $(Y,v_0)$ is the $\Gamma_B$-quotient of a toric variety of infinite type and $v_0=xyz$ is a $\Gamma_B$-invariant product of the local toric coordinates,
\item $(Y,v_0)$ is a Landau-Ginzburg model which is equipped with the structure of a symplectic fibration via symplectic form $\omega$ defined in Definition \ref{defn of w}, 
\item $(Y,v_0)$ has generic fiber $V^\vee$ degenerating to the singular fiber $\mb{CP}^2(3)/\Gamma_B$,
\item $D^b_{\mc{L}}Coh$ denotes the full subcategory generated by $\{\mc{L}^i[n]\}_{i,n \in \mb{Z}}$,
\item $\iota^*$ denotes the restriction functor of line bundles to the hypersurface $H$,  
\item $\cup$ denotes the functor that parallel transports Lagrangians in fiber $V^\vee$ around U-shapes in the base of $v_0$ using the symplectic horizontal distribution $(TV^\vee)^\omega$, and 
\item FS stands for Fukaya-Seidel category and denotes the Fukaya category of the Landau-Ginzburg model $(Y,v_0)$.
\end{compactitem}
\end{theorem}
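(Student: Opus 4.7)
The plan is to split the theorem into two independent homological mirror symmetry (HMS) statements (the two vertical arrows) plus a compatibility argument for the square. The left vertical HMS for the abelian variety $V$ is in essence the Polishchuk--Zaslow/Fukaya picture: because the polarization matrix $\bigl(\begin{smallmatrix} 2 & 1 \\ 1 & 2 \end{smallmatrix}\bigr)$ is positive definite, $\mc{L}$ is ample, and I would construct explicit affine Lagrangian sections $L_i \subset V^\vee$ mirror to $\mc{L}^i$ by dualizing the connection $1$-form of $\mc{L}^i$, identify $\mathrm{Ext}^\ast(\mc{L}^i,\mc{L}^j)$ with spaces of theta functions on $V$, and match these term-by-term with $HF^\ast(L_i,L_j)$ via counts of geodesic triangles on the dual torus. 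This gives the left vertical fully faithful embedding on the full subcategory $D^b_{\mc{L}}Coh(V)$.

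The main work lies on the right vertical arrow. First, I would execute the Abouzaid--Auroux--Katzarkov construction on the equation cutting out $H$: pulling the section $s$ back to $(\mb{C}^*)^2$, one obtains a Laurent polynomial whose tropicalization is a balanced $\Gamma_B$-periodic graph, and the AAK recipe produces a toric variety of infinite type $\widetilde Y$ whose $\Gamma_B$-quotient is $Y$, with $v_0 = xyz$ descending from a $\Gamma_B$-invariant monomial. I would then verify that the symplectic form $\omega$ of Definition~\ref{defn of w} makes $v_0 : Y \to \mb{C}$ into a symplectic fibration whose generic fiber is $V^\vee$ and whose singular fiber over the origin is the banana manifold $\mb{CP}^2(3)/\Gamma_B$, thus fixing the target for U-shaped parallel transport.

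Next I would define the functor $\cup$ on objects and morphisms. For each $L_i \subset V^\vee$ mirror to $\iota^\ast \mc{L}^i$, parallel transport $L_i$ over a U-shaped arc in the base of $v_0$ via the symplectic horizontal distribution $(TV^\vee)^\omega$ to obtain a non-compact Lagrangian $\cup L_i \subset Y$. Morphisms are computed by Floer cohomology $HF^\ast(\cup L_i, \cup L_j)$. Following the Abouzaid--Seidel localization strategy together with the U-shape trick of \cite{ushape}, I would choose an almost complex structure that is horizontal near the ends of the Us so that a maximum principle for the function $v_0$ traps all holomorphic strips into a fixed compact piece; projecting to the base shows that the strips concentrate near the bottom of the U, and a gluing/bijection argument identifies their count with the strip count between $L_i|_H$ and $L_j|_H$ computing $\mathrm{Ext}^\ast(\iota^\ast\mc{L}^i, \iota^\ast\mc{L}^j)$. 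Commutativity of the square is then tautological from this identification: restriction to $H$ corresponds fiberwise to intersection with the Lagrangian skeleton in the fiber and this commutes with U-shaped transport.

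The hardest step will be the Floer computation on the right. Since $(Y,v_0)$ is neither exact, monotone, nor Lefschetz, standard Fukaya-category machinery does not apply directly: I must control strip bubbling without exactness (Novikov coefficients and grading by the area class will be used), handle infinite-order monodromy around the singular fiber, and extract moduli compactness purely from the behavior of $v_0$ at infinity together with the $\Gamma_B$-periodicity of $\widetilde Y$. Restricting to the cohomological, rather than $A_\infty$, level keeps higher coherence data out of play but still requires a careful analysis of holomorphic triangles for the composition maps, and this is where the bulk of the technical content of the paper will necessarily live.
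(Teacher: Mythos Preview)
Your plan for the left vertical arrow and for the construction of $(Y,v_0)$ is essentially what the paper does: linear Lagrangians $\ell_k$ in $V^\vee$, triangle counts matched with theta-function products, and the AAK recipe adapted to the $\Gamma_B$-periodic setting. That part is fine.

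The right vertical arrow, however, has a genuine conceptual gap. You propose that strips between $\cup L_i$ and $\cup L_j$ ``concentrate near the bottom of the U'' and that a bijection identifies their count with a ``strip count between $L_i|_H$ and $L_j|_H$.'' But $H$ is the \emph{complex} genus~2 curve on the B-side; there are no Lagrangians in $H$ and no Floer theory there. The fibers of $v_0$ are copies of $V^\vee$, not of $H$, so nothing in $Y$ restricts to $H$. The way $H$ enters the Floer side is entirely different: the U-shaped Lagrangians $L_i,L_j$ intersect over \emph{two} points in the base, so $CF(L_i,L_j)\cong CF(\ell_{i+1},\ell_j)[-1]\oplus CF(\ell_i,\ell_j)$, and the Floer differential $M^1$ maps the first summand to the second. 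The entire content of the right arrow is the computation that this differential equals (up to an invertible factor) multiplication by the theta function $s$ defining $H$; then $HF(L_i,L_j)=\operatorname{coker}(\cdot\, s)$, which by the short exact sequence $0\to\mc{L}^{-1}\xrightarrow{s}\mc{O}_V\to\mc{O}_H\to 0$ is precisely $\operatorname{Hom}(\mc{L}^i|_H,\mc{L}^j|_H)$. Commutativity of the square is then forced by this cokernel description, not by any ``tautological'' restriction.

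Computing that differential is where the real technical work lies, and it is not a triangle count. The paper introduces the auxiliary moment-fiber Lagrangian $t_x$ and builds a cobordism (a Seidel-type homotopy of Lagrangian boundary conditions together with a path $J_r\to J_0$) identifying $M^1$ on $CF(\ell_{i+1},t_x)\to CF(\ell_i,t_x)$ with an open Gromov--Witten count of Maslov-2 discs bounded by a product torus $\bigcup_{\text{circle}}t_x$. That count is evaluated via Cho--Oh (the disc part gives exactly the theta series) together with the Kanazawa--Lau open mirror theorem for the sphere-bubble corrections, using $\Gamma_B$-equivariance to show all facet classes contribute the same constant $C(x)$. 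Finally a Leibniz-rule argument transfers the answer from the $t_x$ computation to $M^1:CF(\ell_{i+1},\ell_j)\to CF(\ell_i,\ell_j)$. None of these ingredients---the auxiliary $t_x$, the cobordism to $J_0$, the Cho--Oh/Kanazawa--Lau input, or the Leibniz trick---appear in your outline, and a direct ``strip-concentration'' argument will not produce the theta function.
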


\begin{remark}[Relevance] This cohomology-level result already gives a lot of information. The product structure in Floer theory can be computed by counting triangles, and is mirror to the ring structure on $D^b_{\mc{L}|_H}Coh(H)$. Then since $\mathcal{L}|_H$ is the canonical bundle of the genus 2 curve, this determines the product structure on the canonical ring $\bigoplus_{i\geq 0} H^0(\Sigma_2, \mathcal{L}^i)$ of functions on $H=\Sigma_2$. Once we know this ring structure, we can describe embeddings of the genus 2 curve into projective space $\mb{CP}^{N-1}$ where $N=h^0(\Sigma_2,\mc{L}^k)$ for a very ample power of $\mc{L}$, using its sections. As a subvariety in projective space, functions on the genus 2 curve become polynomials, i.e.~restrictions of homogeneous polynomials on the ambient $\mb{CP}^{N-1}$. That is, degree $m$ homogeneous polynomials are sections of $\mc{O}(m)$ for positive integers $m$, which in turn are identified with sections of $\mc{L}|_{\Sigma_2}^{mk}$ via this embedding.
\end{remark}

\begin{remark}[Previous related work] HMS for abelian varieties of arbitrary dimension and quotient lattice was proven in \cite{fuk_abel} using more advanced machinery. We present a different argument for this particular case in the left vertical arrow of the Theorem \ref{theorem: me}. Seidel proved HMS with the A-model of the genus 2 surface \cite{genus2}, i.e.~the symplectic side. Seidel's complex mirror is a crepant resolution of $\mb{C}^3/\mb{Z}_5$, quotienting by rotation and resolving the orbifold singularity without changing the first Chern class. The critical locus of the superpotential in his paper and of the mirror here are the same. He also speculated in \cite{seidel_gen2_specul} HMS for the genus 2 curve on the complex side.
\end{remark}

\begin{remark}[Future directions] One future direction is to relate Seidel's genus 2 mirror to ours. Another is enhancing the theorem to $A_\infty$-functors, namely proving that higher order composition maps match in addition to objects, morphisms and composition. Powers of $\mc{L}$ split-generate the derived category so an $A_\infty$-enhancement of the result would allow us to extend the functors to iterated mapping cones and hence would give a HMS statement for the whole derived category.
\end{remark}

\subsection{Structure of the paper} 
In {\bf Section \ref{section: hms_abel}} we describe the fully-faithful embedding on abelian varieties, that is, the left vertical arrow of Theorem \ref{theorem: me}. This is Lemma \ref{functor_ok}. 

In {\bf Section \ref{section: toric_recap}} we construct $(Y,v_0)$ and a symplectic form defined in Equation \ref{def_s_form} on $Y$. The construction generalizes that in \cite{AAK} for mirrors of hypersurfaces of toric varieties, to the case of a hypersurface of an abelian variety. The idea is that a hypersurface may not admit a special Lagrangian torus fibration, as needed to use the SYZ mirror construction; however we may be able to consider it as sitting inside of something that does. A hypersurface $H=s^{-1}(0)$ of a toric variety $V$ has complex codimension 1, so blowing up $V$ along $H$ doesn't produce anything new. However, $X:=Bl_{H \times \{0\}} V\times \mb{C}$ works; with the holomorphic map and fibration given by projection to the last $\mb{C}$-coordinate, $y:Bl_{H \times \{0\}} V\times \mb{C} \to \mb{C}$, we find that $y^{-1}(0) \supset Crit(y)\cong H$. See Theorem \ref{theorem:Aside_blowup}. 

Via the log-norm map on $V$ and the moment map under the $S^1$-action on $\mb{C}$, \cite{AAK} construct a Lagrangian torus fibration on this blow-up by cut-and-paste operations on the moment polytope of $V \times \mb{C}$, for a suitably defined symplectic form. Morever, the hypersurface $H \subset V$ gives a degenerating family $s_{\tau}^{-1}(0)$ of tropical hypersurfaces under the $\log_{\tau}$-norm map, see Definition \ref{trop definition}; the tropical limit curve is the dual cell complex of some polyhedral decomposition $\mc{P}$. Using \cite{syz} one can then construct a dual Lagrangian torus fibration from a polytope whose charts encode wall-crossing behavior, as well as a toric degeneration via $\mc{P}$ (similar to the Gross-Siebert program, e.g. \cite{Gross} and \cite{Gross_Top}) which will ultimately be the superpotential on the mirror. See Equation \ref{eq:polytope_def_Y}. The difference in our setting is that we consider an \emph{abelian variety}, or the quotient of a toric variety by a lattice, so we must keep track of that lattice in the constructions. The mirror is hence a quotient of a toric variety of infinite type.  

The symplectic form is defined in pieces on the moment polytope in terms of the norms of the toric coordinates, and then glued together by analyzing the bump function derivatives to ensure the 2-form from the constructed K\"ahler potential remains non-degenerate.

In {\bf Section \ref{section: FS}} we equip the symplectic fibration $(Y,v_0)$ with a Fukaya-Seidel category, see Definition \ref{defn:fuk_cat}. The moduli spaces are defined using regular and generic choices. In the future, it may be possible to extend our definition to include non-regular choices, using e.g.~theory developed in \cite{fooo1}, \cite{fooo2}, \cite{fuk_lectures}, and \cite{kur_strs}. Resources for doing this using polyfolds (which may be equivalent) include \cite{j_li} and \cite{polyfold_lab}; for using neck-stretching there are the references \cite[\textsection 2.3]{oan_ciel} and \cite{ben_katrin}. Resources for self-gluing a curve to itself via ``connectors" to put a smooth structure on a moduli space near a multiply-covered curve include \href{https://floerhomology.wordpress.com/2014/07/14/gluing-a-flow-line-to-itself/}{Gluing a flow line to itself}, which is also discussed in \href{https://scholar.harvard.edu/files/gerig/files/note.pdf}{Obstruction Bundle Gluing}. 

In {\bf Section \ref{section:differential}} we compute the differential in this Fukaya-Seidel category. This uses several techniques. The rule that enables us to do so is the Leibniz rule stated in Lemma \ref{leib_lemma}; we can find the differential on all Lagrangians in the category by calculating it for only a subset of Lagrangians. One Lagrangian in this subset is the preimage of a moment map value, and discs bounded by this Lagrangian can be counted by existing theory. Since we defined the differential in Section \ref{section: FS} using generic choices, and we want to compute the differential using this specific Lagrangian and $J$ given by multiplication by $i$ (which is not regular), we prove Lemma \ref{lemma: seidel_htpy_cob} which builds a cobordism between the generic- and specific-choice moduli spaces. In particular, in the cobordism we vary both the almost complex structure and the Lagrangian boundary conditions.

At this point, we have now reduced the calculation of the differential to an open Gromov-Witten invariant of curves bounded by the one Lagrangian which is the preimage of a moment map value, and one marked point; the almost complex structure is multiplication by $i$. The disk-only configurations can be counted using \cite{cho_oh}, and the disk-and-sphere configurations can be counted using \cite{chan} and \cite{kl}. These are the only possible configurations by Corollary \ref{exclude_cor}, where we exclude all other configurations. As we sum over all possible relative homology classes, we see that these moduli spaces are all isomorphic via the lattice group action coming from the abelian variety and can hence compute the intermediary differential with the moment map Lagrangian in Lemma \ref{lemma:final_diffl_computation}. From here we can at last compute the differential more generally in Lemma \ref{lem: the real final diffl calcn}.

In {\bf Section \ref{section: proof}} we prove the fully-faithful embedding result for the genus 2 curve on the cohomological level, which is the right vertical arrow in Theorem \ref{theorem: me}, using that the differential is proportional to the theta function.

\subsection{Acknowledgements} I would first like to thank my thesis advisor Denis Auroux for the immensely helpful mathematical advice, discussions, and support on this project. This project had a lot of moving parts and I benefited from the expertise of many in discussions during conferences and research talks. I thank Mohammed Abouzaid, Melissa Liu, Katrin Wehrheim, Kenji Fukaya, Mark McLean, Charles Doran, Alexander Polishchuk, Sheel Ganatra, Heather Lee, Sara Venkatesh, Haniya Azam, Zack Sylvan, Jingyu Zhao, Roberta Guadagni, Weiwei Wu, Wolfgang Schmaltz, Zhengyi Zhou, Benjamin Filippenko, Hiro Lee Tanaka, Andrew Hanlon, and Rodrigo Barbosa for fruitful mathematical discussions. I thank the referee for helpful and thorough comments. I also thank the Fields Institute for hosting me as a Long Term Visitor during their thematic program on Homological Mirror Symmetry in Fall 2019, which resulted in several potential collaborations. This work was partially supported by NSF grants DMS-1264662, DMS-1406274, and DMS-1702049, and by a Simons Foundation grant (\#\,385573, Simons Collaboration on Homological Mirror Symmetry). Lastly, P.~Taylor's \href{https://www.paultaylor.eu/diagrams/}{commutative diagrams} package was used in this paper. 

\section{Left arrow of main theorem: fully-faithful embedding $D^b_{\mc{L}}Coh(V) \into H^0FS(V^\vee)$}\label{section: hms_abel}
\subsection{The symplectic side}

 We define the action-angle coordinates corresponding to a $T^2$-action on the abelian surface. All Lagrangians will be expressed in these coordinates. We will find that the image of the moment map is the same as the toric polytope, an instance of Delzant's theorem e.g.~see \cite{symp_intro}. Note that a usual moment map would land in $\mb{R}^n$ where $n$ is the dimension of the torus, but here the moment map will land in $\mb{R}^2/\Gamma_B$ instead. This is known as a \emph{quasi-Hamiltonian} action. \label{caveat}

\begin{claim}[Symplectic coordinates]\label{claim:action_angle_fiber}
 Let $V$ be the abelian variety $(\mb{C}^*)^2/\Gamma_B$ as above and $V^\vee$ the SYZ mirror abelian variety with complex coordinates $x$ and $y$. Consider the standard positive rotation $T^2$-action i.e.~$(e^{2\pi i \alpha_1}x,e^{2\pi i \alpha_2}y)$ for $(\alpha_1,\alpha_2) \in T^2$ acting on $(x,y)$. Let $(\xi_1,\xi_2,\theta_1,\theta_2)$ denote the action-angle coordinates, so $\xi_1, \xi_2$ are the quasi-moment map coordinates for the above quasi-Hamiltonian $T^2$-action with respect to a symplectic form (which will be the restriction to a fiber of a symplectic form on $Y$, defined below). Then $\xi_i = \log_\tau |x_i|$ for $(x_1,x_2) \in V=(\mb{C}^*)^2/\Gamma_B$, hence $\gamma \in \Gamma_B$ acts on $(\xi_1,\xi_2)$ by translating in the negative direction $-\gamma$. Lastly $\theta_1:=\arg(x)$ and $\theta_2:=\arg(y)$.
\end{claim}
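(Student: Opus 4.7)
The plan is to use the standard fact that for a toric Kähler manifold the moment map of a $T^n$-action by rotation coincides up to a constant with the gradient of the Kähler potential in the logarithmic radial coordinates; after quotienting by $\Gamma_B$ this becomes the quasi-moment map of the claim. The route is thus: (i) unwind the symplectic form of Definition \ref{defn of w} on a generic $v_0$-fiber, (ii) read off the Kähler potential and compute the moment map, (iii) check $\Gamma_B$-equivariance to recover the quasi-Hamiltonian picture.

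For step (i), the $T^2$-action on $V^\vee$ by $(e^{2\pi i\alpha_1}x,e^{2\pi i\alpha_2}y)$ is generated by $\partial_{\theta_1},\partial_{\theta_2}$ where $\theta_1=\arg x$, $\theta_2=\arg y$; this fixes the angular coordinates by definition. Since the restriction of $\omega$ to a smooth $v_0$-fiber is $T^2$-invariant, it pulls back under the universal cover $(\mb{C}^*)^2\to V^\vee$ to a Kähler form with potential $K$ depending only on $\log|x|^2,\log|y|^2$. For step (ii), the standard formula $d\xi_j=-\iota_{\partial_{\theta_j}}\omega$ becomes $\xi_j=\partial K/\partial\log|x_j|^2$ up to an additive constant. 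Inspection of Definition \ref{defn of w} shows that on a generic fiber $K$ reduces to the AAK model in which these derivatives come out proportional to $\log|x_j|$ with proportionality constant $1/\log\tau$, so $\xi_j=\log_\tau|x_j|$ as claimed; the pairing $\omega=d\xi_1\wedge d\theta_1+d\xi_2\wedge d\theta_2$ confirms action-angle form.

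For step (iii), the lattice action $\gamma\cdot x_i=\tau^{-\gamma_i}x_i$ translates $\log_\tau|x_i|\mapsto\log_\tau|x_i|-\gamma_i$, so $\xi$ descends to $V^\vee\to\mb{R}^2/\Gamma_B$ with the stated negative-translation action, yielding the quasi-Hamiltonian structure (the moment map lands in $\mb{R}^2/\Gamma_B$ rather than $\mb{R}^2$, matching the compactness of $V^\vee$). The main obstacle will be step (ii): Definition \ref{defn of w} is constructed piecewise via bump functions adapted to the AAK moment polytope, so one must verify that on a generic smooth fiber of $v_0$ the bump-function interpolation does not distort the identification of the moment map with $\log_\tau|x_j|$, i.e.~that the Kähler potential really does reduce to its clean logarithmic model rather than a perturbation thereof that would require a nonlinear reparametrization of $\xi$. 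Once the $1/\log\tau$ scaling and $T^2$-invariance of $K$ are tracked consistently through the polytope charts, the identification $\xi_j=\log_\tau|x_j|$ will follow directly from the Kähler-potential formula for the moment map.
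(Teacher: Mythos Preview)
Your proposal has a genuine gap in step (ii), and it stems from a misreading of what the claim asserts. The formula $\xi_i=\log_\tau|x_i|$ refers to the complex coordinates $(x_1,x_2)$ on $V$, not to the toric coordinates $(x,y)$ on $V^\vee\subset Y$. The paper's proof does not attempt a direct computation from the K\"ahler potential of Definition~\ref{defn of w}; instead it invokes the SYZ dictionary: the complex affine structure on the base of the torus fibration $V\to T_B$ (namely $\log_\tau|\cdot|$) is identified with the symplectic affine structure on the base of the dual fibration $V^\vee\to T_B$ (namely the moment map). Both fibrations share the same base, and this identification of affine structures is what the claim records.

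Your direct-computation route cannot succeed as stated. The K\"ahler potential $F$ of Definition~\ref{defn of w} is a piecewise interpolation via bump functions between the $\mb{CP}^2(3)$ potentials $g_{xy},g_{xz},g_{yz}$ and the $\mb{C}^3$ potential; its derivatives with respect to $\log|x|,\log|y|$ are \emph{not} proportional to $\log|x|,\log|y|$. Indeed, the paper later computes (Claim~\ref{claim:monotonic_mom_map}) that $\xi_1=-\tfrac{1}{2}(\partial F/\partial\log|x|-\partial F/\partial\log|z|)+\text{const}$, and only establishes monotonicity in $r_x,r_y$, not any closed logarithmic form. The obstacle you flag --- that the bump-function interpolation might distort the identification --- is real and is not overcome; the moment map on $V^\vee$ in terms of its own complex coordinates is genuinely complicated. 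The content of the claim is the SYZ matching with the $V$-side coordinates, for which no such computation is needed.
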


\begin{proof}
Since the Lagrangian torus fibration on $X$ is special with respect to the $n$-form $\Omega = d \log x_1 \wedge d \log x_2 \wedge dy$ for $n=3$, we have integral affine structures on the bases of both fibrations, for $X$ and for $Y$. The complex affine structure on the B-model corresponds to the symplectic affine structure on the A-model. In the construction of SYZ mirror symmetry, e.g.~cf \cite{t_duality}, the complex affine structure on one side (the $\log|\cdot |$ of the complex coordinates) is mirror to the symplectic affine structure on the other side (the moment map). Hence $\xi_i = \log_\tau |x_i|$. Since we rotate the complex coordinates $x$ and $y$, their angles are the remainder of the action-angle coordinates. So the symplectic form on $V^\vee$ in these coordinates is $d \xi \wedge d\theta$. The statement about the $\Gamma_B$-action follows from the $\Gamma_B$-action on $V$, namely $\gamma \cdot (x_1,x_2) = (\tau^{-\gamma_1}x_1, \tau^{-\gamma_2}x_2)$. Thus taking the logarithm base $\tau$ implies $\gamma$ acts additively in the negative direction.
\end{proof}

\begin{definition}[Setting up notation]\label{defn:Bside_notation}
Let $T_B:=\mb{R}^2/\Gamma_B$ where $\gamma$ acts by negative translation $-\gamma$, and $T_F:=\mb{R}^2/\mb{Z}^2$. Thus $V^\vee = {T_B} \times {T_F} \ni (\xi_1,\xi_2,\theta_1,\theta_2)$ with symplectic form $d\xi_1 \wedge d\theta_1 + d\xi_2 \wedge d\theta_2$. Furthermore, let $\la$ denote the linear map on $\mb{R}^2$ in standard bases given by $\left( \begin{matrix} 2 & 1 \\ 1 & 2 \end{matrix}\right)^{-1}=\left( \begin{matrix} \frac{2}{3} & -\frac{1}{3} \\ -\frac{1}{3} & \frac{2}{3} \end{matrix}\right)$. In particular, $\la(\gamma')=(1,0)$ and $\la(\gamma'') = (0,1)$. For ease of notation to follow we also define $\kappa(\gamma) :=  -\frac{1}{2} \left<\gamma, \la(\gamma) \right>$.

\end{definition}

\begin{remark}[Intuition for choice of Lagrangians] HMS for abelian varieties was previously known in more generality by Fukaya \cite{fuk_abel}. In his paper, he uses Family Floer theory to define a line bundle $\mc{E}$ by requiring $\mc{E}_p= \Ext(\mc{E}, \mc{O}_p)$ to be defined as the corresponding hom set on the mirror side, i.e.~$\mb{C} \times (L_p \cap L)$ where $L$ is a linear Lagrangian on the torus, $L_p$ is vertical (infinite slope), and this intersection has one point. He then constructs a holomorphic structure on this line bundle.

Here we will write explicit linear Lagrangians first, which will be two-dimensional analogues of those considered in \cite{zp}. In \cite{zp} they consider a square depicting $(T^2, \int_{T^2} \omega = a)$ as mirror to elliptic curve with complex structure $\mb{C}/\mb{Z} + ia \mb{Z}$, on which lines of slope $k$ on the square are mirror to $\mc{L}^k$ powers of a degree 1 line bundle on the mirror elliptic curve. Similarly our linear Lagrangians denoted $\ell_i$ will be mirror to $\mc{L}^i$ for the $\mc{L}$ defined in (\ref{eqn: mc{L}}).
\end{remark}

\begin{remark}[Intuition for choice of line bundles] We will see how we arrived at the definition of the holomorphic line bundle $\mc{L} \to V$ in Theorem \ref{theorem: me}. Its transition functions on different $\gamma$ translates are given by:
$$(\gamma,x) \mapsto x^{\la(\gamma)} \tau^{\kappa(\gamma)}$$
where $\la \in \hom(\Gamma_B, \mb{Z}^2) = \hom (\Gamma_B, \Gamma_F^*)$ corresponds to the first Chern class and is ${\left( \begin{matrix} 2 & 1\\ 1 & 2\\ \end{matrix} \right)}^{-1}$ in Theorem \ref{theorem: me}. This is because the first Chern class arises as follows:
\begin{allowdisplaybreaks}
    \begin{align*}
        H^2(V; \mb{Z}) = H^2(T_B \times T_F) & \cong \oplus_i H^i(T_B) \otimes H^{2-i}(T_F)\\
        \therefore c_1(\mc{L}) \in H^2(V; \mb{Z}) \cap H^{1,1}(V) \implies c_1(\mc{L}) & \in H^1(T_B; \mb{Z})\otimes H^1(T_F; \mb{Z})\\
        &=\hom(\Gamma_B, \mb{Z}) \otimes \hom(\Gamma_F, \mb{Z})\\
        &=\Gamma_B^* \otimes \Gamma_F^*\stepcounter{equation}\tag{\theequation}\\
        &=\hom(\Gamma_B, \Gamma_F^* \cong \mb{Z}^2))\\
        \implies c_1(\mc{L}) & \in \hom(\Gamma_B, \mb{Z}^2)
    \end{align*}
\end{allowdisplaybreaks}
by Corollary \ref{cor:v_vs_vs+} (which holds for our particular choice of complex structure on $V$ determined by $\Gamma_B$) and Claim \ref{claim: holo_abel_str}.
\end{remark}

\begin{lemma}\label{lem:fuk_subcat}  The following defines a full subcategory of $Fuk(V^\vee)$. The objects are
\begingroup \allowdisplaybreaks \begin{equation}
    \ell_k :=\{(\xi_1, \xi_2,\theta_1,\theta_2) \in V^\vee \mid (\theta_1,\theta_2) \equiv -k \left( \begin{matrix} 2 & 1 \\ 1 & 2 \end{matrix}\right) ^{-1}\left(\begin{matrix} \xi_1 \\ \xi_2 \end{matrix} \right)~\mod \mb{Z}^2 \}
\end{equation} \endgroup

The morphisms $HF(\ell_i, \ell_j)$ have rank $(i-j)^2$ for $i\neq j$ and $HF(\ell_i, \ell_i) \cong H(T^2)$. The multiplicative structure for $CF(\ell_j, \ell_k) \otimes CF(\ell_i, \ell_j) \to CF(\ell_i, \ell_k)$ is
$$ \left<\mu^2(p_1,p_2), q\right> = \sum_{\gamma_A \in \Gamma_B} \tau^{- \frac{l}{l'l''}\cdot  \kappa\left(\frac{l''}{l}\gamma_{e,l} + \gamma_A\right)}$$
summing over possible intersection points of $\ell_i \cap \ell_j$, where $l'=j-i$, $l''=k-j$, and $l=k-i$.

\end{lemma}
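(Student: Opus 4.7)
The plan is to lift everything to the universal cover $\mb{R}^4 = \mb{R}^2_\xi \times \mb{R}^2_\theta$ of $V^\vee$, on which each $\ell_k$ lifts to a family of parallel affine $2$-planes of ``slope'' $-k\la$, and to carry out four steps.

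\textbf{Step 1: Well-definedness, Lagrangian condition, and intersection counts.} The deck group $\Gamma_B$ acts on the lift by $\xi \mapsto \xi - \gamma$, and this preserves the constraint $\theta \equiv -k\la\xi \mod \mb{Z}^2$ because $\la(\Gamma_B) \subset \mb{Z}^2$ by the very definition of $\la$ (indeed $\la(\gamma')=(1,0)$, $\la(\gamma'')=(0,1)$). The Lagrangian condition follows from the symmetry of $\la$: along $\theta = -k\la\xi$, the restriction of $d\xi_i \wedge d\theta_i$ equals $-k\langle \la\, d\xi, d\xi\rangle = 0$. For $CF(\ell_i,\ell_j)$ with $i\neq j$, intersections correspond to $\xi \in T_B$ with $(j-i)\la\xi \in \mb{Z}^2$, and since $\la\colon T_B \to T_F$ is a degree-one covering (check: $|\det\la|\cdot \mathrm{vol}(T_B) = \tfrac{1}{3}\cdot 3 = 1$), multiplication by $(j-i)$ gives a covering of degree $(j-i)^2$, and $\theta$ is then determined by $\xi$.

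\textbf{Step 2: Unobstructedness and Floer differential.} Any non-constant holomorphic disc with boundary on $\ell_k$ would lift to a non-constant holomorphic disc in $\mb{C}^2$ with boundary on a single affine Lagrangian plane, and is ruled out by the maximum principle applied to a $\mb{C}$-linear functional that vanishes on the plane. The same argument rules out non-constant strips between transverse $\ell_i, \ell_j$: lifts of these Lagrangians in $\mb{C}^2$ meet in exactly one point, so any strip must start and end at the same lift, hence be constant by removal of singularities. Thus $\mu^1 \equiv 0$ and $HF(\ell_i,\ell_j) \cong CF(\ell_i,\ell_j)$ has the claimed rank. For $HF(\ell_i,\ell_i)$, a $C^2$-small Hamiltonian perturbation off the diagonal together with a Morse-theoretic model on $\ell_i \cong T^2$ yields $H^*(T^2)$; unobstructedness rules out any disc-bubble corrections.

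\textbf{Step 3: Product structure.} Given $p_2 \in \ell_i \cap \ell_j$ and $p_1 \in \ell_j \cap \ell_k$, I would fix lifts $\tilde p_2, \tilde p_1 \in \mb{R}^4$; the compatible choices of third vertex $\tilde q \in \tilde\ell_i \cap \tilde\ell_k$ are parameterized by $\gamma_A \in \Gamma_B$. Each such triple of vertices cuts out a unique affine simplex bounded by three pairwise-transverse affine Lagrangian planes in $\mb{C}^2$, which by the same maximum-modulus rigidity as in Step~2 is the image of a unique holomorphic triangle (any two such would be concatenated into a non-constant strip, contradicting Step~2). Weighting by $\tau^{-\mathrm{area}}$ and summing over $\gamma_A$ gives the formula, provided one can identify the symplectic area of the lifted triangle with $\tfrac{l}{l'l''}\,\kappa\!\left(\tfrac{l''}{l}\gamma_{e,l} + \gamma_A\right)$, where $\gamma_{e,l}$ labels the specific intersection point of $\ell_i \cap \ell_k$ inside a chosen fundamental domain.

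\textbf{Main obstacle.} The core technical point is the final area identity. The symplectic area of the affine triangle decomposes naturally as a sum of three boundary-integral contributions (one per arc, with weights determined by the slopes $-i\la, -j\la, -k\la$); the task is to show that, using $l = l'+l''$ and the symmetry of $\la$, these three quadratic forms in the lift offset complete into the single quadratic $\tfrac{l}{l'l''}\kappa\!\left(\tfrac{l''}{l}\gamma_{e,l} + \gamma_A\right)$. Once this bilinear-form identity is isolated, the remaining steps are bookkeeping over $\Gamma_B$.
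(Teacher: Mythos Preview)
Your overall strategy matches the paper's: lift to the universal cover, observe that the $\ell_k$ lift to affine planes, rule out discs and strips topologically, and compute $\mu^2$ by enumerating lifted triangles and their symplectic areas. Steps 1 and 2 are fine and essentially what the paper does.

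Step 3, however, has a real gap. The assertion that ``each such triple of vertices cuts out a unique affine simplex bounded by three pairwise-transverse affine Lagrangian planes in $\mb{C}^2$'' is not well-posed: three generic affine $2$-planes in $\mb{R}^4$ do not bound any $2$-dimensional region, and your proposed uniqueness argument (``any two such would be concatenated into a non-constant strip'') does not work, since two $3$-punctured discs with the same asymptotics do not glue to a strip. What the paper actually does is choose a specific compatible complex structure $J$ (sending $\partial_{\xi}$ to $\la^{-1}\partial_\theta$, not the standard $i$) for which the universal cover splits as a product $P \times P^\omega$ of $J$-complex lines, with $P$ spanned by $(\xi_0,0)$ and $(0,\la\xi_0)$. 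Each lifted $\tilde\ell_m$ then factors as a line of slope $-m$ in $P$ times a line of slope $-m$ in $P^\omega$, and crucially the three $P^\omega$-lines pass through a \emph{single} point. Hence any holomorphic triangle must be constant in $P^\omega$ and is a genuine planar triangle in $P$; existence and uniqueness then follow from the Riemann mapping theorem, and regularity of $J$ is checked directly. This product decomposition is the missing geometric input.

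On the area identity you flag as the main obstacle: once the triangle is seen to lie in $P$, the paper simply parametrizes it by $(a,b)\mapsto a(\xi_0,0)+b(0,\la\xi_0)$ and integrates $d\xi\wedge d\theta$ directly, obtaining $-\tfrac{1}{2}\tfrac{ll'}{l''}\langle \xi_0,\la\xi_0\rangle$, which rewrites as $\tfrac{l}{l'l''}\kappa(\tfrac{l''}{l}\gamma_{e,l}+\gamma_A)$ after substituting $\xi_0=\tfrac{l''}{ll'}\gamma_{e,l}+\tfrac{1}{l'}\gamma_A$. No boundary-integral decomposition or completion-of-squares is needed; it is a two-line computation once the product structure is in hand.
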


\begin{proof} \mbox{}
\begin{center}\fbox{\bf Objects}
\end{center} 

The definition of $\ell_k$ is well-defined because the minus sign ensures that it's well-defined as a graph modulo group action; $\Gamma_B$ acts negatively on $(\xi_1,\xi_2)$, so in the theta coordinates it becomes the standard positive additive $\mb{Z}^2$ action in the angular coordinates. 

Secondly, the $\ell_k$ are Lagrangian. Given a path 
$$p(t):=(\xi_1(t), \xi_2(t), -k\la(\xi(t))_1,-k\la(\xi(t))_2):(-\eps,\eps) \to V^\vee$$
consider the vector $\frac{d}{dt}|_{t=0} p(t)$ tangent to $\ell_k$. It's of the form
$$(c_1, c_2, -k\la(c)_1, -k\la(c)_2) \equiv c_1 \dd_{\xi_1} + c_2 \dd_{\xi_2} -k\la(c)_1 \dd_{\theta_1} - k\la(c)_2 \dd_{\theta_2}$$
Hence the tangent bundle $T\ell_k$ is spanned by the vectors with $c=(1,0)$ and $c=(0,1)$:
$$T\ell_k = \mb{R}\left<\dd_{\xi_1} -\frac{2k}{3}\dd_{\theta_1} +\frac{k}{3}\dd_{\theta_2}, \dd_{\xi_2}+\frac{k}{3}\dd_{\theta_1} -\frac{2k}{3} \dd_{\theta_2}\right>=:\mb{R}\left<X_{1,0}, X_{0,1}\right>$$ 

The symmetry of the matrix representing $\la$ implies that $d\xi\wedge d\theta$ of these two vectors is zero. 
\begin{equation}
\begin{aligned}
    \omega\left(X_{1,0}, X_{0,1}\right)&=\sum_{i=1,2}d\xi_i(X_{1,0})d\theta_i(X_{0,1}) - d\theta_i(X_{1,0})d\xi_i(X_{0,1})\\
    &=\left(1\cdot \frac{k}{3} - \frac{-2k}{3}\cdot 0 \right) + \left(0 \cdot \frac{-2k}{3} - \frac{k}{3}\cdot 1 \right)\\
    &=(k/3)-(k/3)=0
\end{aligned}
\end{equation}
and $\omega(X_{1,0}, X_{1,0}) = 0 = \omega(X_{0,1}, X_{0,1})$ by skew-symmetry of $\omega$. Thus
$$\omega|_{\ell_k} \equiv 0$$
and the $\ell_k$ are Lagrangian. Note that an alternative proof would be to show $\ell_k$ is Hamiltonian isotopic to $\ell_0$. 

\begin{center}
\fbox{{\bf Morphisms: geometric set-up}}    
\end{center}

A 4-torus is aspherical and there are no bigons between two of these linear Lagrangians on a 4-torus, which prohibits sphere bubbling and strip-breaking respectively. The latter statement follows because two planes in $\mb{R}^4$ which intersect in a finite number of points can only intersect in one point, and strip-breaking occurs on strips between two Lagrangians. These linear Lagrangians also do not bound discs, which excludes the remaining limiting behavior in Gromov compactness for discs with Lagrangian boundary condition, namely disc bubbling. 

\begin{center}
    \fbox{{\bf Transversely intersecting Lagrangians}}
\end{center}

Thus $HF(\ell_i, \ell_j) = CF(\ell_i, \ell_j)$ since the differential is zero. So Floer cohomology is freely generated by intersection points of $\ell_i$ and $\ell_j$.
 {\small \begingroup \allowdisplaybreaks \begin{equation}\label{eq:abel_symp_basis}
    \begin{aligned}
    \ell_i \cap \ell_j & =  \{(\xi_1, \xi_2,\theta_1,\theta_2) \in T_B \times T_F \mid (\theta_1,\theta_2) \equiv -i\la(\xi) \equiv -j\la(\xi) \mod \mb{Z}^2\}\\
    \iff \xi & \in (j-i)^{-1}\Gamma_B/\Gamma_B\\
    \therefore |\ell_i \cap \ell_j| & = (j-i)^2
    \end{aligned}
\end{equation} \endgroup
}
\begin{center}
    \fbox{{\bf Notation to be used throughout}}
\end{center} 

Notationally, we write a generic intersection point as
$$\left(\frac{\gamma_{i \cap j}}{j-i}, -i\la\left(\frac{\gamma_{i \cap j}}{j-i} \right) \right) \in \ell_i \cap \ell_j$$
where $\gamma_{i\cap j} = e_1 \gamma' + e_2 \gamma''$ for $0 \leq e_1,e_2 < (j-i)^2$ (since the lattice $\Gamma_B$ has index $(j-i)^2$ in $(j-i)^{-1}\Gamma_B$). Letting $l:=j-i$ and $1 \leq e \leq l^2$ index the possible choices of $(e_1,e_2)$ in $\gamma_{i \cap j}$, we write the collection of all the intersection points as
$$\left\{ \left(\frac{\gamma_{e,l}}{l}, -i\la\left(\frac{\gamma_{e,l}}{l} \right) \right) \right\}_{0 \leq e < (j-i)^2}$$
(Note that we could've taken $j$ instead of $i$ as the coefficient on the $\la$, and otherwise use the same notation.)

\begin{center}
    \fbox{{\bf Non-transversely intersecting Lagrangians}}
\end{center} 

The above holds when $i \neq j$. Since Lagrangians are half-dimensional, if they intersect transversely then their intersection is 0-dimensional and we have a discrete set of points. In the case $i=j$, we perturb $\ell_i$ by a Hamiltonian as in standard Floer theory. That is, we introduce a Hamiltonian function $H$ and flow one Lagrangian along the symplectically dual vector field $X_H$ to $dH$, until the two Lagrangians intersect transversely, c.f.~the introductory paper to Fukaya categories \cite{fuk_intro}. Furthermore, we want to take care how we perturb around the intersection points that are already transverse. We pick a 1-form $\phi$ defined to be zero near boundary punctures on the domain curve $\mb{D}$ where there is no problem and the intersection is transverse, and nonzero near punctures where the Lagrangians do not intersect transversally. Since we have moved one Lagrangian, we keep track of how this affects the Cauchy-Riemann equation. Holomorphic maps $u$ should satisfy the modified Cauchy-Riemann equation
\begin{equation}\label{eq:perturb_CR}
    (du - X_H \circ \phi)^{0,1} = 0
\end{equation}
with a modified asymptotic condition: at a puncture (the preimage in $\mb{D}$ of a perturbed intersection point) the map $u$ converges to a trajectory of $X_H$ from $\ell_i$ to $\ell_j$. So that the elements of hom sets are again intersection points, we can instead flow the image of $u$ along $X_H$ to obtain a Cauchy-Riemann equation $(du_H)^{0,1}$ with modified almost complex structure and boundary conditions. If we take the Hamiltonian to be a Morse function, then intersection points of the 0-section of $T^*\ell_i$ and the graph of $df$ are critical points of $f: T^2 \to T^2$. By Morse theory, $HF(\ell_i, \ell_i) \cong H(T^2)$.

\begin{center}
    \fbox{{\bf Existence of regularity, moduli spaces, independence of choices}}\label{page:tri_J0_reg}
\end{center} 

The standard complex structure $J_0 = i$ is regular. This is because the linearized $\ol{\dd}_{J_0}$ operator at a holomorphic map $u\in \pi_2(V^\vee, \cup_{i \in I} \ell_i=:L)$ for some index set $I$ is $\ol{\dd}$ on $\bigwedge^{(0,1)}((\mb{D}, \dd \mb{D}), (u^*TV^\vee,u^*TL))\subset \Omega^{(0,1)}(\mb{D})$ where smoothness follows from considering smooth maps $u$, by elliptic regularity. However, a Riemann surface has trivial $H^{2,0}$ since there are no $(2,0)$ forms (so no $(0,2)$ forms either). Thus $0=H^{(1,0)}(\mb{D}) = \ker(\ol{\dd})/\im(\ol{\dd})=\bigwedge^{1,0}(\mb{D})/\im(\ol{\dd})$ hence the image of the $\ol{\dd}$-operator is surjective. 

In particular, moduli spaces of $k$-pointed $i$-holomorphic discs are smooth orbifolds, which we can take the 0-dimensional part of and count. The structure map $\mu^k$ which inputs $k$ intersection points and outputs one intersection point, counts holomorphic polygons with vertices mapping to those intersection points with boundary on the corresponding intersecting Lagrangians. Furthermore:
 
\begin{lemma}\label{lem:regJ_torus} There exists a dense set $\mc{J}_{reg} \subset \mc{J}(V^\vee,\omega)$ of $\omega$-compatible almost complex structures $J$ such that, for all $J$-holomorphic maps $u: \mb{D} \to V^\vee$ with suitable Lagrangian boundary condition, the linearized $\ol{\dd}$-operator $D_u$ is surjective.\end{lemma}

We postpone the proof to our discussion of regularity below in more generality in Section \ref{section:q_invc_choices} for quasi-invariance of the Fukaya category on regular choices. However here we can make the stronger statement that the two $\mu^2$ counts are equal on the nose.

\begin{lemma} The $\mu_2$ for two regular almost-complex structures $J_1$ and $J_2$ on $V^\vee$ are equal. \end{lemma}

\begin{proof}[Outline] The proof will rely on arguments from the proof of the previous Lemma \ref{lem:regJ_torus}. The argument will be similar, but the Fredholm problem will have an additional $[0,1]$ factor in the Banach bundle setup. So we will obtain a 1-dimensional manifold.  There is no other boundary expected because 1) sphere bubbling cannot happen as $\pi_2(T^4) = 0$, furthermore 2) strip breaking would break off a bigon but there are no bigons between two linear Lagrangians on a torus (all intersection points have the same index since the Lagrangians have constant slope, whereas a broken strip would have intersection points with indices differing by two) and 3) disc bubbling doesn't occur because Lagrangians don't bound discs on a torus (since $\pi_2$ is preserved upon taking the universal cover of the torus which is $\mb{R}^4$ so has no $\pi_2$). Since the signed boundary of a 1-dimensional manifold is zero, we find that $\# \mc{M}(p_1,p_2,p_3,[u],J_1) = \# \mc{M}(p_1,p_2,p_3,[u],J_2)$ for regular $J_1$ and $J_2$. (These moduli spaces will be defined in Section \ref{defn}.) The existence of a dense set of regular paths is similar to the proof for the existence of regular $J$.
\end{proof}
 Since we consider $H^0Fuk(V^\vee)$ only here, it remains to compute the multiplication $\mu^2$. 

\begin{center}
\fbox{{\bf Counting triangles}}
\end{center}

We compute $\mu^2: CF(\ell_j, \ell_k) \otimes CF(\ell_i, \ell_j) \to CF(\ell_i, \ell_k)$. This will count $J$-holomorphic triangles between points $p_1,p_2,q$ as in Figure \ref{fig: triangle}, weighted by area. We can compute their area as they wrap around the abelian variety, by lifting to the universal cover.

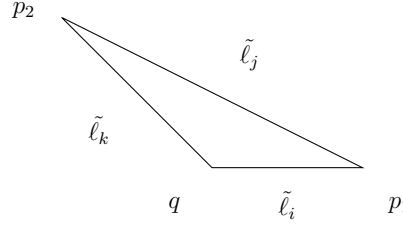
\begin{figure}[h]
\begin{tikzpicture}[every node/.style={inner sep=0,outer sep=0, scale = 0.8}]

\draw (-2.5,0.5) node (v1) {} -- (-0.5,-1.5) -- (1.5,-1.5) -- (v1) -- cycle;
\node at (-1,-2) {$q$};
\node at (2,-2) {$p_1$};
\node at (-3,0.6) {$p_2$};
\node at (0.5,-2) {$\tilde{\ell_i}$};
\node at (0,0) {$\tilde{\ell_j}$};
\node at (-2,-1) {$\tilde{\ell_k}$};
\end{tikzpicture}
    \caption{A triangle in $V^\vee$ contributing to $\mu^2$, viewed in $\xi_1,\xi_2$ plane in the universal cover $\mb{R}^4$}
    \label{fig: triangle}
\end{figure}

Use subscript $\xi$ and $\theta$ to denote the $(\xi_1,\xi_2)$ coordinates and $(\theta_1, \theta_2)$ coordinates respectively of a point in the universal cover $\mb{R}^4$ of $V^\vee$. Choose $k>j>i$. Fix $q=\left< \frac{\gamma_{k\cap i}}{k-i},-\frac{k}{k-i}\la(\gamma_{k\cap i}) \right>$ to be one of the $(k-i)^2$ intersection points in the fundamental domain. In particular, the sum of the three vectors around the triangle must be zero. Let the $\xi$ coordinates of these vectors be $\xi, \xi', \xi''$ respectively. Then setting their sum, and the sum of $\theta$-coordinates equal to zero:
\begin{equation}
    \begin{aligned}
    \xi + \xi'+\xi''& = 0\\
         i\xi +j\xi' -k(\xi+\xi')& =0\\
        \therefore \xi'& = -\frac{l}{l''}\xi\\
        \xi'' & = -\frac{l'}{l''}\xi
    \end{aligned}
\end{equation}

Now we apply the constraint that $p_1 \in \ell_i \cap \ell_j$ and $p_2 \in \ell_j \cap \ell_k$. The first constraint is equivalent to $-j\la({p_1}_{\xi}) \equiv {p_1}_\theta \pmod{\mb{Z}^2} = {p_1}_\theta + \la(\gamma_A)$ for some $\gamma_A \in \Gamma_B$, where the $A$ indicates we are on the symplectic side.
    \begin{allowdisplaybreaks}
    \begin{align*}  
        p_1 =({p_1}_\xi, {p_1}_\theta) & = \left(\frac{\gamma_{e,l}}{l} + \xi, - \la\left(k\frac{\gamma_{e,l}}{l} + i\xi\right) \right)\\
        \therefore -j\la({p_1}_\xi) & \equiv {p_1}_\theta\\
        \implies j\frac{\gamma_{e,l}}{l} + j\xi & = k \frac{\gamma_{e,l}}{l} + i \xi + \gamma_A \stepcounter{equation}\tag{\theequation}\label{eq:defn_xi}\\
        \implies \xi & =  \frac{l''}{ll'}\gamma_{e,l} + \frac{\gamma_A}{l'}
    \end{align*}
        \end{allowdisplaybreaks}

Let $\xi_0 :=\frac{l''}{ll'}\gamma_{e,l} + \frac{\gamma_A}{l'}$. To compute the area of the triangle, recall that the symplectic form in action-angle coordinates $\xi, \theta$ is $\omega = d \xi \wedge d \theta$. In particular, in the plane spanned by $\vec{u}:=\left<\xi_0, 0 \right>$ and $\vec{v}:=\left<0, \la(\xi_0)\right>$, with respect to these vectors we can write $\vec{qp_1}=\vec{u} - i \vec{v}$ and $\vec{qp_2}=\frac{l'}{l''}\vec{u} - k \cdot \frac{l'}{l''}\vec{v}$ so under the parametrization $\Psi: (a,b) \mapsto a\vec{u} + b\vec{v}$ we find that parallelogram $\vec{qp_1} \times \vec{qp_2}$ is $\Psi((1,-i) \times (\frac{l'}{l''}, -k \frac{l'}{l''}))$ hence
{\small \begingroup \allowdisplaybreaks 
\begin{align*}
    area(\Delta_{p_1p_2q}) &= \frac{1}{2}\int_{\vec{qp_1} \times \vec{qp_2}} d \xi \wedge d \theta \\
    & = \frac{1}{2}\left| (1,-i) \times (\frac{l'}{l''}, -k \frac{l'}{l''})\right| \int_{[0,1]^2}\Psi^* (d \xi \wedge d\theta)\\
    & = \frac{1}{2}[-kl'/l'' +il'/l'']\int_{[0,1]^2} d(a\vec{u} + b \vec{v})_\xi \wedge d(a\vec{u} + b \vec{v})_\theta\\
    &= -\frac{1}{2} \left(\frac{ll'}{l''} \right)\xi_0\cdot \la(\xi_0)\int_{[0,1]^2} da \wedge db \stepcounter{equation}\tag{\theequation}\label{eqn:compn_triangles}\\
    \xi \cdot \la(\xi) & = \left<\frac{l''}{ll'}\gamma_{e,l} + \frac{\gamma_A}{l'}, \la\left(\frac{l''}{ll'}\gamma_{e,l} + \frac{\gamma_A}{l'}\right) \right> = -\frac{2}{l'^2} \kappa\left(\frac{l''}{l}\gamma_{e,l} + \gamma_A\right)\\
    \therefore area(\Delta_{p_1p_2q}) & =  \frac{l}{l'l''}\cdot  \kappa\left(\frac{l''}{l}\gamma_{e,l} + \gamma_A\right)\\
    \implies \left<\mu^2(p_1,p_2), q\right> &= \sum_{\gamma_A \in \Gamma_B} \tau^{- \frac{l}{l'l''}\cdot  \kappa\left(\frac{l''}{l}\gamma_{e,l} + \gamma_A\right)}= \left< \mu^2(p_{e'',l''}, p_{e',l'}), p_{e,l}\right>
    \end{align*}
\endgroup}

\begin{center}
\fbox{{\bf Count of triangles computes $\mu_2$}}    
\end{center}

This concludes the triangle count computation. It remains to prove that these triangles are holomorphic for some regular $J$. In the basis $\vec{u},\vec{v}$ above, we can construct the standard $J$.

\begin{claim}\label{holo_tri_count} Let
$$J :=\begin{blockarray}{ccc}
& \xi & \theta \\
\begin{block}{c&(c&c)}
 \xi & 0 & -\left(\begin{matrix} 2 & 1 \\ 1 & 2\end{matrix}\right)  \\
 \theta & \left(\begin{matrix} 2 & 1 \\ 1 & 2\end{matrix}\right)^{-1} & 0 \\
\end{block}
\end{blockarray} \mbox{ }$$ Then (i) $J$ is a compatible almost complex structure, (ii) the triangles described above bounded by $\ell_i,\ell_j,\ell_k$ are $J$-holomorphic, and (iii) $J$ is regular. \end{claim}

\begin{proof}[Proof of Claim \ref{holo_tri_count}] 
%C.f.~notes from November 14, 2018.

%We want $J$ to properly transform $\dd_{\xi}$. If we vary one complex coordinate, keeping the other two constant, that won't work because $v_0$ can't remain constant, as it should on a fiber. So instead vary one complex coordinate, then keep another and $v_0$ constant. 
%

(i) Let $M_{\la} := \left( \begin{matrix} 2& 1\\ 1 & 2 \end{matrix}\right)$. Then \begin{allowdisplaybreaks}
\begin{align*}
J^2 &=\left( \begin{matrix} 0 & -M_\la \\ M_\la^{-1} & 0 \end{matrix}\right)\left( \begin{matrix} 0 & -M_\la \\ M_\la^{-1} & 0 \end{matrix}\right) =\left(\begin{matrix}-I & 0 \\ 0 & -I \end{matrix} \right)\\
\omega(\cdot, J \cdot) &=(d\xi \wedge d\theta) (\cdot, J \cdot)\stepcounter{equation}\tag{\theequation} \\
&=\left( \begin{matrix}0 & I \\ -I & 0 \end {matrix} \right)\left(\begin{matrix} 0 & -\left(\begin{matrix} 2 & 1 \\ 1 & 2\end{matrix}\right)\\
\left(\begin{matrix} 2 & 1 \\ 1 & 2\end{matrix}\right)^{-1} & 0 \end{matrix} \right) = \left(\begin{matrix} \left(\begin{matrix} 2 & 1 \\ 1 & 2\end{matrix}\right)^{-1} & 0\\
0 & \left(\begin{matrix} 2 & 1 \\ 1 & 2\end{matrix}\right)^{-1}  \end{matrix} \right) >0
\end{align*}
\end{allowdisplaybreaks}

Thus $J^2 = -\bm{1}$ and $J$ is compatible with $\omega$, namely $\omega(\cdot, J \cdot)$ is a metric since the matrix above is positive definite.\newline

(ii) Recall the discussion before Equation (\ref{eqn:compn_triangles}). Taking the universal cover of $V^\vee$, we split up the resulting linear space into a product of two 2-planes. Let $P$ be the plane spanned by $\vec{u}=\left<\xi_0,0 \right>$ and $\vec{v} = \left<0,\la(\xi_0)\right>$ for the choice of $\xi_0$ right below Equation (\ref{eq:defn_xi}). Let $P^\omega$ be the symplectic orthogonal complement. In particular, $P$ and $P^\omega$ are $J$-holomorphic planes since $\la = \left(\begin{matrix} 2 & 1 \\ 1 & 2\end{matrix}\right)^{-1}$. We can view the universal cover as $P \times P^\omega \cong \mb{R}^4$ with respect to basis vectors $\vec{u}, \vec{v}$ for $P$ and $\vec{u^\omega}$, $\vec{v^\omega}$ for $P^\omega$. In this basis, we have a $J$-holomorphic disc in $P$ where $J$ is a complex structure (as every almost complex structure is integrable in two dimensions) with the specified Lagrangian boundary condition. \newline

(iii) All the triangles described above are regular, and they are the only discs that appear in the $\mu_2$ count as follows. The Lagrangians $\tilde{\ell}_i, \tilde{\ell}_j, \tilde{\ell}_k$ decompose as products of a straight line of slope $-i$ (respectively $-j$,$-k$) in $P$, and a straight line of the same slope in $P^\omega$. In $P$ we obtain the triangles previously considered and projected to $P^\omega$ the straight lines all meet in a single point. Therefore the maps in the moduli spaces for $\mu^2$, $u:\mb{D}^2 \to V^\vee$, have a triangle image in $P$ and are constant maps in $P^\omega$ at the triple intersection point of the three Lagrangians. (Namely, the boundary condition of the projection of the 3-punctured disc to the latter plane must be constant.) By the Riemann mapping theorem the discs we considered are unique. The discs are regular by the same argument on page \pageref{page:tri_J0_reg} that $J_0$ is regular.
\end{proof}

%
%To finish this setting: we need to look at what this means in the total space. End up trivializing the bundle, so maybe deform $J$ and $\omega$. Want what we do in the fiber to translate to the total space over the trivializable fibration. This will involve further chain map, quasi-iso situations but should be ok by the same arguments?

%wall-crossing via GHK discussion. gross fibration, how to do interior blowup. Complex and symplectic lattices get exchanged? See 3/17/2016 Denis meeting.
%

%First we prove the left side of the diagram from Theorem \ref{mythm}.

This concludes the proof of Lemma \ref{lem:fuk_subcat} that the linear Lagrangians and their morphisms define a full subcategory of the cohomological Fukaya category, and that the multiplicative structure is as stated in the lemma. \end{proof}

\subsection{The complex side}

We now set up the notation describing the abelian variety and its bounded derived category of coherent sheaves, needed to show that the map in the main Theorem \ref{theorem: me}, $D_{\mc{L}}^bCoh(V) \into H^0 Fuk(V^\vee)$ given by $\mc{L}^i \mapsto \ell_i$ defines a fully-faithful functor. 

% To prove this we will need to understand line bundles on abelian varieties, and their sections. Specifically, we will be interested in polarized abelian varieties, i.e.~ones equipped with an ample line bundle. The background on abelian varieties is based on \cite{cx_abel}, \cite{pol_bk}. The theme is to find $\Gamma_B$-equivariant properties on $(\mb{C}^*)^2$. Then we prove that the map in the statement of this Lemma defines a functor. Here are the steps of the proof:
% \begin{enumerate}
%     \item Define cohomology $H^2(V)$ in terms of the lattice.
%     \item Classify $\mc{L}$ topologically by $c_1$.
%     \item Define factors of automorphy of the lattice, i.e.~transition functions of holomorphic line bundles.
%     \item Define $H^{1,1}(V)$ in terms of the lattice.
%     \item Classify $Pic(V)$ in terms of the lattice.
%     \item Express sections of $\mc{L}^k$. 
%     \item Prove the map is a functor, namely that it respects composition. Prove that the functor is a fully-faithful embedding, namely that morphism groups are isomorphic to their image under the functor.
% \end{enumerate}

It will be easier to compute cohomology of $V$ if we take log so the lattice acts additively. Taking the natural logarithm of $(x_1,x_2) \in V$, which is a locally holomorphic map, where we set $|x_i|=\tau^{\xi_i}$, we obtain coordinates
\begingroup \allowdisplaybreaks \begin{equation}
(\xi_1 \log \tau  + 2 \pi i \arg(x_1), \xi_2 \log \tau + 2\pi i \arg(x_2)) \in \mb{C}^2/(\log \tau) \Gamma_B + 2\pi i \mb{Z}^2  
\label{equation: V+}
\end{equation} \endgroup

Define $\Gamma_F = \mb{Z}^2$ then $V_+:= \mb{C}^2/ \log \tau\Gamma_B + 2\pi i \Gamma_F$. Then the lattice $\Gamma_B$ now acts by subtraction on $\xi$ and $\Gamma_F$ acts by addition on $\frac{1}{2\pi i}\arg(x)$. Let $\Gamma:=(\log \tau) \Gamma_B + 2\pi i \mb{Z}^2$. Note that topologically we can express $V_+$ as a product of tori:
$$V_+ \cong \mb{R}^2_{\xi_1,\xi_2}/\Gamma_B \times \mb{R}^2_{\theta_{x_1}, \theta_{x_2}}/\Gamma_F = T_B \times T_F$$
where $\theta_{x_i} := \frac{1}{2\pi} \arg(x_i)$. (However, as an abelian variety, $V \cong V_+$ are not product elliptic curves.) Namely, we can identify $V$ and $V_+\cong T_B\times T_F$ by taking $\log$ to go from $V$ to $V_+$ and then $(\frac{1}{\log \tau} \Re(\cdot), \frac{1}{2\pi} \Im(\cdot ))$ to get from $V_+$ to $T_B \times T_F$, see Equation (\ref{eq: add_and_mult_coords}) below. So $V$ is diffeomorphic to $T_B \times T_F$, allowing us to compute its cohomology and choice for $c_1(\mc{L})$ in terms of $T_B \times T_F$, however they are not the same as complex manifolds.
\begin{equation}\label{eq: add_and_mult_coords}
(x_1,x_2)=(\tau^{\xi_1}e^{2\pi i \theta_{x_1}},x_1=\tau^{\xi_2}e^{2\pi i \theta_{x_2}}) \mapsto \ \ \ (\xi_j, \theta_{x_j})= \left(\frac{1}{\log \tau} \log |x_j|, \frac{1}{2\pi i} (\log x_j-\log |x_j|) \right)
\end{equation}

The following collection of facts and definitions allows us to prove fully-faithfulness of the functor on a chosen basis of hom groups on the A- and B-sides.

% \begin{claim}[Cohomology of abelian variety $V$]
% $H^n(V; \mb{Z}) \cong \wedge^n \Hom(\Gamma, \mb{Z})$
% \label{coh_ab}
% \end{claim}

% \begin{proof}[Proof outline, c.f.~{\cite[\textsection 1.3]{cx_abel}}] We have $\Gamma = \pi_1(V_+) = H_1(V_+; \mb{Z}) \therefore H^1(V_+;\mb{Z}) = \Hom(\Gamma,\mb{Z})$. Then the de Rham isomorphism and induction imply $\bigwedge^n \Hom(\Gamma,\mb{Z}) \cong \bigwedge^n H^1(V_+;\mb{Z})$ $ \xrightarrow{\cup} H^n(V_+;\mb{Z})$.
% \end{proof}

% \begin{center}{\bf Step 2}\end{center}

\begin{claim}[{\cite[\textsection 1]{pol_bk}}] Since $H^n(V; \mb{Z}) \cong \wedge^n \Hom(\Gamma, \mb{Z})$, complex line bundles on $V$ are topologically classified by their first Chern class, which is equivalent to a skew-symmetric bilinear form $E: \Gamma \times \Gamma \to \mb{Z}$.
\end{claim}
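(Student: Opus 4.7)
The claim contains two assertions: (a) complex line bundles on $V$ are classified up to topological isomorphism by their first Chern class $c_1\in H^2(V;\mb{Z})$, and (b) via the hypothesized identification $H^2(V;\mb{Z})\cong \bigwedge^2 \Hom(\Gamma,\mb{Z})$, such a class corresponds to a skew-symmetric $\mb{Z}$-bilinear form on $\Gamma$. The overall plan is to treat these in turn, invoking the exponential sheaf sequence for (a) and a basis-level check for (b).

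For (a), my plan is to use the exponential sheaf sequence $0\to \mb{Z}\to \underline{\mb{C}}\to \underline{\mb{C}^*}\to 0$ on the paracompact Hausdorff space $V$, where $\underline{\mb{C}}$ denotes the sheaf of continuous $\mb{C}$-valued functions. This middle sheaf is fine, hence acyclic, so the connecting map in the long exact sequence yields an isomorphism $H^1(V;\underline{\mb{C}^*})\xrightarrow{\sim} H^2(V;\mb{Z})$. Combined with the standard identification of $H^1(V;\underline{\mb{C}^*})$ with the set of topological complex line bundles on $V$ (equivalently, $[V,BU(1)]=[V,K(\mb{Z},2)]=H^2(V;\mb{Z})$), this gives the classification, with the bijection sending $L\mapsto c_1(L)$.

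For (b), I would specialize the hypothesized exterior-algebra formula to $n=2$, so that $c_1(L)\in H^2(V;\mb{Z})\cong \bigwedge^2 \Hom(\Gamma,\mb{Z})$, and then identify $\bigwedge^2 \Hom(\Gamma,\mb{Z})$ with the module of skew-symmetric $\mb{Z}$-bilinear forms $E:\Gamma\times \Gamma\to \mb{Z}$ via the natural evaluation pairing $\phi \wedge \psi \longmapsto \bigl((v,w)\mapsto \phi(v)\psi(w)-\phi(w)\psi(v)\bigr)$. Since $\Gamma$ is free abelian of finite rank $4$, both sides are free of rank $\binom{4}{2}=6$ and this map is visibly injective on the basis coming from a chosen $\mb{Z}$-basis of $\Hom(\Gamma,\mb{Z})$, hence an isomorphism.

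The only real pitfall — and in my view the principal obstacle, though a purely notational one — is bookkeeping around which lattice is meant: the $\Gamma$ in the statement is the rank-$4$ additive lattice $(\log\tau)\Gamma_B + 2\pi i\,\mb{Z}^2 \subset \mb{C}^2$ arising from the logarithmic presentation $V_+ = \mb{C}^2/\Gamma$ introduced just before the claim, not the rank-$2$ lattice $\Gamma_B$. The passage between the multiplicative coordinates on $V=(\mb{C}^*)^2/\Gamma_B$ and the additive ones on $V_+$, already executed in Equation (\ref{equation: V+}), is what ensures that $\Gamma$ is a genuine full lattice in a real $4$-dimensional vector space, and hence that the cohomology of $V$ really is the exterior algebra on $\Hom(\Gamma,\mb{Z})$ as used above.
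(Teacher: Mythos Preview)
Your argument is correct. Note, however, that the paper does not supply its own proof of this claim: it is stated as a citation to \cite[\textsection 1]{pol_bk} and left without a proof environment. So there is no ``paper's proof'' to compare against; you have filled in what the paper treats as background.

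Your two steps are the standard ones and are executed cleanly. The exponential sequence (or equivalently $BU(1)\simeq K(\mb{Z},2)$) gives the bijection $L\mapsto c_1(L)$ for topological line bundles, and the identification $\bigwedge^2\Hom(\Gamma,\mb{Z})\cong\{\text{skew-symmetric }\mb{Z}\text{-bilinear forms on }\Gamma\}$ via $\phi\wedge\psi\mapsto\phi\otimes\psi-\psi\otimes\phi$ is immediate once $\Gamma$ is free of finite rank. Your remark that $\Gamma$ here is the rank-$4$ lattice $(\log\tau)\Gamma_B+2\pi i\,\mb{Z}^2$ from Equation~(\ref{equation: V+}), rather than $\Gamma_B$, is exactly the right clarification and matches how the paper uses $\Gamma$ in the surrounding text.
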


% \begin{proof} For a complex line bundle, $c_1(L) \in H^2(V_+;\mb{Z}) \cong \wedge^2 \Hom(\Gamma, \mb{Z})$ by the previous claim, and $\wedge^2 \Hom(\Gamma, \mb{Z}) \cong \Hom(\wedge^2 \Gamma, \mb{Z})$.
% \end{proof}

% \begin{center}{\bf Step 3}\end{center}

% Next consider a holomorphic line bundle on $V$. Since $\log$ is a holomorphic map, such line bundles pullback to holomorphic line bundles on $V_+$. So first we describe holomorphic line bundles on $V_+$, and then determine which pass under exp to $V$. One definition of a holomorphic line bundle is by Cartier divisors $\{f_i\}$ where transition functions are $f_i/f_j$ and $f_i$ locally define the divisor corresponding to the line bundle. On a complex torus, we can package transition functions into $\Gamma$-equivariant \emph{factors of automorphy} $H^1(\pi_1(V_+); H^0(\mc{O}^*_{\tilde{V}_+}))$. On the other hand, holomorphic line bundles are represented by cocycles in the sheaf cohomology groups $H^1(V_+,\mc{O}^*)$.

\begin{claim}[{\cite[Appendix B]{cx_abel}}] Holomorphic line bundles on $V_+$ are classified by 
\begingroup \allowdisplaybreaks \begin{equation}
H^1(\pi_1(V_+); H^0(\mc{O}^*_{\tilde{V}_+})) \cong H^1(V_+,\mc{O}^*)
\label{equation: automor}
\end{equation} \endgroup
where $\tilde{V}_+= \mb{C}^2$.
\end{claim}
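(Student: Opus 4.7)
The statement asserts a classification of holomorphic line bundles on the complex torus $V_+$ via group cohomology of the deck transformation group of its universal cover $\tilde V_+ = \mb{C}^2$. My plan is to establish the two isomorphisms separately: first the standard Picard-group interpretation $\mathrm{Pic}(V_+) \cong H^1(V_+,\mc{O}^*)$, and then the Cartan--Leray-style descent to group cohomology on $\pi_1(V_+) = \Gamma$.

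First I would record that holomorphic line bundles on any complex manifold are classified by $H^1(\cdot,\mc{O}^*)$ via transition functions; this is immediate from a Čech description of both sides. Then, since $\tilde V_+ \to V_+$ is a regular covering with deck group $\Gamma$, I would invoke the Cartan--Leray (equivalently, Hochschild--Serre or Leray for the quotient map) spectral sequence
\begin{equation}
E_2^{p,q} = H^p\bigl(\Gamma;\, H^q(\tilde V_+,\mc{O}^*)\bigr) \;\Longrightarrow\; H^{p+q}(V_+,\mc{O}^*).
\end{equation}
The key input is the vanishing $H^q(\tilde V_+,\mc{O}^*) = 0$ for all $q \geq 1$, which collapses the $q\geq 1$ rows of $E_2$ and forces the edge morphism $H^1(\Gamma; H^0(\tilde V_+,\mc{O}^*)) \xrightarrow{\sim} H^1(V_+,\mc{O}^*)$ to be an isomorphism.

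The main technical step, and the only nontrivial obstacle, is the vanishing of $H^q(\mb{C}^2,\mc{O}^*)$ for $q \geq 1$. For this I would run the exponential exact sequence
\begin{equation}
0 \to \underline{\mb{Z}} \to \mc{O} \xrightarrow{\exp(2\pi i \,\cdot)} \mc{O}^* \to 0
\end{equation}
on $\mb{C}^2$ and take the long exact sequence in sheaf cohomology. Cartan's Theorem B applied to the Stein manifold $\mb{C}^2$ yields $H^q(\mb{C}^2,\mc{O}) = 0$ for $q \geq 1$, and the contractibility of $\mb{C}^2$ gives $H^q(\mb{C}^2,\underline{\mb{Z}}) = 0$ for $q\geq 1$; together these force $H^q(\mb{C}^2,\mc{O}^*) = 0$ for $q \geq 1$. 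In particular every holomorphic line bundle on $\tilde V_+$ is trivial, so any holomorphic line bundle $L \to V_+$ admits a trivialization of its pullback $\pi^*L$, and the descent data for $L$ reduces precisely to a $\Gamma$-equivariant structure on the trivial bundle, i.e.\ to a choice of automorphy factor $\Gamma \to H^0(\tilde V_+,\mc{O}^*)$.

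Finally I would interpret the result geometrically: the isomorphism sends a line bundle $L$ to the cohomology class of its automorphy cocycle $a_\gamma(z) = \phi(\gamma z)/\phi(z)$, where $\phi$ is any nonvanishing holomorphic trivialization of $\pi^* L$, and coboundaries correspond to different choices of $\phi$. This directly prepares the ground for the subsequent discussion in the paper, where line bundles on $V$ (equivalently on $V_+$) will be presented concretely through their automorphy factors, as was done for $\mc{L}$ in the statement of Theorem~\ref{theorem: me}.
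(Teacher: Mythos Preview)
Your proof is correct and is the standard argument one finds in the cited reference. Note, however, that the paper does not actually prove this claim: it is stated as background, with a citation to \cite[Appendix B]{cx_abel}, and no argument is given in the paper itself. Your spectral-sequence approach (Cartan--Leray plus the exponential sequence and Cartan's Theorem B to kill $H^q(\mb{C}^2,\mc{O}^*)$ for $q\geq 1$) is precisely the proof that appears in Birkenhake--Lange, so there is nothing to compare.
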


\begin{claim}[{\cite[Lemma 1.2.5]{huy}, \cite[Theorem 1.4.1]{cx_abel}}]\label{claim: holo_abel_str} $$H^{1,1}(V_+) \cong \Hom_{\mb{C}}(\mb{C}^2,\mb{C}) \otimes \Hom_{\ol{\mb{C}}}(\mb{C}^2,\mb{C})$$
\end{claim}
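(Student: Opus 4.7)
The plan is to reduce the identification to the standard fact that translation-invariant forms compute the Dolbeault cohomology of a complex torus, then unpack what a translation-invariant $(1,1)$-form looks like in coordinates.

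First I would equip $V_+ = \mathbb{C}^2/\Gamma$ with the flat K\"ahler metric inherited from the standard Hermitian metric on $\mathbb{C}^2$, which is well-defined since $\Gamma$ acts by translations. By Hodge theory for compact K\"ahler manifolds, $H^{1,1}(V_+)$ is canonically isomorphic to the space $\mathcal{H}^{1,1}(V_+)$ of harmonic $(1,1)$-forms for this metric. Because the Laplacian, the complex structure, and the metric are all translation-invariant, every translation-invariant $(1,1)$-form on $V_+$ is harmonic; conversely, averaging any harmonic form over the compact group $V_+$ acting on itself by translation produces a translation-invariant harmonic form in the same class, and a standard argument using the Stone--Weierstrass-type decomposition into characters (or simply uniqueness of the harmonic representative) shows the averaged form equals the original. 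Hence $\mathcal{H}^{1,1}(V_+)$ is exactly the space of translation-invariant $(1,1)$-forms.

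Second, I would unpack what a translation-invariant $(1,1)$-form is. Such a form is determined by its value at the origin, which lives in
\[
\Lambda^{1,1}(T^*_0 V_+) \;\cong\; (\mathbb{C}^2)^*_{\mathbb{C}\text{-lin}} \otimes_{\mathbb{C}} \overline{(\mathbb{C}^2)^*_{\mathbb{C}\text{-lin}}}
\]
under the standard splitting of complexified cotangent space into $(1,0)$ and $(0,1)$ parts. The first factor is by definition $\Hom_{\mathbb{C}}(\mathbb{C}^2,\mathbb{C})$ (spanned by $dz_1, dz_2$) and the second factor is $\Hom_{\overline{\mathbb{C}}}(\mathbb{C}^2,\mathbb{C})$ (spanned by $d\bar z_1, d\bar z_2$). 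Explicitly, the isomorphism sends a translation-invariant form $\sum_{i,j} h_{ij} \, dz_i \wedge d\bar z_j$ to $\sum_{i,j} h_{ij} \, dz_i \otimes d\bar z_j$.

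Combining the two steps gives the claimed isomorphism. The only step that requires any care is verifying that harmonic forms on $V_+$ are translation-invariant; the simplest justification is that the flat Laplacian $\Delta$ commutes with translation, so the translation-average of a harmonic form is again harmonic and lies in the same cohomology class, and then uniqueness of the harmonic representative identifies the two. No single step is a serious obstacle; this is essentially the Hodge decomposition of a complex torus specialized to bidegree $(1,1)$, and the content of the claim is really the coordinate-free identification of the resulting space of invariant $(1,1)$-forms with $\Hom_{\mathbb{C}}(\mathbb{C}^2,\mathbb{C}) \otimes \Hom_{\overline{\mathbb{C}}}(\mathbb{C}^2,\mathbb{C})$, which will be used in the following corollary to pin down $c_1(\mathcal{L}) \in \hom(\Gamma_B, \mathbb{Z}^2)$.
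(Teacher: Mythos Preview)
The paper does not give its own proof of this claim; it simply cites \cite[Lemma 1.2.5]{huy} and \cite[Theorem 1.4.1]{cx_abel} and moves on. Your argument is correct and is essentially the standard one found in those references: on a complex torus with its flat K\"ahler metric, harmonic forms coincide with translation-invariant forms, and a translation-invariant $(1,1)$-form is determined by its value at the origin, which lives in $(T_0^{*(1,0)}) \otimes (T_0^{*(0,1)}) = \Hom_{\mb{C}}(\mb{C}^2,\mb{C}) \otimes \Hom_{\ol{\mb{C}}}(\mb{C}^2,\mb{C})$. There is nothing to compare here beyond noting that you have supplied the argument the paper outsources.
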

% \begin{proof} By \cite[Lemma 1.2.5]{huy} and \cite[Theorem 1.4.1]{cx_abel},
% $$\mb{C}^2 = (\mb{C}^2)^{1,0} \oplus (\mb{C}^2)^{0,1} \implies (T^{1,0}\mb{C}^2)^* = \Hom_{\mb{C}}(\mb{C}^2,\mb{C}) \mbox{ and } (T^{0,1}\mb{C}^2)^* = \Hom_{\ol{\mb{C}}}(\mb{C}^2,\mb{C})$$

% Let $t_{v}$ denote translation by $v$ on the torus $\mb{C}^2/\Gamma$. Then pulling back by $dt_{-v}$ translates $\mb{C}^2$ to cover the whole tangent bundle of $V_+$ and we can extend $(p,q)$ forms to $V_+$. 
% \begin{equation}
% \begin{aligned}
% \Hom_{\mb{C}}(\mb{C}^2,\mb{C}) \otimes \Hom_{\ol{\mb{C}}}(\mb{C}^2,\mb{C})& \cong H^{1,1}(V_+)\\
% \omega & \mapsto \{dt_{-v}^*\omega\}_{v \in V_+}
% \end{aligned}
% \end{equation}
% \end{proof}

\begin{claim}[C.f~{\cite[Proposition 2.1.6]{cx_abel}}]  A complex line bundle $\mc{L}$ with first Chern class $c_1(\mc{L})=E$ admits a holomorphic structure if and only if $H(\cdot,\cdot):= E(i( \cdot), \cdot) + i E(\cdot, \cdot)$ is Hermitian. 
\label{cor: param_lines}
\end{claim}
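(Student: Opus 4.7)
The plan is to deduce the statement from the exponential exact sheaf sequence on $V_+$ combined with the Hodge-type decomposition already recorded in Claim \ref{claim: holo_abel_str}. The classical input is that the sequence $0 \to \mathbb{Z} \to \mathcal{O} \to \mathcal{O}^* \to 0$ yields a connecting map $c_1 : H^1(V_+,\mathcal{O}^*) \to H^2(V_+,\mathbb{Z})$ whose image consists of exactly those integral classes that vanish under $H^2(V_+,\mathbb{Z}) \to H^2(V_+,\mathcal{O})$. Equivalently, a complex line bundle $\mathcal{L}$ on $V_+$ admits a holomorphic structure if and only if $c_1(\mathcal{L})_\mathbb{C}$ has no $(0,2)$-component; since $c_1$ is real, this is the same as saying $c_1(\mathcal{L})$ is of Hodge type $(1,1)$. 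So the task reduces to translating the condition ``$E$, viewed as a real alternating $2$-form on $\mathbb{C}^2$, is of type $(1,1)$'' into the Hermiticity of the associated form $H$.

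For the first half of the translation I would extend the integer-valued skew form $E : \Gamma \times \Gamma \to \mathbb{Z}$ $\mathbb{R}$-bilinearly to a real alternating $2$-form on $\mathbb{C}^2$. Using the basis $\{dz_j, d\bar z_j\}$ guaranteed by Claim \ref{claim: holo_abel_str}, the space of real $2$-forms decomposes as $\Lambda^{2,0} \oplus \Lambda^{1,1} \oplus \Lambda^{0,2}$, and a real alternating form $E$ lies in the $(1,1)$-piece precisely when $E$ is $J$-invariant, i.e.\ $E(iu, iv) = E(u, v)$ for all $u, v \in \mathbb{C}^2$. This is the standard criterion: any real $2$-form can be split as $E_+ + E_-$ with $E_\pm(iu, iv) = \pm E_\pm(u,v)$, and $E_-$ is the real part of the combined $(2,0) + (0,2)$ component while $E_+$ is exactly the $(1,1)$-part.

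Then I would verify the purely algebraic equivalence: with $H(u,v) := E(iu, v) + i E(u, v)$, the form $H$ is Hermitian on $(\mathbb{C}^2, J=i)$ if and only if $E$ is $J$-invariant. Complex linearity in the first argument, $H(iu, v) = iH(u,v)$, is automatic from the definition using $i^2 = -1$ and the $\mathbb{R}$-bilinearity of $E$. The conjugate-symmetry condition $\overline{H(u,v)} = H(v,u)$ unpacks, using skew-symmetry of $E$, into $E(iu, v) = E(iv, u) \cdot (-1) \cdot \text{(sign bookkeeping)}$; carrying this through shows it is equivalent precisely to $E(iu, iv) = E(u,v)$. Conjugate-linearity in the second slot then follows automatically. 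Combined with the Hodge-theoretic step above, this yields the claim: $\mathcal{L}$ is holomorphic iff $E = c_1(\mathcal{L})$ is $(1,1)$ iff $H$ is Hermitian.

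The main obstacle I anticipate is purely bookkeeping: one must be careful that ``Hermitian'' is understood here in the convention where $H$ is $\mathbb{C}$-linear in the first slot and conjugate-linear in the second (the convention used by \cite{cx_abel}), and that the real-bilinear extension of the integer form $E$ to $\mathbb{C}^2 = \Gamma \otimes_\mathbb{Z} \mathbb{R}$ is the one compatible with the Hodge decomposition of Claim \ref{claim: holo_abel_str}. Once these conventions are fixed, the argument is a direct unpacking, and the substance lies entirely in the appeal to the exponential sequence and to the Hodge splitting which has been isolated in the preceding claims.
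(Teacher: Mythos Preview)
The paper does not supply its own proof of this claim; it is stated with a citation to \cite[Proposition 2.1.6]{cx_abel} and left without argument, as part of a list of background facts imported from the literature on complex abelian varieties. So there is no in-paper proof against which to compare your proposal.

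That said, your argument is correct and is essentially the standard one found in the cited reference: use the exponential sequence to identify the image of $c_1$ with integral classes of Hodge type $(1,1)$, then verify the linear-algebra equivalence between $E$ being $J$-invariant (i.e.\ $E(iu,iv)=E(u,v)$) and $H(u,v)=E(iu,v)+iE(u,v)$ being Hermitian. Your bookkeeping checks out: $\mathbb{C}$-linearity in the first slot is automatic, and conjugate symmetry $\overline{H(u,v)}=H(v,u)$ unwinds via skew-symmetry of $E$ to exactly $E(iu,v)=-E(u,iv)$, which is the $J$-invariance condition after the substitution $v\mapsto iv$. The only caveat you flag yourself---matching the convention on which slot is linear---is the only place to be careful, and you have it right.
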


% \begin{proof} By the previous claim, it suffices to consider $c_1(\mc{L})$ at a point. Again by the previous claim it arises from a holomorphic line bundle if and only if it is a hermitian matrix (after choosing a basis), namely an inner product that is complex linear in the first argument and complex anti-linear in the second argument.  

% Given an alternating 2-form $E$ so that $E(\Lambda, \Lambda) \subset \mb{Z}$, consider $H(\cdot,\cdot):= E(i( \cdot), \cdot) + i E(\cdot, \cdot)$. Then $H$ is a Hermitian form if and only if $E$ respects the complex structure $i$, e.g.~see \cite[Lemma 1.2.15]{huy}. That is $i^*E=E$ and $E = \im H$ the imaginary part of $H$.
% \end{proof}

% \begin{center}{\bf Step 5}\end{center}

% Many holomorphic structures exist on a complex line bundle. Putting everything together, the \emph{Appell-Humbert Theorem} classifies holomorphic line bundles on $V_+$ in terms of $H$ and an additional piece of data $\alpha: \Gamma \to U(1)$. The first Chern class isn't sufficient to define the holomorphic structure, since there is a torus' worth of holomorphic line bundles with the same $c_1$. 

\begin{theorem}[{\cite[Theorem 1.3]{pol_bk}} and {\cite[Appell-Humbert Theorem 2.2.3]{cx_abel}}]\label{main_cx_lemma} The Picard group of $V_+$ can be classified by the following set of pairs.
\begingroup \allowdisplaybreaks \begin{equation}
    \begin{aligned}
    Pic(V_+)\cong & \{(H,\alpha) \mid H: \mb{C}^2 \times \mb{C}^2\to \mb{C} \text{ Hermitian}, E(\Gamma,\Gamma) \subseteq \mb{Z}, \alpha: \Gamma \to U(1)\\
    &\alpha(\gamma + \tilde{\gamma}) = \exp(\pi i E(\gamma, \tilde{\gamma})) \alpha(\gamma) \alpha(\tilde{\gamma}), \text{where } E = \im H \}
    \end{aligned}
\end{equation} \endgroup
\end{theorem}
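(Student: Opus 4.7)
The plan is to combine the exponential exact sequence $0 \to \mb{Z} \to \mc{O}_{V_+} \to \mc{O}_{V_+}^* \to 0$ with the preceding structural claims, and then exhibit an explicit set-theoretic section of $c_1: \mathrm{Pic}(V_+) \to \mathrm{NS}(V_+)$ via the Appell-Humbert automorphy factor
\[
a_{H,\alpha}(\gamma,z) \;:=\; \alpha(\gamma)\exp\!\Bigl(\pi H(z,\gamma) + \tfrac{\pi}{2}H(\gamma,\gamma)\Bigr).
\]

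First I would feed the exponential sheaf sequence through cohomology to obtain
\[
0 \to \mathrm{Pic}^0(V_+) \to \mathrm{Pic}(V_+) \xrightarrow{c_1} \mathrm{NS}(V_+) \to 0,
\]
with $\mathrm{Pic}^0(V_+) = H^1(V_+, \mc{O})/H^1(V_+, \mb{Z})$ and $\mathrm{NS}(V_+) = \ker(H^2(V_+,\mb{Z}) \to H^2(V_+,\mc{O}))$. Using the preceding claims I would identify each piece: the stated classification $H^2(V_+;\mb{Z}) \cong \wedge^2 \mathrm{Hom}(\Gamma,\mb{Z})$ combined with Claim~\ref{claim: holo_abel_str} cuts $\mathrm{NS}(V_+)$ down to skew $E$ satisfying $E(iu,iv) = E(u,v)$, which by Claim~\ref{cor: param_lines} are exactly imaginary parts of Hermitian forms $H$; similarly $H^1(V_+,\mc{O})$ identifies via Claim~\ref{claim: holo_abel_str} with $\mathrm{Hom}_{\bar{\mb{C}}}(\mb{C}^2,\mb{C})$, while $H^1(V_+,\mb{Z}) \cong \mathrm{Hom}(\Gamma,\mb{Z})$ sits inside as a full lattice, so $\mathrm{Pic}^0(V_+) \cong \mathrm{Hom}(\Gamma, U(1))$. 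This produces the desired data $(H, \text{character})$.

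Next I would verify that each $a_{H,\alpha}$ is a holomorphic $1$-cocycle: the ratio $a_{H,\alpha}(\gamma+\tilde\gamma,z)/[a_{H,\alpha}(\tilde\gamma,z+\gamma)\cdot a_{H,\alpha}(\gamma,z)]$ collapses (using $H(\gamma,\tilde\gamma) - H(\tilde\gamma,\gamma) = 2iE(\gamma,\tilde\gamma)$) to a $z$-independent scalar that equals $1$ precisely when the semicharacter identity holds. A direct calculation of the first Chern class of $L_{H,\alpha}$ (differentiating $\log a_{H,\alpha}$) recovers $E = \mathrm{Im}\,H$; tensoring with $\mathrm{Pic}^0 = \mathrm{Hom}(\Gamma, U(1))$ corresponds to multiplying $\alpha$ by a genuine character, so varying $\alpha$ over all semicharacters with fixed $\mathrm{Im}\,H = E$ sweeps out every fiber of $c_1$. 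Injectivity is immediate: a trivial $a_{H,\alpha}$ forces $c_1 = 0$, hence $E = 0$ and then $H = 0$ by non-degeneracy in Claim~\ref{cor: param_lines}, leaving $\alpha = 1$.

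The main obstacle is proving \emph{surjectivity} cleanly, i.e.~that every holomorphic line bundle on $V_+$ is isomorphic to some $L_{H,\alpha}$ and not merely cohomologous to some arbitrary $1$-cocycle. The standard route is to normalize a given cocycle via coboundaries: since $\tilde V_+ = \mb{C}^2$ is contractible, choose a coherent holomorphic logarithm $g_\gamma$ with $f_\gamma = \exp(2\pi i\, g_\gamma)$, Taylor-expand in $z$, and use polynomial coboundaries $h(z+\gamma)-h(z)$ to kill every homogeneous component of $g_\gamma$ of total degree $\geq 2$ in $z$ (the cocycle condition at each order forces those components to themselves be coboundaries). The remaining $\mb{R}$-linear piece decomposes via Claim~\ref{claim: holo_abel_str} into $\mb{C}$-linear and $\mb{C}$-antilinear halves; the antilinear half is a coboundary by the $(1,1)$-type condition of Claim~\ref{cor: param_lines}, leaving exactly the form $\pi H(z,\gamma) + (\pi/2)H(\gamma,\gamma) + \log\alpha(\gamma)$. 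The careful degree-by-degree accounting in this reduction is the bulk of the technical work.
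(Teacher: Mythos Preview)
The paper does not actually prove this theorem: it is stated with citations to \cite[Theorem~1.3]{pol_bk} and \cite[Appell--Humbert Theorem~2.2.3]{cx_abel} and then used as a black box, with no proof supplied. So there is nothing to compare against in the paper itself.

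Your sketch is the standard proof of the Appell--Humbert theorem and is essentially what one finds in the cited references. The structure is correct: the exponential sequence gives the short exact sequence $0 \to \mathrm{Pic}^0 \to \mathrm{Pic} \to \mathrm{NS} \to 0$, you identify $\mathrm{NS}$ with Hermitian forms whose imaginary part is integral on $\Gamma$ and $\mathrm{Pic}^0$ with $\mathrm{Hom}(\Gamma, U(1))$, and you exhibit the Appell--Humbert cocycle as a section of $c_1$. The cocycle verification and the surjectivity argument via coboundary normalization are both right in outline. One small caution: in your injectivity line you invoke ``non-degeneracy in Claim~\ref{cor: param_lines}'' to conclude $H=0$ from $E=0$, but that claim does not assert non-degeneracy; rather, $E=0$ directly forces $H=0$ because $H(u,v) = E(iu,v) + iE(u,v)$ recovers $H$ from $E$. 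Otherwise the argument is sound.
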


\begin{cor}\label{cor:v_vs_vs+}  Holomorphic line bundles on $V$ are in one-to-one correspondence with pairs $(H,\alpha)\in Pic(V_+)$ (see Theorem \ref{main_cx_lemma}) where $H$ can be represented as a real integral symmetric $2 \times 2$ matrix, under the pullback by $\exp$.
\end{cor}

% that for a complex line bundle on an abelian variety, $c_1(\mc{L})$ can be represented by an alternating form $E: \Gamma \times \Gamma \to \mb{Z}$ for $\Gamma \subset \mb{C}^2$. Thus taking $\mb{R}$ linear combinations we can extend this to an $\mb{R}$-linear form $E: \mb{C}^2 \times \mb{C}^2 \to \mb{R}$. With respect to the basis given by the generators of $\Gamma_B$ and $\Gamma_F$ in Step 1, this gives a 4 by 4 skew-symmetric matrix with respect to $\Gamma_B \times \Gamma_F$. Then 

\begin{proof}  Recall $E:=\im H$ for some hermitian form $H = \left( \begin{matrix} a & b \\ \ol{b} & a \end{matrix} \right)$ so $a \in \mb{R}$. 
\begingroup \allowdisplaybreaks \begin{equation}
\begin{aligned}
& (\begin{matrix} \gamma_1 + i w_1 & \gamma_2 + iw_2\end{matrix}) \left( \begin{matrix} a & b+ic \\ b-ic & d \end{matrix} \right) \left( \begin{matrix} \tilde{\gamma}_1 + iv_1 \\ \tilde{\gamma}_2 + iv_2 \end{matrix} \right)
\end{aligned}
\end{equation} \endgroup
We require that $E$ is trivial in the $T_F$ directions, because under $\exp$ the $T_F$ directions are already quotiented by in $(\mb{C}^*)^2$. These are the purely imaginary vectors in $\mb{C}^2$ by Equation \ref{equation: V+}, so we want $E(iw, iv) = 0$. We find that 
\begin{align*}
E(iw,iv) &= \im H(iw, iv) = \im(aw_1v_1 + bw_1v_2+icw_1v_2+bw_2v_1 + dw_2v_2 - icv_1w_2)\\
& = c(w_1v_2 - v_1 w_2)=0 \; \forall w,v \in \mb{Z}^2 \therefore c = 0
\end{align*}
so $H$ is real hence symmetric. We can express $H$ on $\mb{R}^4$ as 
$H =\begin{blockarray}{ccc}
& \Gamma_B & \Gamma_F \\
\begin{block}{c&(c&c)}
 \Gamma_B & 0 & A  \\
 \Gamma_F & A & 0 \\
\end{block}
\end{blockarray} \mbox{ }$ where $A$ is real symmetric. Note that the information of $H$ is that of a homomorphism $\Gamma_B \to \Gamma_F$.
\end{proof}

% \begin{center} {\bf Step 6} \end{center}

Hom groups on the B-side have bases represented by sections of powers of $\mc{L}$.

\begin{claim}\label{defn:line_bundle} $s:(\gamma_\xi,x) \mapsto x^{\la(\gamma_\xi)} \tau^{\kappa(\gamma_\xi)}$ is a factor of automorphy for $\mc{L}$ where recall $\la$ and $\kappa$ are defined in Definition \ref{defn:Bside_notation}.
\end{claim}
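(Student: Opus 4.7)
The plan is to check directly that $s(\gamma,x) := x^{\la(\gamma)}\tau^{\kappa(\gamma)}$ satisfies the cocycle condition required of a factor of automorphy for the $\Gamma_B$-action $\gamma\cdot x = \tau^{-\gamma}x$ on $(\mb{C}^*)^2$, namely
\[
s(\gamma_1+\gamma_2,\,x) \;=\; s(\gamma_1,\,\gamma_2\cdot x)\cdot s(\gamma_2,\,x)\quad\text{for all }\gamma_1,\gamma_2\in\Gamma_B,
\]
and then to observe that the cocycle $s$ recovers the transition data of $\mc{L}$ as defined in (\ref{eqn: mc{L}}) of Theorem \ref{theorem: me}.

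First I would compute the right-hand side. Substituting the $\Gamma_B$-action and using that $\la$ is $\mb{Z}$-linear, I get
\[
s(\gamma_1,\tau^{-\gamma_2}x)\,s(\gamma_2,x) \;=\; (\tau^{-\gamma_2}x)^{\la(\gamma_1)}\tau^{\kappa(\gamma_1)}\cdot x^{\la(\gamma_2)}\tau^{\kappa(\gamma_2)}\;=\; x^{\la(\gamma_1+\gamma_2)}\,\tau^{\kappa(\gamma_1)+\kappa(\gamma_2)-\langle\gamma_2,\la(\gamma_1)\rangle}.
\]
Matching the $\tau$-exponent against $\kappa(\gamma_1+\gamma_2)$ reduces the cocycle identity to the scalar equation
\[
\kappa(\gamma_1+\gamma_2) \;=\; \kappa(\gamma_1)+\kappa(\gamma_2) - \langle\gamma_2,\la(\gamma_1)\rangle.
\]

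Second, I would verify this quadratic identity by expanding $\kappa(\gamma_1+\gamma_2)=-\tfrac12\langle\gamma_1+\gamma_2,\la(\gamma_1+\gamma_2)\rangle$ and using bilinearity together with the symmetry of the matrix representing $\la$ (the matrix $\left(\begin{smallmatrix}2&1\\1&2\end{smallmatrix}\right)^{-1}$ is symmetric, so $\langle\gamma_1,\la(\gamma_2)\rangle=\langle\gamma_2,\la(\gamma_1)\rangle$). Expanding and collecting terms yields exactly the required identity, confirming that $s$ is a $1$-cocycle.

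Finally, I would match $s$ with the transition rule for $\mc{L}$ in (\ref{eqn: mc{L}}): there $\gamma\cdot(x_1,x_2,v)=(\gamma\cdot(x_1,x_2),\,x^{\la(\gamma)}\tau^{\kappa(\gamma)}v)$, so the fiber coordinate $v$ is multiplied precisely by $s(\gamma,x)$. Hence $s$ is the factor of automorphy defining $\mc{L}$ as the quotient of the trivial bundle $(\mb{C}^*)^2\times\mb{C}$ by $\Gamma_B$, which completes the proof. No obstacle is really expected here beyond bookkeeping — the only substantive input is the symmetry of $\la$, which is why one needs the polarization $\la$ to be self-adjoint for the cocycle identity to close up.
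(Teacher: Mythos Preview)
Your proof is correct but takes a different route from the paper. You verify the cocycle identity $s(\gamma_1+\gamma_2,x)=s(\gamma_1,\gamma_2\cdot x)\,s(\gamma_2,x)$ directly on $(\mb{C}^*)^2$, reducing it to the quadratic identity for $\kappa$ which follows from the symmetry of $\la$; this is a clean, self-contained verification. The paper instead pulls the cocycle back along $\exp\colon V_+\to V$ to additive coordinates and matches $\exp^*s$ against the Appell--Humbert canonical form $(\gamma,v)\mapsto \alpha(\gamma)\exp(\pi H(v,\gamma)+\tfrac{\pi}{2}H(\gamma,\gamma))$ from Theorem~\ref{main_cx_lemma}, with $\alpha\equiv 1$ and $H$ determined by $\la$. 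Your approach is more elementary and requires no appeal to the classification theory; the paper's approach buys an explicit identification of $\mc{L}$ with the pair $(H,\alpha)$ in the Appell--Humbert picture, which is what the surrounding discussion (Corollary~\ref{cor:v_vs_vs+}, Claim~\ref{claim: holo_abel_str}) has been setting up and which is used later to interpret $c_1(\mc{L})\in\Hom(\Gamma_B,\Gamma_F^*)$.
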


\begin{proof} We show its pullback to $V_+$ is a factor of automorphy which is trivial in the $\Gamma_F$ directions. 
\begin{center}
    \begin{tikzpicture}
\draw[<-] (-0.5,-0.5) node (v1) {} .. controls (0,-1) and (0,-1.5) .. (-0.5,-2);
\node (v3) at (-1,0) {$\mathcal{L}$};
\node (v5) at (-1,-2) {$V$};
\node (v2) at (-3,0) {$\exp^* \mathcal{L}$};
\node (v4) at (-3,-2) {$V_+$};
\draw [->] (v2) edge (v3);
\draw [->] (v4) edge (v5);
\draw [->] (v2) edge (v4);
\draw [->] (v3) edge (v5);
\draw [->](-3.5,-2) .. controls (-4,-1.5) and (-4,-1) .. (-3.5,-0.5);
\node at (-4.5,-1) {$\exp^*s$};
\node at (0.1,-1) {$s$};
\end{tikzpicture}
\end{center}
We work in holomorphic coordinates on $V$ and $V_+$. Let $v:=(\log \tau) \xi + 2\pi i \theta$ be the coordinate on $V_+$, using the notation above. Let $\gamma =  \gamma_\xi + i \gamma_\theta$. Then

\begin{equation}
    \begin{aligned}
        \exp^*s : (\gamma, v)& \mapsto  \exp( (\la(\gamma_\xi) \log \tau) \xi +2\pi i \la(\gamma_\xi) \theta+ \kappa(\gamma_\xi) \log \tau ) \\
        &= \exp(\la(\Re \gamma) \cdot v + \kappa(\Re \gamma) \log \tau)
    \end{aligned}
\end{equation}
for a linear term in $\gamma$ and a quadratic term in $\gamma$, which matches Theorem \ref{main_cx_lemma} with $\alpha \equiv 1$ (else $\alpha$ would contribute a linear term to $\kappa$.) This is because recall from Theorem \ref{main_cx_lemma} that a pair $(H,\alpha)$ gives rise to a factor of automorphy on $V_+$ by 
\begin{equation}
    (\gamma, v)  \mapsto \alpha(\gamma) \exp(\pi H(v,\gamma) + \frac{\pi}{2}H(\gamma,\gamma))
\end{equation}

By the proof of Corollary \ref{cor:v_vs_vs+}, we found $H=iE$ where $E$ is a real 2 by 2 symmetric matrix. Thus we may define $H$ in terms of the map $\la: \Gamma_B \to \Gamma_F$ so that:
\begingroup \allowdisplaybreaks \begin{equation}
    \begin{aligned}
(\gamma, v) & \mapsto \alpha(\gamma) \exp(\pi H(v,\gamma) + \frac{\pi}{2}H(\gamma,\gamma))\\
&= \exp(\left< v, \la(\gamma_\xi)\right> +\kappa(\gamma_\xi)\log \tau)\\
& =  \exp(\left< (\log \tau) \xi + 2\pi i \theta, \la(\gamma_\xi)\right> +\kappa(\gamma_\xi)\log \tau)\\
&=x^{\la(\gamma_\xi)}\tau^{\kappa(\gamma_\xi)}
    \end{aligned}
\end{equation} \endgroup
So the condition that the line bundle on $V_+$ passes to one on $V$ is the condition that $H=iE$, since under exponentiation the $\Gamma_F$ action becomes multiplication by $e^{2\pi i n}=1$ for some $n \in \mb{Z}$.

\end{proof}

We now use $\gamma$ instead of $\gamma_\xi$ to denote group elements of $\Gamma_B$.

\begin{claim} Sections of holomorphic lines bundles on $V$ are functions on $(\mb{C}^*)^2$ with the periodicity property
$$s(\gamma\cdot x) = \tau^{\kappa(\gamma)}x^{\la(\gamma)}s(x)$$
so have a Fourier expansion.
 \end{claim}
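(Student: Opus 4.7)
My plan is to identify global sections of $\mc{L}$ with holomorphic functions on the intermediate cover $(\mb{C}^*)^2 \to V=(\mb{C}^*)^2/\Gamma_B$ that are equivariant with respect to the factor of automorphy already computed in Claim \ref{defn:line_bundle}, and then invoke the standard Laurent expansion on $(\mb{C}^*)^2$.

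First, I would unpack the definition from Equation (\ref{eqn: mc{L}}): $\mc{L} = (\mb{C}^*)^2 \times \mb{C}/\Gamma_B$ where $\gamma \cdot (x,v) = (\gamma \cdot x,\, x^{\la(\gamma)}\tau^{\kappa(\gamma)} v)$, so that the natural projection $(\mb{C}^*)^2 \times \mb{C} \to \mc{L}$ is a $\Gamma_B$-cover. A global section $\sigma : V \to \mc{L}$ pulls back along $(\mb{C}^*)^2 \to V$ to a section of the trivial bundle $(\mb{C}^*)^2 \times \mb{C} \to (\mb{C}^*)^2$, which is the same data as a holomorphic function $s : (\mb{C}^*)^2 \to \mb{C}$.

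Second, I would extract the transformation law. Well-definedness of $\sigma$ on $V$ requires that $(x, s(x))$ and $(\gamma \cdot x, s(\gamma \cdot x))$ map to the same point of $\mc{L}$, i.e.\ lie in the same $\Gamma_B$-orbit of $(\mb{C}^*)^2 \times \mb{C}$. Applying the formula above to $(x,s(x))$ gives the equivalent point $(\gamma \cdot x,\, x^{\la(\gamma)}\tau^{\kappa(\gamma)} s(x))$, and comparing the fiber coordinates at $\gamma \cdot x$ yields exactly
\begin{equation}
s(\gamma\cdot x) \;=\; \tau^{\kappa(\gamma)}\, x^{\la(\gamma)}\, s(x),
\end{equation}
as claimed. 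Conversely any holomorphic $s$ satisfying this relation descends to a section of $\mc{L}$, so the correspondence is a bijection.

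Third, since $s$ is a holomorphic function on $(\mb{C}^*)^2$, the classical theory of several complex variables gives a Laurent series expansion $s(x_1,x_2) = \sum_{n \in \mb{Z}^2} c_n\, x^n$ converging on all of $(\mb{C}^*)^2$; this is the Fourier expansion asserted in the claim. The equivariance then imposes the recursion $c_n\,\tau^{-\gamma \cdot n} = \tau^{\kappa(\gamma)}\, c_{n - \la(\gamma)}$ obtained by matching coefficients of $x^n$ on the two sides, but for the present claim only the existence of the expansion is needed. The main care point is simply lining up the sign conventions in (\ref{eqn: mc{L}}) and the $\Gamma_B$-action $\gamma\cdot x = (\tau^{-\gamma_1}x_1,\tau^{-\gamma_2}x_2)$ from Theorem \ref{theorem: me}; the rest is direct bookkeeping using Claim \ref{defn:line_bundle}.
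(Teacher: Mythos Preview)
Your proof is correct and takes essentially the same approach as the paper: both argue that a section of $\mc{L}$ is a holomorphic function on the cover $(\mb{C}^*)^2$ transforming by the factor of automorphy from Claim \ref{defn:line_bundle}. The paper's proof is much terser (one line invoking Cartier data), whereas you spell out the pullback and well-definedness argument and the Laurent expansion explicitly; your added detail is fine and not a departure in method.
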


\begin{proof}
A section $s:V \to \mc{L}$ must have the same transition functions as the line bundle, by considering the Cartier data. 
$$s(\gamma \cdot x)/s(x) = \tau^{\kappa(\gamma)} x^{\la(\gamma)}$$

\end{proof}

\begin{cor}\label{theta_basis} Let $\mc{L}$ be the line bundle defined above in Claim \ref{defn:line_bundle}. Then using the notation from the proof of Lemma \ref{lem:fuk_subcat}, $H^0(V,\mc{L}^{\otimes l})$ has the following basis of sections:
\begingroup \allowdisplaybreaks \begin{equation}
    s_{e,l}:= \sum_\gamma \tau^{-l \kappa(\gamma + \frac{\gamma_{e,l}}{l})} x^{-l\la(\gamma) - \la(\gamma_{e,l})}
\end{equation} \endgroup
where $\gamma_{e,l} = {e_1}\gamma' + {e_2} \gamma'', \;\; 0 \leq e_1,e_2 < l$. So $h^0(\mc{L}^l) =l^2$, e.g.~$\mc{L}$ is a degree 1 line bundle.
\end{cor}

\begin{proof}
Tensoring the line bundle $l$ times means we multiply the transition function of $\mc{L}$ by $l$ times. In particular, the exponents now add in $\la$ and $\kappa$. So equivalently, we could scale the lattice $\Gamma_B$ to $l \Gamma_B$ and note that the quotient has $l^2$ lattice points. If we think of the parallelogram in $\Gamma_B$ of length $l$ in the $\gamma'$ and $\gamma''$ directions, then unique lattice points index the sections. So the functions in the statement of this Corollary are $l^2$ linearly independent sections with the same transition functions as $\mc{L}^{\otimes l}$.
\end{proof}

\subsection{The fully faithful functor}\label{section:fff_T4} Now we show that $D^b_{\mc{L}}Coh(V)\ni \mc{L}^{\otimes k} \mapsto \ell_k \in H^*Fuk(V^\vee)$ is a functor, by showing it respects composition on elements of a basis.

\begin{center}
\fbox{{\bf Basis on complex side}}    
\end{center}

Recall that $D^b_{\mc{L}} Coh(V)$ is defined in Theorem \ref{theorem: me} to be generated by powers of $\mc{L}$, and for $j>i$ $\Hom(\mc{L}^i, \mc{L}^j) \cong H^0(\mc{O}, \mc{L}^{j-i})$ (see \cite{zp} for the case of line bundles on an elliptic curve). Set $\tilde{l}:=j-i$, $\tilde{\tilde{l}}:=k-j$, and $l:=\tilde{l} + \tilde{\tilde{l}}=k-i$.  Recall that Corollary \ref{theta_basis} gives a basis of sections of $H^0(V,\mc{L}^{\otimes l})$:
$$s_{e,l}:= \sum_\gamma \tau^{-l \kappa(\gamma + \frac{\gamma_{e,l}}{l})} x^{-l\la(\gamma) - \la(\gamma_{e,l})}$$
where $\gamma_{e,l} = {e_1}\gamma' + {e_2} \gamma'', \;\; 0 \leq e_1,e_2 < l$.

\begin{center}
\fbox{\bf Basis on symplectic side}    
\end{center} 

On the symplectic side, we consider a basis of $\Hom_{V^\vee}(\ell_i,\ell_j) = \bigoplus_{p \in \ell_i \cap \ell_j} \mb{C}\cdot p$ given by the $(j-i)^2 = l^2$ intersection points of Equation \ref{eq:abel_symp_basis}:
\begin{equation}\label{def:symp_basis_fiber}
\begin{aligned}
    p_{e,l} := \left(\frac{\gamma_{e,l}}{l}, -k \la \left( \frac{\gamma_{e,l}}{l} \right) \right)
\end{aligned}
\end{equation}
where again $\gamma_{e,l} = {e_1}\gamma' + {e_2} \gamma'', \;\; 0 \leq e_1,e_2 < l$. These morphism groups have the same dimension as vector spaces. It remains to show there is a functor.

\begin{remark}This section is a bit notationally heavy so we collect the notations in this remark: 
\begin{compactitem}[\textbullet]
\item $\gamma'$ and $\gamma''$ form a basis for $\Gamma_B$
\item $e$ indexes the intersection points of two Lagrangians
\item One tilde corresponds to $j-i = \tilde{l}$, two tildes corresponds to $k-j=\tilde{\tilde{l}}$, and no tilde corresponds to the indexing lattice element once we've multiplied together and are considering $l=k-i$.
\item $s$ denotes sections and $p$ denotes intersection points
\end{compactitem}
\end{remark}

\begin{example} E.g.~for $i=0, j=1$, and $k=2$ we have $\mc{O}, \mc{L}, \mc{L}^2$ and $\ell_0, \ell_1, \ell_2$ with maps between objects and between homs as follows:
\begin{align*}
& hom(\mc{O},\mc{L}) = H^0(V,\mc{L}) \ni s_{1,1}  \mapsto p_{1,1}\in hom(\ell_0,\ell_1) = \bigoplus_{p \in \ell_0 \cap \ell_1} \mb{C}\cdot p=\mb{C}\cdot p_{1,1}\\
& hom(\mc{O},\mc{L}^2)=H^0(V,\mc{L}^2) \ni (s_{1,2},s_{2,2},s_{3,2},s_{4,2})  \mapsto (p_{1,2},p_{2,2},p_{3,2},p_{4,2}) \in \bigoplus_{p \in \ell_0 \cap \ell_2} \mb{C} \cdot p, \;\; |\ell_0 \cap \ell_2| = 4\\
& hom(\mc{L},\mc{L}^2)\cong hom(\mc{O}, \mc{L}^* \otimes \mc{L}^2) \cong hom(\mc{O}, \mc{L}) = H^0(V,\mc{L}) \ni s_{1,1}  \mapsto p_{1,1} \in hom(\ell_1,\ell_2) = \mb{C} \cdot p_{1,1}
\end{align*}
\end{example}

In particular we define the map to send units to units. The statement that this map respects composition is the following.

\begin{lemma}\label{functor_ok} The left vertical map of Theorem \ref{theorem: me}, $D^b_{\mc{L}}Coh(V)\ni \mc{L}^{\otimes k} \mapsto \ell_k \in H^*Fuk(V^\vee)$, respects composition so is a functor. Namely, for $s_{e,l}$ and $p_{e,l}$ bases defined as above:
\begin{equation}
    \begin{aligned}
    HMS(s_{e,l}) &=p_{e,l}\\
    HMS(s_{\tilde{\tilde{e}},\tilde{\tilde{l}}} \cdot s_{\tilde{e},\tilde{l}}) &=  HMS(s_{\tilde{\tilde{e}},\tilde{\tilde{l}}}) \cdot HMS(s_{\tilde{e},\tilde{l}})  =  p_{\tilde{\tilde{e}},\tilde{\tilde{l}}} \cdot p_{\tilde{e},\tilde{l}} \\
\iff s_{\tilde{\tilde{e}},\tilde{\tilde{l}}} \cdot s_{\tilde{e},\tilde{l}}&=\sum_{e \in \mb{Z}^2/l\mb{Z}^2} C_e \cdot s_{e,l}\\
        p_{\tilde{\tilde{e}},\tilde{\tilde{l}}} \cdot p_{\tilde{e},\tilde{l}} & = \sum_{e \in \mb{Z}^2/l\mb{Z}^2} C_e \cdot p_{e,l}
    \end{aligned}
\end{equation}
for the same $C_e$.
\end{lemma}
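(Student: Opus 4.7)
The plan is to verify the stated identity by computing the section product on the complex side in closed form and matching it with the triangle count of Lemma~\ref{lem:fuk_subcat} term-by-term. The underlying map is already specified on the basis elements, so the substantive claim is the equality of structure constants $C_e=\sum_{\gamma_A\in\Gamma_B}\tau^{-\frac{l}{\tilde{l}\tilde{\tilde{l}}}\kappa(\frac{\tilde{\tilde{l}}}{l}\gamma_{e,l}+\gamma_A)}$ on both sides in the chosen bases.

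First I would multiply the two theta-type series from Corollary~\ref{theta_basis}:
\begin{equation*}
s_{\tilde{\tilde{e}},\tilde{\tilde{l}}}\cdot s_{\tilde{e},\tilde{l}}=\sum_{\gamma_1,\gamma_2\in\Gamma_B}\tau^{-\tilde{l}\,\kappa(\gamma_1+\gamma_{\tilde{e},\tilde{l}}/\tilde{l})-\tilde{\tilde{l}}\,\kappa(\gamma_2+\gamma_{\tilde{\tilde{e}},\tilde{\tilde{l}}}/\tilde{\tilde{l}})}\,x^{-\la(\tilde{l}\gamma_1+\tilde{\tilde{l}}\gamma_2+\gamma_{\tilde{e},\tilde{l}}+\gamma_{\tilde{\tilde{e}},\tilde{\tilde{l}}})},
\end{equation*}
and group the $\Gamma_B^2$-sum by the value of $M:=\tilde{l}\gamma_1+\tilde{\tilde{l}}\gamma_2+\gamma_{\tilde{e},\tilde{l}}+\gamma_{\tilde{\tilde{e}},\tilde{\tilde{l}}}$. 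Writing $M=l\gamma+\gamma_{e,l}$ uniquely with $0\le e_1,e_2<l$ identifies each summand with a Fourier mode $x^{-l\la(\gamma)-\la(\gamma_{e,l})}$ of some $s_{e,l}$. For fixed $(e,\gamma)$ the contributing pairs $(\gamma_1,\gamma_2)$ form an affine coset of the rank-two kernel of the map $\Gamma_B^2\to\Gamma_B,\;(\gamma_1,\gamma_2)\mapsto\tilde{l}\gamma_1+\tilde{\tilde{l}}\gamma_2$, which I parametrize by a single variable $\gamma_A\in\Gamma_B$.

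To evaluate the coefficient $C_e$ I would then apply the completion-of-squares identity
\begin{equation*}
\tilde{l}\,\kappa(A)+\tilde{\tilde{l}}\,\kappa(B)=l\,\kappa\!\left(\tfrac{\tilde{l}A+\tilde{\tilde{l}}B}{l}\right)+\tfrac{\tilde{l}\tilde{\tilde{l}}}{l}\,\kappa(A-B),
\end{equation*}
which is immediate from the bilinear polarization of $\kappa(v)=-\tfrac{1}{2}\langle v,\la v\rangle$, taking $A=\gamma_1+\gamma_{\tilde{e},\tilde{l}}/\tilde{l}$ and $B=\gamma_2+\gamma_{\tilde{\tilde{e}},\tilde{\tilde{l}}}/\tilde{\tilde{l}}$. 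Since $\tilde{l}A+\tilde{\tilde{l}}B=M=l\gamma+\gamma_{e,l}$, the first term on the right becomes precisely the $\tau$-weight $l\,\kappa(\gamma+\gamma_{e,l}/l)$ attached to the $\gamma$-th Fourier mode of $s_{e,l}$, so it factors out and reassembles into $s_{e,l}$. The residual $\tfrac{\tilde{l}\tilde{\tilde{l}}}{l}\kappa(A-B)$ is rewritten via the affine relation $A-B=\tfrac{l}{\tilde{l}\tilde{\tilde{l}}}(\tfrac{\tilde{\tilde{l}}}{l}\gamma_{e,l}+\gamma_A)$ as $\tfrac{l}{\tilde{l}\tilde{\tilde{l}}}\kappa(\tfrac{\tilde{\tilde{l}}}{l}\gamma_{e,l}+\gamma_A)$, which, after summing over $\gamma_A$, reproduces exactly the triangle-count expression for $C_e$.

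The main obstacle is the lattice bookkeeping in the $\gamma_A$ reindexing. The constraint $\tilde{l}\gamma_1+\tilde{\tilde{l}}\gamma_2=l\gamma+\gamma_{e,l}-\gamma_{\tilde{e},\tilde{l}}-\gamma_{\tilde{\tilde{e}},\tilde{\tilde{l}}}$ is solvable only when the right-hand side lies in the image sublattice $\tilde{l}\Gamma_B+\tilde{\tilde{l}}\Gamma_B=\gcd(\tilde{l},\tilde{\tilde{l}})\Gamma_B$, and when it is, the solution set is a coset of a rank-two sublattice which must be matched bijectively with $\Gamma_B$ via $\gamma_A$ with the correct affine shift and no overcounting. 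I must verify that this sublattice identification reproduces the triangle-count indexing without a spurious $\gcd$-type factor, and that the residue restriction on the allowed values of $e$ agrees on both sides. Once these identifications are pinned down, the two expressions for $C_e$ coincide term-by-term and the assignment defines a functor on $D^b_{\mc{L}}Coh(V)\hookrightarrow H^0Fuk(V^\vee)$.
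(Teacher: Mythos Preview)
Your proposal is correct and follows essentially the same route as the paper. Your completion-of-squares identity $\tilde{l}\,\kappa(A)+\tilde{\tilde{l}}\,\kappa(B)=l\,\kappa\bigl(\tfrac{\tilde{l}A+\tilde{\tilde{l}}B}{l}\bigr)+\tfrac{\tilde{l}\tilde{\tilde{l}}}{l}\,\kappa(A-B)$ is precisely the paper's $(u,v)$ change of variables (Equation~\eqref{eq:main_transformn}) with $u=\tfrac{\tilde{l}A+\tilde{\tilde{l}}B}{l}$ and $lv=A-B$, and your affine relation for $A-B$ matches the paper's derivation of $\gamma_A$ exactly. On the lattice bookkeeping you flag as the main obstacle, the paper addresses it by writing down the explicit formula $\gamma_A=\tilde{l}(\tilde{\gamma}-\gamma)+\gamma_{\tilde{e},\tilde{l}}-\gamma_{e,l}$ and checking directly that it lands in $\Gamma_B$; it does not dwell on the $\gcd(\tilde{l},\tilde{\tilde{l}})$ issue you raise, so your proposal is if anything more careful on that point.
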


\begin{proof} First note that $C_e$ was computed above in the count of triangles in Equation (\ref{eqn:compn_triangles}). On the other hand, multiplying the theta functions gives
\begin{equation}
\begin{aligned}
s_{\tilde{\tilde{e}},\tilde{\tilde{l}}} \cdot s_{\tilde{e},\tilde{l}} & =  \sum_{\tilde{\gamma},\tilde{\tilde{\gamma}}} \tau^{-\tilde{l} \kappa(\tilde{\gamma} + \frac{\gamma_{\tilde{e},\tilde{l}}}{\tilde{l}}) - \tilde{\tilde{l}}\kappa(\tilde{\tilde{\gamma}} + \frac{\gamma_{\tilde{\tilde{e}},\tilde{\tilde{l}}}}{\tilde{\tilde{l}}})}x^{-\la(\tilde{l}\tilde{\gamma} + \gamma_{\tilde{e},\tilde{l}} + \tilde{\tilde{l}}\tilde{\tilde{\gamma}} + \gamma_{\tilde{\tilde{e}},\tilde{\tilde{l}}})}
\end{aligned}\label{eq:compare_multn_counts}
\end{equation}

We want to find new variables $(\gamma,\gamma_A)$ to sum over so that we can factor out the bases $s_{e,l}$. In particular, $-\la(l\gamma+ \gamma_{e,l})$ must be the exponent on $x$. So we want this product to equal $\sum_e C_e \sum_\gamma \tau^{-l\kappa(\gamma + \frac{\gamma_{e,l}}{l})} x^{-\la(l\gamma + \gamma_{e,l})} =\sum_e C_e s_{e,l}$. The other factor that sums over $\gamma_A$ will arise from counting triangles on the A-side, hence the subscript $A$. Define
\begin{equation}\label{eq:new_var1}
    \begin{aligned}
        l\gamma + \gamma_{e,l}& :=  \tilde{l}\tilde{\gamma} + \gamma_{\tilde{e},\tilde{l}} + \tilde{\tilde{l}}\tilde{\tilde{\gamma}} + \gamma_{\tilde{\tilde{e}},\tilde{\tilde{l}}}
    \end{aligned}
\end{equation}
If we sum over $\gamma$ and $1 \leq e \leq l^2$, we will obtain some of the lattice $\Gamma_B\times \Gamma_B \ni (\tilde{\gamma},\tilde{\tilde{\gamma}})$. Given $\gamma$ and $e$, there are multiple corresponding solutions in $(\tilde{\gamma}, \tilde{\tilde{\gamma}})$. We need another variable. We do a weighted version of the change of coordinates $(u,v) \mapsto ((u+v)/2, (u-v)/2)$. Namely let $u:= \gamma + \frac{\gamma_{e,l}}{l} = \frac{1}{l}(\tilde{l}\tilde{\gamma} + \gamma_{\tilde{e},\tilde{l}} + \tilde{\tilde{l}}\tilde{\tilde{\gamma}} + \gamma_{\tilde{\tilde{e}},\tilde{\tilde{l}}})$. We want to find $v$ such that 
\begin{equation}
\begin{aligned}
    u+c_1v & = \tilde{\gamma} +\frac{\gamma_{\tilde{e},\tilde{l}}}{\tilde{l}}\\
    u-c_2v &= \tilde{\tilde{\gamma}}+\frac{\gamma_{\tilde{\tilde{e}}, \tilde{\tilde{l}}}}{\tilde{\tilde{l}}}
    \end{aligned}
\end{equation}
for some constant $c_1$ and $c_2$ such that the $v$ terms cancel when we multiply the first equation by $\tilde{l}$ and add it to the second equation multiplied by $\tilde{\tilde{l}}$. In other words, $c_1\tilde{l} - c_2 \tilde{\tilde{l}}=0$. So take $c_1=\tilde{\tilde{l}}$ and $c_2 = \tilde{l}$. Then we can simplify the exponent on $\tau$ in Equation \ref{eq:compare_multn_counts}:
\begin{equation}\label{eq:main_transformn}
    \begin{aligned}
        \tilde{l}\kappa(u+\tilde{\tilde{l}}v) + \tilde{\tilde{l}} \kappa(u-\tilde{l}v) & = l \kappa (u) + \tilde{l}\tilde{\tilde{l}} \cdot l \kappa(v)
    \end{aligned}
\end{equation}
since $lv = \tilde{\gamma} +\frac{\gamma_{\tilde{e},\tilde{l}}}{\tilde{l}} - \tilde{\tilde{\gamma}}-\frac{\gamma_{\tilde{\tilde{e}}, \tilde{\tilde{l}}}}{\tilde{\tilde{l}}}$. Thus we now can factor out $l\kappa(u)$ as needed to obtain $s_{e,l}$ when summing over $\gamma$. On the other hand, recall from Equation (\ref{eqn:compn_triangles}) that
$$p_{\tilde{e},\tilde{l}} \cdot p_{\tilde{\tilde{e}}, \tilde{\tilde{l}}} = \sum_ e \sum_{\gamma_A \in \Gamma_B} \tau^{- \frac{l}{\tilde{l}\tilde{\tilde{l}}}\cdot  \kappa\left(\frac{\tilde{\tilde{l}}}{l}\gamma_{e,l} + \gamma_A\right)}\cdot p_{e,l}$$
That is $C_e = \sum_{\gamma_A \in \Gamma_B} \tau^{- \frac{l}{\tilde{l}\tilde{\tilde{l}}}\cdot  \kappa\left(\frac{\tilde{\tilde{l}}}{l}\gamma_{e,l} + \gamma_A\right)}$. So in order for the functor to respect composition, we would like this to be the coefficient on the $s_{e,l}$ as well. Comparing exponents on $\tau$ implies:
\begin{equation}\label{eqn:rotate_coords_functor}
    \frac{l}{\tilde{l}\tilde{\tilde{l}}}\cdot  \kappa\left(\frac{\tilde{\tilde{l}}}{l}\gamma_{e,l} + \gamma_A\right)= \tilde{l}\tilde{\tilde{l}} \cdot l \kappa(v)
\end{equation}
In other words, multiplying by $\tilde{l}\tilde{\tilde{l}}$ and equating the arguments of $l\kappa$:
\begin{equation}
        \frac{\tilde{\tilde{l}}}{l}\gamma_{e,l} + \gamma_A = \tilde{l}\tilde{\tilde{l}} \cdot \frac{1}{l}\left(\tilde{\gamma} + \frac{\gamma_{\tilde{e},\tilde{l}}}{\tilde{l}} - \tilde{\tilde{\gamma}} - \frac{\gamma_{\tilde{\tilde{e}}, \tilde{\tilde{l}}}}{\tilde{\tilde{l}}} \right) \iff \gamma_A = \tilde{l}\tilde{\tilde{l}} \cdot \frac{1}{l}\left(\tilde{\gamma} + \frac{\gamma_{\tilde{e},\tilde{l}}}{\tilde{l}} - \tilde{\tilde{\gamma}} - \frac{\gamma_{\tilde{\tilde{e}}, \tilde{\tilde{l}}}}{\tilde{\tilde{l}}} \right) -  \frac{\tilde{\tilde{l}}}{l}\gamma_{e,l} 
\end{equation}

Recall that $l\gamma + \gamma_{e,l} =\tilde{l}\tilde{\gamma} + \gamma_{\tilde{e},\tilde{l}} + \tilde{\tilde{l}}\tilde{\tilde{\gamma}} + \gamma_{\tilde{\tilde{e}},\tilde{\tilde{l}}} $. Thus simplifying we find that:
\begin{equation}
    \begin{aligned}
        \gamma_A & = \frac{\tilde{l}\tilde{\tilde{l}}}{l}\left(\tilde{\gamma} + \frac{\gamma_{\tilde{e},\tilde{l}}}{\tilde{l}}  - \frac{1}{\tilde{\tilde{l}}} \left(l \gamma + \gamma_{e,l} - \tilde{l}\tilde{\gamma} - \gamma_{\tilde{e}, \tilde{l}} \right) \right)-\frac{\tilde{\tilde{l}}}{l}\gamma_{e,l} \\
        & = \frac{\tilde{l}\tilde{\tilde{l}}}{l}\left(\tilde{\gamma} (1+\tilde{l}/\tilde{\tilde{l}}) + \gamma_{\tilde{e},\tilde{l}} (1/\tilde{l} + 1/\tilde{\tilde{l}})-(l/\tilde{\tilde{l}})\gamma \right) -\gamma_{e,l}(\tilde{l}+\tilde{\tilde{l}})/l\\
        &=\tilde{l}(\tilde{\gamma}-\gamma) + \gamma_{\tilde{e},\tilde{l}} -\gamma_{e,l} \in \Gamma_B
    \end{aligned}
\end{equation}
so $\gamma_A \in \Gamma_B$ as we would like. Hence using Equation (\ref{eqn:rotate_coords_functor}):

\begin{equation}\label{eq:theta_prod}
\begin{aligned}
s_{\tilde{e},\tilde{l}} \cdot s_{\tilde{\tilde{e}},\tilde{\tilde{l}}} & = \sum_{\tilde{\gamma},\tilde{\tilde{\gamma}}} \tau^{-\tilde{l} \kappa(\tilde{\gamma} + \frac{\gamma_{\tilde{e},\tilde{l}}}{\tilde{l}}) - \tilde{\tilde{l}}\kappa(\tilde{\tilde{\gamma}} + \frac{\gamma_{\tilde{\tilde{e}},\tilde{\tilde{l}}}}{\tilde{\tilde{l}}})}x^{-\la(\tilde{l}\tilde{\gamma} + \gamma_{\tilde{e},\tilde{l}} + \tilde{\tilde{l}}\tilde{\tilde{\gamma}} + \gamma_{\tilde{\tilde{e}},\tilde{\tilde{l}}})}\\
& =  \sum_e\sum_{\gamma_A} \tau^{-\frac{l}{\tilde{l}\tilde{\tilde{l}}} \kappa\left( \frac{\tilde{\tilde{l}}}{l}\gamma_{e,l} + \gamma_A\right)} \sum_{\gamma} \tau^{-l \kappa({\gamma} + \frac{\gamma_{e,l}}{l}) }x^{-\la(l{\gamma} + \gamma_{e,l}) }\\
& =\sum_e \left( \sum_{\gamma_A \in \Gamma_B} \tau^{-\frac{l}{\tilde{l}\tilde{\tilde{l}}} \kappa\left( \frac{\tilde{\tilde{l}}}{l}\gamma_{e,l} + \gamma_A\right)}\right) s_{e,l}
\end{aligned}
\end{equation}

So we see that the two coefficients on the basis elements agree between multiplication of sections and of intersection points, hence composition is respected, and we do indeed have a functor.
\end{proof}

This completes the proof of the left vertical arrow in the main Theorem \ref{theorem: me}. We proceed to the proof that the right vertical arrow is a fully-faithful embedding. First we need to define the symplectic fibration $(Y,v_0)$.

\section{Construction of symplectic fibration on $(Y,v_0)$}\label{section: Y_Aside}\label{section: toric_recap}

The definition of the symplectic form on the genus 2 SYZ mirror $(Y,v_0)$ arises from standard algebro-geometric results on toric varieties. The subsection \cite[\textsection 3.1]{liu_hms_toric} provides a concise summary ($M$ encodes the algebra and $N$ encodes the geometry). Other references include \cite[p 59, p128]{cls}, \cite[Chapter 1]{fulton}, and \cite{Gu94}. We have elements of $M := \Hom_\mb{Z}(N, \mb{Z})$ called \emph{weight vectors} which exponentiate to functions on the toric variety, called \emph{toric monomials} or \emph{characters}. Given a polytope $\Delta$, we can construct a dual fan which prescribes the charts and transition functions defining the toric variety (\cite[p 76]{cls}). We use the symplectic form obtained on $\mb{CP}^2$ in the following manner for our later calculation defining a symplectic fibration on $(Y,v_0)$. 

\begin{lemma}[Definition of toric symplectic form, {\cite{cls}[Proposition 4.3.3]},{\cite{cls}[Proposition 6.1.1]}, and {\cite{huy}[Example 4.1.2]}]\label{toric_metric}  Let polytope $\Delta:=\{m \in M_\mb{R} \mid \left<m, u_i\right> \geq -a_i\}\subseteq M_\mb{R}$. Then this defines basepoint-free line bundle $\mc{O}(D_\Delta)$ where $D_\Delta=\sum_{F \mbox{\tiny{ facet}}} a_F D_F$ is a weighted sum of the toric divisors. Since its sections are $\Gamma(Y_\Delta,\mc{O}(D_\Delta)) = \bigoplus_{m \in \Delta \cap M}  \mb{C} \cdot \chi^m$, we can define a {K\"ahler form} by
$$\omega_\Delta:= \frac{i}{2\pi} \ol{\dd}\dd h, \qquad h = \frac{1}{\sum_{i=1}^s |\chi^{m_i}|^2}$$
where $|\cdot|$ refers to the standard norm in $\mb{C}$ and $s=|\Delta \cap M|$.
\end{lemma}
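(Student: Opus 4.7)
The plan is to verify the statement in three stages: first the polytope-to-divisor correspondence and section count, then basepoint-freeness, and finally the K\"ahler property of $\omega_\Delta$. For the first stage I would invoke the standard dictionary in toric geometry: a polytope $\Delta = \{m \in M_\mb{R} : \left<m, u_i\right> \geq -a_i\}$ defines the torus-invariant Cartier divisor $D_\Delta = \sum_F a_F D_F$, whose associated line bundle has global sections spanned by the characters $\chi^m$ with $m \in \Delta \cap M$. This is \cite[Proposition 4.3.3]{cls}. Basepoint-freeness then follows because on each torus-fixed affine chart corresponding to a vertex $v$ of $\Delta$, the character $\chi^v$ is a nonvanishing local trivialization, so collectively the lattice-point sections have no common zero (equivalently, $\Delta$ being a lattice polytope produces enough monomials to cover $Y_\Delta$).

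Next, I would construct the K\"ahler form by a Fubini-Study pullback argument. Basepoint-freeness yields a morphism $\phi_\Delta : Y_\Delta \to \mb{CP}^{s-1}$ defined by $y \mapsto [\chi^{m_1}(y) : \cdots : \chi^{m_s}(y)]$. The potential $h = 1/\sum_i |\chi^{m_i}|^2$ should, up to the usual logarithm, match a local K\"ahler potential on $\mb{CP}^{s-1}$ pulled back along $\phi_\Delta$, so that $\omega_\Delta$ agrees, up to a positive scalar, with the pullback of the Fubini-Study form. Closedness is then automatic since $\omega_\Delta$ is globally $\dd\ol{\dd}$-exact.

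The main obstacle I anticipate is positivity, because of the somewhat atypical form of $h$: the more standard choice is $\log \sum_i |\chi^{m_i}|^2$, which reproduces Fubini-Study on the nose. To confirm the stated formula also yields a positive definite $(1,1)$-form, I would either (i) reinterpret $h^{-1} = \sum_i |\chi^{m_i}|^2$ as the pullback of a Hermitian fibre metric on the dual of the hyperplane bundle and invoke standard curvature positivity of the Chern connection, using that $\phi_\Delta$ is an immersion on the open torus orbit of $Y_\Delta$, or (ii) compute $\ol{\dd}\dd h$ directly in the action-angle coordinates $\xi_i = \log_\tau |\chi^{m_i}|$ (as in Claim \ref{claim:action_angle_fiber}) and check that the resulting real Hessian matrix is positive definite on the smooth locus. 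This last verification reduces the geometric claim to a linear-algebraic positivity statement at each point of the moment polytope, and once established, combining it with closedness gives that $\omega_\Delta$ is K\"ahler, which is the form of the lemma we need for subsequent constructions of the symplectic form on $(Y,v_0)$.
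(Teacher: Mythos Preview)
The paper does not give a proof of this lemma; it is stated with citations to \cite{cls} and \cite{huy} and immediately followed by the $\mb{CP}^2$ example. Your outline is exactly the standard argument one finds in those references: polytope $\leadsto$ divisor, lattice points $\leadsto$ sections, basepoint-freeness $\leadsto$ map to projective space, and then pull back Fubini--Study. So as an approach you are aligned with what the paper intends.

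However, your instinct that the potential $h = 1/\sum_i |\chi^{m_i}|^2$ is ``atypical'' is more than a stylistic observation: the formula as literally written does \emph{not} produce a K\"ahler form. Already on $\mb{CP}^1$ with $h = 1/(1+|z|^2)$ one computes
\[
\ol{\dd}\dd h \;=\; \frac{|z|^2 - 1}{(1+|z|^2)^3}\, d\ol{z}\wedge dz,
\]
which changes sign at $|z|=1$. Your proposed workarounds (curvature of the dual metric, or a direct Hessian check) will therefore fail for this $h$, and the issue is not something you can repair by a more careful positivity argument. The intended potential is $\log \sum_i |\chi^{m_i}|^2$, and indeed the paper's own $\mb{CP}^2$ example immediately below the lemma uses exactly that: $\omega = \frac{i}{2\pi}\ol{\dd}\dd \log(1 + |z_1/z_0|^2 + |z_2/z_0|^2)$. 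Likewise the later constructions in Section~\ref{section:the_symp_form} all use logarithmic potentials $g_{xy} = \log(\cdots)$. So you should read the displayed $h$ as a typo for its logarithm; with that correction your Fubini--Study pullback argument goes through verbatim and matches the cited references.
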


% \begin{remark} $\mc{O}(D_P)$ is an ample line bundle from combinatorics of the polytope (\cite{cls}), which implies that the toric variety $Y_P$ can be embedded into projective space via sections of a sufficiently high power of $\mc{O}(D_P)$. The idea is that some multiple $kP$ of the polytope is normal (Chapter 2) which implies the polytope is ample (definition in Chapter 2) which is seen later to be equivalent to the line bundle being ample (Chapter 6). A very ample polytope intuitively has enough lattice points, corresponding to there being enough sections to define an embedding.
% %Will I need anywhere that it's projective? maybe some orlov $d^b$coh results. plus I quotient by a lattice, will it still be projective? there are some results about periods for when quotienting something is still projective.
% \end{remark}

\begin{example}[Complex $\mb{CP}^2$] \label{ex:cp2fan} The fan $\Sigma$ has three cones from three rays $\rho_1=\mb{R}_+f_1,\rho_2=\mb{R}_+f_2, \rho_3=\mb{R}_+ \cdot (-f_1-f_2)$ as in the left diagram of Figure \ref{cp2}, and dual cones $\sigma_1^\vee =span(e_1,e_2)$, $\sigma_2^\vee  = span(-e_1, -e_1+e_2)$, $\sigma_3^\vee  = span(-e_2, e_1-e_2)$ depicted in color on the right for $e_1, e_2$ standard basis vectors on $M$ and $f_1,f_2$ standard basis vectors on $N$. Each chart is a copy of $\mb{C}^2$. The choice of generator in each case gives a complex coordinate on the chart, e.g.~$U_{\sigma_1}  = \Spec \mb{C}[\chi^{1,0}, \chi^{0,1}] \cong \mb{C}^2$. If $\tau = \sigma_1 \cap \sigma_2$, then we have a coordinate change from inverting $\chi^{1,0}$ since $\mb{C}[S_\tau] = \mb{C}[\chi^{\pm (1,0)}, \chi^{(0,1)}] =  \mb{C}[\chi^{\pm (1,0)}, \chi^{(-1,1)}]$. (Each choice of coordinates on $U_\tau$ identifies it with $\mb{C}^* \times \mb{C}$ included as the identity map into $\mb{C}^2$ in the two charts $U_{\sigma_i}$.) This coordinate change is the transition map.
$$g_{12}(\chi^{(1,0)}, \chi^{(0,1)}) := \left( \frac{1}{\chi^{(1,0)}}, \frac{\chi^{(0,1)}}{\chi^{(1,0)}}\right)$$
This recovers how we think about $\mb{CP}^2\ni [z_0:z_1:z_2]$ with $\chi^{(1,0)} =z_1/z_0$ and $\chi^{(0,1)}= z_2/z_0$. 
\begin{figure}[h]
\includegraphics[scale=0.2]{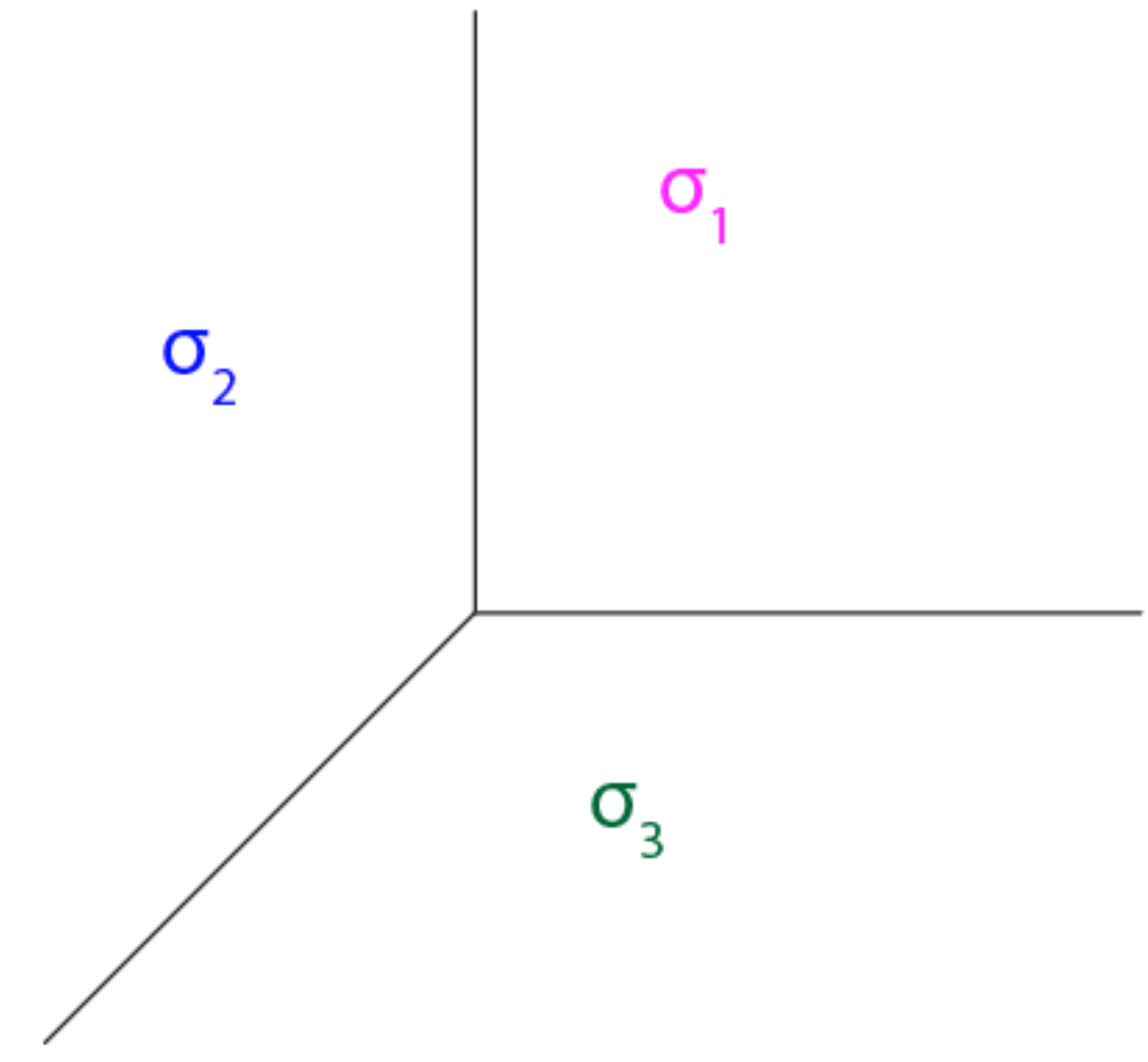}\hspace{1in} \includegraphics[scale=0.18]{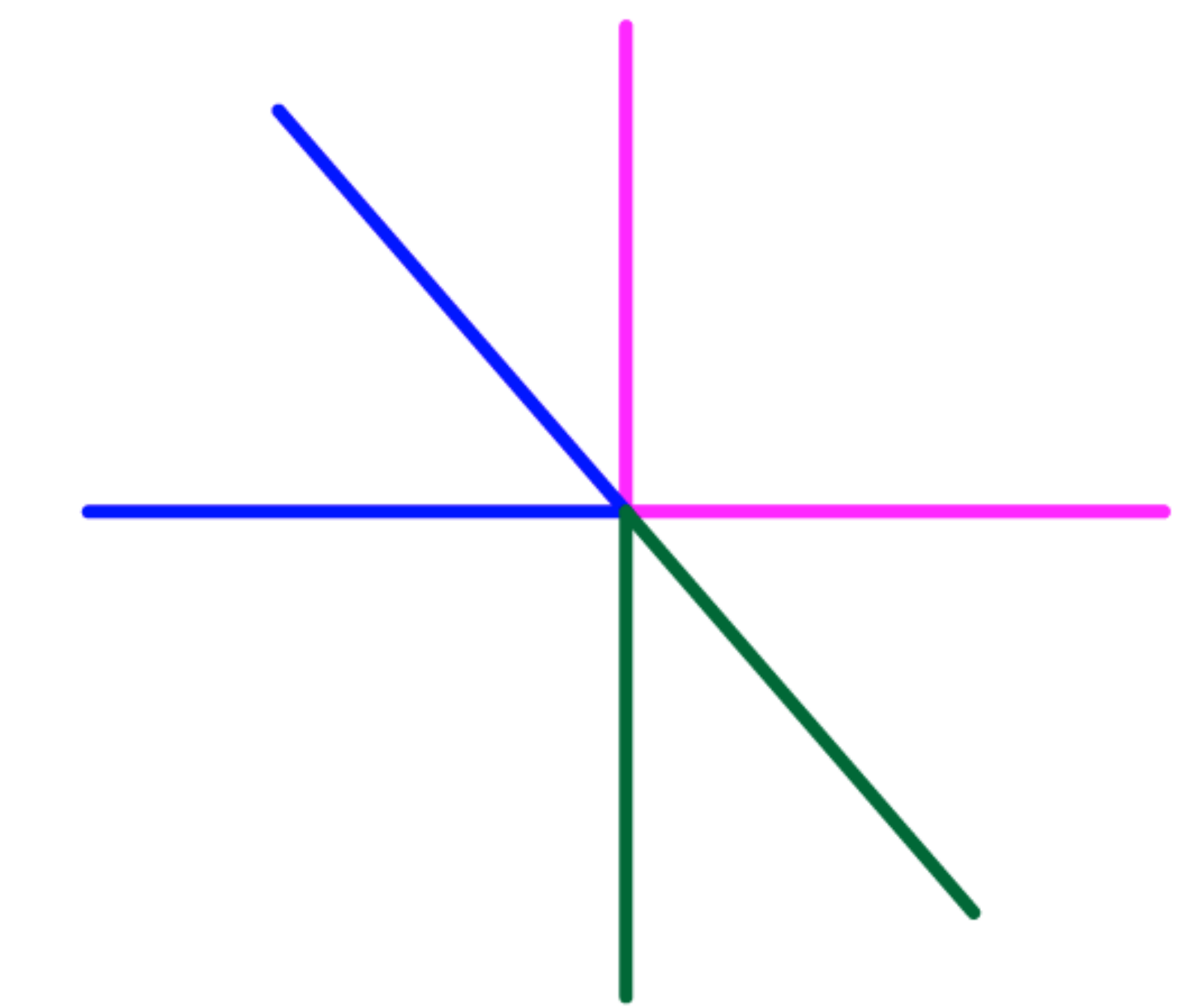}
\caption{$\mb{CP}^2$ example}
\label{cp2}
\end{figure}

Dually, the toric polytope for $\mb{CP}^2$ is a triangle, suppose delineated by the two coordinate axes (say $m_1=0,m_2=0$) and $m_1+m_2 = -1$. The line bundle described by this polytope is $\mc{O}(\{z_0=0\})$ with sections given by the integral vertices of the polytope $\chi^{(0,0)}=1, \chi^{(-1,0)}, \chi^{(0,-1)}$. Thus $\omega = \frac{i}{2\pi} \ol{\dd}\dd \log( 1 + |z_1/z_0|^2 + |z_2/z_0|^2)$, which recovers the Fubini-Study form.
\end{example}

%It is strictly convex when the fan comes from a polytope. In particular, strict convexity ensures the line bundle $\mc{O}(D)$ is very ample.
%
%We define 
%\begingroup \allowdisplaybreaks \begin{equation}
%\begin{aligned}
%\psi: |\Sigma| & \to \mb{R} \\
%n & \mapsto n(\sigma) :=\left<u_\sigma, n\right>, \qquad n \in \sigma=\mbox{Cone}(u_F \mid \mbox{$F$ facet containing fixed face $Q$})
%\end{aligned}
%\end{equation} \endgroup
%linear on each cone, where $u_{\sigma} \in M$ is such that $\left<u_{\sigma_i}, n\right> +a_i \geq 0$ defines the polytope. Take the minimum over $u_\sigma$.
%$$D = -\sum_\rho \psi(v_\rho)D_\rho = \sum_i a_i D_{\rho}$$

\subsection{Finding a Lagrangian torus fibration}

We would like to construct an SYZ mirror to $H=\Sigma_2$ the genus 2 curve. The required input to do this is a special Lagrangian torus fibration on $H$.  Finding a special Lagrangian torus fibration is a hard problem. Guadagni's thesis \cite{roberta} finds Lagrangian torus fibrations on central fibers of toric degenerations, which will be the setting of the mirror $Y$ in our case. However there is not an obvious Lagrangian torus fibration on, or toric degeneration to, $\Sigma_2$. 

What one can do, as in Abouzaid-Auroux-Katzarkov \cite{AAK}, is embed $\Sigma_2$ in an abelian variety. We then take the trivial fibration over $\mb{C}$ with the abelian variety as a fiber, and blow-up the copy of $H$ over 0. The resulting fibration has $H$ as a critical locus in the central fiber. It also admits another fibration which is a Lagrangian torus fibration, by taking the moment map before the blow-up (a \emph{toric} Lagrangian torus fibration), and then keeping track of the blow-up in the base as in \cite{4from2} to obtain a \emph{non-toric} Lagrangian torus fibration. Note that \cite{AAK} did this process for hypersurfaces of toric varieties, and Seidel speculated that this could be done on hypersurfaces of abelian varieties \cite{seidel_gen2_specul}, which we do here. 

The SYZ construction \cite{syz} produces a candidate mirror complex manifold by prescribing dual fibers over the same base. The points of a dual fiber are parametrized by unitary flat connections on the trivial line bundle on the original fiber. This process is also discussed in \cite{t_duality}. The SYZ construction inverts the radius of each $S^1$ on a torus fiber, known as T-duality, and passes between the A- and B-models. In particular on a Calabi-Yau 3-fold as in \cite{syz}, SYZ mirror symmetry is $T$-duality three times.

{\bf Toric Lagrangian torus fibration.} A toric variety with its corresponding symplectic form as in Corollary \ref{toric_metric}, has a natural Hamiltonian $T^n$ action given by rotation on the dense $(\mb{C}^*)^n$, which extends to the full toric variety. Here is an example with $\mb{CP}^2$.

\begin{example}[Symplectic $\mb{CP}^2$]\label{ex:cp2}
Consider the complex projective plane with the Fubini-Study form: $(\mb{CP}^2, \omega_{FS} = \frac{i}{2\pi} \dd \ol{\dd} \log(\sum_{i=0}^2 |z_i|^2))$ where points are denoted $[z_0:z_1:z_2]$. There is a well-defined Hamiltonian $T^2$-action where $
(\theta_1,\theta_2) \in \mb{R}^2/ \mb{Z}^2 = T^2$ acts on $\mb{CP}^2$ by rotation:
$\rho(\alpha_1,\alpha_2)[z_0:z_1:z_2]  = [z_0:e^{2\pi i\alpha_1}z_1 : e^{2 \pi i \alpha_2} z_2]$. This is Hamiltonian with Hamiltonian functions defining the moment map coordinates $\mu_i$. For local $\mb{CP}^2$ coordinates $(x_1,x_2)$ we can define $\theta_i:=\arg(x_i)$ and the infinitesimal rotation action is:
\begingroup \allowdisplaybreaks \begin{equation}
\begin{aligned}
X_j &: = d \rho\left( \frac{\dd}{\dd \alpha_j} \right) = 2\pi iz_j \frac{\dd}{\dd z_j}\\
\implies \iota_{X_i} \omega_{FS} & = d \mu_i \text{ where } \mu_i := -\frac{|z_i|^2}{|z_0|^2 + |z_1|^2 + |z_2|^2}\\
\implies \omega_{FS}  &= d \mu_1 \wedge d \theta_1 + d \mu_2 \wedge d \theta_2
\end{aligned}
\end{equation} \endgroup
The last line is true more generally, that in action-angle coordinates $\omega = d \mu \wedge d\theta$, e.g.~see \cite[Theorem 1.3.4]{da_silva_toric_notes} and set $f_i=\mu_i$. The contraction with the vector field rotating coordinates gives $d\mu_i$ more generally. Thus the moment map here $\mu:= (\mu_1,\mu_2): \mb{CP}^2 \to \mb{R}^2 \cong (Lie(T^2), [,]=0)$ is given by 
$$\mu = \displaystyle{\left(-\frac{|z_1|^2}{|z_0|^2 + |z_1|^2 + |z_2|^2},-\frac{|z_2|^2}{|z_0|^2 + |z_1|^2 + |z_2|^2} \right)}$$
Its image can be seen in Figure \ref{fig:cp2_polytope}, where the diagonal edge follows from adding $\mu_1+\mu_2$ and allowing the coordinates to vary:

\begin{figure}[h]
\begin{center}
\includegraphics[scale=0.5]{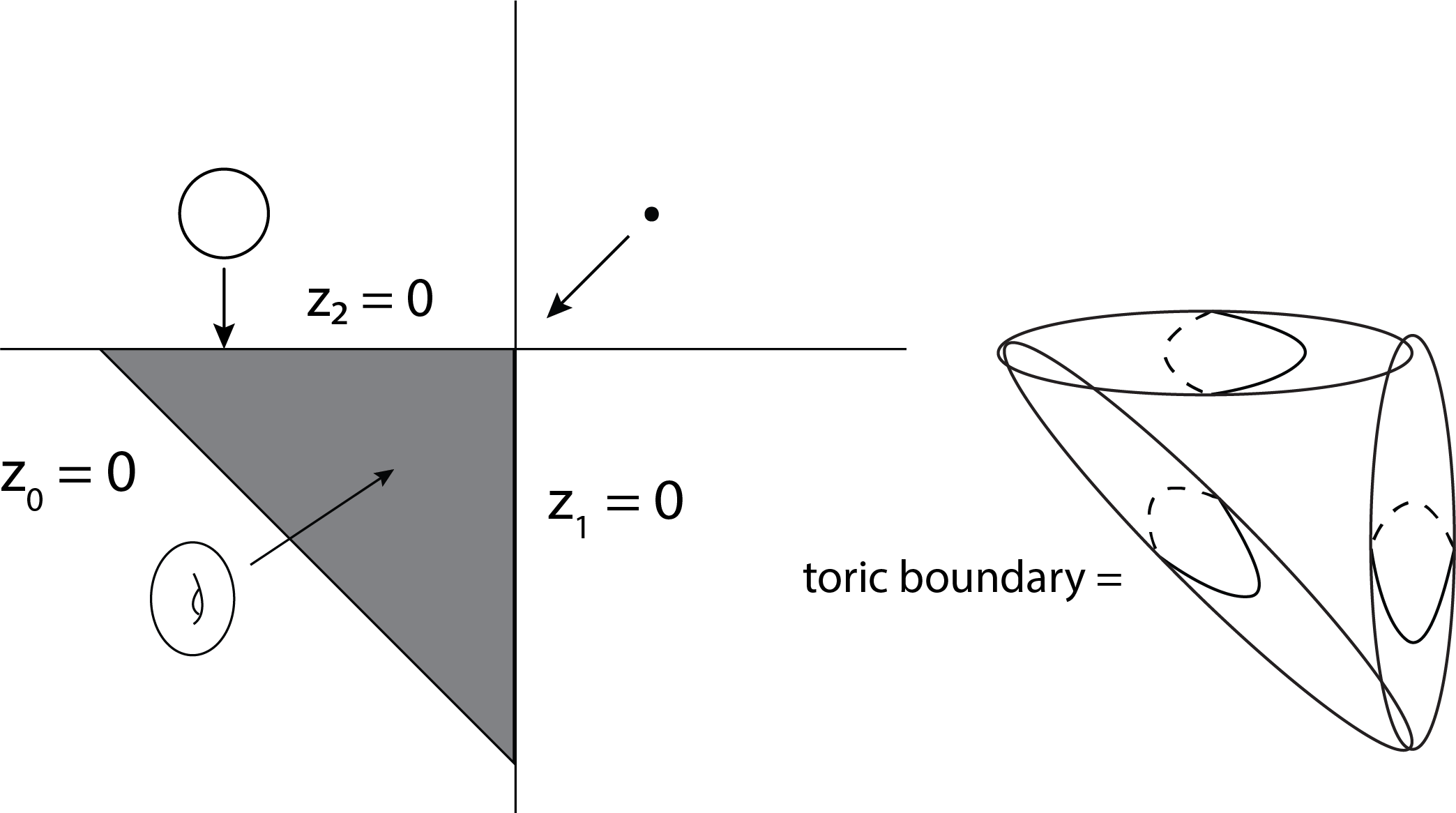}

\caption{Moment map gives Lagrangian torus fibration: $\mb{CP}^2$ example}
\label{fig:cp2_polytope}
\end{center}
\end{figure}

The moment map gives a Lagrangian torus fibration because the moment map is a function of the norms, as is the K\"ahler potential. Thus $\omega_{FS}|_{\mu^{-1}(pt)}=0$. We can read off the geometry of the fibration from Figure \ref{fig:cp2_polytope}. When both local complex coordinates are non-zero, the preimage is $T^2$ by rotating under the $T^2$-action. When one coordinate goes to zero, we only have the other coordinate to rotate, so the fiber is an $S^1$. And when both coordinates are zero in each of the three local $\mb{C}^2$ charts we obtain a point.
\end{example}

The above Lagrangian torus fibration arose from a moment map, so is called \emph{toric}. If a Lagrangian torus fibration is not from a moment map, it's called \emph{non-toric}. Note that $\mb{CP}^2$ blown up at $[1:0:0]$ corresponds to removing a small triangle on the base, which gives a quadrilateral. This can still be the base of a Lagrangian torus fibration by taking $T^2$'s above interior points, $S^1$ on the edges and points at the vertices. Note that we get $\mb{CP}^1 \times \mb{CP}^1$ which is again toric. However, we could have blown up at a point interior in the toric divisor, such as $[1:1:0]$. The result cuts a triangle out of the base again, but we introduce monodromy around the top vertex by gluing via the Dehn twist. See Figure \ref{fig:blowups_base} and also \cite[Fig 2]{AAK}.

\begin{figure}[h]
    \centering
    \includegraphics{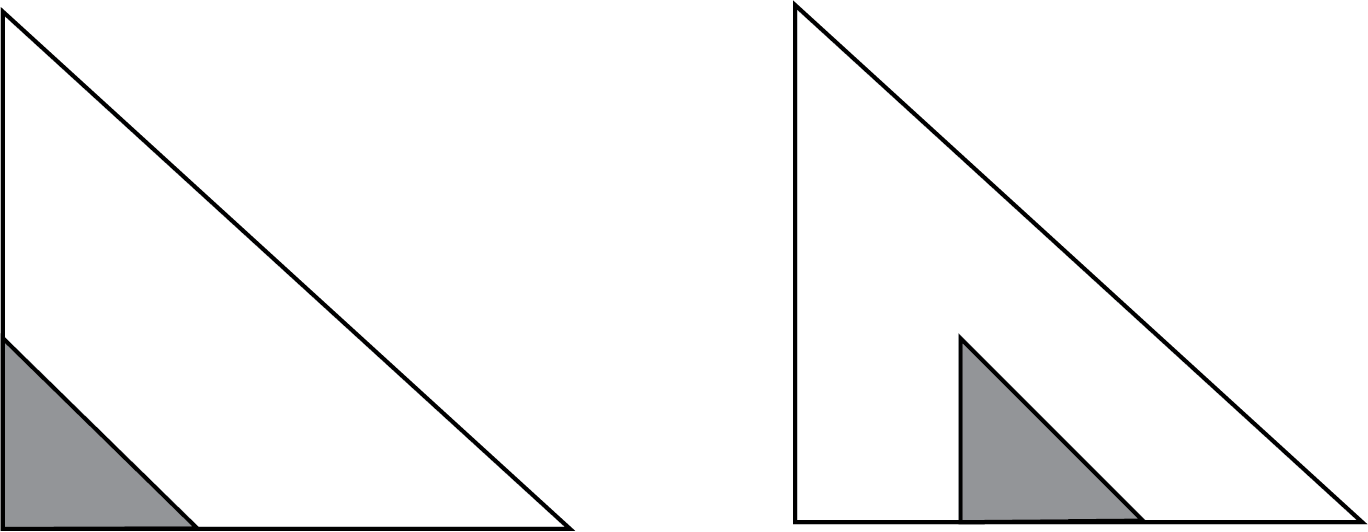}
    \caption{Base of exterior and interior blow-up on $\mb{CP}^2$}
    \label{fig:blowups_base}
\end{figure}

These describe Lagrangian torus fibrations over a base with a symplectic affine structure. Taking the Legendre transform, we obtain a Lagrangian torus fibration over a base with a complex affine structure, namely $\log |\cdot|$. Indeed SYZ sends the symplectic affine structure to the complex affine structure on the mirror base. In other words, the torus fibration on the mirror is $\log |\cdot |$ over the interior of the base polytope.

\begin{theorem}[Construction of Lagrangian torus fibration from {\cite[\textsection 4]{AAK}}]\label{theorem:Aside_blowup} $$X:= \mbox{Bl}_{H \times \{0\}} (V \times \mb{C})$$ admits a Lagrangian torus fibration. Furthermore, invariants on $X$ are related to $H$:
\begingroup \allowdisplaybreaks \begin{equation}
\begin{aligned}
    Fuk(X)& \cong Fuk(H)\\
D^bCoh(X) &= \left<D^bCoh(V \times \mb{C}), D^b Coh(H)\right>
\end{aligned}
\end{equation} \endgroup
where the angle brackets denote semi-orthogonal decomposition.
\end{theorem}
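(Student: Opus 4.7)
The plan is to adapt the Abouzaid--Auroux--Katzarkov construction of~\cite[\textsection 4]{AAK} for toric hypersurfaces to the abelian-variety quotient setting, carrying out every step $\Gamma_B$-equivariantly. First, I would construct the Lagrangian torus fibration on $X$ from the natural toric data on $V \times \mb{C}$. The $T^3$-action rotating the angles of $V = (\mb{C}^*)^2/\Gamma_B$ and rotating the $\mb{C}$-factor is quasi-Hamiltonian on the quotient, with moment image $(\mb{R}^2/\Gamma_B) \times \mb{R}_{\geq 0}$, realized exactly as in Example~\ref{ex:cp2} but with the extra $\Gamma_B$-translation in the first two moment coordinates. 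Blowing up $H \times \{0\}$ (which has complex codimension 2 in $V \times \mb{C}$) via Guillemin's equivariant symplectic blow-up produces $X$ with a lifted $S^1$-action on the $\mb{C}$-factor and a symplectic form depending on a size parameter $\epsilon$ for the exceptional $\mb{CP}^1$'s. Composing the log-norm coordinates on $V$ with the lifted $\mu_{S^1}$-coordinate then yields a Lagrangian torus fibration on $X$ whose discriminant locus concentrates along the tropical amoeba of $H$ inside the slice $\{\mu_{S^1} = \epsilon\}$ of the base; this is the cut-and-paste mechanism of~\cite{AAK} and~\cite{4from2} applied to the abelian setting.

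Next, the semi-orthogonal decomposition $D^bCoh(X) = \langle D^bCoh(V \times \mb{C}), D^bCoh(H)\rangle$ follows from Orlov's blow-up formula applied to the codimension-2 smooth center $H \times \{0\}$. Orlov's theorem gives $D^b(X) = \langle D^b(V \times \mb{C}), D^b(H \times \{0\})\rangle$, and since $H \times \{0\} \cong H$ this is precisely the stated decomposition. For the Fukaya-category equivalence $Fuk(X) \cong Fuk(H)$, I would invoke the key geometric observation of~\cite{AAK}: projection $\pi: X \to \mb{C}$ to the last coordinate has critical locus inside $\pi^{-1}(0)$ equal to the proper transform of $H$, so $(X,\pi)$ is a Landau--Ginzburg model with $H$ as its critical fiber. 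Lagrangian branes in $H$ lift to Lagrangian branes in $X$ by combining them with the exceptional $\mb{CP}^1$-fibers, and a generation/cobordism argument then identifies the two Fukaya categories on the cohomological level.

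The hard part will be ensuring that every step descends compatibly to the $\Gamma_B$-quotient. Since $\Gamma_B$ acts freely and properly discontinuously on $(\mb{C}^*)^2 \times \mb{C}$ and preserves the locus being blown up, the equivariant blow-up is well-defined, but the symplectic form and the bump-function interpolation needed to globalize the fibration coordinates on $X$ must be chosen $\Gamma_B$-invariantly; this is the technical bookkeeping which distinguishes our setting from the purely toric one in~\cite{AAK}, and it is handled by Definition~\ref{defn of w} below. A secondary subtlety is that the claim $Fuk(X) \cong Fuk(H)$ is really a statement about $A_\infty$-categories, but for the purposes of the cohomology-level Theorem~\ref{theorem: me} only object-and-morphism data is needed, so it suffices to match generating Lagrangian objects and their hom-spaces explicitly rather than to prove a full $A_\infty$-quasi-equivalence.
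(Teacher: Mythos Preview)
Your approach is essentially the same as the paper's proof overview: construct the torus fibration from $(\log_\tau|\cdot|,\mu_{S^1})$ on $V\times\mb{C}$ and track the blow-up in the base as in \cite{AAK} and \cite{4from2}; invoke Orlov for the semi-orthogonal decomposition; and identify $Fuk(H)$ with a Fukaya-type category of $(X,y)$ via the observation that $H=\mathrm{Crit}(y)$.

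Two small corrections. First, Definition~\ref{defn of w} constructs the symplectic form on the \emph{mirror} $Y$, not on $X$; the form on $X$ is the one built in \cite{AAK} (symplectomorphic to $p^*\omega_{V\times\mb{C}}$ away from $E$), so your reference there is misplaced. Second, the mechanism the paper cites for $Fuk(X)\cong Fuk(H)$ is not ``combining with the exceptional $\mb{CP}^1$-fibers'' but rather parallel-transporting Lagrangians in $H\subset y^{-1}(0)$ along rays in the base of $y$ to obtain non-compact admissible Lagrangians in $X$ (this is \cite[Corollary 7.8]{AAK}, and more precisely identifies $Fuk(H)$ with $F_s(X,y)$). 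Also, the critical locus of $y$ is $\tilde V\cap E\cong H$, not the proper transform of $H$.
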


An example of this, and a non-toric Lagrangian torus fibration, is when $H=(1,0) \subset \mb{CP}^1$ defined by $s(x) = x-1$. This gives mirror symmetry for the point $H$, and the mirror is a Lefschetz fibration generated by one thimble. Namely if $\dim_{\mb{C}}V = 1$, then the zero fiber of $y: X \to \mb{C}$ involves a normal crossings divisor of the form $yz=0$, which for dimensional reasons is a Lefschetz singularity. Hence Seidel's Fukaya category for Lefschetz fibrations \cite{seidel} can be used, where $H=$ a point and $Fuk(X)$ is generated by a thimble. 

\begin{claim} $H = Crit(y)$ is the critical locus of the Bott-Morse fibration $y:X \to \mb{C}$. \end{claim}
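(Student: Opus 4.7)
The plan is to verify the claim by a direct local coordinate computation on the blow-up. First I would observe that, away from the exceptional divisor, the blow-down $X \to V \times \mb{C}$ is a biholomorphism and $y$ agrees with the submersion $V \times \mb{C} \to \mb{C}$, which has no critical points. So the whole critical locus is contained over $H \times \{0\}$, and it suffices to compute $dy$ in standard affine charts of the blow-up near this center.

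Next, for each point $p \in H \times \{0\}$ I would pick local holomorphic coordinates $(s, w)$ on a neighborhood of the image of $p$ in $V$ with respect to which $H$ is cut out by $\{s = 0\}$; this is possible because $H$ is smooth and a local trivialization of $\mc{L}$ identifies the defining section $s$ with a coordinate function. In local coordinates $(s, w, y)$ on $V \times \mb{C}$ the center of the blow-up is $\{s = y = 0\}$, and $X$ is covered by two standard affine charts obtained by trivializing the exceptional $\mb{CP}^1 = \{[u:v]\}$: the chart $\{u=1\}$ has coordinates $(s, w, v)$ with $y = sv$, while the chart $\{v = 1\}$ has coordinates $(u, w, y)$ with $s = uy$.

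The critical locus computation is then immediate: in the second chart $y$ is itself a coordinate, so $dy \neq 0$ there; in the first chart $dy = v\,ds + s\,dv$ vanishes precisely on $\{s = 0,\, v = 0\}$. Letting $p$ vary over $H \times \{0\}$, these local loci assemble into a section of the exceptional divisor $E \to H$ given by $[u:v] = [1:0]$, which is naturally isomorphic to $H$. For the Bott-Morse property I would note that at a critical point the conormal to $Crit(y)$ inside $X$ is spanned by $ds$ and $dv$, so the normal Hessian of $y = sv$ is
\[\begin{pmatrix} 0 & 1 \\ 1 & 0 \end{pmatrix},\]
which is non-degenerate; hence $y$ is Morse-Bott with critical manifold $H$.

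The computation is essentially routine, so the only step requiring real attention is verifying that the local identification of $Crit(y)$ with $H$ is coordinate-invariant, so that the local descriptions glue. This follows from the observation that a change of local trivialization replaces $s$ by $f \cdot s$ for a nonvanishing holomorphic function $f$, forcing $v = y/s$ to transform by $v \mapsto v/f$; both factors are nonvanishing away from $\{s = v = 0\}$, so the locus is intrinsic. This gluing check is the only subtle point and poses no real obstacle.
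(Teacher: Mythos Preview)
Your proof is correct. It is the explicit, local-coordinate version of the paper's argument, and in some respects it is more complete.

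The paper argues globally: it identifies the zero fiber $y^{-1}(0)$ as the union $\tilde{V} \cup E$ of the proper transform of $V$ and the exceptional divisor, observes that these meet in a normal crossings singularity along the section $s^p(H) = \{(x,0,[1:0]) : x \in H\}$, and declares this intersection to be the critical locus. It also offers an $S^1$-action viewpoint (the action $(x,e^{i\theta}y,[e^{-i\theta}u:v])$), though as stated this is a heuristic rather than a proof: the fixed locus of that action is strictly larger than $Crit(y)$, containing all of $\tilde{V}$ and a second copy of $H$ sitting at $[0:1]$.

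Your approach instead writes down the two affine charts of the blow-up, computes $dy$ directly, and reads off $\{s=v=0\}$ as the vanishing locus. This has two advantages: it makes the Morse--Bott property manifest via the normal Hessian $\begin{pmatrix}0&1\\1&0\end{pmatrix}$, which the paper asserts but does not verify, and it avoids the slightly imprecise $S^1$ statement. The trade-off is that you must check the gluing of your local descriptions, which you do correctly by tracking how $s$ and $v$ transform under change of trivialization; the paper's global description of $\tilde V \cap E$ sidesteps this. Underneath, both arguments are really the same observation that near the critical locus $y$ factors as a product of two transverse coordinates.
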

\begin{proof} The zero fiber is the union of the proper transform of $V$, namely $\tilde{V} := \overline{p^{-1}(V\backslash H \times \{0\})}$ and the exceptional divisor. The normal bundle $\mc{N}_{V \times \mb{C}/H \times 0} = \mc{L} \oplus \mc{O}$. Then $H=\Sigma_2$ in the blow-up in $\mb{P}(\mc{L} \oplus \mc{O})$ is the intersection of two divisors in a normal crossing singularity. This intersection forms the critical locus of a Morse-Bott fibration given by $y: (x,y,(u:v)) \mapsto y$. Let $p$ be the blow-up map:
$$p: X \to V \times \mb{C}, \;\; (x,y,(u:v)) \mapsto (x,y)$$ 
Geometrically $\tilde{V}$ is a copy of $V$, i.e.~the closure of the part of $V$ away from $H$ in the blow-up which fills in the rest of the $V$ copy. Note that the closure adds in only a point for dimension reasons as we approach each point in $H$, so the closure adds back in a copy of $H$. The exceptional divisor is a $\mb{P}^1$-bundle over $H$ if we consider projection to $V\times \mb{C}$ and then take the ``zero-th" level at $V \times \{0\}$. Let $s^p$ be a section of this $\mb{P}^1$-bundle.
\begin{align*}
E & := \{(x,0,(u:v))\mid x \in H\}=p^{-1}(H \times \{0\})\\
p|_E& : E \to H \times \{0\}\\
s^p &: H \to E, \;\; s^p(x):=(x,0,(1:0))
\end{align*}
Now we can see $H$ as the critical locus of the $y$ fibration, as the fixed point set of the $S^1$-action that rotates $y$, namely $(x, e^{i\theta}y, (e^{-i\theta}u:v))$.
\begin{align*}
y^{-1}(0) &= \{(x,0,(u:v))\mid s(x)v=0\cdot u\}\\
&= \{(x,0,(1:0))\} \cup \{(x,0,(u:v))\mid s(x)=0\}\\
& = \tilde{V} \cup E
\end{align*}
Thus
$$\tilde{V} \cap E=s^p(H)  \cong  H = Crit(y)$$
since $x \in E$ implies $s(x)=0$ and $x \in \tilde{V} \implies (u:v)=(1:0)$.
\end{proof}

\begin{proof}[Proof overview of Theorem \ref{theorem:Aside_blowup} from {\cite{AAK}}] Let $x=(x_1,x_2)\in V, y \in \mb{C}$ and $s: V \to \mc{L}$ define the hypersurface which here is the theta divisor. Then, the blow-up projectivizes the normal bundle to $H \times \{0\}$ namely $\mc{L} \oplus \mc{O}$ by the adjunction formula.
\begingroup \allowdisplaybreaks \begin{equation}
\begin{aligned}
X&= \mbox{Bl}_{H \times \{0\}} V \times \mb{C} \\
&=\overline{\mbox{graph}[s(x):y]} \\
&= \overline{ \{(x,y,[s(x):y])\in (\mb{P}(\mc{L} \oplus \mc{O})\to V\times \mb{C}) \}}\\
&= \{(x,y,[u:v]) \mid s(x)v = yu\} \subset \mb{P}(\mc{L} \oplus \mc{O})
\end{aligned}
\end{equation} \endgroup
a subset of the $\mb{P}^1$-bundle $\mb{P}(\mc{L} \oplus \mc{O})$ on $V\times \mb{C}$. The torus fibration on $V\times \mb{C}$ is $(\log_\tau |\cdot |, \mu_{S^1})$ where $\mu_{S^1}$ is the moment map from the Hamiltonian $S^1$-action that rotates the $y$ complex coordinate. The base of this fibration is $B = T_B^2 \times \mb{R}_+$ because of quotienting by the $\Gamma_B$-action in the first two coordinates, which scales $|x_i|$. We keep track of the blow-up in the base, as above in the case of interior blow-up, to obtain a Lagrangian torus fibration on $X$ for a suitable symplectic form $\omega$ constructed in \cite{AAK}. It is symplectomorphic to the pullback of the canonical toric K\"ahler form $p^*\omega_{V\times\mb{C}}$ away from $E$ and controls symplectic area near the exceptional divisor.  

For the relations between invariants, \cite[Corollary 7.8]{AAK} states that $Fuk(H)$ is equivalent to a Fukaya category $F_s(X,y)$, because Lagrangians in $H$ can be parallel transported from the central fiber to obtain non-compact Lagrangians in $X$ admissible with respect to the superpotential $y$. On the complex side, [Orlov, \cite{Der_cats}] implies there is a semi-orthogonal decomposition of $D^bCoh(X)$ into $D^b(V \times \mb{C})$ and $D^bCoh(H)$. 
\end{proof}

%\begin{remark}Recall that assigning a superpotential to $X$ enlarges the Fukaya category to allow Lagrangians that are not compact but rather have infinite ends whose behavior is determined by the superpotential. This involves putting a Fukaya category on $X$ that is an analogue of Seidel's version for Morse singular fibrations. Constructing this type of Fukaya category is discussed in Jeff Hicks' thesis on tropical Lagrangians.
%\end{remark}

%First, a definition.
%
%\begin{definition} Let $X$ be a Noetherian scheme, e.g.~a manifold. A \emph{coherent sheaf $\mc{F}$ on $X$} is a sheaf that fits into an exact sequence of sheaves of the form 
%$$\mc{O}_X^m \to \mc{O}_X^n \to \mb{F} \to 0$$
%
%where $\mc{O}_X$ is the sheaf of holomorphic functions on $X$ and $m$ and $n$ are finite.\end{definition}
%
%\begin{example} In particular, sections of vector bundles give rise to coherent sheaves because under a local trivialization we get a number of generators equal to the rank, and zero relations because a vector bundle is finite rank and locally free. Also skyscraper, gives exact sequence.
%\end{example} 
%
%\begin{definition} The B-model is $X$ equipped with the superpotential.\end{definition}

%\begin{proof}
%Guess: can contract $X$ onto the central fiber so sheaves on $X$ are sheaves on the genus 2 curve. Actually, Denis cites Orlov's result that says we can find $D^b(Coh(X))$ to be the direct sum of that of $H$ and another one.
%\end{proof}

\subsection{Background needed to define the generalized SYZ mirror} We have local charts for the mirror over open sets in the base $B$, as described in \cite{t_duality}, which are glued across walls in the base $B$. A wall occurs in the base over which the Lagrangian fibers are singular. The gluing information can be encoded in a polytope by \cite{AAK}:
$$\Delta_{\tilde{Y}} := \{(\xi_1,\xi_2,\eta) \in \mb{R}^3 \mid \eta \geq \Trop(s)(\xi_1,\xi_2)\}$$
where the tropicalization of a function describes how it tends to infinity as its variables tend to infinity, as a function of the direction $\xi$ we let the variable norms $|x_i|:=\tau^{\xi_i}$ go to infinity. Mathematically:

\begin{definition}[Tropicalization] Let $f(x) = \sum_{a \in A\subset \mb{Z}^n}c_ax^a \tau^{\rho(a)}$. Let $|x_i| = \tau^{\xi_i}$. Then 
$$f(x/|x|, \xi) = \sum_a c_a\left(\frac{x}{|x|}\right)^a \tau^{\rho(a) + \left<a,\xi\right>}$$

The \emph{tropicalization of $f$} is:
$$\Trop(f)(\xi) := -\min_{a \in A} \left<a,\xi\right> + \rho(a)$$
\label{trop definition}
\end{definition}

As $\tau \to 0$, the leading order term in $f$ has exponent $-\Trop(f)(\xi)$. The vanishing of $f$ limits to a tropical curve given by those $\xi \in \mb{R}^n$ where two terms can cancel in $f$, namely where two different $a\in A$ give the same minimum for $\xi$.

\begin{example}[{\cite{AAK}[\textsection 9.1]}]\label{pants_ex} Suppose $H \subset V$ is the pair of pants $f(x_1,x_2):=1+x_1+x_2 = 0$ in $V = (\mb{C}^*)^2$. This is a pair of pants because $x_1 \in \mb{C}^*\backslash\{-1\}$ and a cylinder minus a point is a pair of pants. Then $\rho \equiv 0$ and $A = \{(0,0), (1,0), (0,1)\}$ hence $\Trop(f)(\xi_1,\xi_2) = \max\{0, \xi_1,\xi_2\}$. If $\xi_1,\xi_2 < 0 $ then 0 is the maximum, if $\xi_1 > \xi_2 >0$ then $\xi_1$ is the maximum and if $\xi_2 > \xi_1 > 0$ then $\xi_2$ is the maximum. So the vanishing of $\Trop(f)$ is the left diagram in Figure \ref{fig:pop_ex}, and the moment polytope is $\Delta:=\{(\xi,\eta) \mid \eta \geq \Trop(f)(\xi)\} \subset \mb{R}^{n+1}$ depicted in the right diagram of Figure \ref{fig:pop_ex} projected to $(\xi_1,\xi_2)$ coordinates with the $\eta$-coordinate coming out of the page.

\begin{figure}[h]
\begin{multicols}{2}
\begin{tikzpicture}
\draw[thick] (0,0) -- (-2,0); \draw[thick] (0,0) -- (0,-2); \draw[thick] (0,0) -- (1.4,1.4);
\end{tikzpicture}

\begin{tikzpicture}
\draw[thick] (0,0) -- (-2,0); \draw[thick] (0,0) -- (0,-2); \draw[thick] (0,0) -- (1.4,1.4);
\node at (1,0) {$\eta \geq \xi_1$}; \node at (0,1) {$\eta \geq \xi_2$}; \node at (-.7,-.7) {$\eta \geq 0$}; 
\end{tikzpicture}
\end{multicols}
\caption{L) $\Trop(1+x_1+x_2)=0$\qquad \qquad R) Moment polytope in $\mb{R}^3$}
\label{fig:pop_ex}
\end{figure}    

The corresponding toric variety is $\Spec \mb{C}[x,y,z] = \mb{C}^3$ where $x$, $y$ and $z$ are the three toric coordinates arising from the toric monomials with weight vectors given by primitive generators of the three edges. The superpotential is $v_0 = xyz$, giving the expected pair of pants mirror $(\mb{C}^3, xyz)$.
\end{example}

Tropicalizing the infinite series given by the theta function at first sight seems hard. In fact, it satisfies a periodicity property which allows us to see the tropicalization as a honeycomb shape when projected to $(\xi_1,\xi_2)$ coordinates.

\begin{claim} The tropicalization of the theta function $\vp:=\mbox{Trop } s$ satisfies the following periodicity property
\begingroup \allowdisplaybreaks \begin{equation}
\vp(\xi+\tilde{\gamma}) =  \vp(\xi) -\kappa(\tilde{\gamma}) + \left<\xi,\la(\tilde{\gamma})\right>
\label{phi_periodicity}
\end{equation} \endgroup
\end{claim}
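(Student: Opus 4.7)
My plan is to read off the identity directly from the automorphy law for $s$ proved in Claim \ref{defn:line_bundle}, namely $s(\tilde{\gamma}\cdot x)=\tau^{\kappa(\tilde{\gamma})}x^{\la(\tilde{\gamma})}s(x)$. Tropicalization records the leading $\tau$-exponent of the modulus of a Laurent series on the locus $|x_i|=\tau^{\xi_i}$ (with a sign flip, per Definition \ref{trop definition}), and this assignment turns pointwise multiplication into addition. So once one replaces both sides of the automorphy equation by their leading $\tau$-exponents, the resulting identity among exponents rearranges into the desired periodicity of $\vp$.

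Concretely, first I would translate the $\Gamma_B$-action on $(\mb{C}^*)^2$ into the $\xi$-picture. Since $\tilde{\gamma}$ sends $x_i\mapsto\tau^{-\tilde{\gamma}_i}x_i$, it sends $|x_i|=\tau^{\xi_i}$ to $\tau^{\xi_i-\tilde{\gamma}_i}$, so the action on $\xi$-space is $\xi\mapsto\xi-\tilde{\gamma}$ (compare Claim \ref{claim:action_angle_fiber}). Likewise the multiplicative factor has magnitude $|x^{\la(\tilde{\gamma})}|=\tau^{\langle\la(\tilde{\gamma}),\xi\rangle}$. Writing the leading $\tau$-exponent of $|s(x)|$ as $-\vp(\xi)$, the automorphy law translates to
\[
-\vp(\xi-\tilde{\gamma})\;=\;\kappa(\tilde{\gamma})+\langle\la(\tilde{\gamma}),\xi\rangle-\vp(\xi).
\]

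Next I would substitute $\xi\mapsto\xi+\tilde{\gamma}$ and use the identity $\langle\la(\tilde{\gamma}),\tilde{\gamma}\rangle=-2\kappa(\tilde{\gamma})$, which is just the definition of $\kappa$ in Definition \ref{defn:Bside_notation}. Rearranging gives
\[
\vp(\xi+\tilde{\gamma})=\vp(\xi)+\kappa(\tilde{\gamma})+\langle\la(\tilde{\gamma}),\xi\rangle+\langle\la(\tilde{\gamma}),\tilde{\gamma}\rangle=\vp(\xi)-\kappa(\tilde{\gamma})+\langle\la(\tilde{\gamma}),\xi\rangle,
\]
which is the claimed formula. As a cross-check, one can also compute $\vp$ directly from the Fourier expansion $s=\sum_{\gamma}\tau^{-\kappa(\gamma)}x^{-\la(\gamma)}$ given by Corollary \ref{theta_basis} at $l=1$: this yields $\vp(\xi)=\max_{\gamma}\{\kappa(\gamma)+\langle\la(\gamma),\xi\rangle\}$, and the same relation follows by shifting the summation variable $\gamma\mapsto\gamma+\tilde{\gamma}$ and using symmetry of $\la$ together with bilinear expansion of $\kappa(\gamma+\tilde{\gamma})$.

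The only real subtlety is bookkeeping with signs: Definition \ref{trop definition} uses a minus sign, $\tau<1$ reverses the order of $\log_\tau$, and the $\Gamma_B$-action on $\xi$ is by $-\tilde{\gamma}$ rather than $+\tilde{\gamma}$. Provided these three sign conventions are tracked consistently, the proof reduces to the one-line rearrangement above, so I do not anticipate a substantive obstacle beyond this bookkeeping.
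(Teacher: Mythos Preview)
Your proposal is correct. Your primary route—tropicalizing the automorphy law $s(\tilde{\gamma}\cdot x)=\tau^{\kappa(\tilde{\gamma})}x^{\la(\tilde{\gamma})}s(x)$ and then shifting $\xi\mapsto\xi+\tilde{\gamma}$—is a slightly different angle than the paper's proof, which works directly with the explicit formula $\vp(\xi)=\max_\gamma\{\kappa(\gamma)+\langle\xi,\la(\gamma)\rangle\}$ and performs an index shift $\gamma\mapsto\gamma-\tilde{\gamma}$ inside the max after expanding $\kappa(\gamma-\tilde{\gamma})$. Your cross-check is essentially the paper's argument. The advantage of your main approach is conceptual: it makes transparent that the periodicity of $\vp$ is nothing but the tropical shadow of the factor of automorphy for $\mc{L}$, so the same one-line argument would give the analogous law for $\mc{L}^{\otimes l}$ without redoing the combinatorics. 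The paper's direct computation, on the other hand, has the virtue of simultaneously establishing the explicit piecewise-linear formula for $\vp$, which it needs immediately afterward to identify the honeycomb. Either way the substance is the same bilinear identity $\kappa(\gamma\pm\tilde{\gamma})=\kappa(\gamma)+\kappa(\tilde{\gamma})\mp\langle\gamma,\la(\tilde{\gamma})\rangle$, and your sign bookkeeping is accurate.
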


\begin{proof}Recall from the definition in Claim \ref{defn:line_bundle}
$$s(x) = \sum_{\gamma \in \Gamma_B} \tau^{-\kappa(\gamma)} x^{-\la(\gamma)}= \sum_{\gamma \in \Gamma_B} \tau^{\frac{1}{2}\left<\gamma,\la(\gamma)\right>} x^{-\la(\gamma)}$$
Since $|x_i|=\tau^{\xi_i}$, and letting $\tau \to 0$ we see that the leading term is the minimum exponent or the maximum of the negated exponent, namely
\begingroup \allowdisplaybreaks \begin{equation}\label{trop_eq}
\varphi(\xi):=\Trop(s)(\xi) = \max_{\gamma} \kappa(\gamma)+\left<\xi,\la(\gamma)\right>  
\end{equation} \endgroup
Since $\kappa$ is a negative definite quadratic form of degree 2 and $\la$ is positive of degree 1, this should have a maximum. For example, $\vp(0)=0$. We have the following periodicity property.
\begingroup \allowdisplaybreaks \begin{equation}\label{eq:phi_periodicity}
    \begin{aligned}
\varphi(\xi + \tilde{\gamma}) &= \max_\gamma  \kappa(\gamma)+ \left<\xi + \tilde{\gamma}, \la(\gamma)\right>  = \max_\gamma  \kappa(\gamma) + \left<\xi, \la(\gamma)\right>  + \left<\tilde{\gamma},\la(\gamma)\right>\\
\kappa(\gamma-\tilde{\gamma}) & = \kappa(\gamma) + \kappa(\tilde{\gamma}) + \left<\gamma, \la(\tilde{\gamma})\right>\\
\implies \varphi(\xi + \tilde{\gamma}) &=\max_\gamma \left<\xi,\la(\gamma)\right> + [\kappa(\gamma - \tilde{\gamma}) -\kappa(\tilde{\gamma})]\\
 & = \left(\max_\gamma \kappa(\gamma - \tilde{\gamma})+ \left<\xi,\la(\gamma-\tilde{\gamma})\right> \right) - \kappa(\tilde{\gamma})+\left<\xi,\la(\tilde{\gamma})\right>\\
 \implies \vp(\xi+\tilde{\gamma}) &=  \vp(\xi) -\kappa(\tilde{\gamma}) + \left<\xi,\la(\tilde{\gamma})\right>
    \end{aligned}
\end{equation} 
\endgroup

\end{proof}

\begin{claim} The vanishing set $V(\mbox{Trop } s) \subset \mb{R}_{\xi_1,\xi_2}^2$ is a honeycomb shape that is a tiling by hexagons.\end{claim}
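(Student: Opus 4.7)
The plan is to treat $\varphi(\xi) = \max_{\gamma \in \Gamma_B}\bigl(\kappa(\gamma) + \langle \xi, \lambda(\gamma)\rangle\bigr)$ as a piecewise-linear convex function, whose domains of linearity $C_\gamma := \{\xi : \kappa(\gamma) + \langle \xi,\lambda(\gamma)\rangle = \varphi(\xi)\}$ are closed convex polyhedra. The vanishing set $V(\mathrm{Trop}\, s)$ coincides with the union of the boundary walls $\bigcup_\gamma \partial C_\gamma$, i.e.\ the locus where at least two distinct $\gamma$'s achieve the maximum. So I need to show that this polyhedral decomposition consists of hexagons meeting three-to-a-vertex.

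The first step is to use the periodicity relation (\ref{phi_periodicity}), which shows that replacing $\xi \mapsto \xi + \tilde{\gamma}$ changes $\varphi$ by an affine function and permutes the cells via $C_\gamma \mapsto C_{\gamma + \tilde{\gamma}}$ (up to this affine shift). Hence the combinatorial type of the cell decomposition is $\Gamma_B$-invariant, and it suffices to describe the cell $C_0$ at the origin and its immediate neighbors.

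The second step is to identify $C_0$ explicitly. Writing $\gamma = a\gamma' + b\gamma''$, the computation $\lambda(\gamma) = (a,b)$ and $\kappa(\gamma) = -\tfrac12\langle \gamma,\lambda(\gamma)\rangle = -(a^2 + ab + b^2)$ (the standard Eisenstein quadratic form) turns the defining inequalities $\kappa(\gamma) + \langle \xi, \lambda(\gamma)\rangle \leq 0$ into
\begin{equation}
    a\xi_1 + b\xi_2 \leq a^2 + ab + b^2, \qquad (a,b) \in \mathbb{Z}^2 \setminus \{0\}.
\end{equation}
The six minimal-norm vectors $(\pm 1, 0), (0, \pm 1), \pm(1,-1)$ contribute the walls $\xi_1 = \pm 1$, $\xi_2 = \pm 1$, $\xi_1 - \xi_2 = \pm 1$, cutting out a hexagon $H$ with vertices $(1,0), (1,1), (0,1), (-1,0), (-1,-1), (0,-1)$. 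I then verify that all other inequalities are redundant: since the LHS is linear in $\xi$, its maximum over $H$ is attained at a vertex, and one checks at each vertex $v$ that $\langle v,(a,b)\rangle \leq a^2 + ab + b^2$ for all $(a,b) \in \mathbb{Z}^2 \setminus \{0\}$, with equality only for the six minimal vectors. This reduces to an elementary quadratic estimate using $a^2 + ab + b^2 \geq \tfrac12(a^2 + b^2)$.

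Finally, I show the trivalent condition at vertices of $H$, which is the main obstacle since it is what pins down the honeycomb (as opposed to a square or other) structure. At $v = (1,0)$, for example, one solves $a^2 + ab + b^2 = a$ over $\mathbb{Z}^2$ and finds exactly three solutions $(0,0), (1,0), (1,-1)$, confirming that precisely three cells (for $\gamma = 0, \gamma', \gamma' - \gamma''$) meet there. Analogous computations at the other five vertices, or invocation of the $\Gamma_B$-symmetry plus the symmetry $(a,b) \leftrightarrow (-a,-b)$, yield trivalent meeting at every vertex. Combining this with the $\Gamma_B$-periodicity from the first step, the entire cell decomposition of $\mathbb{R}^2$ is obtained by translating $H$ under $\Gamma_B$, and the vanishing locus $V(\mathrm{Trop}\, s)$ is the trivalent graph bounding these hexagons, i.e.\ a honeycomb tiling.
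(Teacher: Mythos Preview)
Your proof is correct and follows essentially the same approach as the paper: identify the linear piece $C_0$ (the paper calls it $F_{0,0}$) by intersecting with the six neighboring cells for $(a,b)\in\{(\pm1,0),(0,\pm1),\pm(1,-1)\}$, obtain the hexagon bounded by $\xi_1=\pm1$, $\xi_2=\pm1$, $\xi_1-\xi_2=\pm1$, and then use the $\Gamma_B$-periodicity of $\varphi$ to propagate this to all cells. Your argument is in fact slightly more complete than the paper's, since you explicitly verify that the remaining inequalities are redundant on $H$ and check the trivalent condition at the vertices, whereas the paper only computes the six wall equations and takes the hexagonal shape and tiling as evident from there.
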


\begin{proof} 
Let $\xi = 0$. Then 
$$\Trop(s)(0) = \max_{\gamma\in \Gamma_B} \kappa(\gamma) \leq 0 \implies \Trop(s)(0)=0$$
since $\kappa$ is negative definite so its maximum is achieved when $\gamma=0$. We know that $\Trop(s)(\xi_1,\xi_2)$ is a piecewise linear function, so let $F_{0,0}$ denote the piece that is identically zero.

In order to prove that the projection of $\Delta_{\tilde{Y}}$ to the first two coordinates is a tiling of hexagons (equivalent to the statement of the claim), we will proceed as follows. We determine where adjacent hyperplanes intersect, by finding the equations of their lines of intersection. This will produce the hexagonal shape.

Fix $(\xi_1,\xi_2,\eta)\in F_{0,0}$. Add $i_1\gamma'+i_2\gamma''$ for $(i_1,i_2) \in \{(\pm 1,0), (0, \pm 1), \pm(1,-1) \}$. These six choices will give rise to the hexagonal shape. Recall $\gamma'$ and $\gamma''$ are the generators for $\Gamma_B$. By Equation (\ref{eq:phi_periodicity}) and that $\vp(\xi)=0$:
\begingroup \allowdisplaybreaks \begin{equation}
\begin{aligned}
\eta=\vp(\xi+i_1\gamma' +i_2\gamma'' + 0) &=\vp(\xi) + \left<\xi, \la(i_1\gamma'+i_2\gamma'')\right> -\kappa(i_1\gamma'+i_2\gamma'')\\
&=i_1\xi_1 + i_2\xi_2 + i_1^2+i_2^2+i_1i_2 \\
&= i_1\xi_1+i_2\xi_2+ 1
\end{aligned}\label{always1}
\end{equation} \endgroup

On the other hand, the equation of the plane when $(\xi, \eta) \in F_{i_1,i_2}$ is the set of $\xi$ and $\eta=\vp(\xi)$ such that  
$$\eta = i_1(\xi_1-2i_1-i_2)+i_2(\xi_2-i_1-2i_2)+1=i_1\xi_1+i_2\xi_2-1$$
(We relabeled $\xi$ from Equation (\ref{always1}) so it lies in $F_{i_1,i_2}$ and not $F_{0,0}$.) So points $(\xi_1,\xi_2,\eta)$ on the intersection of the two planes must satisfy both, hence:
$$i_1\xi_1+i_2\xi_2=1$$
We get the shape enclosed by the lines $\xi_i=\pm 1$ for $i=1,2$, a box, and $\xi_1-\xi_2=\pm 1$ which is the line $\xi_1=\xi_2$ shifted up and down by 1. This means $F_{0,0}$ is a hexagon in the $\eta=0$ plane. 
\begin{figure}[H]
\centering
\begin{tikzpicture}[scale = 2.5]
\draw[shift = {(-1,-1)}] (0,0) -- (1,0) -- (2,1) -- (2,2) -- (1,2) -- (0,1)--(0,0);
\node [above] at (0,0) {$(0,0)$}; \node [above] at (-0.5,-1) {$\xi_2 = -1$}; \node [right] at (.5, -.5) {$\xi_2=\xi_1-1$}; \node[right] at (1,.5) {$\xi_1=1$};

\end{tikzpicture}
\caption{The (0,0) tile delimited by the tropical curve}
\end{figure}
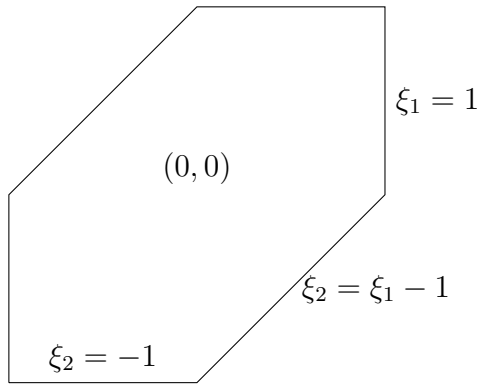

Now pick any $(\xi_1,\xi_2) \in \mb{R}^2$. Choose $\gamma$ such that $(\xi_1-\gamma_1,\xi_2-\gamma_2) \in F_{0,0}$, which we can do since the hexagon is the same size as the fundamental domain for the $\Gamma_B$-action by subtraction. Let $\la(\gamma) =: (m_1,m_2)^t$. Then again by Equation (\ref{eq:phi_periodicity}) and using that $\vp(\xi-\gamma)=0$:
\begingroup \allowdisplaybreaks \begin{equation}\label{facet_eq}
\begin{aligned}
\eta=\vp(\xi) &=\vp(\xi-\gamma) + \left<\xi-\gamma, \la(\gamma)\right> -\kappa(\gamma)\\
&=0 + \xi_1 m_1+\xi_2m_2 -(m_1^2+m_1m_2+m_2^2)\\
\iff 0 &=\left<\left(\begin{matrix} m_1\\m_2\\-1 \end{matrix}\right), \left(\begin{matrix} \xi_1 \\ \xi_2 \\ \eta \end{matrix} \right) \right> - (m_1^2+m_1m^2+m_2^2)\\
\iff 0 & = \left<\left(\begin{matrix} \la(\gamma) \\-1 \end{matrix}\right), \left(\begin{matrix} \xi \\ \eta \end{matrix} \right) \right> +\kappa(\gamma)
\end{aligned}
\end{equation} \endgroup
Then using Equation (\ref{facet_eq}), we find the intersection of the $(m_1,m_2)$ plane with the $(m_1+i_1, m_2+i_2)$ plane, i.e.~those $(\xi,\eta)$ satisfying both plane equations.
\begingroup \allowdisplaybreaks 
\begin{align*}
 \Delta \eta = 0 &=\vp(\xi + (i_1,i_2) \cdot (\gamma', \gamma'')) - \vp(\xi)\\
 &=\left<\la(\gamma)+(i_1,i_2), \xi \right> + \kappa(\gamma+(i_1,i_2)\cdot (\gamma',\gamma''))-[\left<\la(\gamma), \xi \right> + \kappa(\gamma)]\\
 &=i_1\xi_1 + i_2 \xi_2 -\left< \gamma,(i_1,i_2)\right> -\frac{1}{2}\left<i_1\gamma'+i_2\gamma'', (i_1,i_2)\right>\stepcounter{equation}\tag{\theequation}\\
 &=i_1\xi_1 + i_2 \xi_2 - \left<(2m_1+m_2, m_1+2m_2),(i_1,i_2)\right> - \frac{1}{2}\left<(2i_1+i_2, i_1+2i_2), (i_1, i_2)\right>\\
 &=i_1\xi_1 + i_2\xi_2 - i_1(2m_1+m_2) - i_2(m_1+2m_2)-1\\
 \implies  \xi_1 & = 2m_1+m_2 \pm 1\\
 \xi_2 & = m_1+2m_2 \pm 1\\
 \xi_1-\xi_2 &= \pm1+m_1-m_2
\end{align*}
\endgroup

\begin{figure}[H]
\centering
\begin{tikzpicture}[scale = 2.5]
\draw[shift = {(-1,-1)}] (0,0) -- (1,0) -- (2,1) -- (2,2) -- (1,2) -- (0,1)--(0,0);
\node [above] at (0,0) {$m_1\gamma'+m_2\gamma''$}; \node [below] at (-0.5,-1) {$\xi_2 = m_1+2m_2-1$}; \node [right] at (.5, -.5) {$\xi_2=\xi_1-1-m_1+m_2$}; \node[right] at (1,.5) {$\xi_1=2m_1+m_2+1$};

\end{tikzpicture}
\caption{The $(m_1, m_2)$ tile delimited by the tropical curve}
\end{figure}
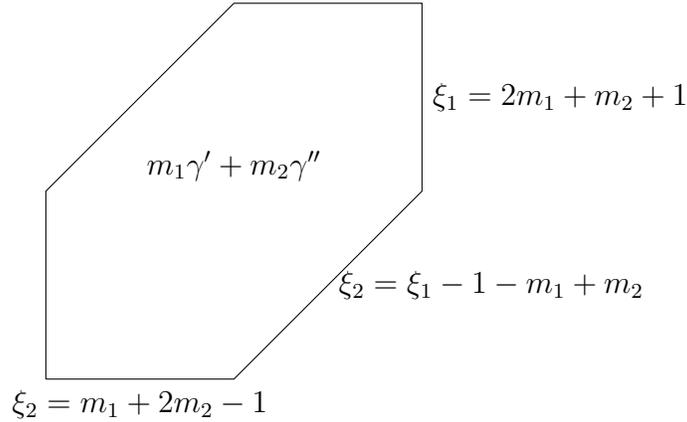
\end{proof}

\subsection{The definition of $(Y,v_0)$} 

The smooth manifold $Y$ is constructed as a {portion} of a toric variety $\tilde{Y}$ quotiented by $\Gamma_B$ acting properly discontinuously via holomorphic maps. The defining polytope is
\begingroup \allowdisplaybreaks
\begin{align*}
\Delta_{\tilde{Y}} &:= \{(\xi_1,\xi_2,\eta) \in \mb{R}^3 \mid \eta \geq \Trop(s)(\xi)\}\\
\Delta_Y &:= (\Delta_{\tilde{Y}})|_{\eta \leq T^l}/\Gamma_B \mbox{ where}\\
\Trop(s)(\xi) & := \max_{\gamma} \kappa(\gamma)+\left<\xi,\la(\gamma)\right>\stepcounter{equation}\tag{\theequation}\label{eq:polytope_def_Y}  \\
\gamma \cdot (\xi_1,\xi_2, \eta) &:= (\xi_1 + \gamma_1, \xi_2 + \gamma_2, \eta -\kappa(\gamma) +\left<\xi, \lambda(\gamma)\right>)\\
 \because \Trop(s)(\xi+\gamma) & = \Trop(s)(\xi)-\kappa(\gamma) +\left<\xi, \lambda(\gamma)\right>
\end{align*}
\endgroup
where the last line is Equation \ref{phi_periodicity}. The parameter $T\ll 1$ is the Novikov parameter on the genus 2 curve, hence the complex parameter on $Y$. The polytope $\Delta_{\tilde{Y}}$ is illustrated in Figure \ref{coords in tiling}, where $\eta$ is bounded below by the expression in the center of the tile, and comes out of the page. The superpotential $v_0$ is the toric degeneration given by the monomial $v_0=xyz$ in toric coordinates which vanishes to order 1 on each toric strata, hence the $T^4$ fibers degenerate over 0 to the toric strata. The three toric coordinates $(x,y,z)$ correspond to the three edges on the polytope $\Delta_{\tilde{Y}}$ from the lower left vertex of the $(0,0)$ hexagon. So $\eta$ is the weight vector $(0,0,1)$ which gives the toric monomial $v_0=xyz$. The fact that $v_0$ is well-defined under the $\Gamma_B$-quotient will be proven below. In particular, the action on complex coordinates only gives a nontrivial action if we restrict $v_0=xyz$ to be very small. This is why the condition $\eta \leq T^l$ is imposed. 

More specifically, recall that vertices of $\Delta_{\tilde{Y}}$ correspond to $\mb{C}^3$ charts. For example, consider the $(x,y,z)$ coordinates and the green vertices to the right in Figure \ref{coords in tiling}, call them $(x',y',z')$. The way in which we glue these two charts is encoded by the edge they are connected by, as follows. Recall that the normal $\nu_n$ to the $(n_1,n_2)$ tile is 
\begin{equation} 
\nu_n:= \left(\begin{matrix} -n_1\\-n_2\\ 1 \end{matrix}\right)
\end{equation}
The two vertices, taking the normals to the three facets at the vertex, gives us two cones spanned by the convex hull of the following rays:
\begin{allowdisplaybreaks}
    \begin{align*}
    \sigma_1 = \left<\left(\begin{matrix}0\\0\\1 \end{matrix} \right), \left(\begin{matrix} 1\\0\\1 \end{matrix}\right),\left(\begin{matrix}0\\1\\1 \end{matrix} \right)\right> & \implies U_{\sigma_1}  = \Spec \mb{C}[x,y,z]\\
    \sigma_2 = \left<\left(\begin{matrix}0\\0\\1 \end{matrix} \right), \left(\begin{matrix} -1\\1\\1 \end{matrix}\right),\left(\begin{matrix}0\\1\\1 \end{matrix} \right)\right> & \implies U_{\sigma_2}  = \Spec \mb{C}[x^{-1}, xy, zy^{-1}]\stepcounter{equation}\tag{\theequation}\\
    \tau:=\sigma_1 \cap \sigma_2 = \left< \left( \begin{matrix}0\\0\\1 \end{matrix}\right), \left( \begin{matrix} 0\\1\\1\end{matrix} \right) \right> & \implies U_\tau = \Spec \mb{C}[\mb{Z}^3 \cap \tau^\vee] = \Spec \mb{C}[x^{\pm}, y, zy^{-1}]
    \end{align*}
\end{allowdisplaybreaks}

Thus in coordinates on $\mb{C}^* \times \mb{C}^* \times \mb{C}$, which is the overlap of the two charts
\begin{equation}
    \begin{aligned}
    \phi_1=\bm{1} & : \mb{C}^3 \ni (x,y,z)  \mapsto (x,y,z)\\
    \phi_2 & : \mb{C}^3 \ni (x,y,z) \mapsto (x^{-1}, xy, zy^{-1})
    \end{aligned}
\end{equation}
we find that identifying the $U_\tau \subset U_{\sigma_i}$ for each $i$, we obtain the transition map:
\begin{equation}
\begin{aligned}
    \phi_{12} & : \mb{C}^* \times \mb{C}^* \times \mb{C} \curvearrowright\\
    (x,y,z) & \mapsto (x^{-1}, xy, zy^{-1})
    \end{aligned}
\end{equation}
Thus this gluing gives $\mb{P}^1$ in the first component. In particular, gluing all the toric charts will contain the dense $(\mb{C}^*)^3$ but infinitely many toric divisors coming from the $\mb{CP}^2(3)$ glued along $\mb{P}^1$'s. In the fibration $v_0=xyz$, we have this infinite $\mb{Z}^2$-chain of toric divisors over 0, and since the coordinates in each chart preserve $v_0$, a generic fiber is $(\mb{C}^*)^2$. So we have non-compact fibers and a non-compact base for the fibration $v_0$. When we take the quotient, the base stays the same but the fibers become compact.

\begin{definition}[{\cite{AAK}[Definition 1.2]}] $(Y,v_0)$ is the \emph{generalized SYZ mirror} of $H = \Sigma_2$.\end{definition}

\begin{remark} Note that one can apply SYZ in the reverse direction by starting with a Lagrangian torus fibration on $Y$ minus a divisor to recover $X$ as its complex mirror, see \cite[\textsection 8]{AAK} or \cite{cll}.
%  \url{https://arxiv.org/pdf/1605.00342.pdf}? This is in fact the direction that motivates your construction even more than the direction in the main statements of [AAK].
\end{remark}

We now discuss the complex coordinates on $Y$.

\subsection{Definition of complex coordinates on $\tilde{Y}/\Gamma_B$} 

\begin{center}
\fbox{\bf Reason for 1-parameter family}    
\end{center}

We can strengthen our result to be not just between two manifolds, but between two families of manifolds. Namely a family of genus 2 curves parametrized by $\tau$ and a family of symplectic manifolds parametrized by $\tau$. Symmetrically, we can also allow $T$ to parametrize a symplectic structure on the genus 2 curve which is mirror to a complex structure parametrized by $T$ on $Y$, which is what we define in this subsection.  

More specifically, we view $\tau$ or $T$ parametrizing the complex structure via scaling the lattice we quotient by, so multiplication by $i$ gets scaled in a 1-parameter family. The way in which they parametrize the symplectic structure as Novikov parameters is by symplectically weighting counts of discs in homology class $\beta$ by $\tau^{-\omega(\beta)} = e^{-(\log \tau) \omega (\beta) }$, i.e.~the symplectic form is scaled to $(\log \tau) \omega$ as $\tau \to 0$. 

So the upshot is that in this paper we consider mirror symmetry between 1-parameter families; $\tau$ is the complex parameter on the genus 2 curve/Novikov parameter on $Y$, and $T$ will be the complex parameter on $Y$/Novikov parameter on the genus 2 curve. Although the complex structure on $Y$ won't affect its symplectic geometry, the K\"ahler potential will be defined in terms of the complex coordinates, necessarily in a way invariant under $T$. This is why we need to define the complex structure on $Y$. (So adding in the $T$ doesn't extend our results to more manifolds in the direction we are considering, because we consider $Y$ as a symplectic manifold and $T$ varies the complex manifold. But if we ever wanted to consider mirror symmetry in the other direction, swapping the A- and B-models, we would have 1-parameter families in that direction too. So it's a bit stronger result to include the $T$ here.)

\begin{center}
\fbox{\bf Properties defining the symplectic form}    
\end{center}

We would like $\omega$ to have some nice properties that allow us to compute parallel transport and monodromy later on. (However, there are other symplectic forms one can equip $(Y,v_0)$ with.) These properties will enforce the way in which the $\Gamma_B$-lattice acts on the complex coordinates in terms of $T$. We will define a K\"ahler potential for $\omega$ as a function of the norms of the complex coordinates. That way it is a function of the moment map coordinates as well by Legendre transform. In order to adapt Seidel's Fukaya category for Lefschetz fibrations, we also would like $v_0:Y \to \mb{C}$ to be a symplectic fibration. That is, $\omega_{V^\vee}$ is a symplectic form on the fiber. Over zero we have a singular fiber, so we only require $\omega$ to restrict to a symplectic form away from the zero fiber. In a neighborhood of the zero fiber, this corresponds to a neighborhood of the facets in $\Delta_{\tilde{Y}}$. Away from a vertex, we take the limit of the symplectic form as $v_0 \to 0$. Near a $\mb{C}^3$-vertex, we take the standard $\omega_{\mb{C}^3}$ on $\mb{C}^3$. In between the two, we interpolate with bump functions. Checking non-degeneracy in these regions is the bulk of the calculation for $\omega$, to which readers may refer to Appendix B. We proceed to describe the symplectic form in a fiber so the limit to the zero fiber is well-defined.

\begin{center}
\fbox{\bf Properties defining the symplectic form on a $v_0$-fiber}    
\end{center}

The central fiber of $v_0$ is the $\Gamma_B$-quotient of the toric variety with moment polytope given by the hexagon in Figure \ref{hexagon}. This comes equipped with a symplectic form as described in the theory of Section \ref{section: toric_recap} with the toric variety $\mb{CP}^2(3 \mbox{ points})$, i.e.~blown-up at three points. As we move away from $v_0=0$ but still near a vertex of the polytope, the toric variety is locally modelled on the $(\mb{C}^3,xyz)$ picture that was the local model of Example \ref{pants_ex}.  So we will use bump functions to interpolate between the toric symplectic form of $\mb{CP}^2(3 \mbox{ points})$ and the standard form on $\mb{C}^3$. Away from $v_0=0$ and away from the $\mb{C}^3$ vertex we use the toric K\"ahler form $\omega_{\mb{CP}^2(3)}$.
\begin{figure}
\begin{center}
\includegraphics[scale=0.3]{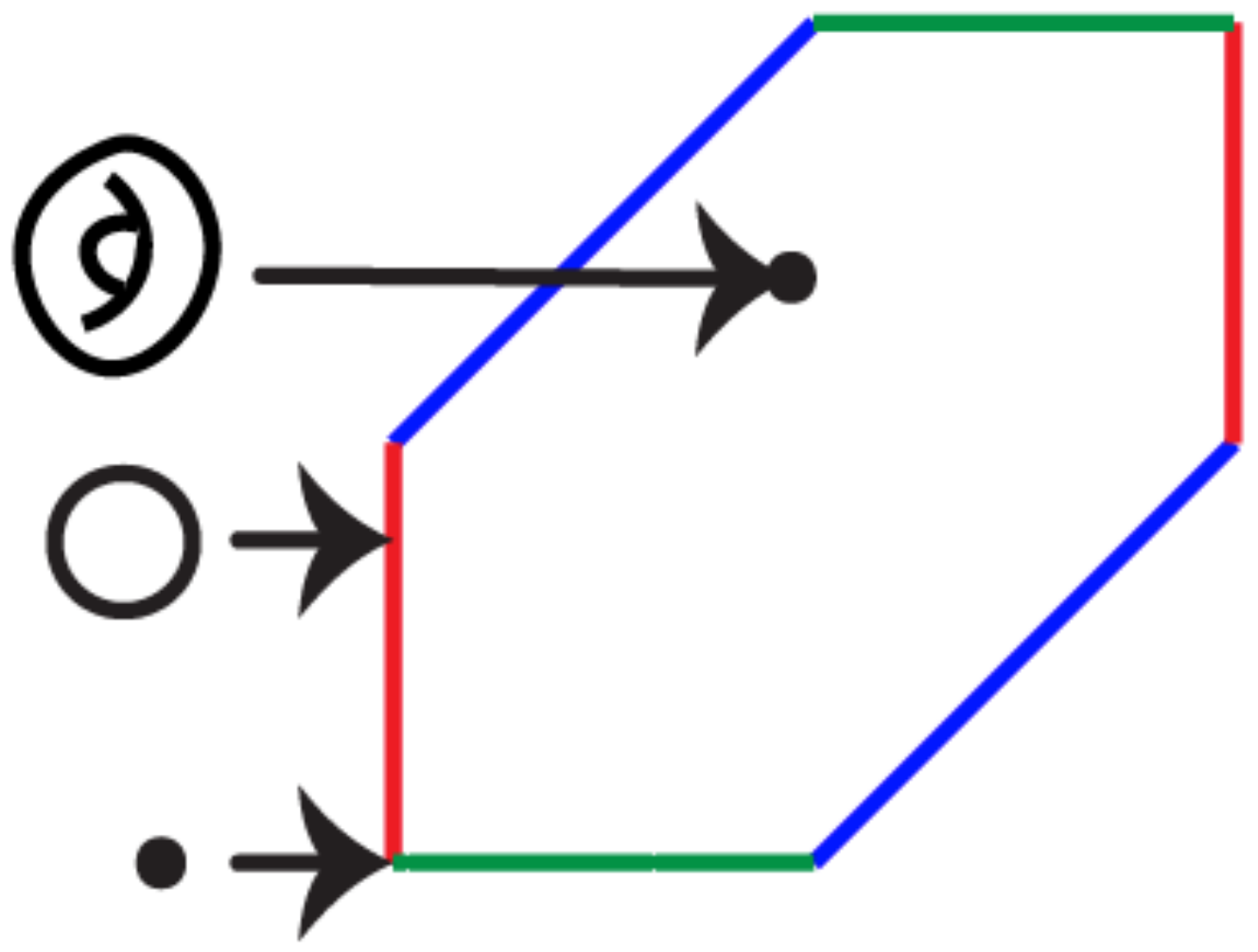}
\caption{Moment polytope for central fiber of $(Y,v_0)$ when $H=\Sigma_2$.}
\label{hexagon}
\end{center}
\end{figure}

\begin{example}[{\bf Illustrative example:~symplectic form on mirror to $H=pt$}] We illustrate this in one dimension down, where the polytope is two dimensional. Let $H$ be a point inside an elliptic curve. Then the polytope $\Delta_{\tilde{Y}}$ is two-dimensional, so we can draw it. 
\begin{figure}[h]
\begin{center}
\includegraphics[scale=0.2]{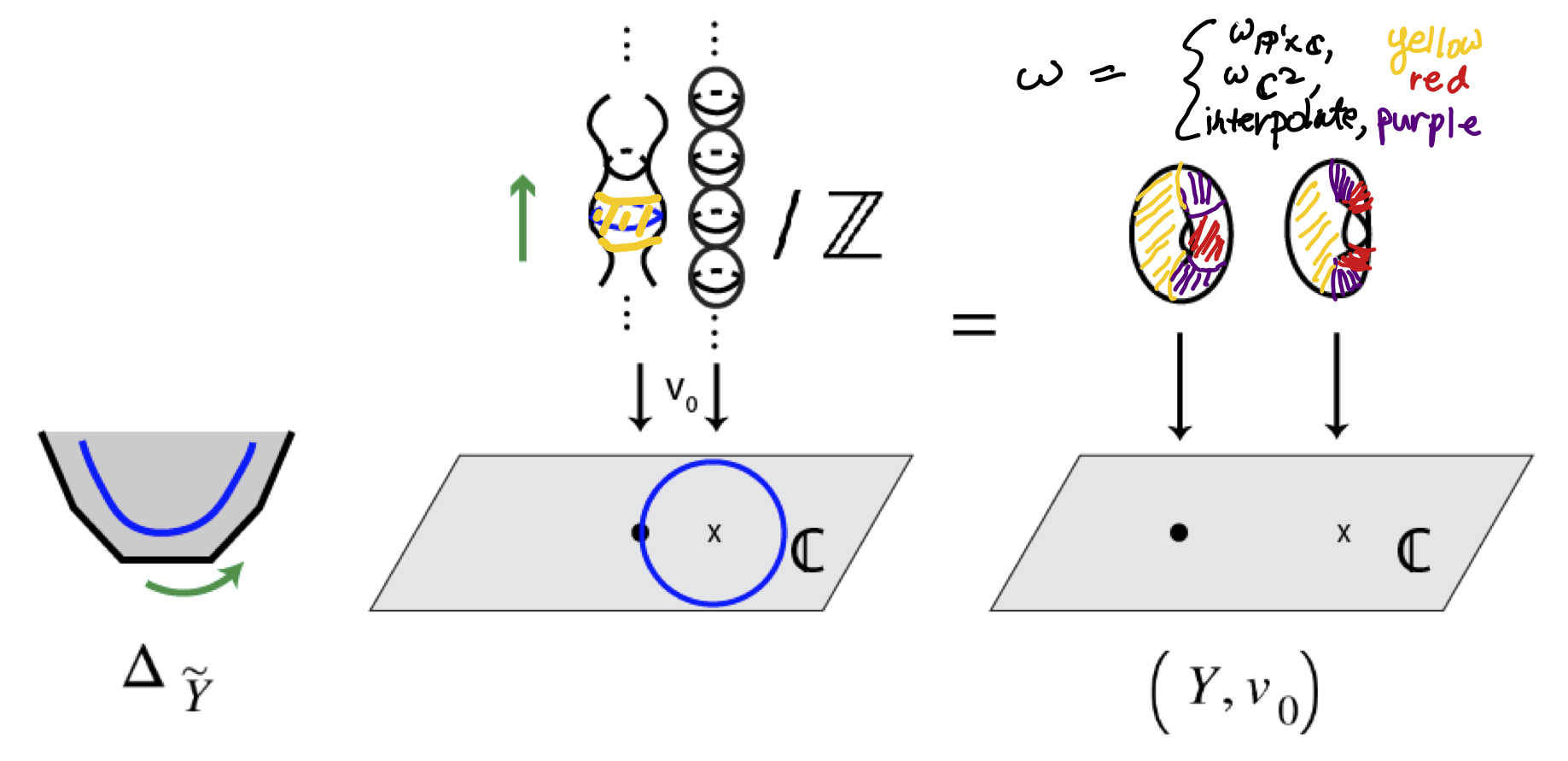}
\caption{In one dimension lower, the boundary of $\Delta_{\tilde{Y}}$ is the moment map image of a string of $\mb{P}^1$'s. In the polytope, $|v_0|$ increases in the $(0,1)$ direction. In the fibration $v_0$, $|v_0|$ is the radius of the circle in the base.}
\label{fig:ypic}
\end{center}
\end{figure}
 
A fiber of $v_0: \tilde{Y} \to \mb{C}$ is topologically a cylinder with belts increasingly  pinched, $\mb{Z}$-periodically, as $|v_0| \to 0$, degenerating to a string of $\mb{P}^1$'s over the central fiber. Around the widest parts of the cylinder, it looks like a portion of a sphere, and $\mb{P}^1$ comes canonically equipped with the Fubini-Study form. So on (fiber neighborhood of $v_0$-fiber blue circle) $\times \mb{C}$ the symplectic form is a product of that on the base and on the fiber.  

On the other hand, neighborhoods of the vertices of the polytope give $\mb{C}^2$ charts where the local picture of $v_0=xy:\tilde{Y} \to \mb{C}$ is the Lefschetz fibration $\mb{C}^2 \to \mb{C}$, $(x,y) \mapsto xy$ where cylinders degenerate to a double cone over zero. In particular, $\mb{C}^2$ comes canonically equipped with the standard form $\omega_{std}$ with K\"ahler potential $|x|^2 + |y|^2$. When the toric coordinates $x, y$ are very small (these coordinates correspond to the weight vectors $\left<1,0\right>$ and $\left<-1,1\right>$ in $\Delta_{\tilde{Y}}$), the Fubini-Study form and the standard form are approximately the same by a Taylor expansion of log. For example when $Tr_x$ is very small, we have $\log(1+(Tr_x)^2) \approx (Tr_x)^2$. So a symplectic form can be constructed by interpolating between these two K\"ahler forms. $\square$
\end{example}

Next we do the same one dimension up, for $H=\Sigma_2$ inside an abelian surface.

\begin{figure}[h]
\centering
\begin{tikzpicture}[scale = 2.5]
\draw[help lines, dotted] (-2,-2) grid (2,2);
\draw (0,0) -- (1,0) -- (2,1) -- (2,2) -- (1,2) -- (0,1)--(0,0);
\draw[shift={(-2,-1)}] (0,0) -- (1,0) -- (2,1) -- (2,2) -- (1,2) -- (0,1)--(0,0);
\draw[shift={(-1,-2)}] (0,0) -- (1,0) -- (2,1) -- (2,2) -- (1,2) -- (0,1)--(0,0);
%P=-1, Q = -2
%center one
\node[above, red] at (1/2,0) {$x$}; \node[left, red] at (-.2,-.1) {$\displaystyle{z=\frac{v_0}{xy}}$};\node[left, red] at (0,1/3) {$y$};
\draw[->, ultra thick, red] (0,0) -- (.3,0); \draw[->, ultra thick, red] (0,0) -- (-.2,-.2);
\draw[->,ultra thick, red] (0,0) -- (0,1/4);

%upper right red 
\draw[->, ultra thick, red] (2,1) -- (1.8,.8); \node[right,red] at (1.5, .6) {$T^{-3}v_0^{-1}z$};
\draw[->,ultra thick, shift = {(2,1)}, red] (0,0) -- (0,1/4); \node[right,red] at (2,1.3) {$y$};
\draw[->, ultra thick, shift ={(2,1)},red] (0,0) -- (.3,0); \node[right,red] at (2.3,1) {$T^{3}v_0 x$};

%upper left red 
\draw[->, ultra thick, red] (1,2) -- (.8,1.8); \node[right,red] at (.9,1.8) {$T^{-3}v_0^{-1}z$};
\draw[->,ultra thick, shift = {(1,2)}, red] (0,0) -- (0,1/4); \node[above,red] at (1,2.3) {$T^{3}v_0y$};
\draw[->, ultra thick, shift ={(1,2)},red] (0,0) -- (.3,0); \node[right,red] at (1.3,2.1){$x$};

%above center teal
\draw[->,ultra thick,teal] (0,1) -- (0,.8); \node[right,teal] at (0,.8) {$T^{-2} y^{-1}$};
\draw[->,ultra thick, teal] (0,1) -- (.2, 1.2); \node[right,teal] at (.2,1.15) {$Tv_0z^{-1}=Txy$};
\draw[->, ultra thick,teal] (0,1) -- (-.2,1); \node[above, teal] at (-.4,1.05) {$Tv_0x^{-1}=Tyz$};

%up right corner teal
\draw[->,ultra thick,teal, shift = {(2,1)}] (0,1) -- (0,.8); \node[right,teal] at (2,1.8) {$T^{-2}y^{-1} $};
\draw[->,ultra thick, teal, shift = {(2,1)}] (0,1) -- (.2, 1.2); \node[right,teal] at (2.2,2.1) {$T^{4}v_0^2z^{-1}$};
\draw[->, ultra thick,teal, shift = {(2,1)}] (0,1) -- (-.2,1); \node[above, teal] at (2,2.2) {$ T^{-2}x^{-1}$};

%lower right corner teal
\draw[->,ultra thick,teal, shift = {(1,-1)}] (0,1) -- (0,.8); \node[right,teal] at (1,-.2) {$Tv_0y^{-1}$};
\draw[->,ultra thick, teal, shift = {(1,-1)}] (0,1) -- (.2, 1.2); \node[right,teal] at (1.2,.1) {$Tv_0z^{-1}=Txy$};
\draw[->, ultra thick,teal, shift = {(1,-1)}] (0,1) -- (-.2,1); \node[below, teal] at (.6,-.1) {$ T^{-2} x^{-1}$};

%parallelograms, first centered at origin but then want middle of 5-pronged thing in the middle so shifted by (-.5,-.5)
\draw[magenta, dotted, shift = {(-.5,-.5)}] (-1.5,-1.5) -- (.5,-.5) -- (1.5,1.5) -- (-.5,.5)--(-1.5,-1.5);
\draw[magenta, dotted, shift = {(1.5,.5)}] (-1.5,-1.5) -- (.5,-.5) -- (1.5,1.5) -- (-.5,.5)--(-1.5,-1.5);
\draw[magenta, dotted, shift = {(.5,1.5)}] (-1.5,-1.5) -- (.5,-.5) -- (1.5,1.5) -- (-.5,.5)--(-1.5,-1.5);
%\draw[scale = 0.5,magenta, dotted, shift={(-2,-1)},thick] (0,0) -- (1,0) -- (2,1) -- (2,2) -- (1,2) -- (0,1)--(0,0);
%\draw[scale = 0.5,magenta, dotted, shift={(-1,-2)},thick] (0,0) -- (1,0) -- (2,1) -- (2,2) -- (1,2) -- (0,1)--(0,0);

% lower left teal
\draw[->,ultra thick, teal] (-1,-1) -- (-.8,-.8); \node[right,teal] at (-.8,-.8) {$T^{-2}z^{-1} $};
\draw[->, ultra thick, teal] (-1,-1) -- (-1.2,-1); \node[above,teal] at (-1.4,-1) {$Tv_0x^{-1}$};
\draw[->, ultra thick, teal] (-1,-1) -- (-1,-1.2); \node[right,teal] at (-1,-1.2) {$Tv_0y^{-1}$};

\node at (0,-1) {$-\xi_2-1$}; \node[above] at (-1,0) {$-\xi_1-1$}; \node at (1,1) {$0$};
%\node at (-2,-2) {$-\xi_1 - \xi_2+1$};

\end{tikzpicture}
\caption{Depiction of 3D $\Delta_{\tilde{Y}}$. Coordinates respect $\Gamma_B$-action, see below; magenta parallelogram = fundamental domain. Vertices = $\mb{C}^3$ charts. Coordinate transitions, see Lemma \ref{symm_lemma}. Expressions in the center of tiles indicate e.g.~$\eta \geq \vp(\xi) = -\xi_1-1$ over that tile, so $\mbox{tile}_{(0,0)}$ is given by $(\xi_1,\xi_2) \in \{ (\xi_1,\xi_2) \mid -1 \leq \xi_1,\xi_2, -\xi_1+\xi_2 \leq 1\} $.
}
\label{coords in tiling}
\end{figure}
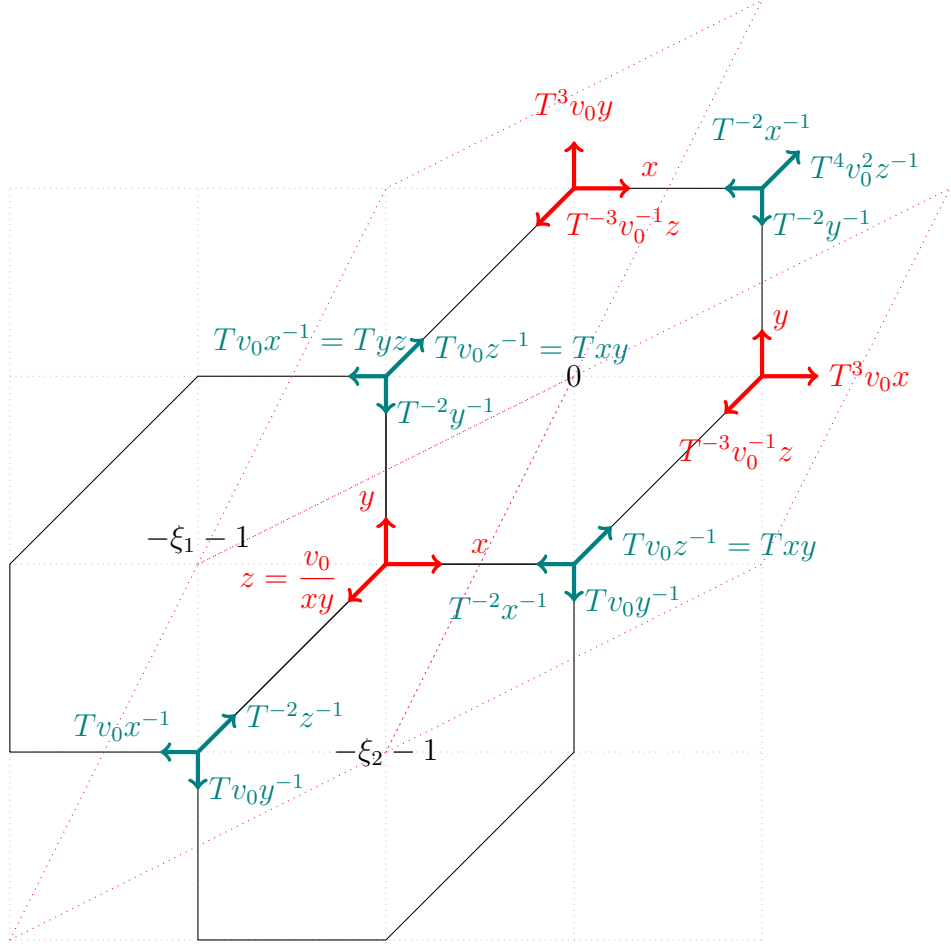

Let $r_x=|x|$ and the same for $y$ and $z$, and let $0<T<<1$ be a small number i.e.~the Novikov parameter on the genus 2 curve. The symplectic form will interpolate between $(\mb{C}^3,\omega_{std})$ with K\"ahler potential $\frac{1}{3}T^2(r_x^2+r_y^2+r_z^2)$ on $\mb{C}^3$-charts at each vertex in Figure \ref{coords in tiling}, and the toric K\"ahler form for $\mb{CP}^2(\mbox{3 points})$, the blow-up at three points, induced by the hexagon in Figure \ref{coords in tiling}. Recall from Corollary \ref{toric_metric} that such a potential is the logarithm of the sum of squares of sections corresponding to lattice points. 

 Toric geometry determines the complex coordinates up to scaling. We don't want to change the symplectic form so we scale the K\"ahler potential as well to cancel out the scaling of the complex coordinates. This reflects the phenomenon that under mirror symmetry, varying the complex structure on $Y$ doesn't change the symplectic structure. In particular, recall that introducing $T$ allows us to make a statement about mirror symmetry that is between families if we were to flip the A and B sides (analogous to the role of $\tau$ in the current direction). Since we are free to scale the sections by a scalar multiple, we may scale in a way that allows us to factor their sum and write the K\"ahler potential as:
\begin{equation}\label{eq:cp2_3_potl_def}
\begin{aligned}
g_{xy}:=\log(1+|T^a x|^2)(1+|T^b y|^2)(1+|T^c xy|^2)
\end{aligned}
\end{equation}
from a to-be-determined choice of $(a,b,c)$. The three dimensional moment polytope $\Delta_{\tilde{Y}}$ in Figure \ref{coords in tiling} has a $\mb{Z}/3$ symmetry if the $\mb{P}^1$ along each axis is defined to have the same symplectic area under $\omega$, say equal to 1. This symmetry will enforce how $\Gamma_B$ acts on the local complex coordinates.  

\begin{center}
    \fbox{\bf Above properties define a choice of complex structure on $Y$}
\end{center}

\begin{definition}[Complex coordinates on $\tilde{Y}$]\label{def:complex_coordinates} We define the following coordinate charts on the polytope $\Delta_{\tilde{Y}}$. Let $g^k$ index the chart obtained by rotating $k$ vertices clockwise from the lower-left vertex of the (0,0) hexagon. Then the coordinate charts are:
\begin{itemize}
\item $U_{\underline{0}, g^0}: (x,y,z)$
\item $U_{\underline{0},g}: (Tv_0x^{-1}, T^{-2}y^{-1}, Tv_0z^{-1})=:(x'',y'',z'')$
\item $U_{\underline{0},g^2}: (x,T^3v_0y, T^{-3}v_0^{-1}z)$
\item $U_{\underline{0},g^3}: (T^{-2}x^{-1}, T^{-2}y^{-1}, T^4v_0^2z^{-1})$
\item $U_{\underline{0},g^4}: (T^3v_0x, y, T^{-3}v_0^{-1}z)$
\item $U_{\underline{0},g^5}: (T^{-2}x^{-1}, Tv_0y^{-1}, Tv_0z^{-1})=:(x',y',z')$
\end{itemize}

Furthermore, we extend this definition to all the coordinate charts on $\tilde{Y}$ by symmetry. E.g.~note that going along the $z$ axis we get to the $g^{-1}$ vertex in the $(-1,0)$ tile and may define $U_{(-1,0),g^{-1}}: (Tv_0x^{-1}, Tv_0 y^{-1}, T^{-2}z^{-1})=:(x''',y''',z''')$. In the new coordinate system denoted $(x'',y'',z'')$ in Figure \ref{coords in tiling}, they will have the same small values as $(x,y,z)$ in the original chart. It is the same idea to obtain the coordinate charts at all the other vertices.
\end{definition}

\begin{definition}[Definition of complex structure on $Y$ by $\Gamma_B$-action]\label{gammab_action} The complex structure on $\tilde{Y}$ defines a complex structure on $Y$ as follows. We define the group action to be
\begin{equation}
    \begin{aligned}
    (-\gamma') \cdot (x,y,z)& := (T^3v_0x, y, T^{-3}v_0^{-1}z) \\
    (-\gamma'')\cdot (x,y,z) & :=(x,T^3v_0y, T^{-3}v_0^{-1}z)
    \end{aligned}
\end{equation}

The convention is that moving up and right in Figure \ref{coords in tiling} is negative since the powers of $T<<1$ are positive, hence decreasing values in the coordinates corresponds with moving in the negative direction of the group action. That is, the actions of $\gamma'$ and $\gamma''$ map the coordinates $(x,y,z)$ to the charts centered at $(-2,-1)$ and $(-1,-2)$ respectively in Figure \ref{coords in tiling}. Then $\Gamma_B$ acts properly discontinuously on the restriction of $\tilde{Y}$ to small $|v_0|$ and $Y$ is a well-defined complex manifold. In particular we obtain a product complex structure.

\end{definition}

The reason we choose this group action is explained in the proof of Claim \ref{claim:omega_descends}.

\begin{claim}\label{claim:omega_descends}
The symplectic forms for $\mb{CP}^2(3)$ (i.e.~$\omega$ on a fiber) and $\mb{C}^3$ (i.e.~$\omega$ in a neighborhood of the origin in $\mb{C}^3$) are invariant under the $\Gamma_B$ action so descend to forms on the corresponding neighborhoods in $Y$, (shaded in Figure \ref{fig:ypic} in the one dimension lower case.)
\end{claim}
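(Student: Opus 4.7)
The plan is to reduce invariance of $\omega$ to invariance of each building block---$\omega_{\mathrm{std}}$ near vertices, $\omega_{\mb{CP}^2(3)}$ on fibers---and to handle them via the $\partial\bar{\partial}$-lemma. Since each block is $\frac{i}{2\pi}\partial\bar{\partial}$ of a K\"ahler potential and $(-\gamma'), (-\gamma'')$ act holomorphically (their defining formulas in Definition \ref{gammab_action} involve only the holomorphic function $v_0$ and the toric coordinates), it suffices to show that the pullback of each potential differs from the potential in the target chart by a pluriharmonic function. I only need to verify this on the two generators of $\Gamma_B$, and then appeal to the cocycle property of the action.

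For the $\mb{C}^3$ block, by Definition \ref{def:complex_coordinates} the local coordinates at the vertex $(-2,-1)$ read, on the overlap with the $(0,0)$ chart, as $(T^3 v_0 x,\, y,\, T^{-3}v_0^{-1}z)$; by Definition \ref{gammab_action} this is precisely $(-\gamma')\cdot(x,y,z)$. So the $-\gamma'$ map is literally the toric coordinate transition from the $(0,0)$ chart to the $(-2,-1)$ chart. Because the same functional form $\rho_{\mathrm{std}}(x',y',z')=\tfrac{T^2}{3}(|x'|^2+|y'|^2+|z'|^2)$ is used in the local coordinates at every vertex, the pullback and the target potential agree on the overlap on the nose, giving $(-\gamma')^*\omega_{\mathrm{std}}=\omega_{\mathrm{std}}$; the analogous computation with Definition \ref{def:complex_coordinates} at vertex $(-1,-2)$ handles $-\gamma''$.

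For the $\mb{CP}^2(3)$ block, on a fixed fiber $v_0=\mathrm{const}$ the generator $-\gamma'$ rescales $x$ by $T^3v_0$ and $z$ by $T^{-3}v_0^{-1}$ while fixing $y$. The hexagonal $\mb{Z}/3$ symmetry, which is what fixes $(a,b,c)$ in \eqref{eq:cp2_3_potl_def}, amounts to the statement that the potential $g_{xy}$ written at the $(-2,-1)$ chart in its own local coordinates (via Definition \ref{def:complex_coordinates}) matches $g_{xy}\circ(-\gamma')$ up to terms of the form $\log|T^{m} v_0^{k}|^2=2\,\mathrm{Re}\log(T^{m}v_0^{k})$; such terms are pluriharmonic on $\{v_0\neq 0\}$ and hence killed by $\partial\bar{\partial}$. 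Concretely, I would expand each of the three log-factors in $g_{xy}$, group the $|x|^2$, $|y|^2$, $|xy|^2$ contributions, and peel off the $v_0$-prefactors that become pluriharmonic.

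Finally, on the interpolation region between the two models, the bump functions depend only on $|v_0|$ (equivalently on the moment-map coordinates on the base), and the $\Gamma_B$ action on the base is the affine translation of \ref{eq:polytope_def_Y}; hence the bump functions are $\Gamma_B$-equivariant across charts and the interpolated form inherits invariance. Both building blocks and their gluing therefore descend to the corresponding neighborhoods in $Y=\tilde{Y}/\Gamma_B$. The main obstacle will be the combinatorial bookkeeping in the $\mb{CP}^2(3)$ step: matching the three log-factors at the source chart with the three at the target chart so that the residual difference is visibly pluriharmonic, which is precisely where the symmetric exponents $(a,b,c)$ and the symmetric $T^{\pm 3}v_0^{\pm 1}$ scalings in Definition \ref{gammab_action} enter crucially.
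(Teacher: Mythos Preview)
Your approach is sound and logically closer to what the claim literally asks, but it differs in spirit from the paper's proof. The paper does not take Definitions~\ref{def:complex_coordinates} and~\ref{gammab_action} as given and verify invariance; rather it runs the argument in reverse to \emph{derive} those definitions. It posits a $\mb{Z}/6$ rotation $g$ on fiber coordinates of the form $(x,y)\mapsto(T^{\alpha}y^{-1},T^{\beta}xy)$, expands $g^*g_{xy}$, and solves for the exponents $(a,b,c)$ in \eqref{eq:cp2_3_potl_def} together with $(\alpha,\beta)$ so that $g^*g_{xy}-g_{xy}$ is harmonic; the outcome $(a,b,c)=(1,1,2)$, $(\alpha,\beta)=(-2,1)$ then fixes both the potential and the $\Gamma_B$-action (the generators being identified with $g^{\pm 2}$ up to a coordinate permutation, cf.\ Lemma~\ref{symm_lemma}). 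So the paper's proof is a construction explaining \emph{why} those particular scalars appear, whereas your direct verification assumes them. Both routes are valid; yours is cleaner for the $\mb{C}^3$ block (which the paper does not treat separately), while the paper's hexagonal $\mb{Z}/6$ device makes the symmetry manifest and carries out the $\mb{CP}^2(3)$ log-factor bookkeeping that you explicitly defer. One correction: your aside that the interpolating bump functions ``depend only on $|v_0|$'' is not right---they are functions of the $(d,\theta)$ coordinates of \eqref{polar_coords}, hence of all of $r_x,r_y,r_z$---but that discussion is extraneous to the present claim, which concerns only the two model forms.
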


\begin{proof}
The interior of each $n$th hexagonal tile in $\Delta_{\tilde{Y}}$ is identified under the $\Gamma_B$-action on the polytope, so on the K\"ahler potential for $\mb{CP}^2(3)$ we would like $\gamma^*g_{xy}$ and $g_{xy}$ to differ by an element in the kernel of $\dd \ol{\dd}$ e.g.~a harmonic function such as $|x|^2=x \ol{x}$. One way to guarantee these conditions is if $\gamma', \gamma''$ arise from a $\mb{Z}/6$-action on the hexagon. Let $G$ denote the $\mb{Z}/6$ rotation action in the first two moment map coordinates on the hexagons in the honeycomb tiling in $\Delta_{\tilde{Y}}$. Suppose that $g$ acts by 
\begin{equation}\label{gp_action}
(x,y) \mapsto (T^\alpha y^{-1}, T^\beta xy)
\end{equation}
for some $\alpha$ and $\beta$, so that the action on the $z$ coordinate is $g \cdot z = T^{-\alpha-\beta}yz$, determined by $v_0=xyz$ gluing to a global function so must be preserved under $G$. Thus:

{ \begin{equation}
 \begin{aligned}
&g^*\log(1+|T^ax|^2)(1+|T^by|^2)(1+|T^cxy|^2) \\
&= \log(1+|T^{a+\alpha}y^{-1}|^2)(1+|T^{b+\beta}xy|^2)(1+|T^{c +\alpha+\beta}x|^2)\\
g_{xy} & = \log(1+|T^ax|^2)(1+|T^by|^2)(1+|T^cxy|^2)
\end{aligned}
\end{equation}}

Comparing coefficients on $x$, $y$ and $xy$, in order for the $g_{xy}$ and $g^* g_{xy}$ to differ by a harmonic function we want $a=c+\alpha+\beta$, $b=-a-\alpha$ and $c=b+\beta$. Since we have five unknowns and only three equations, there are multiple solutions. We make a choice so we will be able to define a symplectic form, and fix $a=b=1$. Then the rest is determined: $\alpha=-2$ and $c=3-\beta=1+\beta$ hence $\beta=1$ and $c=2$. We find that 
\begin{equation}\label{eq:exponents_Kahl_potl}
    \begin{aligned}
    g_{xy}&:= \log(1+|Tx|^2)(1+|Ty|^2)(1+|T^2 xy|^2) \\
    g \cdot (x,y) &= (T^{-2}y^{-1}, Txy)
    \end{aligned}
\end{equation}
and we see that $g^2$ and $g^{-2}$ do indeed give the group action defined in Definition \ref{gammab_action}. 
\end{proof}

Now we can see where the choice of complex coordinates on $\tilde{Y}$ arose from. 

\begin{lemma}\label{symm_lemma} The action of $G$ defines the complex coordinate charts given above in Lemma \ref{lem:complex_coordinates}.
\end{lemma}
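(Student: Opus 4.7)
The plan is to verify the lemma by a finite direct calculation: for each $k \in \{0,1,\ldots,5\}$, compute $g^k\cdot(x,y,z)$ and match the result, up to a cyclic relabeling of the three toric coordinates at each vertex, with the chart $U_{\underline{0},g^k}$ listed in Definition \ref{def:complex_coordinates}. The action $g$ is initially only given on the first two coordinates in Equation \ref{eq:exponents_Kahl_potl}, so I would first extend it to the $z$-coordinate by invoking global $G$-invariance of the toric monomial $v_0 = xyz$. This forces $g\cdot z = v_0/((g\cdot x)(g\cdot y)) = Tv_0 x^{-1}$, giving the full formula $g\cdot(x,y,z) = (T^{-2}y^{-1},\,Txy,\,Tv_0 x^{-1})$.

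Next I would establish the labeling convention at each vertex. At $U_{\underline{0},g^k}$ the three toric coordinates in Definition \ref{def:complex_coordinates} correspond to the three edges of $\Delta_{\tilde{Y}}$ emanating from that vertex, and since the $G$-action rotates the hexagon clockwise by $60^\circ$, the natural labeling at $g^k$ is the $k$-fold cyclic shift of $(g^k\cdot x,\,g^k\cdot y,\,g^k\cdot z)$. As the base case at $k=1$, one computes $(g\cdot z,\,g\cdot x,\,g\cdot y) = (Tv_0 x^{-1},\, T^{-2}y^{-1},\, Txy)$, and the identity $Txy = Tv_0 z^{-1}$ (using $v_0=xyz$) turns this into exactly the listed chart $(x'',y'',z'')$.

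The remaining four vertices are handled by the same procedure: iterate $g$, apply the appropriate cyclic relabeling, and substitute $z = v_0/(xy)$ to put the resulting monomials in the form given in Definition \ref{def:complex_coordinates}. Finally $g^6 = \mathrm{id}$ recovers the starting chart, confirming that $G \cong \mb{Z}/6$. To extend the identification to vertices of neighboring hexagonal tiles of $\tilde{Y}$, I would combine these six local identifications with the $\Gamma_B$-translations of Definition \ref{gammab_action}; compatibility is automatic since $\gamma',\gamma''$ act by (negative) translation on the polytope while preserving $v_0$, so the same $G$-identifications apply chart by chart under the $\Gamma_B$-translates.

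The only real obstacle is bookkeeping: one must fix the cyclic relabeling rule at each vertex from the orientation of the edges of $\Delta_{\tilde{Y}}$ and then apply it consistently as $g$ is iterated. There is no analytic content, and once the convention is pinned down, each of the six identifications reduces to a one-line computation in rational monomials in $(x,y,z,T,v_0)$ using the single substitution $z = v_0/(xy)$.
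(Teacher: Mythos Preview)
Your proposal is correct and follows essentially the same approach as the paper: extend $g$ to the $z$-coordinate via $v_0$-invariance, iterate $g$, apply a permutation at each vertex, and rewrite monomials using $z=v_0/(xy)$. Your explicit identification of the relabeling as the $k$-fold cyclic shift is in fact slightly cleaner than the paper's phrasing (``suitable permutation $\sigma_{g^k}$''), and your observation that compatibility with neighboring tiles follows by composing with the $\Gamma_B$-translations of Definition~\ref{gammab_action} mirrors the paper's closing remark about reaching other vertices via $\gamma'$-translation followed by a power of $g$.
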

\begin{proof}
The $g^k$ allows us to index the charts, but the coordinates are a permutation of the coordinates $g^k \cdot (x,y,z)$ so the subscript is mainly for indexing. Namely, we re-label the coordinates in each $\mb{C}^3$ chart to match with the $(x,y,z)$. Since $g\cdot(x,y) = (T^{-2}y^{-1}, Txy)$, applying the group action twice we find
$$(x,y) \sim (T^{-2}y^{-1}, Txy) \sim (T^{-2}(T^{-1}x^{-1}y^{-1}), T(T^{-2}T^{-1})(Txy)) = (T^{-3}x^{-1}y^{-1}, x)$$
Since $\Gamma_B$ fixes $v_0 = xyz$ so we may rewrite the transformed $y$-coordinate as $T^3v_0y$ to obtain the $\gamma''$ action, after suitable permutation $\sigma_{g^2}$:
\begin{equation} 
\begin{aligned} 
\sigma_{g^2} \cdot g^2\cdot (x,y,z)&  =(x, T^3v_0y, T^{-3}v_0^{-1}z)\\
\implies  -\gamma''\cdot (x,y,z)& := (x, T^3v_0y, T^{-3}v_0^{-1}z)
\end{aligned}
\end{equation}
(Note that $\gamma'$ and $\gamma''$ increase the coordinate norms, analogous to the $\Gamma_B$-action increasing norms by $\tau^{-\gamma}$ on the mirror side.) The $\gamma'$ calculation is similar; since $g^{-1} \cdot (x,y) = (Txy, T^{-2} x^{-1})$ we obtain $g^{-1} \cdot (x,y,z) = (Tv_0z^{-1}, T^{-2}x^{-1}, Tv_0y^{-1})$. Then:
\begin{equation}
\begin{aligned}
-\gamma' \cdot (x,y,z) & = \sigma_{g^{-2}} \circ g^{-2} \cdot (x,y,z)\\
&= \sigma_{g^{-2}} \circ g^{-1}\cdot (Tv_0z^{-1}, T^{-2}x^{-1}, Tv_0y^{-1})\\
&= (T^{-3}v_0^{-1}x, y, T^3 v_0 z)
\end{aligned}
\end{equation}

For example, to find the coordinate system $(x''', y''', z''')$, we first move down and left by $\gamma'$ action to be in the $(0,-1)$ tile, then rotate by $g^{-1}$ and finally apply a suitable permutation.
\end{proof}

\begin{cor}
$\omega_{\mb{CP}^2(3)}$ defines a symplectic form in each $v_0$ fiber on a neighborhood given by the preimage of an open set in the interior of the hexagon at fixed $|v_0|$, (corresponding to the yellow shading of Figure \ref{fig:ypic}.) This open set is defined in Figure \ref{delineation_pic} in terms of the coordinates introduced beginning with Equation \ref{phi_def}.
\end{cor}
\begin{proof}
We are justified in defining it just on a fiber, and taking the compactification as $v_0 \to 0$ we'll obtain the fiber over 0. Fix $v_0 \neq 0$. A fiber has two complex coordinates $(x,y)$, and in particular $|v_0|$ is also fixed so in the moment map this corresponds to fixing a certain height above the base of the infinite bowl. Note that all $\xi_1,\xi_2,\eta$ will vary, but $\eta$ is a function of $(\xi_1,\xi_2)$ since all points lie on a surface. We restrict $|v_0| < T^l$ for some large power of $l$, where $T<<1$. (In other words, up to rescaling, this A-side is either non-compact from the base, or compact from boundary on the base of $v_0$.) Namely, for $|v_0| = T^l$ with $T<<1$ and $l$ sufficiently small, $\Gamma_B$ acts properly discontinuously and holomorphically so the quotient is a well-defined complex manifold $Y$.
\end{proof}

\begin{claim}[$\Gamma_B$-action on symplectic coordinates]\label{gamma_acts}  $\gamma \in \Gamma_B$ acts by $\gamma \cdot (\xi, \eta) = (\xi-\gamma, \eta - \kappa(\gamma) + \left<\xi,\la(\gamma)\right>)$.
\end{claim}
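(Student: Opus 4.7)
The plan is to extract the $\Gamma_B$-action on symplectic coordinates directly from the holomorphic action on complex coordinates in Definition~\ref{gammab_action}. Because $\Gamma_B$ commutes with the Hamiltonian $T^3$-rotation of the toric coordinates on $\tilde Y$ and preserves the K\"ahler form $\omega$ by Claim~\ref{claim:omega_descends}, the induced action on the moment polytope $\Delta_{\tilde Y}=\mu(\tilde Y)$ is necessarily affine. It therefore suffices to compute its action on one generic point and to identify its linear part.

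First I would pin down the $(\xi_1,\xi_2)$-component. Definition~\ref{gammab_action}, together with the remark immediately following it, records that $\gamma'$ sends the lower-left vertex of the $(0,0)$ hexagon to the lower-left vertex of the $(-2,-1)$ hexagon in Figure~\ref{coords in tiling}, and $\gamma''$ to that of the $(-1,-2)$ hexagon. Since $\gamma'=(2,1)$ and $\gamma''=(1,2)$, each generator translates the $(\xi_1,\xi_2)$-plane by $-\gamma$, giving $\xi\mapsto\xi-\gamma$ in general. This is forced by consistency with Claim~\ref{claim:action_angle_fiber}, where the analogous action on a generic $v_0$-fiber, identified under SYZ with $V^\vee$, is already shown to be negative translation on the action coordinates.

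Next I would determine the $\eta$-component by demanding that $\Gamma_B$ preserve the boundary $\{\eta=\varphi(\xi)\}$ of $\Delta_{\tilde Y}$, where $\varphi=\Trop(s)$. Writing the image of $(\xi,\varphi(\xi))$ as $(\xi-\gamma,\varphi(\xi)+\delta(\xi,\gamma))$, the condition $\varphi(\xi-\gamma)=\varphi(\xi)+\delta(\xi,\gamma)$ combined with the tropical periodicity identity~\eqref{phi_periodicity} pins down $\delta(\xi,\gamma)=-\kappa(\gamma)+\langle\xi,\lambda(\gamma)\rangle$ (matching the formula in Equation~\eqref{eq:polytope_def_Y} under the sign convention $\xi\mapsto\xi-\gamma$ fixed in the previous paragraph). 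Since the action is affine on all of $\mathbb{R}^3$, the formula extends from boundary points to the full polytope, so that $\gamma\cdot(\xi,\eta)=(\xi-\gamma,\,\eta-\kappa(\gamma)+\langle\xi,\lambda(\gamma)\rangle)$, as claimed.

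The main difficulty here is bookkeeping rather than mathematics: one has to reconcile the sign conventions appearing in Definition~\ref{gammab_action}, Equation~\eqref{eq:polytope_def_Y}, Claim~\ref{claim:action_angle_fiber}, and the periodicity identity~\eqref{phi_periodicity}, all of which involve $\gamma$ versus $-\gamma$ ambiguities. Once a single convention is fixed, the verification reduces to elementary toric moment-map geometry plus the quadratic identity $\kappa(\gamma+\tilde\gamma)=\kappa(\gamma)+\kappa(\tilde\gamma)+\langle\gamma,\lambda(\tilde\gamma)\rangle$ repeatedly exploited in this section.
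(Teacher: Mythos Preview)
Your proposal is correct and follows essentially the same route as the paper. For the $\eta$-component both you and the paper invoke the periodicity identity \eqref{phi_periodicity} for $\varphi=\Trop(s)$ and the fact that $\Gamma_B$ must preserve $\{\eta\geq\varphi(\xi)\}$. For the $\xi$-component the paper is terser: it simply recalls that on the $B$-side $\gamma$ acts by $x\mapsto\tau^{-\gamma}x$ and that $\xi=\log_\tau|x|$ via Claim~\ref{claim:action_angle_fiber}, hence $\xi\mapsto\xi-\gamma$. Your argument through the $A$-side complex coordinates of Definition~\ref{gammab_action} and the hexagon translations in Figure~\ref{coords in tiling} reaches the same conclusion, and your invocation of Claim~\ref{claim:action_angle_fiber} is exactly the paper's argument. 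The extra affine-action framing you supply (commutation with $T^3$, invariance of $\omega$) is correct and useful scaffolding, though the paper does not make it explicit.
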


\begin{proof} Recall that the $\Gamma_B$-action on $V$ was given by $x \mapsto \tau^{-\gamma}x$. Since $\tau^{\xi} = |x|$, we find that $\Gamma_B$ acts on $\xi$ by negative addition. For the statement about $\eta$, recall that $\eta$ takes values $\eta \geq \vp(\xi)$, so since $\vp(\xi+\gamma) = \vp(\xi) -\kappa(\gamma) + \left<\xi,\la(\gamma)\right>$ 
$$\gamma \cdot \eta = \eta - \kappa(\gamma) + \left<\xi,\la(\gamma)\right>$$
when $\gamma \in \Gamma_B$.\end{proof}

The additional piece of information we need to check when taking an SYZ mirror in this setting of a $\Gamma_B$-action, is that the group action respects the Lagrangian torus fibration.

\begin{lemma}\label{lem:complex_coordinates}
The $\Gamma_B$-action commutes with the moment map, i.e.~$\forall \gamma \in \Gamma_B$, 
$$(\xi_1,\xi_2)(\gamma \cdot (x,y,z)) = \gamma \cdot (\xi_1,\xi_2)(x,y,z)$$
\end{lemma}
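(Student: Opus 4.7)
The plan is to combine three ingredients: (a) the $\Gamma_B$-action commutes with the fiberwise Hamiltonian $T^2$-action that defines $(\xi_1,\xi_2)$; (b) together with the $\Gamma_B$-invariance of $\omega$ from Claim~\ref{claim:omega_descends}, this forces $\mu\circ\gamma - \mu$ to be locally constant on $\tilde{Y}$; (c) the resulting constant equals $-\gamma$, identified by matching $T^2$-fixed points (i.e.\ polytope vertices) on the source and on the target of $\gamma$.

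For (a), the $v_0$-preserving $T^2$-action is $(\alpha_1,\alpha_2)\cdot(x,y,z)=(e^{2\pi i\alpha_1}x,\, e^{2\pi i\alpha_2}y,\, e^{-2\pi i(\alpha_1+\alpha_2)}z)$. In the generator formulas $-\gamma'\cdot(x,y,z)=(T^3v_0\,x,\, y,\, T^{-3}v_0^{-1}z)$ and $-\gamma''\cdot(x,y,z)=(x,\, T^3v_0\,y,\, T^{-3}v_0^{-1}z)$, each scalar factor $T^3 v_0$ or $T^{-3}v_0^{-1}$ is $T^2$-invariant because $v_0$ is $T^2$-fixed; hence $\gamma$ intertwines the $T^2$-actions on source and target. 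Consequently $\gamma_*X_i=X_i$ for the fundamental vector fields $X_1,X_2$ of the $T^2$-action. Combining this with $\gamma^*\omega=\omega$ (Claim~\ref{claim:omega_descends}) and the defining identity $d\mu_i=\iota_{X_i}\omega$ (as in Example~\ref{ex:cp2}) gives $d(\gamma^*\mu_i-\mu_i)=0$, so on the connected manifold $\tilde{Y}$ we have $\mu\circ\gamma=\mu+c_\gamma$ for some constant $c_\gamma\in\mb{R}^2$.

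For step (c), the $T^2$-fixed points of $\tilde{Y}$ are exactly the preimages of vertices of $\Delta_{\tilde{Y}}$ under the moment map, and by construction of $\omega$ in Section~\ref{section: toric_recap} the moment map at each $T^2$-fixed point equals the polytope-vertex position. The $T^2$-fixed point $\{x=y=z=0\}$ of the chart $U_{\underline{0},g^0}$ lies at $(\xi_1,\xi_2)=(-1,-1)$. Its image under $-\gamma'$ is again $T^2$-fixed (by the commutation in step (a)), and by Lemma~\ref{symm_lemma} this image is the $T^2$-fixed point of the chart $U_{\underline{0},g^4}$, whose coordinates are $(T^3v_0\,x,\, y,\, T^{-3}v_0^{-1}z)$, sitting at $(\xi_1,\xi_2)=(1,0)$. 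Hence $c_{-\gamma'}=(1,0)-(-1,-1)=(2,1)=\gamma'$, giving $c_{\gamma'}=-\gamma'$; an identical argument applied to the $g^0$-to-$g^2$ transition yields $c_{\gamma''}=-\gamma''$. Since $\gamma\mapsto c_\gamma$ is a group homomorphism, $c_\gamma=-\gamma$ for every $\gamma\in\Gamma_B$, which is the assertion of the lemma.

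The main subtlety lies in the global normalization of $\mu$: to read off $c_\gamma$ from polytope-vertex positions we need the moment map of $\omega$ (built by gluing the $\omega_{\mb{C}^3}$ model to $\omega_{\mb{CP}^2(3)}$ via bump functions) to agree chart-by-chart with the polytope $\Delta_{\tilde{Y}}$. This is precisely the content of the bump-function construction of Section~\ref{section: toric_recap}, which is arranged so that each $\mb{C}^3$-vertex chart normalizes $\mu$ to equal the corresponding polytope vertex; once this normalization is in place, the single-chart identification in step (c) together with the global constant-shift conclusion of step (b) pin down $c_\gamma=-\gamma$ everywhere on $\tilde{Y}$.
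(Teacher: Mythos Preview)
Your steps (a) and (b) are correct and give a cleaner conceptual framework than the paper makes explicit. The paper does not isolate the ``$\mu\circ\gamma-\mu$ is constant'' statement; instead it tracks how the locally defined K\"ahler potential $F$ transforms as one crosses from one $\mb{C}^3$ chart to an adjacent one, and hence how the local formula for $\xi_i$ in terms of $\partial F/\partial\log r_\bullet$ shifts. Concretely, it proves Claim~\ref{area_cyl_2dirns} (area of an $S^1$-invariant disc equals change in moment map) and Claim~\ref{claim:sympl_area_1} (each axis $\mb{P}^1$ has symplectic area $1$), deduces that crossing the $z$-axis shifts $(\xi_1,\xi_2)$ by $(-1,-1)$ and crossing the $x$-axis by $(1,0)$, and then chains two such crossings to realize the action of $\gamma'$.

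Your step (c), however, has a genuine gap. On the simply connected $\tilde{Y}$ there is a \emph{single} global additive constant in $\mu$, not one per chart; once you normalize $\mu(g^0\text{-vertex})=(-1,-1)$, the value $\mu(g^4\text{-vertex})$ is \emph{determined} by $\omega$ via $\mu(q)-\mu(p)=\int_{p}^{q}\iota_{X_i}\omega$, which along the toric $\mb{P}^1$'s is exactly their symplectic area. Thus your assertion that $\mu(g^4\text{-vertex})=(1,0)$ is equivalent to knowing that each edge $\mb{P}^1$ has area $1$---precisely Claim~\ref{claim:sympl_area_1}, the computational core of the paper's proof. The bump-function construction by itself does not ``arrange so that each $\mb{C}^3$-vertex chart normalizes $\mu$''; it only makes $\omega$ smooth and nondegenerate. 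To close your argument you must either invoke Claim~\ref{claim:sympl_area_1} or reproduce that area computation, after which your (a)--(b)--(c) would indeed be a valid alternative route.
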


\begin{proof} The right-hand side is $\xi-\gamma$. In order to determine the left-hand side, we need to compute how the moment map changes as a function of the complex coordinates. To do this, we let $F$ denote the local K\"ahler potential for the symplectic form. So $F$ is interpolating between the three toric $\mb{CP}^2(3)$ potentials around a vertex. The change in moment map value can be calculated by integrating the symplectic form to compute the area of a disc, as follows.

\begin{claim}\label{area_cyl_2dirns} Consider a disc $D \subset Y$, whose lift $\tilde{D} \subset \tilde{Y}$ is invariant under the action of an $S^1$ subgroup of $T^3$. Denote the corresponding moment map by $\mu$. In particular the boundary of $\tilde{D}$ is an $S^1$-orbit $S^1.(x,y,z)$, while its center is a fixed point $(x_0,y_0,z_0)$. Then the symplectic area of the disc $D$ is equal to $\mu(x,y,z) - \mu(x_0,y_0,z_0)$.\end{claim}

\begin{proof} We claim $\mu(x,y,z)- \mu(x_0,y_0,z_0) = \int_{\tilde{D}} \omega$. The $\mb{CP}^1$ moment map image is a segment over which we can draw an elongated sphere and map a value in the segment to the area of the part traced out in the sphere. The lift $\tilde{D}$ has boundary component given by an orbit, which we can think of as the integral flow of the vector field generated by the infinitesimal action, call it $X^{\#}$. Then the integral over $\tilde{D}$ involves integrating over this $X^{\#}$ and the line from $(x,y, z)$ to $(x_0,y_0,z_0)$. Call this line $C$. Then we can write the integral as
$$\int_C\iota_{X^\#} \omega = \int_C d \mu = \mu(x,y,z) - \mu(x_0,y_0,z_0) $$
\end{proof}
So in order to compute the change in moment map coordinates, we go ahead and compute area as follows.

\begin{claim}\label{claim:sympl_area_1} The symplectic area of the $\mb{P}^1$ along each of the $x$, $y$ and $z$ axes is 1. \end{claim}

\begin{proof} In the simplified case of $\mb{P}^1$, if the gluing is normally $x \sim x^{-1}$, the analogue here would be to define a gluing $x \sim K x^{-1}$ for some constant $K$ (so a $\mb{Z}$-action). We do the computation along the $y$-axis, and the others will be the same because we will impose $x \leftrightarrow y \leftrightarrow z$ symmetry on the symplectic form. Recall $\mb{P}^1$ has an open covering $U_0, U_\infty$ and charts $\phi_0$ and $\phi_1$ sending $[z_0:z_1]$ to $z_1/z_0$ and $z_0/z_1$ respectively. We want to split up the integration over $\mb{P}^1$ into these two charts, but only the portion of the chart up to where they intersect (else we integrate over too much). So we have $y$ is the coordinate on $\phi_0(U_0) \cong \mb{C}$ then it is $T^{\alpha}y^{-1}$ on $\phi_\infty(U_\infty)$ and $|y| = |T^\alpha y^{-1}| \implies |y|=T^{\alpha/2}$. Let $C_{T^\alpha/2}$ denote this circle of complex radius $T^{\alpha/2}$. Then let $D_0$ be the disc in $\phi_0(U_0)$ and $D_\infty$ the corresponding disc in the other one. 

Let $F$ denote the local K\"ahler potential for $\omega$ and $F_0, F_\infty$ be the K\"ahler potential in the two charts $U_0$ and $U_\infty$ on the $y$-axis $\mb{P}^1$. Then for K\"ahler potential $F$ given above from the toric K\"ahler form on $\mb{CP}^2(3)$:
\begin{allowdisplaybreaks}
\begin{align*}
\int_{\mb{P}^1} \frac{i}{2\pi} \dd\ol{\dd} F &=\frac{i}{2\pi} \left[\int_{\phi_0^{-1}(D_0)} d(\ol{\dd} F) +\int_{-\phi_\infty^{-1}(D_\infty)} d(\ol{\dd} F)\right] \\
& = \frac{i}{2\pi} \left[\int_{\dd D_0} \ol{\dd} (\phi_0^{-1})^*F - \int_{\dd D_\infty} \ol{\dd} (\phi_\infty^{-1})^*F\right]\\
& = \frac{i}{2\pi} \int_{C_{T^{\alpha/2}}} \ol{\dd}(F_0 - F_\infty)\\
& = \frac{i}{2\pi}\int_{C_{T^{\alpha/2}}} \ol{\dd} \log(|T^{\alpha/2}y|^2) = \frac{i}{2\pi}\int_{C_{T^{\alpha/2}}} \frac{T^{\alpha/2} y d\ol{y}}{|T^{\alpha/2}y|^2}\\
& = \frac{i}{2\pi}\int_0^{2\pi} e^{i \theta} (-i)e^{-i\theta} d\theta = 1
\end{align*}
\end{allowdisplaybreaks}
%
%& = \frac{i}{2\pi} \int_{C_{1/T^{\gamma}}} \ol{\dd}(F_V - F_V''')= \frac{i}{2\pi} \int_{C_{1/T^{\gamma}}} \ol{\dd}(\log(1+|T^{\gamma}z|^2) - \log(1+|T^{-\gamma}z^{-1}|^2))\\
\end{proof}

{\bf Commutes in $\xi_1,\xi_2$.} Thus when gluing across the $z$-axis the coordinate chart $U_{(-1,0),g^{-1}}$ with coordinates $(x''',y''',z''')$ to the main coordinate chart $U_{0,g^0}$ with coordinates $(x,y,z)$, the K\"ahler potential transforms by $F'''_V = F_V - \log(|Tz|^2)$. Thus this discrepancy between the local K\"ahler potentials implies that the value of $\xi_1$ is modified by adding the constant $-1$, and similarly for $\xi_2$. Similarly, when gluing across the $x$-axis the main chart to the chart $U_{0,g^5}$ with coordinates $(x',y',z')$ the local K\"ahler potentials differ by $-\log(|Tx|^2)$, which modifies $(\xi_1,\xi_2)$ by $(1,0)$. Combining the two transformations amounts to the action of $\gamma'$, namely changing coordinates from $(x''',y''',z''')$ to $(x',y',z')$. This action modifies $(\xi_1,\xi_2)$ by adding $(2,1)$, which is exactly $\gamma'$. Similarly for $\gamma''$. This completes the proof that the $T^2$-moment map is $\Gamma_B$-equivariant.
\end{proof}  

We will need the following claim later when we define the full symplectic form.

\begin{claim}\label{claim:monotonic_mom_map} The moment map coordinates $(\xi_1,\xi_2)$ are monotonic increasing functions of $r_x$ and $r_y$ in a $v_0=xyz$ fiber.
\end{claim}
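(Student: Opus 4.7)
The plan is to express $\xi_1,\xi_2$ as partial derivatives of the K\"ahler potential restricted to the $v_0=xyz$ fiber, and then to verify positivity of these derivatives chart by chart. Work in logarithmic radial coordinates $u_i=\log r_i$ for $i\in\{x,y,z\}$. For any $T^3$-invariant K\"ahler potential $F(u_x,u_y,u_z)$ the moment maps for the three individual $S^1$-rotations of $x,y,z$ are, up to a fixed positive constant, $\mu_i=\partial F/\partial u_i$. Since the $T^2$-subgroup preserving $v_0=xyz$ is generated by $(1,0,-1)$ and $(0,1,-1)$, its moment coordinates are $\xi_1=\mu_x-\mu_z$ and $\xi_2=\mu_y-\mu_z$. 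On the fiber the constraint $u_x+u_y+u_z=\log|v_0|$ holds, and defining $\bar{F}(u_x,u_y):=F(u_x,u_y,\log|v_0|-u_x-u_y)$, the chain rule yields $\xi_1=\partial\bar{F}/\partial u_x$ and $\xi_2=\partial\bar{F}/\partial u_y$. Since $r_i\mapsto u_i$ is strictly increasing, monotonicity of $\xi_i$ in $r_j$ is equivalent to positivity of the Hessian entries $\partial^2\bar{F}/\partial u_j\partial u_i$, so by symmetry of mixed partials it suffices to check that the three entries $\partial_{u_x}^2\bar{F}$, $\partial_{u_y}^2\bar{F}$, and $\partial_{u_x}\partial_{u_y}\bar{F}$ are strictly positive.

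Next, verify this in each regime used to build $\omega$. In a $\mb{C}^3$-chart near a vertex where $F=\tfrac{1}{3}T^2(e^{2u_x}+e^{2u_y}+e^{2u_z})$, the Hessian of $F$ is diagonal and the chain-rule computation yields $\partial^2\bar{F}/\partial u_x^2=\tfrac{4}{3}T^2(r_x^2+r_z^2)$, $\partial^2\bar{F}/\partial u_y^2=\tfrac{4}{3}T^2(r_y^2+r_z^2)$, and the crucial off-diagonal $\partial^2\bar{F}/\partial u_x\partial u_y=\tfrac{4}{3}T^2r_z^2$, all strictly positive. In the $\mb{CP}^2(3)$-regime with $F=g_{xy}$ as in Equation \eqref{eq:cp2_3_potl_def} (which is independent of $u_z$, so $\bar{F}=g_{xy}$), direct differentiation of the $\log(1+T^4r_x^2r_y^2)$-summand produces the mixed partial $4T^4r_x^2r_y^2/(1+T^4r_x^2r_y^2)^2>0$, and the diagonal entries are manifestly positive as well.

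Finally, in the interpolation region where $\omega$ is glued from the two limit forms via bump functions, argue that the positivity of all Hessian entries of $\bar{F}$ persists throughout. The main obstacle is precisely this last step: the non-degeneracy of $\omega$, which is equivalent to positive-definiteness of the Hessian of the interpolated K\"ahler potential and is verified in Appendix B, does not by itself force every individual entry of the Hessian to be positive. To conclude, one must exploit the specific monotonic shape of the bump functions together with the observation that both limiting potentials have uniformly positive Hessian entries, so that no sign change of $\partial^2\bar{F}/\partial u_j\partial u_i$ can occur across the interpolation and monotonicity is preserved globally on the fiber.
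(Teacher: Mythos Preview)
Your approach is essentially the same as the paper's: the paper also computes, via $\iota_{X_i}\omega=d\xi_i$ with $\omega=\tfrac{i}{2\pi}\partial\bar\partial F$, the formula
\[
\xi_1=-\tfrac12\Bigl(\frac{\partial F}{\partial\log|x|}-\frac{\partial F}{\partial\log|z|}\Bigr)+\text{const},\qquad
\xi_2=-\tfrac12\Bigl(\frac{\partial F}{\partial\log|y|}-\frac{\partial F}{\partial\log|z|}\Bigr)+\text{const},
\]
which is your $\xi_i=\partial\bar F/\partial u_i$ up to sign and a factor of $1/2$. The paper then stops there and simply asserts that monotonicity in $|x|,|y|$ for fixed $v_0$ holds ``because of how we defined $F$'', without the chart-by-chart Hessian check you attempt.

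Two comments on your version. First, you are checking more than the claim as used in the paper requires: the applications only need that $(r_x,r_y)\mapsto(\xi_1,\xi_2)$ is a diffeomorphism on a fiber (so that moment map level sets are single $T^2$-orbits, and so that the map $\phi_-$ in the monodromy section inverts norms). That is equivalent to positive-definiteness of the Hessian of $\bar F$, which is exactly the non-degeneracy of $\omega|_{V^\vee}$ established in Appendix~B. Requiring every individual entry $\partial^2\bar F/\partial u_i\partial u_j$ to be positive (in particular the off-diagonal one) is strictly stronger, and as you yourself flag, this is not a consequence of positive-definiteness alone. Second, the honest gap you identify in the interpolation region is not closed in the paper either; the paper's ``proof'' of this claim is really the moment-map formula together with the construction of $F$, and the sentence you would need to make your last paragraph rigorous (that the bump-function interpolation preserves positivity of each Hessian entry, not just positive-definiteness) would require revisiting the estimates of Appendix~B with that finer goal in mind.
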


\begin{proof}Recall the action-angle coordinates $(\xi,\theta)$ of Claim \ref{claim:action_angle_fiber} from the fiber-wise action $\rho$:
$$\rho: T^2 \times V^\vee \ni (\alpha_1,\alpha_2)\cdot (x,y,z) \mapsto (e^{2i\pi \alpha_1} x, e^{2i\pi \alpha_2} y,e^{2i\pi (-\alpha_1-\alpha_2)}z ) \in  V^\vee$$
Let $X_i:= d\rho(\dd_{\alpha_i})$. If $\iota_{X_i} \omega|_{V^\vee}$ were exact, say $dH_i$ for some function $H_i$ (known as the Hamiltonian), then the torus action would be called a \emph{Hamiltonian group action} and $(H_1,H_2)$ would be the moment map. This leads us to the caveat at the start of Section \ref{caveat}. In the setting here, $\omega$ is complicated so computing $\iota_{X_i} \omega|_{V^\vee}$ is involved. However, the 1-forms $\iota_{X_i} \omega$ are closed, and hence locally exact, so there exist locally defined functions $\xi_1, \xi_2$ so that $\iota_{X_i}\omega = d\xi_i$. Notationally we are assuming $\xi_\eta \equiv \eta$. Globally, as seen above, the first two $\xi_i$ are $\Gamma_B$-periodic and their differentials pass to 1-forms on a torus fiber $\mb{R}^4/\Gamma$. The infinitesimal action is expressed by the pushforwards $d\rho(\dd_{\alpha_i}) $ for $i \in \{1,2\}$. These vector fields are
\begingroup \allowdisplaybreaks \begin{equation}
\begin{aligned}
X_1=\frac{\dd}{\dd \theta_1}  = (ix, 0, -iz), \qquad
X_2=\frac{\dd}{\dd \theta_2} = (0, iy, -iz)
\end{aligned}
\end{equation} \endgroup

Then the action coordinates can be expressed locally in terms of the K\"ahler potential:
\begingroup \allowdisplaybreaks
\begin{align*}
\iota_{X_i} \omega|_{V^\vee} &= d\xi_i \implies \omega|_{V^\vee} = d\xi \wedge d\theta \\
\implies \iota_{X_i} \left(\frac{i}{2\pi} \dd \ol{\dd} F\right) & = \frac{i}{2\pi} d\iota_{X_i} {\dd} F = d \xi_i\\
\therefore \xi_1 & := i/2\pi \partial F ( 2\pi i x \partial_x - 2\pi i z \partial_z)+const \\
\therefore \xi_1 & = -\frac{1}{2}\left(\frac{\dd F}{\dd \log |x|} - \frac{\dd F}{\dd \log|z|}\right)+const\\
\therefore \xi_2 & = -\frac{1}{2}\left(\frac{\dd F}{\dd \log |y|} - \frac{\dd F}{\dd \log|z|}\right) +const
\end{align*}
\endgroup
using $\dd \ol{\dd} = -\ol{\dd} \dd = -d \dd$, the conversion to polar coordinates from \textsection \ref{convert_polar} and that $F$ is preserved by rotating $x,y,z$ as it is a function of their norms, hence the Lie derivative $\mc{L}_{X_i}\ol{\dd} F = 0$ and also $\dd_{\theta_x}F = 0$. The calculation for $\xi_2$ is similar. This calculation of $\xi_i$ is up to additive constants. The upshot is that the moment map $(\xi_1,\xi_2)$ provides periodic action-coordinates which are monotonic increasing in $|x|$ and $|y|$ for fixed $v_0$ because of how we defined $F$. (But recall the caveat, we are calling it a moment map but it takes values in a torus instead of affine space, so we are expanding the definition of moment map here to allow the $\xi_i$ to be periodic multivalued functions for a \emph{quasi-Hamiltonian action}.) We see that $V^\vee$ is symplectomorphic to $(T_B \times T_F, \omega_{std})$. 
\end{proof}

Note that an orbit is precisely the kernel of $(d\xi_1,d\xi_2)$ so that a preimage of a moment map value is a $T^2$-orbit. Said another way, the torus action preserves the moment map. Or said yet another way, the tangent space to a fiber of the moment map is $\dd_{\theta_1},\dd_{\theta_2}$.

{\bf How to view $\eta$ as a moment map coordinate.} Let $\theta_\eta=\arg(v_0)=\arg(xyz)$. We can't extend the above to a $T^3$-action. If we were to rotate $v_0$ too, then $x$ and $y$ would change in the process as well, due to monodromy. Because $T^3$ has trivial bracket on its Lie group, we should rotate one variable while fixing the others. When $v_0 \not \in \mb{R}_+$ the angle variables $\theta_1$ and $\theta_2$ on $V^\vee$ are only well-defined up to additive constants, as they jump by $\arg(v_0) = \theta_\eta$ under the action of the generators of $\Gamma_B$ due to the transformation rules for the complex coordinates $x$ and $y$, see Corollary \ref{gammab_action}. Thus we can only define an infinitesimal $T^3$ action generated locally by sufficiently small rotations $\alpha_i$,  $P (\alpha_1, \alpha_2, \alpha_\eta) \cdot (x,y,z) = (e^{2i\pi \alpha_1} x, e^{2i\pi \alpha_2} y, e^{2i\pi (\alpha_\eta-\alpha_1-\alpha_2)} z)$. The infinitesimal action on $v_0$ is expressed by the pushforward $dP(\dd_{\alpha_\eta}) $:
\begingroup \allowdisplaybreaks \begin{equation}
\begin{aligned}
\frac{\dd}{\dd \theta_\eta}  = (0,0,iz)
\end{aligned}
\end{equation} \endgroup

This isn't global as it transforms non-trivially under $\Gamma_B$. (Note that, notationally, $\xi_\eta$ is the same thing as $\eta$.) The upshot is that upon integrating we find that 
$$\eta = \frac{1}{2} \cdot  \frac{\dd F}{\dd \log|z|} + const$$

Though the action coordinates $(\xi_1,\xi_2,\eta)$ are globally well-defined on the universal cover $\tilde{Y}$, on which the $T^3$-action discussed above is well-defined and Hamiltonian, $\eta$ is only defined locally on $Y$. This coordinate corresponds to complex coordinates on $X$ via the $S^1$-action rotating the coordinate $y\in \mb{C}$ in $Bl_{H \times \{0\}} V \times \mb{C}$. That action has moment map $\mu_X(\bm{x},y)$ given in \cite[Equation (4.1)]{AAK}\label{page_mu}, and then $|y|$ is determined by the choice of $\eta$ since $X$ and $Y$ have the same base as Lagrangian torus fibrations, with base coordinates given by $(\xi_1,\xi_2,\eta)$.

%Those sums will only converge for a small radius of convergence in $T$, where we count the discs weighted by $T$ raised to their area. [check nonzero radius of convergence]. 
%
%More specifically, we want the symplectic form to have two properties: it has the right periodicity on $\tilde{V}$ to descend to $V$, and it has $x \leftrightarrow y$ symmetry so that parallel transporting linear-type Lagrangians in $V$ around the singular fiber gives a Lagrangian that is Hamiltonian isotopic to the linear-type one of degree one more.
%
%We rephrase these requirements in mathematical terms. Consider the hexagon. We want to construct a $\mb{Z}/6$ action that maps the $(0,0)$ vertex (shift the origin to the lower left vertex) to the other vertices. Indeed if we have $\mb{Z}/3$ symmetry and $\mb{Z}/2$ symmetry then we get symmetry on the full dihedral group [double check]. This will determine the complex coordinate scaling by $T$, which will tell us, by mirror symmetry, what symplectic form had to be taken on the other side to which we took an abelian generalized SYZ mirror. The hexagon determines complex coordinate charts at each of the six vertices, and our goal is to figure out how to scale them by $T$ to get the two required properties.
%

\begin{definition}[Superpotential] The \emph{superpotential $v_0$} is the holomorphic function $Y \to \mb{C}$
\begingroup \allowdisplaybreaks \begin{equation}
v_0(x,y,z) :=xyz
\end{equation} \endgroup
which is well-defined as a global function on $Y$ because $\gamma^*v_0 = v_0$ for all $\gamma \in \Gamma_B$.
\end{definition}

\begin{example}
To see why the fibers are complex tori with complex coordinates $x$ and $y$, e.g. consider just $x \sim 2x$ on $\mb{C}^*$: if we identify the unit circle with the circle of twice the radius, we've just formed an elliptic curve or 2-torus. Now we compactify: if we have $x \sim T^n x$ for all $n$ then taking $n\to \pm \infty$, we see that 0 and $\infty$ are identified to give the pinched torus.
\end{example}

\begin{remark}[Mirror to non-standard symplectic form on mirror] Note that the symmetry properties required for $\omega$ told us what complex structure was needed (it produces a product complex structure), which by mirror symmetry will correspond to a specific symplectic form on A-model of $X$. The complex structure is a product here on $x$ and $y$. In particular, the $\Gamma_B$-action on complex coordinates here is different than that described in \cite[\textsection 10.2]{AAK}. The complex structure there is mirror to the standard K\"ahler form and is defined by:
\begingroup \allowdisplaybreaks \begin{equation}
 \bm{v}^m \sim v_0^{\left<\la(\gamma),m\right>}T^{\left<\gamma,m\right>}\bm{v}^m
 \label{aak vars}
 \end{equation} \endgroup
where they use complex coordinates $\bm{v} = (v_1,v_2)$. Thus setting $(x,y,z)=(v_1^{-1}, v_2^{-1}, v_0v_1v_2)$, their complex structure arises from the following $\Gamma_B$-action:
\begin{align*}
\gamma'\cdot (x,y,z) &= (T^{-2}v_0^{-1}x, T^{-1}y, T^{3}v_0 z)\\
\gamma''\cdot (x,y,z) &= (T^{-1}x, T^{-2}v_0^{-1}y, T^{3}v_0 z)
\end{align*}
So our complex structure is mirror to a non-standard K\"ahler form on the genus 2 curve.
\end{remark}

\begin{remark}[Terminology] The toric variety $\tilde{Y}$ is referred to as a \emph{toric variety of infinite-type} by \cite{kl}, because of the infinitely many facets, where the neighborhood of the toric divisor there is the same as our restriction to $|v_0|$ small, i.e.~$\eta$ small. 
\end{remark}

%See notebook 6. 
%\begin{proof} We claim that near boundary of a tile it looks like $\mc{O}(-1) \oplus \mc{O}(-1) \to \mb{CP}^1$. The proof is that the moment map for the torus action on this, as a subset of $\mb{CP}^1 \times \mb{C}^4$ with a $T^3$ action on $((x_0:x_1),(l_0,l_1,k_0,k_1))$, is given by
%$$\left(\frac{|x_0|^2}{|x_0|^2 + |x_1|^2} + |l_0|^2 + |k_0|^2, |l_0|^2 + |l_1|^2, |k_0|^2 + |k_1|^2\right)$$
%
%where $x_0l_1 = x_1l_0$ and similarly $x_0k_1 = x_1k_0$. Under coordinate transformations we see that $x_1/x_0 = w$ is $x$, $l_0 = y$ and $k_0 = z$. We get toric monomials
%$$x = (1,0,0),\qquad y = (0,1,0),\qquad  z = (0,0,1) - (1,0,0) - (0,1,0) = (-1,-1,1)$$
%
%From \cite{AAK}, $v_{\alpha,1}$ is the monomial with weight $(-1,0,-\alpha_1)$ and $v_{\alpha,2}$ with weight $(0,-1,-\alpha_2)$. Here $\alpha \in A = \mb{Z}^2$ and we are working in the case of $\alpha=(\alpha_1,\alpha_2) = 0$.  We know $v_0 = (0,0,1)$ hence $z = v_0(xy)^{-1}$ corresponds to toric monomial $(1,1,-1)$. In particular $v_0$ is characterized by vanishing to order 1 on each of the facets, namely its toric monomial is a sum of those of the toric coordinates. Hence 
%$$v_1 = (-1,0,0), \qquad v_2 = (0,-1,0) \implies v_1 = x^{-1}, \qquad v_2 = y^{-1}$$
%
%Taking $m = (0,-1)$ and $m=(-1,0)$ in Equation \ref{aak vars} we get
%$$(x,y) \sim (T^{-2}v_0^{-1}x,T^{-1}y),\qquad (x,y) \sim (T^{-1}x,T^{-2}v_0^{-1}y)$$
%
%In particular this implies that $z = v_0/(xy) = v_0v_1v_2$ changes by the following, in both directions $\gamma'$ and $\gamma''$:
%$$z \sim T^3v_0z$$
%\end{proof}

\subsection{The symplectic form}\label{section:the_symp_form}
Now we define the symplectic form, first on a fiber, as a function of the norms $r_x, r_y, r_z$ viewed in the moment polytope. See Figure \ref{interpolate}. We are starting with a polytope, which we want to be the moment map image with respect to some symplectic form that we construct. In particular, we have found a symplectic form so that the boundary $\mb{P}^1$'s of the hexagon have length 1 i.e.~symplectic area 1. This follows from the change in K\"ahler potential under gluing across coordinate axes, see Claim \ref{gamma_acts}. 

The three tiles adjacent to the main corner define toric coordinates $x,y$ and $x,z$ and $y,z$ respectively. Recall the three $\mb{CP}^2(\text{3 points})$ potentials are denoted $g_{xy},g_{xz}, g_{yz}$ by Equations (\ref{eq:cp2_3_potl_def}) and (\ref{eq:exponents_Kahl_potl}). In between we interpolate between the two potentials on either side. In the Roman-numbered regions, all three potentials are at play and all of $r_x,r_y,r_z$ are very small. They are small because in a fiber we fix $v_0$ and we've restricted to small $v_0$ in the definition of $\Delta_Y$ in Equation (\ref{eq:polytope_def_Y}). So if $|v_0| = T^{l}$, then $Tr_*$ are each of the form $T^{l_*}$ where $l_x+l_y+l_z=l$. Furthermore in the Roman-numbered sections around the vertex, because all three toric coordinates go to zero at the vertex, near the vertex their norms are still small by continuity. Geometrically, the toric coordinates are small in the corresponding region of a torus fiber close to the zero fiber.  
\begin{center} 
\begin{figure}
\begin{multicols}{2}
\includegraphics[scale=0.3]{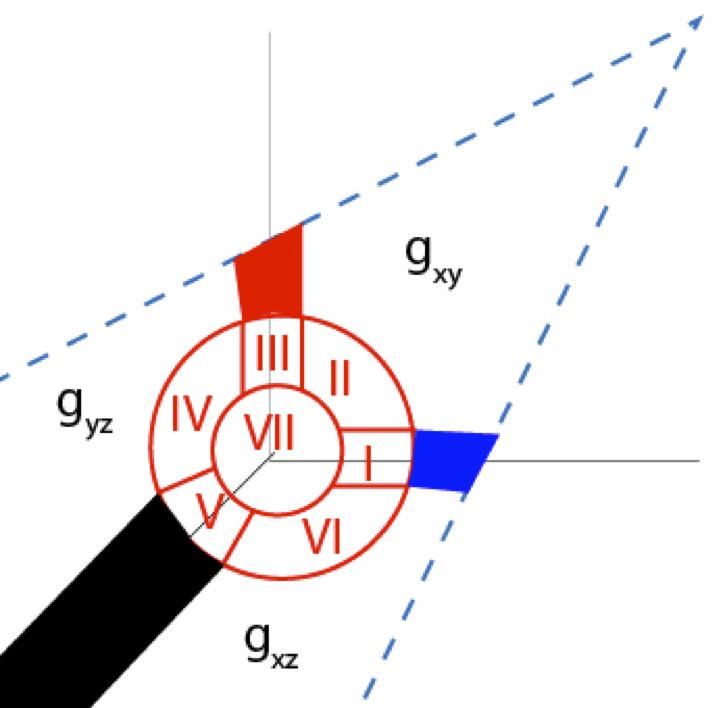}

\includegraphics[scale=0.2]{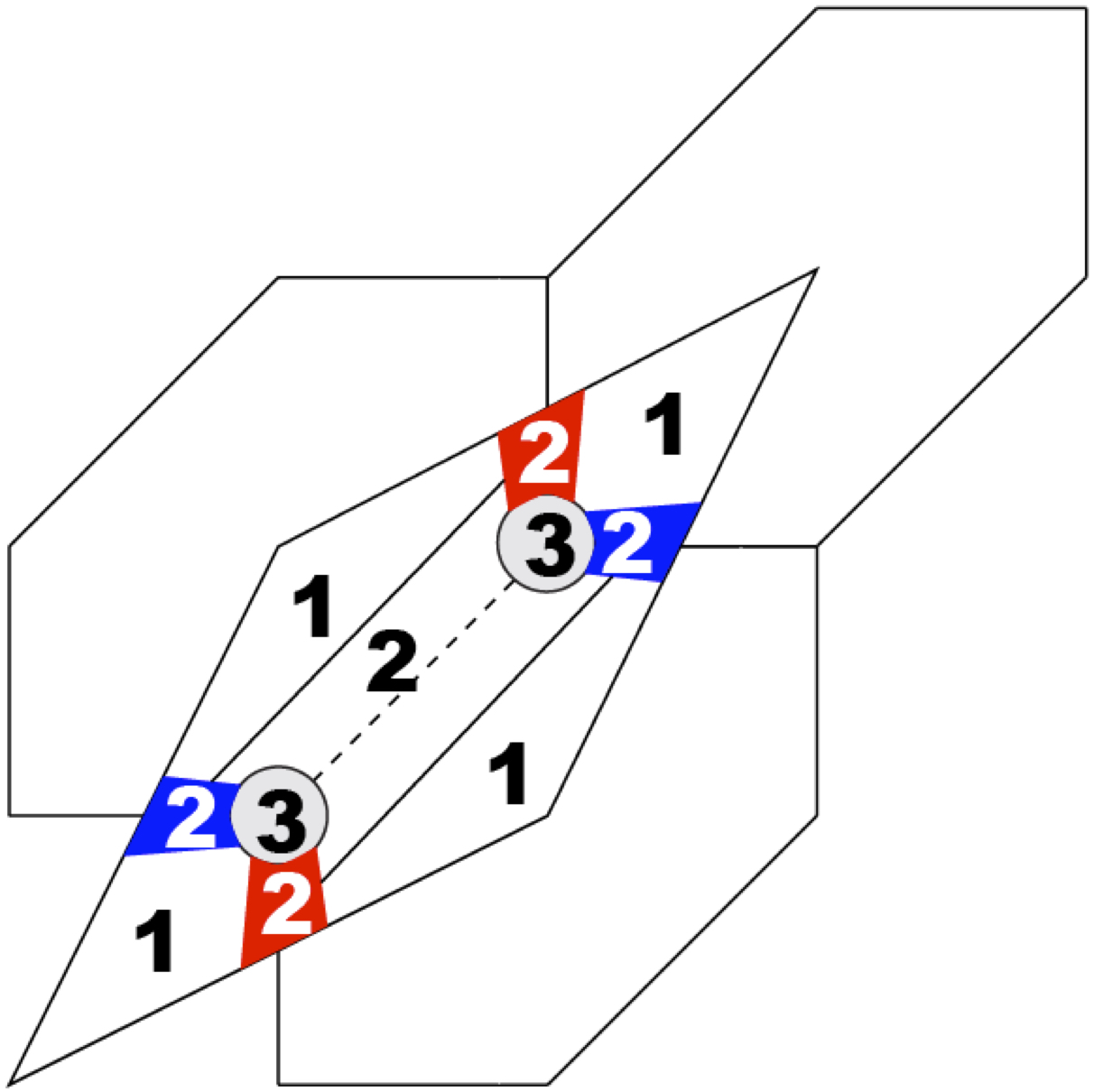}
\end{multicols}
\caption{L) Regions near a vertex, R) Number of regions interpolated between}
\label{interpolate}
\end{figure}
\end{center}
\begin{center}
    \fbox{\bf Definition of new radial and angular coordinates $d$ and $\theta$, definition of $\omega$}
\end{center}

We introduce local real radial and angular coordinates $d$ and $\theta$ to define delineations on the moment polytope around the vertex. Their subscripts indicate which region of Figure \ref{interpolate} they are defined in. This will allow us to interpolate between the three K\"ahler potentials $g_{xy}, g_{xz}, g_{yz}$ as functions of these local coordinates. We fix $|v_0| (= |xyz|) = T^l$ for $T<<1$ and $l$ a large positive constant, so that in the red regions denoted by Roman numerals of Figure \ref{interpolate} we have $r_x,r_y,r_z<<1$ and can use approximations to simplify the radial and angular coordinates. We start with
\begin{equation}\label{eq:interpolate}
F = \alpha_1 g_{xy} + \alpha_2 g_{xz} + (1-\alpha_1-\alpha_2)g_{yz};\;\; 1/3\leq \alpha_1,\alpha_2; \;\; \alpha_1+\alpha_2 \leq 1
\end{equation}
where Region VII is where $\alpha_1 = \alpha_2 = 1/3$ and $F  = \frac{1}{3}(g_{xy} + g_{xz} + g_{yz}) \approx \frac{2}{3}((Tr_x)^2 + (Tr_y)^2 + (Tr_z)^2)$ via the log approximation and in the other regions $\alpha_1,\alpha_2$ interpolate between the three K\"ahler potentials.

We want $d$ and $\theta$ to locally around a vertex be approximated by Figure \ref{angular} where we can read off how $r_x, r_y$ and $r_z$ compare to each other. It is hard to define them as functions of the moment map coordinates, but we use that the moment map coordinates are monotonic increasing functions in the norm coordinates. This is proven in Claim \ref{claim:monotonic_mom_map}. In other words, $r_x$ increases in the $(1,0,0)$ direction, $r_y$ increases in the $(0,1,0)$ direction on the polytope and $r_z$ increases in the $(-1,-1,1)$ direction. These motivate the radial variable definition. Hence, as shown in Figure \ref{angular} we see that e.g.~$T^2(r_y^2-r_z^2)$ increases in the upward vertical direction. This motivates the angular variable definition. Namely our choices for $d$ and $\theta$ approximate to the expressions in Figure \ref{angular}.  
 
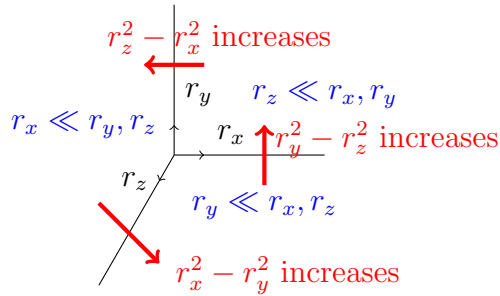
\begin{figure}[h]
\centering
\begin{tikzpicture}[scale = 0.4]
\draw[->] (0,0) -- (1,0); \draw (1,0) -- (5,0);
\draw[->] (0,0) -- (0,1); \draw (0,1) -- (0,5);
\draw[->](0,0) --(-1/2, -0.866); \draw (-1/2, -0.866) -- (-5/2, -4.33);
\node[blue] at (5,2) {$r_z \ll r_x, r_y$};
\node[blue] at (-3,1) {$r_x \ll r_y, r_z$};
\node[blue] at (3,-1.7) {$r_y \ll r_x, r_z$};
\node [above] at (1.9,0) {$r_x$}; \node [right] at (0,2) {$r_y$}; \node [left] at (-1/2, -0.866) {$r_z$};

\draw[->, ultra thick, red] (3,-1) -- (3,1); \node [right,red] at (3,1/2) {$r_y^2-r_z^2$ increases};
\draw[->, ultra thick, red]  (1,3) -- (-1,3); \node [above,red] at (1.5,3) {$r_z^2-r_x^2$ increases};
\draw[->, ultra thick, red] (-5/2,-1.598) -- (-1/2,-3.598); \node [right,red] at (-0.3,-4) {$r_x^2-r_y^2$ increases};
\end{tikzpicture}
\caption{How the three angular directions vary for $r_xr_yr_z$ constant on a fiber}\label{angular}
\end{figure}

We define functions $\phi_x, \phi_y, \phi_z$ for expressions we will use often in the coming definitions.

\begin{allowdisplaybreaks}
\begin{align*}
    \phi_x(x,y,z) & := \log_T\frac{1+|Tx|^2}{1+|T^2yz|^2}\\
\phi_y(x,y,z) & := \log_T \frac{1+|Ty|^2}{1+|T^2xz|^2}\stepcounter{equation}\tag{\theequation}\label{phi_def}\\
\phi_z(x,y,z) & := \log_T\frac{1+|Tz|^2}{1+|T^2xy|^2}
\end{align*}
\end{allowdisplaybreaks}

\begin{definition}[$d$ and $\theta$ coordinates and their approximations for $r_x,r_y,r_z<<1$]
\begin{allowdisplaybreaks}
\begin{align*}
d_I  := & \phi_x - \frac{1}{2}(\phi_y + \phi_z) = \log_T\left(\frac{1+|Tx|^2}{1+|T^{2}v_0x^{-1}|^2}/ \sqrt{\frac{1+|Ty|^2}{1+|T^{2}v_0y^{-1}|^2} \cdot\frac{1+|Tz|^2}{1+|T^{2}v_0z^{-1}|^2}}\right)\\
\approx & (Tr_x)^2   - \frac{1}{2} \left( (Tr_y)^2 +(Tr_z)^2  \right) \stepcounter{equation}\tag{\theequation}\label{polar_coords}
 \\
\theta_I & := \phi_y - \phi_z = \log_T\left(\frac{1+|Ty|^2}{1+|Tz|^2} \cdot \frac{1+|T^2xy|^2}{1+|T^2xz|^2}    \right) { \approx   (Tr_y)^2  -(Tr_z)^2 } \\
\\
\hline
\end{align*}
\end{allowdisplaybreaks}
\begingroup \allowdisplaybreaks 
\begin{align*}
\textcolor{black}{d_{IIA}} &:= \phi_x - \frac{1}{2}(\phi_y + \phi_z) + \frac{3}{2}\alpha_6 (\theta_{II}) \cdot \phi_y  \approx \textcolor{black}{T^2[r_x^2 - \frac{1}{2}(r_y^2+r_z^2)+\frac{3}{2}\alpha_6(\theta_{II}) \cdot r_y^2] }\stepcounter{equation}\tag{\theequation}\label{polar_coords}
\\
d_{IIB} & := \phi_x + \phi_y - \frac{1}{2}\phi_z \\
d_{IIC} &: =\phi_y - \frac{1}{2}(\phi_x + \phi_z)  +\frac{3}{2} \alpha_6(-\theta_{II}) \cdot \phi_x\\
\theta_{II} &:= \log_T r_y - \log_T r_x\\
\end{align*} \endgroup
where $\alpha_6$ will be a cut-off function of the angular direction. By symmetry, we define

\begin{multicols}{2}
% \begingroup \allowdisplaybreaks 
\begin{allowdisplaybreaks}

\begin{align*}
d_{III} & := \phi_y - \frac{1}{2}(\phi_x + \phi_z)\stepcounter{equation}\tag{\theequation}\\
\theta_{III} & := \phi_z - \phi_x
\end{align*}

\begin{equation*}
\begin{aligned}
d_{V} &: =\phi_z - \frac{1}{2}(\phi_x+\phi_y)\\
\theta_{V} & :=  \phi_x-\phi_y
\end{aligned}
\end{equation*} 
\end{allowdisplaybreaks}

\end{multicols}

\begin{figure}
\begin{center}
 \includegraphics[scale=0.5]{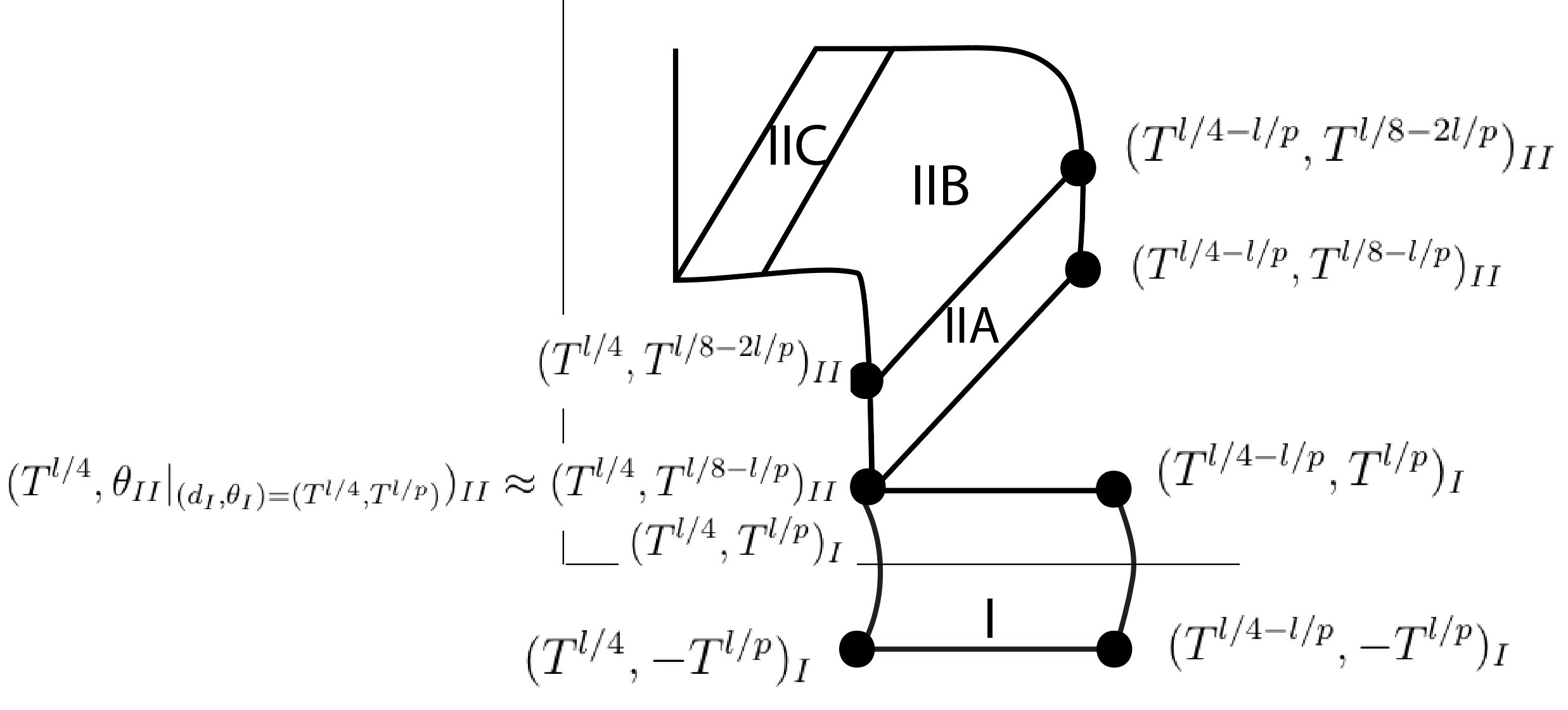}
\end{center}
\caption{Delineating regions in coordinates $(d_I, \theta_I)$ and $(d_{II}, \theta_{II})$}
\label{delineation_pic}
\end{figure}

In Figure \ref{delineation_pic}, we define regions $I$ and $IIA$ of the polytope in terms of $(d_I,\theta_I)$ and $(d_{II}, \theta_{II})$ coordinates (indicated as $(,)_I$ or $(,)_{II}$ respectively) using the approximations for $d$ and $\theta$. This defines the rest of the regions by symmetry and completes the definition of the $d$ and $\theta$ coordinates.
\end{definition}

Figure \ref{delineation_pic} will allow us to estimate how much $r_x, r_y$ and $r_z$ vary in each of the regions and show there is enough space to bound the second derivatives of the bump functions used. Namely, in order to go from the values of the bump functions at either end of a region, the function has space to grow sufficiently gradually that the slope and rate of change of slope can be made small and taking second order derivatives in the definition of the symplectic form will not have contributions from the $\alpha_i$ making it degenerate. This is proven in Appendix A. Now we define the symplectic form in terms of $r_x, r_y, r_z$ in these regions.

\begin{definition}[{\bf Definition of symplectic form on $Y$}] We set $\omega|_{V^\vee} = \frac{i}{2\pi}F$ where $F$ is defined locally as follows in terms of the coordinates in Equation \ref{polar_coords} and 
\begingroup \allowdisplaybreaks \begin{equation}
\begin{aligned}
g_{yz} & =  \log(1+|Ty|^2)(1+|Tz|^2)(1+|T^2yz|^2)\\
g_{xz} & =  \log(1+|Tx|^2)(1+|Tz|^2)(1+|T^2xz|^2)\\
g_{xy} & =  \log(1+|Tx|^2)(1+|Ty|^2)(1+|T^2xy|^2)
\end{aligned}
\label{f_fns}
\end{equation} \endgroup

We introduce new bump functions $\alpha_3,\alpha_4,\alpha_5,\alpha_6$ of the new variables $(d,\theta)$ as follows:
$$ \frac{2}{3} \leq \alpha_3(d_I)  =\alpha_1+\alpha_2 \leq 1,\; -\frac{1}{2}\leq \alpha_4(\theta_I) \leq \frac{1}{2},\; 0 \leq \alpha_5(d_I) \leq 1, \; 0 \leq \alpha_6(\theta_{II}) \leq 1$$
$$\alpha_4(\theta_I)\cdot \alpha_5(d_I) =\frac{1}{2}(\alpha_1-\alpha_2)$$
These bump functions are smooth, increasing as functions of the specified variable, and near the ends of their domain of definition they are constant at the bounds given. We also require that $\alpha_4$ is an odd function. See the subsection below entitled {\bf Motivation for new bump functions defined in $\omega$} for an explanation of the properties of these bump functions. Now the definition is as follows, noting that $g_{xz}-g_{yz} = \phi_x - \phi_y$ and similarly permuting $(x,y,z)$:

\begin{allowdisplaybreaks}
{\small\begin{align*}
 \mbox{Regions $g_{*\bullet}$ in Fig \ref{interpolate}: } F&=g_{xy}, F=g_{yz}, F=g_{xz} \mbox{ respectively}\\
\mbox{Region I: } F &= g_{yz} + \alpha_3(d_I)d_I + \alpha_4(\theta_I)\alpha_5(d_I)\theta_I\\
\mbox{Region IIA: } F & = g_{yz} - \alpha_6(\theta_{II}) \phi_y + \alpha_3(d_{IIA}) d_{IIA}+ \frac{1}{2} \alpha_5(d_{IIA}) (\phi_y-\phi_z-\alpha_6(\theta_{II})\phi_y) \\
\mbox{Region IIB: } F & = (g_{yz} - \phi_y) + \alpha_3(d_{IIB})d_{IIB} - \frac{1}{2} \alpha_5(d_{IIB}) \phi_z \stepcounter{equation}\tag{\theequation}\label{def_s_form}\\ 
\mbox{Region IIC: } F &= g_{xz} - \alpha_6(-\theta_{II}) \phi_x + \alpha_3(d_{IIC}) d_{IIC} + \frac{1}{2} \alpha_5(d_{IIC}) (\phi_x-\phi_z-\alpha_6(-\theta_{II})\phi_x)\\
\mbox{Region VII: } F& =  \frac{1}{3}(g_{xy} + g_{xz}+g_{yz})
\end{align*}}
\end{allowdisplaybreaks}

These formulas match at the boundaries, which allows us to define the rest of the regions III -- VI similarly to I and II by symmetry, via permuting the coordinates $(x,y,z)$. For example, one can check that the formula for region IIA agrees with that for region VII when $\alpha_3 = \frac{2}{3}$ and $\alpha_5 = 0$; with that for region I when $\alpha_6 = 0$ and $\alpha_4 = \frac{1}{2}$; with that for region IIB when $\alpha_6 = 1$; and with $g_{xy}$ when $\alpha_3 = \alpha_5 = 1$. The calculation is similar for the other regions.

Along the coordinate axes (namely the regions shaded red, blue, and black) we interpolate between the relevant $g_{*\bullet}$'s using the same formulas as in regions I, III, and V, with $\alpha_3 \equiv 1$. E.g.~along the $r_x$-axis (blue region) the formula is $F = \frac{1}{2}(g_{xy}+g_{xz}) + \alpha_4(\theta_I)(g_{xy}-g_{xz})$ and similarly for the other edges. 

Finally by adding a term proportional to $|xyz|^2$ on the base, with sufficiently large constant of proportionality, we obtain a non-degenerate form $\omega$ on $Y$. This completes the definition of the symplectic form.
\label{defn of w}
\end{definition}

\begin{center}
\fbox{\bf Motivation for new bump functions in $\omega$} \label{motivation_w}    
\end{center}

Note that the set $\{\alpha_1,\alpha_2, 1-\alpha_1 -\alpha_2\}$ is asymmetric but all three should be treated symmetrically. In other words, if we rotate $(\alpha_1,\alpha_2)$ thought of as a vector, by $\pi/4$, we get something proportional to
$$\left(\begin{matrix} 1 & -1\\ 1 & 1 \end{matrix}\right)\left(\begin{matrix} \alpha_1 \\ \alpha_2 \end{matrix} \right) = \left(\begin{matrix}\alpha_1-\alpha_2\\\alpha_1+\alpha_2\end{matrix}\right)$$
We accordingly rearrange terms of the initial expression of $F$ in terms of $\alpha_1-\alpha_2$ and $\alpha_1+\alpha_2$. 
\begin{allowdisplaybreaks}
\begin{align*}
F & = \alpha_1 g_{xy} + \alpha_2g_{xz} +(1-\alpha_1-\alpha_2)g_{yz}\\
& = \alpha_1\log(1+|Tx|^2)(1+|Ty|^2)(1+|T^2xy|^2) \\
&+ \alpha_2\log(1+|Tz|^2)(1+|Tx|^2)(1+|T^{2}xz|^2) \\
& + (1-\alpha_1-\alpha_2) \log(1+|Tz|^2)(1+|Ty|^2)(1+|T^{2}yz|^2)\stepcounter{equation}\tag{\theequation}\label{eq:F_rotate}\\
& = g_{yz} +(\alpha_1+\alpha_2)\phi_x-\left(\frac{\alpha_1+\alpha_2}{2} - \frac{\alpha_1-\alpha_2}{2}\right)\cdot  \phi_y-\left(\frac{\alpha_1+\alpha_2}{2} + \frac{\alpha_1-\alpha_2}{2}\right)\cdot \phi_z \\
& = g_{yz} +(\alpha_1+\alpha_2)(\phi_x - \frac{1}{2}(\phi_y+\phi_z)) + \frac{1}{2}(\alpha_1-\alpha_2)(\phi_y - \phi_z)
\end{align*}
\end{allowdisplaybreaks}

We want bump functions to be multiplied by the variable they are a function of because $\frac{d\alpha}{d(\log_T \mu)}=\alpha'(\mu) \cdot \mu \approx \frac{\Delta \alpha}{\Delta \log_T \mu}$, so we can find estimates on terms containing $\alpha'(\mu) \cdot \mu$ when $\mu$ is the argument of $\alpha$. In particular, when $\mu$ is $d\approx (Tr_*)^2$ or $\theta \approx (Tr_*)^2$ we can estimate the $\Log_T$-derivative of $\alpha(\mu)$ as $\frac{O(1)}{l}$. Recall from Equation (\ref{polar_coords}) that $d_I:=\phi_x - \frac{1}{2}(\phi_y+\phi_z) \approx (Tr_y)^2 - (Tr_z)^2$ is a radial direction in region $I$ and $\theta_I:=\phi_y-\phi_z \approx (Tr_y)^2 - (Tr_z)^2$ an angular direction in region I. So define 
\begin{equation}
\alpha_3 = \alpha_1+\alpha_2, \;\; 2/3 \leq \alpha_3 \leq 1
\end{equation}
to be a bump function of $d_I$ going from 2/3 to 1 in region I, by Equation (\ref{eq:interpolate}).

We use $\frac{1}{2}(\alpha_1-\alpha_2)$ to define a bump function $\alpha_4$ that varies in the angular direction $\theta_I$. The range of $\alpha_1-\alpha_2$ depends on $d_I$. At the start of region I we have $\alpha_1=\alpha_2 = 1/3$ so $\alpha_1-\alpha_2$ goes from 0 to 0 as we trace out the angle $\theta_I$. However at the end of region I we go from $(\alpha_1,\alpha_2)=(0,1)$ at the bottom to $(\alpha_1,\alpha_2) = (1,0)$ at the top, so that $\frac{1}{2}(\alpha_1-\alpha_2)$ goes between $-1/2$ and $1/2$. Thus we multiply $\alpha_4(\theta_I)$ by another bump function $\alpha_5(d_I)$ of the radial direction that goes from 0 to 1, so we scale the interval that $\alpha_4$ varies in. We define $\alpha_4$ to be an odd function for symmetry reasons. Let 
\begin{equation}
    \alpha_4(\theta_I) \alpha_5(d_I) = \frac{1}{2}(\alpha_1-\alpha_2), \;\; -1/2 \leq \alpha_4 \leq 1/2, \;\; 0 \leq \alpha_5 \leq 1
\end{equation}
Now we can rewrite Equation (\ref{eq:F_rotate}) for $F$ as:
\begin{equation}
F = g_{yz} +  \alpha_3(d_I) \cdot d_I + \alpha_4(\theta_I)\cdot \alpha_5(d_I)\cdot \theta_I
%& \approx r_y^2+r_z^2+(r_yr_z)^2 + \alpha_3\left(r_x^2 - (r_yr_z)^2    - \frac{1}{2} \left( r_y^2 - (r_xr_z)^2 + r_z^2 - (r_xr_y)^2  \right) \right)+\alpha_4 \left(  r_y^2 - (r_xr_z)^2 -r_z^2 + (r_xr_y)^2     \right)
\end{equation}
We can similarly define $F$ in regions III and IV, and then region II will interpolate between regions I and III. This is where the defining equations for $\omega|_{V^\vee}$ in Definition \ref{defn of w} came from.

\subsection{Leading order terms in $\omega$}\label{convert_polar} We convert $\dd\ol{\dd}F$ to polar coordinates where calculations are easier, using the real transformation $(r_x,\theta_x) \leftrightarrow (x_1,x_2)$, where $x = x_1 + ix_2 = r_x e^{i\theta_x}$ and similarly for $y$. Recall (e.g.~\cite{huy})
\begin{equation}
\dd_x = \frac{1}{2}(\dd_{x_1} - i \dd_{x_2}), \dd_{\ol{x}} = \frac{1}{2}(\dd_{x_1} + i \dd_{x_2})    
\end{equation} 
By the Chain Rule, since $r_x^2 = x_1^2 + x_2^2$,  $\theta_x = \tan^{-1}(x_2/x_1)$, and $\arctan(t)' = 1/(1+t^2)$:
\begin{equation}
\begin{aligned}
    \dd_{x_i} &= \dd_{x_i}(r_x) \dd_{r_x} + \dd_{x_i}(\theta_x) \dd_{\theta_x}= \frac{x_i}{r_x} \dd_{r_x} + \frac{\dd_{x_i}(x_2/x_1)}{1+(x_2/x_1)^2} \dd_{\theta_x}=\frac{x_i}{r_x} \dd_{r_x} \mp \frac{x_{i+1}}{x_1^2+x_2^2} \dd_{\theta_x}
    \end{aligned}
\end{equation}
Hence plugging in for $\dd_{x_i}$ and $\ol{\dd_{x_i}}$:
\begin{equation}
\begin{aligned}
\frac{\dd}{\dd x} & = \frac{1}{2}\left( \frac{\dd}{\dd x_1} - i \frac{\dd}{\dd x_2} \right)=\frac{1}{2}\left( \frac{x_1-ix_2}{r_x} \frac{\dd}{\dd r_x} + \frac{-x_2-ix_1}{r_x^2}\frac{\dd}{\dd \theta_x}\right)\\
& = \frac{1}{2}\left(e^{-i\theta_x}\dd_{r_x} - \frac{ie^{-i \theta_x}}{r_x} \right)\\
\frac{\dd}{\dd \ol{x}}  & = \frac{1}{2}\left(e^{i\theta_x}\dd_{r_x} + \frac{ie^{i \theta_x}}{r_x} \right)
\end{aligned}
\end{equation}

We also need to rewrite the differentials $dx=dx_1 + i dx_2$ and $d\ol{x}$ in terms of polar coordinates. Since $d$ is complex linear, and using Euler's formula $e^{i\theta} = \cos \theta + i \sin \theta$:
\begin{equation}
\begin{aligned}
dx &= d(r_x \cos \theta_x) + i d(r_x \sin \theta_x)=e^{i\theta_x} dr_x + ir_xe^{i\theta_x}d\theta_x\\
d\ol{x} & = e^{-i\theta_x}dr_x - ir_xe^{-i\theta_x}d\theta_x
\end{aligned}
\end{equation}
Similarly for $y$. Now we can convert $\dd \ol{\dd} F$ into polar coordinates.
{\small \begin{allowdisplaybreaks}
\begin{align*}
i\dd \ol{\dd} F & = \sum_{i,j = 1}^3 \frac{\dd^2 F}{\dd z_i \dd \ol{z_j}} dz_i \wedge d\ol{z_j}\\
& =i \sum_{i,j} (\frac{1}{2}e^{-i\theta_{z_i}}\left[\dd_{r_{z_i}} - i/r_{z_i} \dd_{\theta_{z_i}} \right]) (\frac{1}{2}e^{i\theta_{z_j}}\left[\dd_{r_{z_j}} + i/r_{z_j} \dd_{\theta_{z_j}} \right])(F) (e^{i\theta_i}(dr_i + ir_id\theta_i))\\
& \wedge (e^{-i\theta_j}(dr_j - ir_jd\theta_j))\\
& = i\frac{1}{4} \sum_{i,j} e^{- i\theta_i} \left[e^{i\theta_j}\dd^2_{r_ir_j}F - \frac{i}{r_i} \dd_{r_j}F \delta_{ij} i e^{i \theta_j}\right](e^{i\theta_i}(dr_i + ir_id\theta_i))\wedge (e^{-i\theta_j}(dr_j - ir_jd\theta_j))\\
& =\frac{i}{4}\left[ \sum_{i} (\dd^2_{r_i}F + \frac{1}{r_i}\dd_{r_i}F)(dr_i + ir_id\theta_i)\wedge(dr_i - ir_id\theta_i)\right]\\
& +\frac{i}{4} \left[\sum_{i\neq j} (\dd^2_{r_ir_j}F)(dr_i + ir_id\theta_i)\wedge(dr_j - ir_jd\theta_j)\right]
\end{align*}
\end{allowdisplaybreaks}}

A K\"ahler form is compatible with its complex structure by definition (e.g.~\cite[Definition 1.2.13]{huy}), so $J:=i$ given by multiplication by $i$ in the toric coordinates, is $\omega$-compatible. That is, $\omega(\cdot, J \cdot)$ is a metric, which we want to express in polar coordinates to facilitate calculations below. This is a 6 by 6 block diagonal matrix with the $r$-derivatives block and the $\theta$-derivatives block. Recall the complex structure acts on real tangent vectors by $\dd_{x_1} \mapsto \dd_{x_2}$ and $\dd_{x_2} \mapsto -\dd_{x_1}$. Again by the Chain rule:
%Namely $J_0 = Di$ sends $\dd_{x_1} \mapsto \dd_{x_2}$ and $\dd_{x_2} \mapsto - \dd_{x_1}$ and similarly for $y$ and $z$. I.e.~both $i$ and $J_0$ are three blocks of $\left(\begin{matrix} 0 & -1 \\ 1 & 0 \end{matrix}\right)$ along the diagonal. In particular, multiplication by a constant (namely $i$) is a holomorphic map so $i^*$ commutes with $\dd \ol{\dd}$. [justify] Hence locally we have
%$$\omega(Ju,Jv) = i^*\dd \ol{\dd} F (u,v) = \dd \ol{\dd} F \circ i(u,v)= \dd \ol{\dd} F (u,v)$$
%
%since $F$ is a function of norms which remains unchanged upon multiplication by $i$. So
\begin{equation}
\begin{aligned}
\dd_{r_1} &= \frac{\dd x_1}{\dd r_1} \dd_{x_1} + \frac{\dd x_2}{\dd r_1} \dd_{x_2} = \cos \theta_1 \dd_{x_1} + \sin \theta_1 \dd_{x_2}\\
\dd_{\theta_1} & = -r_1 \sin \theta_1 \dd_{x_1} + r_1 \cos \theta_1 \dd_{x_2}\\
J(\dd_{x_1}) &= \dd_{x_2},  J(\dd_{x_2}) = - \dd_{x_1} \implies J(\dd_{r_1}) = \frac{1}{r_1} \dd_{\theta_1}, J(\dd_{\theta_1})  = -r_1 \dd_{r_1}
\end{aligned}
\end{equation}
Hence the entries along the diagonal in the $r$ block will be
\begin{align*}
g_{ii}& = \omega(\dd_{r_i},J\dd_{r_i}) = \omega(\dd_{r_i},\frac{1}{r_i} \dd_{\theta_i}) = \frac{1}{r_i} \omega(\dd_{r_i},\dd_{\theta_i}) = \frac{1}{2}(\dd^2_{r_i}F + \frac{1}{r_i}\dd_{r_i}F)
\end{align*}
because we pick up the $dr_i \wedge d\theta_i$ term, i.e.~$\frac{2}{4}r_i dr_i \wedge d\theta_i$ times the $F$ derivative term. Similarly
\begin{align*}
g_{ij} & = \omega(\dd_{r_i}, J\dd_{r_j}) =\frac{1}{r_j} \omega(\dd_{r_i}, \dd_{\theta_j}) = \frac{1}{2} (\dd^2_{r_i r_j} F)
\end{align*}

{\bf Dominant terms for metric Region I.} The metric in polar coordinates is:
\vspace{-25pt}
{\small  \begin{multicols}{2}
\mbox{}\vfill
\begin{align*}
\left(\begin{matrix} \dd^2_{r_x}F + \frac{1}{r_x} \dd_{r_x}F & \dd^2_{r_xr_y}F & \dd^2_{r_xr_z}F\\
\dd^2_{r_xr_y}F & \dd^2_{r_y}F + \frac{1}{r_y} \dd_{r_y}F & \dd^2_{r_yr_z}F \\
\dd^2_{r_xr_z} F& \dd^2_{r_yr_z}F & \dd^2_{r_z}F + \frac{1}{r_z} \dd_{r_z}F
\end{matrix}\right)
\end{align*}
\mbox{}\vfill
\columnbreak
\begin{align*}
F& =f+  \alpha_3 d+\alpha_4\cdot \alpha_5 \theta\\
d & \approx T^2[r_x^2   - \frac{1}{2} \left( r_y^2 + r_z^2  \right)] \\
\theta & \approx T^2[  r_y^2 -r_z^2 ]  \\
f &\approx T^2[ r_y^2+r_z^2 ]
\end{align*}
\end{multicols}
}
Note that because everywhere $r_x$ appears is as $r_x^2$, applying $\dd^2_{r_xr_x}$ is the same as applying $\dd_{r_x}/r_x$. Here are the terms that do not involve derivatives of the $\alpha_i$, where for ease of notation subscript $x$ means $\dd_{r_x}$:

\begin{align*}
& \left(\begin{matrix} 2(f_{xx} + \alpha_3d_{xx} +\alpha_4\alpha_5 \theta_{xx})  & f_{xy} + \alpha_3d_{xy} +\alpha_4\alpha_5 \theta_{xy}  &f_{xz} + \alpha_3d_{xz} +\alpha_4\alpha_5 \theta_{xz}\\
`` & 2(f_{yy} + \alpha_3d_{yy} +\alpha_4\alpha_5 \theta_{yy})& f_{yz} + \alpha_3d_{yz} +\alpha_4\alpha_5 \theta_{yz}\\
`` & `` & 2(f_{zz} + \alpha_3d_{zz} +\alpha_4\alpha_5 \theta_{zz})
\end{matrix}\right)  \\
\approx\; &
T^2 \left(\begin{matrix} 4\alpha_3 &0&0\\
0 & 4 - 2\alpha_3+4\alpha_4\alpha_5 &0\\
0 & 0 & 4-2 \alpha_3 -4\alpha_4\alpha_5 
\end{matrix}\right)
\end{align*}

The approximation follows from the estimates on bump function derivatives in Appendix B. One of the coordinates does go to zero as the bump functions reach their bounds. However, the one that goes to zero in the $xy$-plane is the $z$ term and similarly in the $xz$-plane it's the $y$ term. Since we add a term $|xyz|^2$ for the base, this will ensure positive definiteness away from the zero fiber.

{\bf Dominant terms in Regions III and V.} In region I, $r_x$ was the dominating coordinate. In region III, $r_y$ will dominate and in region V, $r_z$ will dominate. So we take the analogous data for half regions of III and V, by modeling I.  

{\bf Dominant terms in Region II.} The expressions for the K\"ahler potentials in regions III and V thus differ from that in region I by a permutation of the coordinates $x, y, z$ and all the estimates above carry through under this permutation. Thus we patch together the $d$ coordinate across regions II, IV and VI. This uses another bump function $\alpha_6$ going from 0 to 1 as we increase a suitable $\theta_{II}$-coordinate. In particular, functions of $d$ become functions of $d\circ \alpha_6$. We have
\begin{allowdisplaybreaks}
\begin{align*}
F & \approx T^2[(r_y^2 + r_z^2 - \alpha_6(\theta_{II})\cdot r_y^2) +\alpha_3(d_{IIA})\cdot (r_x^2 - \frac{1}{2}(r_y^2+r_z^2) +\frac{3}{2} \alpha_6(\theta_{II}) \cdot r_y^2) \\
& + \frac{1}{2}\alpha_5(d_{IIA}) \cdot(r_y^2 - r_z^2 - \alpha_6(\theta_{II}) \cdot r_y^2)]
\end{align*}
\end{allowdisplaybreaks} 
since
\begin{allowdisplaybreaks}
\begin{align*}
g_{yz} &\approx T^2[r_y^2 + r_z^2]\\
%d_I & \approx T^2[r_x^2 - \frac{1}{2}(r_y^2 + r_z^2)]\\
d_{IIA} & \approx T^2[r_x^2 - \frac{1}{2}(r_y^2 + r_z^2)] +\frac{3}{2} \alpha_6(\log r_y - \log r_x) \cdot (Tr_y)^2\\
%\theta_I & \approx T^2[r_y^2 - r_z^2]\\
\theta_{II} & =\log(r_y/r_x)
\end{align*}
\end{allowdisplaybreaks}
The terms not involving derivatives of the bump functions will form the nondegenerate part of the metric on region II. {Off-diagonal terms $\dd^2_{r_\bullet r_\star}$ for $\star \neq \bullet$} are zero because derivatives of non-bump functions means differentiating $r_*^2$ for some $*$. On the diagonal terms we get $\frac{1}{r_*}\dd_{r_*} + \dd^2_{r_* r_*}=2\dd^2_{r_* r_*}$ which, applied to $(Tr_*)^2$ is $4T^2$. So in the $(*,*)$ entry of the matrix, the leading terms are $4T^2$ times the coefficients on $r_*^2$.
\begin{align*}
x:\;  & T^2(4\alpha_3 )  \geq T^2 8/3\\
y: \; & T^2(4-4\alpha_6 - 2\alpha_3 + 6\alpha_3\alpha_6 + 2\alpha_5 - 2\alpha_5\alpha_6)= T^2[4+(2-2\alpha_6)(\alpha_5-\alpha_3) + 4 \alpha_6(\alpha_3-1) ]\\
&\geq T^2 [4 + 2(0-2/3) + 4 (-1/3)] = T^2\cdot \frac{4}{3}
\\
z: \; & T^2(4-2\alpha_3-2\alpha_5)
\end{align*}

%\begin{align*} when had r_y^2 - r_x^2
%x:\qquad  &4\alpha_6 + 4\alpha_3 - 4\alpha_3\alpha_6 + 2\alpha_5\alpha_6 = \alpha_6(4-4\alpha_3+2\alpha_5) + 4\alpha_3 \geq \alpha_6(4-4+2\alpha_5) + 4 \alpha_3 \geq 8/3\\
%y: \qquad & 4-4\alpha_6 - 2\alpha_3 + 4\alpha_3\alpha_6 + 2\alpha_5 - 2\alpha_5\alpha_6\geq 4+\alpha_6(-4+8/3-2)\geq 2/3\\
%z: \qquad & 4-2\alpha_3-2\alpha_5.  {\mbox{}}^*
%\end{align*}

Note that when $\alpha_3 = \alpha_5 = 1$ the $z$ term goes to zero. However, it is bounded below in a region where $\alpha_3, \alpha_5$ are bounded away from 1. In the region where it goes to 1, we add a term to $F$ from the base, i.e.~$|xyz|^2$, to maintain nondegeneracy. Again because $xyz$ is bounded below in the region where we add it, we can take its partial derivatives and the result will be positive definite. Region IIC, IVA, IVC and VIA, VIC are the same after permuting $x \leftrightarrow y \leftrightarrow z$. For example, swap $r_x \leftrightarrow r_y$ to get to IIC, and the subscripts I are replaced with subscripts III.

The characteristics in region IIB which we did not have in regions IIA and C are 1) $r_x$ and $r_y$ go from $r_x>>r_y$ to $r_y>>r_x$, passing through $r_x = r_y$ and 2) $\alpha_6 \equiv 1$. All of $r_x,r_y,r_z$ are still small so we still have an approximation for the K\"ahler potential. The calculation for the negligible terms is given in Appendix B.

{\bf Dominant terms in the remainder of $\mb{C}^3$ patch.} See Figure \ref{fig:remainder} for the regions left to consider.
\begin{figure}
 \includegraphics[scale=0.45]{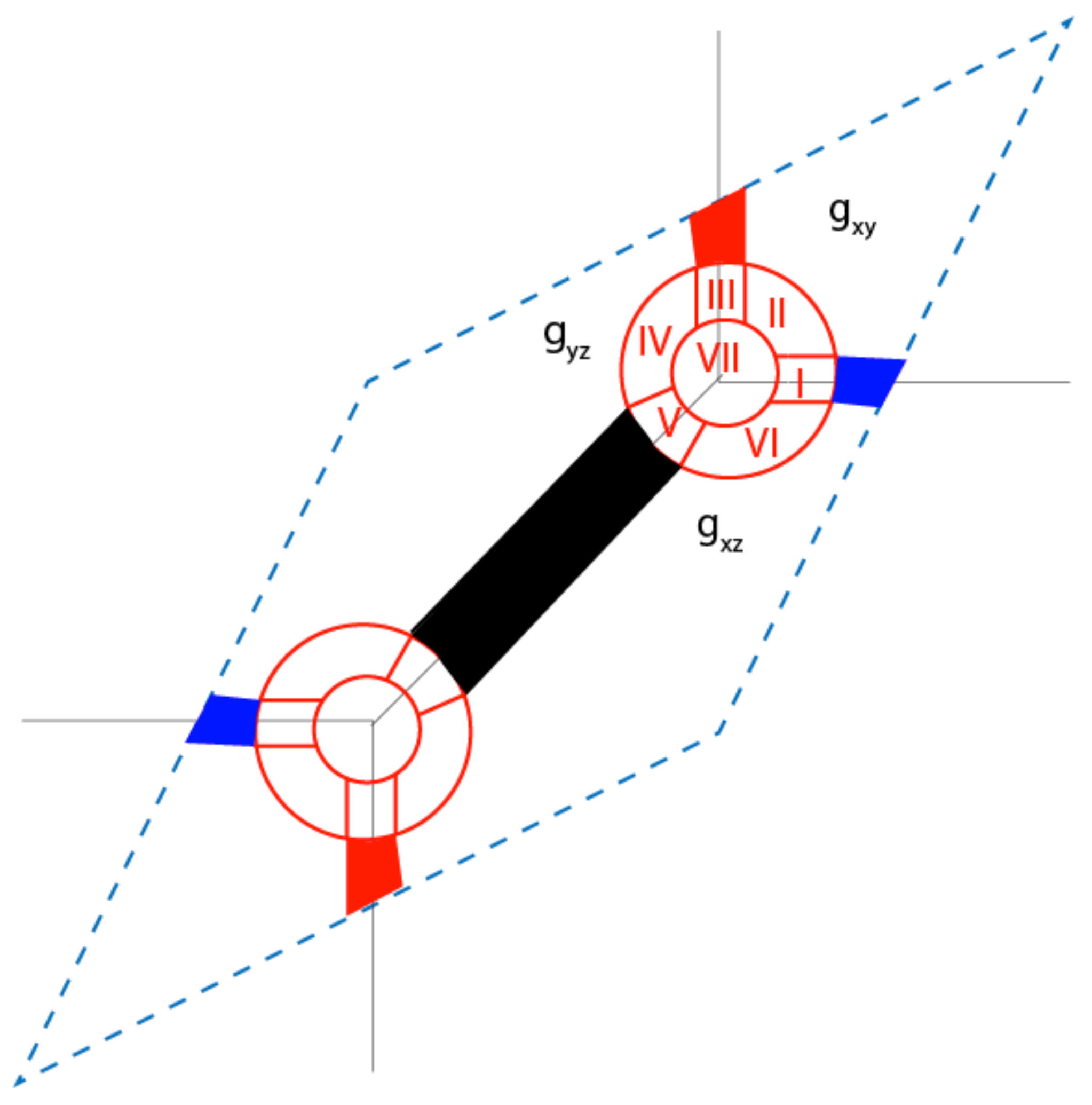}
 \caption{Delineated fundamental domain}
 \label{fig:remainder}
 \end{figure}
In the region between region I and region IIA the only bump functions at play are $\alpha_3$ and $\alpha_5$ and they are allowed to vary in the same amount of space described in Appendix A, where nondegeneracy was checked in Appendix B. Here still $r_x>>r_y, r_z$ so those estimates still apply.

What remains to be checked is that the symplectic form glues positive-definitely along the $z$ axis, then the other axes will follow by symmetry, and in the remaining regions $\omega$ agrees with the standard K\"ahler form of the blowup $\mb{CP}^2(3)$. In the black region along the $z$-axis we no longer have dependence on the $d_V$ coordinate because $\alpha_3, \alpha_5 \equiv 1$. We still have an angular coordinate that allows us to interpolate between $g_{xz}$ and $g_{yz}$ in the unprimed coordinates, or $g_{x'''z'''}$ and $g_{y'''z'''}$ in the tripled primed coordinates at the lower left vertex. So we need to check we have positive definiteness when only $\alpha_4$ is at play and $r_z$ is large. The proof that the bump function derivative terms can be made arbitrarily small is again in Appendix B. 

With the Appendices, this proves the following:

\begin{lemma} The $\omega$ defined in Definition \ref{defn of w} on $Y$ is non-degenerate and puts the structure of a symplectic fibration on $v_0:Y \to \mb{C}$.
\end{lemma}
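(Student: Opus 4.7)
The plan is to verify the two assertions in turn, reducing both to pointwise positive-definiteness of the Hermitian form $h(\cdot,\cdot):=\omega(\cdot,J\cdot)$ for the product almost complex structure $J$ of Definition \ref{def:complex_coordinates}. Expressing $h$ in the polar coordinates of Section \ref{convert_polar} block-diagonalizes the matrix into radial and angular blocks carrying the same information, so the computation boils down to verifying that the $3\times 3$ symmetric matrix with entries $\tfrac12(\dd^2_{r_ir_j}F + \delta_{ij}r_i^{-1}\dd_{r_i}F)$ is positive-definite throughout the polytope $\Delta_Y$, once augmented by the $|xyz|^2$ base correction.

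The main step is a region-by-region analysis. Inside each region of Figure \ref{interpolate}, write $F = F_{\mathrm{dom}} + F_{\mathrm{err}}$, where $F_{\mathrm{err}}$ collects the contributions of derivatives of the bump functions $\alpha_3,\alpha_4,\alpha_5,\alpha_6$. The leading-order computation already initiated in Section \ref{section:the_symp_form} shows that the matrix from $F_{\mathrm{dom}}$ has strictly positive diagonal entries in every region except along directions where a coefficient such as $4-2\alpha_3-2\alpha_5$ vanishes (the $r_z$-direction when $\alpha_3=\alpha_5=1$, and its permutations in regions $IV, VI$); however, those loci meet only the interior of the fiber, where $r_xr_y$ is bounded below, so the base term $|xyz|^2 = r_x^2r_y^2r_z^2$ contributes a strictly positive diagonal entry $\approx 2r_x^2r_y^2$. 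Taking a sufficiently large coefficient in front of $|xyz|^2$ therefore makes $F_{\mathrm{dom}}$ uniformly positive-definite on all of $\Delta_Y$, and in particular guarantees non-degeneracy in the horizontal direction of $v_0$.

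The hard part is showing that $F_{\mathrm{err}}$ does not destroy positivity. Each entry of the error matrix is of the form $\alpha_i'(\mu)\cdot\dd_{r_*}\mu$ or $\alpha_i''(\mu)\cdot\dd_{r_*}\mu\cdot\dd_{r_\bullet}\mu$ for one of $\mu\in\{d_I,\theta_I,d_{II},\theta_{II},\ldots\}$; the key point, built into Definition \ref{polar_coords}, is that each $\mu$ is set up so that $\alpha_i$ transitions between its prescribed endpoints across a $\mu$-interval of $\log_T$-width $\gtrsim l$. This forces $|\alpha_i'(\mu)\mu|$ and $|\alpha_i''(\mu)\mu^2|$ to be $O(1/l)$, as quantified in Appendix B using the geometric room estimates of Appendix A, which is enough to dominate $F_{\mathrm{err}}$ by $F_{\mathrm{dom}}$ in operator norm once $l$ is taken sufficiently large. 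Consistency of $F$ at the boundaries of the regions, checked by the specializations listed right after Definition \ref{defn of w} (e.g.\ the formula for $IIA$ reduces to those for $I$, $IIB$, $VII$, and $g_{xy}$ on the appropriate $\alpha$-values), ensures that $\omega$ assembles into a globally smooth $2$-form.

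Finally, for the symplectic-fibration claim: since $v_0=xyz$ is $J$-holomorphic, every smooth fiber $v_0^{-1}(c)$ with $c \neq 0$ is a complex submanifold of real codimension $2$, and the restriction of a K\"ahler form to a complex submanifold is again K\"ahler for the induced Hermitian metric, hence non-degenerate. The base contribution $|xyz|^2$ has gradient proportional to $d|v_0|^2$, so it is orthogonal to the fiber direction and contributes nothing to $\omega|_{v_0^{-1}(c)}$; the fiber-wise non-degeneracy is therefore already carried by the piecewise construction of $F$ itself. This will show that $v_0:Y\to\mb{C}$ is a symplectic fibration in the sense of the definition at the start of Section 1.2.
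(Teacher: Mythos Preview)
Your proposal is correct and follows essentially the same approach as the paper: the paper's proof is precisely the content of Section \ref{convert_polar} together with Appendices A and B, namely the region-by-region verification that the dominant part of the $3\times 3$ Hessian of $F$ is positive-definite (after adding the $|xyz|^2$ base term) while the bump-function-derivative error terms are $O(1/l)$ and hence negligible for $l$ large. Your treatment of the symplectic-fibration claim via holomorphicity of $v_0$ and the fact that $i\partial\bar\partial|v_0|^2$ vanishes on fibers is a clean way to phrase what the paper builds in by constructing $F$ fiberwise first.
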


\section{Donaldson-Fukaya-Seidel type category of linear Lagrangians in $(Y,v_0)$}\label{section: FS}

\subsection{Context and definition}
Donaldson introduced the pair of pants product and in this paper we only define and compute the differential and ring structure for the morphism groups. \cite{fooo1} explain obstructions to defining an $A_\infty$-category on a symplectic manifold which record how Lagrangians intersect upon perturbations, and is the reason we exclude the infinite-sloped linear Lagrangian parallel transported around a circle in the base in our subcategory. (Such a Lagrangian bounds $i$-holomorphic discs.) Seidel adapted the definition of a Fukaya category to the case of symplectic Lefschetz fibrations, which we adapt to the symplectic fibration $(Y,v_0)$. Note we've constructed $(Y,\omega)$ so that $v_0: Y \to \mb{C}$ is a symplectic fibration because we started by constructing $\omega$ in a fiber in Chapter \ref{section:the_symp_form}.

The notation ``Donaldson-Fukaya-Seidel (DFS)- type category of linear Lagrangians in $(Y,v_0)$" means that we only consider a subset of possible objects, and their collection forms a category that will be a subcategory of the full Fukaya-Seidel category of $(Y,v_0)$ once such a category is defined. In particular, it is expected this subcategory would split-generate the full category. The definition of linear Lagrangians in a fiber $V^\vee$ of $Y$ was inspired by the rational slope Lagrangians considered in \cite{zp} on categorical mirror symmetry for the elliptic curve. Because of the linearity of Lagrangians considered, they have a lift that allows for Maslov grading given by their slope, and they have a Spin structure as well. We obtain Lagrangians in the total space by parallel transporting these Lagrangians in the fiber over an arc in the base with respect to the horizontal distribution induced from the symplectic form. 

\begin{definition}\label{defn:par_transp} The \emph{symplectic horizontal distribution} of a symplectic fibration $\pi: Y \to C$ to a base manifold $C$ is $H \subset TY$ such that if $F$ is a generic fiber of $\pi$ then $H = TF^\omega$ is the $\omega$-complement, i.e.
$$\omega(H, TF) = 0$$

Given two points $p_0, p_1 \in C$ and a path $\gamma: I \to C$ between them (i.e.~$\gamma(0) = p_0$ and $\gamma(1) = p_1$), the \emph{parallel transport map} is a symplectomorphism $$\Phi: F_{p_0} \to F_{p_1}$$ defined as follows: given $x \in F_{p_0}$, we set $\Phi(x)$ to be $\tilde{\gamma}(1)$ where $\tilde{\gamma}:I \to Y$ such that $\pi \circ \tilde{\gamma} = \gamma$, $d \pi(\tilde{\gamma}' ) = \gamma'$ and $\tilde{\gamma}' $ is in the horizontal distribution.\end{definition}

\begin{claim}\label{claim:par_trans_symplecto} By standard theory, this last condition implies $\Phi$ is a symplectomorphism.\end{claim}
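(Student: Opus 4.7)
The plan is to reduce the claim to the infinitesimal statement that the Lie derivative of $\omega$ along the horizontal lift vanishes when restricted to vertical vectors. Concretely, parametrize the path by $t \in [0,1]$ and let $X_t$ denote the horizontal lift of $\gamma'(t)$, characterized by $X_t \in H$ and $d\pi(X_t) = \gamma'(t)$. The parallel transport $\Phi$ is then the time-$1$ flow (restricted between fibers) of this time-dependent vector field, and $\Phi^*\omega|_{F_{p_1}} = \omega|_{F_{p_0}}$ will follow once we show that, for any pair $u,v$ of vectors tangent to a fiber, $(\mathcal{L}_{X_t}\omega)(u,v) = 0$.

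First I would invoke Cartan's magic formula and closedness of $\omega$: $\mathcal{L}_{X_t}\omega = d\iota_{X_t}\omega + \iota_{X_t} d\omega = d\iota_{X_t}\omega$. Then I would observe that, by the very definition of $H$ as the $\omega$-orthogonal complement of the vertical tangent bundle $TF$, one has $\omega(X_t, w) = 0$ for every $w \in TF$, i.e.\ the one-form $\iota_{X_t}\omega$ annihilates vertical vectors.

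Next I would compute $d\iota_{X_t}\omega$ on a pair of vertical vectors $u,v \in T_pF$ by extending them (locally near $p$) to vertical vector fields $U, V$. The standard formula gives
\begin{equation}
d\iota_{X_t}\omega(U,V) = U\bigl(\iota_{X_t}\omega(V)\bigr) - V\bigl(\iota_{X_t}\omega(U)\bigr) - \iota_{X_t}\omega([U,V]).
\end{equation}
The first two terms vanish because $\iota_{X_t}\omega$ is identically zero on vertical vector fields. For the third term, the key observation is that the Lie bracket of two vertical vector fields is again vertical, since $d\pi([U,V]) = [d\pi(U), d\pi(V)] = 0$; hence $\iota_{X_t}\omega([U,V]) = \omega(X_t, [U,V]) = 0$ as well.

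Therefore $(\mathcal{L}_{X_t}\omega)|_{TF \times TF} = 0$, and integrating $\frac{d}{dt}\Phi_t^*(\omega|_{F_{\gamma(t)}}) = \Phi_t^*((\mathcal{L}_{X_t}\omega)|_{TF \times TF}) = 0$ from $t=0$ to $t=1$ yields the symplectomorphism property. The only subtlety worth flagging is that the fibers must be compact (or the path short enough) to guarantee completeness of the flow and hence a globally defined $\Phi$; in our setting this is ensured because the generic fibers of $v_0$ are compact tori $V^\vee$ after the $\Gamma_B$-quotient.
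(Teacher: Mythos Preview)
Your proof is correct and is the standard textbook argument for this fact. It takes a genuinely different route from the paper's own proof. The paper argues more heuristically: it asserts that the flow of the horizontal lift $X_H$ preserves the splitting $TY = TF \oplus H$ (vertical vectors stay vertical under parallel transport), and then tries to conclude $\Phi^*\omega = \omega$ by checking $\omega$ on pairs drawn from this splitting. That argument is sketchier and, as written, does not explicitly verify the key case $\omega(d\Phi u, d\Phi v) = \omega(u,v)$ for \emph{two} vertical vectors $u,v$; it only records that $\omega(X_H, v)=0$ on both sides.

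Your approach via Cartan's formula is cleaner and more complete: the computation of $(\mathcal{L}_{X_t}\omega)(U,V)$ for vertical $U,V$ nails exactly the case the paper glosses over, and the observation that $[U,V]$ remains vertical is the crucial ingredient. The one place where the two arguments overlap is the fact that the flow carries vertical vectors to vertical vectors (you use this when pulling back along $\Phi_t$ to land on $TF_{\gamma(t)} \times TF_{\gamma(t)}$; the paper states it directly). Your remark about completeness of the flow requiring compact fibers is also a useful addition that the paper omits.
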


\begin{proof} There is a unique horizontal vector field $X_H$ on $\pi^{-1}(\gamma(I))$ with flow $\phi_H$ by existence and uniqueness of differential equation solutions and that horizontal implies there is no component of the vector field in the fiber direction. Then since $d\Phi$ is the identity on vectors in $H$ we have $\omega(d\Phi(X_H), d\Phi(v)) = \omega(X_H, v)$ for $v \in H$. Also, $H$ is $\omega$-perpendicular to $TF$, which is a condition also preserved under parallel transport: when $v \in TF$ is transported infinitesimally in the parallel direction, it must still be in $TF$, otherwise any component in $H$ could be reverse parallel-transported to a vector component in $H$ at the original fiber, contradiction. So $\omega(X_H, v) = 0 = \omega(d\Phi(X_h), d\Phi(v))$ and $\Phi^*\omega = \omega$ since $TY = TF \oplus H$ in regions where we parallel transport. 
\end{proof}

\begin{cor} $\Phi(\ell_i)$ is Lagrangian in $V^\vee$ with respect to $\omega|_{V^\vee}$.\end{cor}

\begin{claim}
$\Phi$ fixes $\xi_1,\xi_2$.
\end{claim}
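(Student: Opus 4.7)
The plan is to use the defining properties of the horizontal distribution together with the fact that the $T^2$-action generating $\xi_1,\xi_2$ preserves the fibration $v_0$. First I would observe that the infinitesimal rotations
\[
X_1 = d\rho(\partial_{\alpha_1}) = (ix,0,-iz), \qquad X_2 = d\rho(\partial_{\alpha_2}) = (0,iy,-iz),
\]
introduced in Claim \ref{claim:monotonic_mom_map}, preserve $v_0 = xyz$ because the $-iz$ components cancel the rotation of $x$ and of $y$. Consequently the vector fields $X_1,X_2$ are tangent to the fibers of $v_0$, i.e.\ $X_i \in TF$ where $F = v_0^{-1}(p)$.

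Next I would use the relation $\iota_{X_i}\omega = d\xi_i$ (which holds on each fiber by Claim \ref{claim:monotonic_mom_map}, and, more importantly, extends to a statement about $\omega$ on all of $Y$ locally since $\omega$ is defined via a $T^2$-invariant K\"ahler potential). For any horizontal tangent vector $v \in H = (TF)^\omega$ we then get
\[
d\xi_i(v) \;=\; \omega(X_i,v) \;=\; 0,
\]
because $X_i \in TF$ and $v \in TF^\omega$. Therefore $\xi_i$ is constant along any integral curve of the horizontal lift. Applied to the horizontal lift $\tilde\gamma$ of a path $\gamma$ in the base used to define $\Phi$ (Definition \ref{defn:par_transp}), this gives $\xi_i(\Phi(p)) = \xi_i(\tilde\gamma(1)) = \xi_i(\tilde\gamma(0)) = \xi_i(p)$.

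The only subtle point is that globally on $Y$ the functions $\xi_1,\xi_2$ are multivalued owing to the $\Gamma_B$ quotient, while their differentials $d\xi_i$ are honest closed 1-forms. So the statement one actually proves is that $\Phi^*(d\xi_i) = d\xi_i$ along horizontal lifts, and one then interprets this on the torus-valued moment map $(\xi_1,\xi_2)\in T_B$. I expect this bookkeeping---rather than the geometric content---to be the only mild obstacle; everything else follows directly from the tangency $X_i \in TF$ combined with the definition $H = TF^\omega$.
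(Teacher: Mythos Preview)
Your proposal is correct and follows essentially the same argument as the paper: both use that the infinitesimal generators $X_i=d\rho(\partial_{\alpha_i})$ of the fiberwise $T^2$-action are tangent to the $v_0$-fibers, combine this with $\iota_{X_i}\omega = d\xi_i$, and conclude $d\xi_i(X_H)=\omega(X_i,X_H)=0$ from $X_H\in(TF)^\omega$. Your added remarks about extending $\iota_{X_i}\omega=d\xi_i$ from $\omega|_{V^\vee}$ to the full $\omega$ and about the torus-valued nature of $(\xi_1,\xi_2)$ are careful touches the paper leaves implicit, but the geometric content is identical.
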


\begin{proof}Let $\rho$ be the quasi-Hamiltonian $T^2$-action rotating coordinates $(x,y)\in V^\vee$ by angles $(\alpha_1,\alpha_2)$. Let $X_H$ be the horizontal vector field with flow $\phi_H^t$. Then 
\begin{allowdisplaybreaks}
\begin{align*}
    \frac{d}{dt}(\xi_i \circ \phi_H^t)|_{t=0}&=d\xi_i(X_H)\\ &=\iota_{d\rho(\dd_{\alpha_i})}\omega(X_H)\\
    &=\omega(\dd_{\theta_i},X_H)=0 \because X_H \perp^\omega \dd_{\theta_i} \in TV^\vee\\
    \therefore \xi\circ \phi^t_H &= \xi
\end{align*}
\end{allowdisplaybreaks}
\end{proof}

\begin{cor}\label{claim:par_fix_xi}
Parallel transport $\Phi$ is of the form
$$(\xi_1,\xi_2,\theta_1,\theta_2) \mapsto (\xi_1,\xi_2,\theta_1 + f_1(\xi), \theta_2 + f_2(\xi))$$
for some functions $f,g$ depending on $\xi$ but not the angular coordinates.
\end{cor}

\begin{proof} Let $\pi=v_0$. We know by the previous result that parallel transport does not affect $\xi_1, \xi_2$. We are stating further that $f$ and $g$ are independent of the angles, so we can express monodromy as a graph of a function $T_B \to T_F$. So again let $\rho$ be the $T^2$-action on a fiber, namely addition on the $\theta$ coordinates. Since $\omega$ is a function of the norms only, $\rho$ is a symplectomorphism and preserves that $TV^\vee \perp^\omega H$ and acts fiber-wise. Hence
$$ 0 = {\rho}^*\omega(TV^\vee,H)  = \omega(\rho_*TV^\vee, \rho_*H) = \omega(TV^\vee,\rho_*H) \therefore \rho_*H = H$$
Furthermore since $\pi=v_0$ and $\rho$ preserves fibers, we have $\pi \circ \rho = \pi$ and
$$ d\pi(\rho_* X_{hor})  = d(\pi \circ \rho)(X_{hor}) = d\pi(X_{hor}) \therefore \rho_*(X_{hor}) \circ \rho^{-1} = X_{hor}$$
because $\rho_* X_{hor} \in H$ and has the same horizontal component as $X_{hor}\in H$. Integrating both sides of the last equality we obtain
$$d\rho\left(\frac{d}{dt}\phi_H^t\right)\circ \rho^{-1}  = \frac{d}{dt}\left(\rho \circ \phi_H^t\right)\circ \rho^{-1} = \frac{d}{dt}(\phi_H^t)
\therefore \rho \circ \phi_H^t  = \phi_H^t \circ \rho $$

In other words, we get the same answer whether we rotate the coordinates $\theta \mapsto \theta+\alpha$ and then transport by adding $f(\theta+\alpha)$, or transport and then rotate. So parallel transport doesn't depend on the angular coordinates. Namely
$$\theta_i+\alpha_i+tf_i(\xi,\theta+\alpha)= \theta_i+\alpha_i+tf_i(\xi,\theta)\therefore f_i(\xi,\theta+\alpha) = f_i(\xi,\theta) \forall \alpha
$$
Thus $f_1,f_2$ are independent of $\theta_1,\theta_2$. 
\end{proof}

% \begin{remark} We say a few words to illuminate the geometry depicted in Figure \ref{monodr_pic}. A $v_0$-fiber $V^\vee$ can be described in action-angle coordinates $(\xi_1,\xi_2,\theta_1,\theta_2)$. A linear Lagrangian $\ell_i$ is a graph of a linear function on $\xi:=(\xi_1,\xi_2) \in T_B$ (where $T_B$ can be pictured as the quotient of a hexagon tile in the projection of $\Delta_{\tilde{Y}}$ to the first two coordinates). When we parallel transport $V^\vee$ about the origin, $\Phi(\ell_i)$ is still the graph of a function of $\xi$ but the angles have rotated according to the monodromy.  We now compute this. \end{remark}

Now we can define the DFS-type category we consider in this paper. Objects are U-shaped Lagrangians as in the appendix of \cite{ab_sm} and morphisms are defined via categorical localization as in \cite{ab_seid}, \cite[Chapter 4]{math257b}.

\begin{definition}[Definition of category on A-side]\mbox{}\label{defn:fuk_cat}  

\begin{center}\fbox{\bf Objects} 
\end{center}

Recall the definition of $\ell_k=\{(\xi_1, \xi_2, -k \left( \begin{matrix} 2 & 1 \\ 1 & 2 \end{matrix}\right) ^{-1}\left(\begin{matrix} \xi_1 \\ \xi_2 \end{matrix} \right) \}_{(\xi_1, \xi_2) \in T_B}$ in Lemma \ref{lem:fuk_subcat} for Lagrangians in $V^\vee$. Also define 
$$t_x:=\{(\log_\tau |x|, \theta)\}_{\theta \in [0,1)^2} \subseteq V^\vee.$$
In $Y$, define these Lagrangians to exist in the $v_0$-fiber over $-1$. Let $\bigcup_{\gamma}$ denote parallel transport with respect to $(TV^\vee)^\omega$ over a curve $\gamma \subset \mb{C}$ in the base of $v_0$. Let $\gamma: \mb{R} \to \mb{C}$ be a smooth curve so that $\gamma(0) = -1$ and $\gamma(\mb{R})$ traces out a U-shape in the base given outside a compact set by rays in the right half plane, i.e.~lines $re^{i \theta_0}$ for a fixed $\theta_0 \in [-\pi, \pi)$, as in Figure \ref{u_shape}. Then define 
\begin{equation}
\begin{aligned}
    L_k&:=\cup_\gamma \ell_k\\
    T_x&:= \cup_\gamma t_x
    \end{aligned}
\end{equation}
to be fibered Lagrangians which generate all objects in the DFS-type category. In particular, we don't allow the rays to wind around the origin in the compact set. I.e.~in the language of stops we have a stop at $-\infty$ along the real line in $\mb{C}$.

% Alternative interpretations: we're allowed non-compact Lagrangians now. A limit of compact around the base is a non-compact, so it's like a completion of the Fukaya category. Also: these are fibered Lagrangians because they are Lagrangians in a fiber. Reason: par transport is a symplecto so preserves Lagrangians. Also, we can think of it as a section of the moment map because each $(\xi_1,\xi_2,\eta)$ maps bijectively with a specific point in $L_i$.

\begin{figure}
\begin{center}
\includegraphics[scale=0.5]{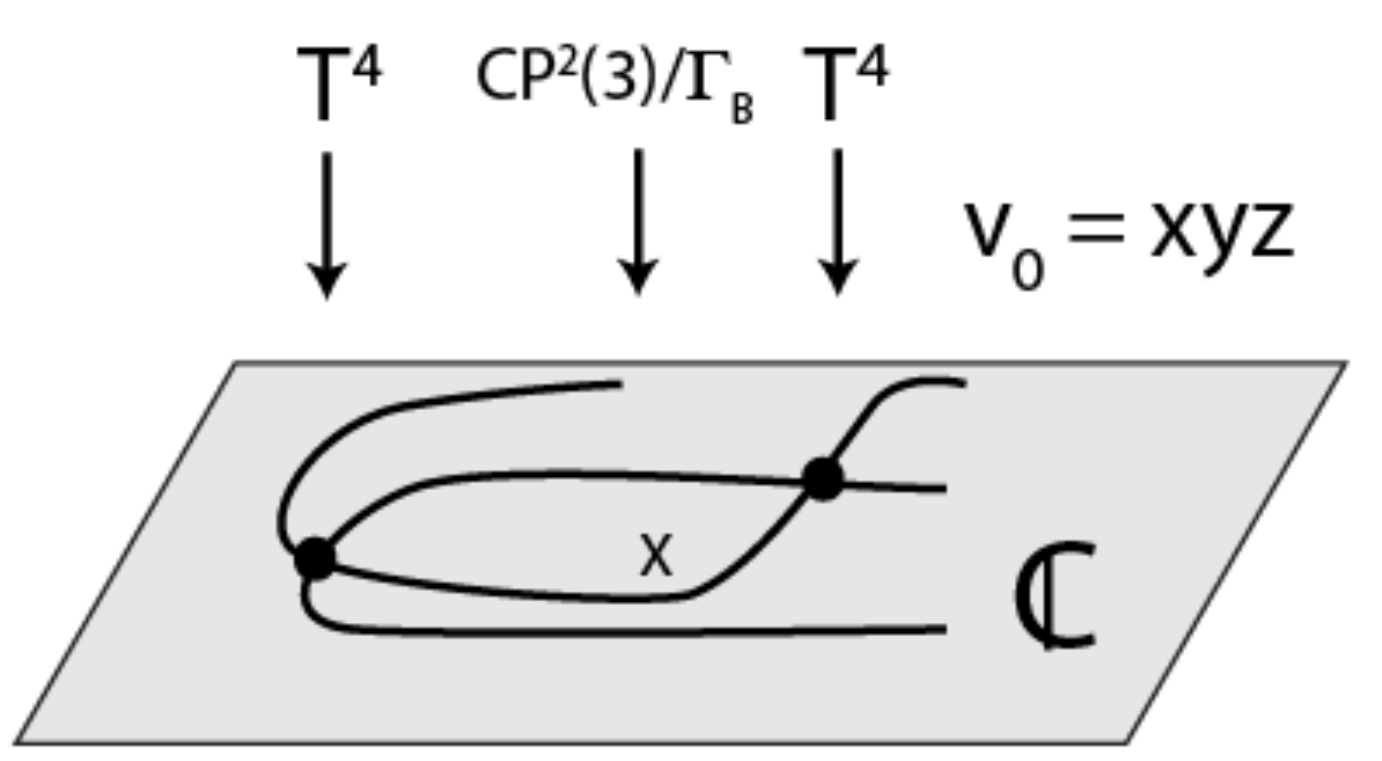}
\end{center}
\caption{$L_i$ = parallel transport $\ell_i$ around U-shaped curve in base of $v_0$}
\label{u_shape}
\end{figure}

\begin{center}\fbox{\bf Floer complexes}
\end{center}

This is based on \cite[\textsection 4.2.2]{math257b}. The condition for rays to lie in the right half-plane ensure morphisms will be well-defined. We first define a directed category. Let $K$ and $L$ be objects. Then first set
  \begingroup \allowdisplaybreaks \begin{equation}
    CF_{directed}(K,L)=
    \begin{cases}
      CF(K,L), & \text{if}\ K>L \\
      \mb{C}\cdot e_L, & \text{if}\ K=L\\
      0, & \text{else}
    \end{cases}
  \end{equation} \endgroup
where the ordering $K > L$ denotes that, outside a compact set, all the rays of $K$ lie above all the rays of $L$ and $e_L$ is defined as follows. If $K=L$, then push the rays of $K$ to lie above the rays of $L$ as in Figure \ref{u_shape}. Denoting this deformation of $K$ as $L^\epsilon$, this defines the \emph{quasi-unit} $e_L \in CF(L^\epsilon, L)$ as the count of discs with those Lagrangians as boundary conditions. We then localize at these $e_L$, in other words we set them to be isomorphisms. Recall that formally inverting morphisms involves taking equivalence classes of \emph{roofs}
$$K \leftarrow K^\eps \to L$$
This is the definition of an arrow $K \to L$ when $K<L$, namely push up to $K^\eps>L$ so that the first arrow of the roof is the quasi-unit $e_K$, and then $K^\eps \to L$ is defined as above. 

\begin{center}\fbox{\bf Morphisms}
\end{center}

The morphisms in this category are obtained by quotienting by the differential on the hom spaces just defined. The differential ($\mu^1$), composition ($\mu^2$), and higher order $\mu^k$ count bigons, triangles, and $(k+1)$-gons respectively, with the usual Lagrangian boundary conditions from these $L_i$, and are $J$-holomorphic for some regular $J$. This is theory from Chapter 12 of \cite{seidel}, contents of which are listed \href{https://drive.google.com/open?id=10D7IKn9mZ6q8ApeASBcodrVQUb-cWris}{here}. 

However, for self-Floer cohomology groups, Hamiltonian perturbation is only used in the fiber, as in the {\bf Non-transversely intersecting Lagrangians} section of the proof of Lemma \ref{lem:fuk_subcat}. Not in the total space, because we would need to find a Hamiltonian vector field tangent to the fibers of $v_0$, so the projection of a Hamiltonian-perturbed holomorphic curve (i.e.~one that satisfies the perturbed Cauchy-Riemann equation (\ref{eq:perturb_CR})) under $v_0$ is still a holomorphic polygon. Since the symplectic form is not a product of one on the base and fiber, we instead use the categorical localization method of \cite{ab_seid}, see also \cite{math257b}. For each Lagrangian in the fiber, there are many different objects of $FS$ obtained by parallel transport along U-shapes that go to infinity in slightly different directions, but these will be isomorphic to each other after localizing.

\begin{center}
\fbox{\bf Moduli spaces}    
\end{center}

This concludes the definition of the DFS-type category used on the A-side $(Y,v_0)$, modulo the definition of the moduli spaces, which is the remainder of this chapter after we compute monodromy.

%\item composition

%\item Could talk about collecting into an $A_\infty$-algebra/category by the bar complex, i.e.~direct sum over all morphisms groups HF with $i<j$? (so before localizing i.e.~in the directed setting)

%\item if $(V_1,\ldots,V_m)$ are directed objects: Need double intersections transverse and triple intersections empty. I have the former but not the latter.
%
%Look at $CF(V_i,V_j)$. Pick one parameter family of acs in $\mc{J}(M)$, indexed by $(i,j)$. Floer differential: usual one on $M$ since no fibrations yet.

\end{definition}

\begin{remark} If we include $t_x$ parallel transported in a circle around the base of $v_0:Y \to \mb{C}$ as a Lagrangian in the subcategory we are considering, then this Lagrangian bounds nonconstant holomorphic discs. A future direction is to incorporate and define $M^0$ for the category containing this Lagrangian. Note that $M^0$ is a degree 2 operation, and the Lagrangian is only $\mb{Z}/2$-graded. In this paper, we do not include it in the subcategory being considered. We do still want to count the discs and $J_0$-spheres, but they will show up only in the $c$ in $M^2(c,-)$ considered as a map on Floer groups in Section \ref{section:cobord}. 
\end{remark}

% \begin{example}  In our case the time-1 flow of $X_H$ is an automorphism of the fiber that is a graph on the real norms of the coordinates $|x|, |y|, |z|$, chosen so that it descends to a map from the torus to itself. As in Lemma \ref{lem:fuk_subcat}, we again have $HF^*(t_x,t_x) \cong H^*(t_x; \mb{C})$, the cohomology of a 2-torus.\end{example}

% \begin{remark}
% what's involved in getting the entire category. We will also consider in the course of our calculations a Lagrangian obtained by parallel transport of $t_x$ around a circle in the base; however this Lagrangian bounds holomorphic discs and does not define an object of $FS(Y, v_0)$.
% \end{remark}

\begin{lemma} For Lagrangians in correct position, morphisms and compositions in the localized Fukaya-Seidel category coincide with those in the directed category. Namely, if $K>L$, then $\hom_{FS}(K,L) \simeq CF^*(K,L)$, and if $L_0 > ... > L_k$, then $\mu^k$ in $FS(Y,v_0)$ is the same as in the directed category. Namely it is given by counting $J$-holomorphic discs. \end{lemma}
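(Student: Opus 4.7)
My plan is to unpack the construction of the localized category from Definition \ref{defn:fuk_cat} and check that, when Lagrangians are already in the correct cyclic order at infinity, every roof simplifies to the trivial roof, reducing the localized $\mu^k$ to the ordinary directed count. The first step is to recall that for $K > L$, the hom-space $\hom_{FS}(K,L)$ is by construction the equivalence class of roofs $K \xleftarrow{f} K' \xrightarrow{g} L$ modulo the relation generated by the quasi-units $e_{(-)}$. When $K>L$ already, I may take $K' = K$ and $f = e_K = \mathrm{id}_K$, so that the roof collapses to the single morphism $g \in CF^*(K,L)$; conversely, any element of $CF^*(K,L)$ defines such a trivial roof. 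Thus the natural map $CF^*(K,L) \to \hom_{FS}(K,L)$ is a bijection, provided one verifies that no nontrivial roof represents an element already in the image: this is where one uses that composition with $e_K$ acts as the identity on $CF^*(K',L)$ whenever $K > K' > L$, a fact which follows from the standard count of thin strips/triangles bounded by small perturbations of $K$.

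The second step is to show the $\mu^k$-operations agree. Assume $L_0 > L_1 > \cdots > L_k$. By definition in a localized $A_\infty$-category (see \cite{ab_seid} and \cite{math257b}), $\mu^k$ applied to morphisms represented by trivial roofs is computed by choosing lifts to genuine Floer cochains and applying the ambient $\mu^k$ of the ``unlocalized'' directed category, then projecting to the localization. Since each input $f_i \in CF^*(L_{i-1}, L_i)$ is already a trivial roof (because $L_{i-1} > L_i$) and the output lands in $CF^*(L_0, L_k)$ with $L_0 > L_k$ (the ordering is transitive on the U-shape slopes at infinity), the lift is tautological and the localized $\mu^k$ is exactly the count of rigid $J$-holomorphic $(k+1)$-gons with boundary on $(L_0,\dots,L_k)$ in the directed category. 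No quasi-unit insertions are needed.

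The verification that the moduli spaces involved are transversely cut out and the count is well-defined is already subsumed in Definition \ref{defn:fuk_cat}, where the moduli spaces are set up using regular and generic choices; the compactness argument is the standard Gromov compactness of Seidel's construction for symplectic (Lefschetz) fibrations, adapted here because the U-shape boundary conditions force any bubbling at infinity to be controlled by the behavior of horizontal rays in the right half-plane, which are disjoint outside a compact set by the ordering assumption. Hence no boundary strata at infinity contribute, and the counts are finite.

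The main obstacle, as I see it, is the well-definedness of the quasi-unit $e_L$ and the verification that it acts as a strict (or at least cohomological) identity under $\mu^2$. This requires a thin-strip/continuation count between $L$ and its small perturbation $L^\epsilon$, showing that the Floer differential of $e_L$ vanishes and that $\mu^2(e_L, -)$ and $\mu^2(-, e_L)$ are homotopic to the identity on Floer cohomology. Once this is established, the collapse of nontrivial roofs to trivial ones becomes purely formal via the general machinery of $A_\infty$-localization, and the lemma follows.
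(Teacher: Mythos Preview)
The paper does not actually prove this lemma: its entire ``proof'' is a citation, namely ``This is unpublished work of Abouzaid--Seidel, e.g.~see \cite[Proposition 119, 120]{math257b}, and Abouzaid--Auroux.'' So there is no argument in the paper to compare against; the author treats the statement as a black box imported from the general theory of $A_\infty$-localization.

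Your proposal therefore goes well beyond what the paper offers, and the outline you give --- trivial roofs when the ordering is already correct, reduction of the localized $\mu^k$ to the directed count via tautological lifts, and the identification of the quasi-unit property of $e_L$ as the crux --- is the correct shape of the argument one finds in the cited sources. Two small points: first, the map $CF^*(K,L) \to \hom_{FS}(K,L)$ is a quasi-isomorphism rather than a bijection of underlying sets (the lemma's $\simeq$ is at the cohomological level), so your phrasing should be adjusted accordingly. Second, your compactness remark leans on ``Seidel's construction for symplectic (Lefschetz) fibrations,'' but the paper is explicit that $(Y,v_0)$ is not Lefschetz, non-exact, and non-monotone; the relevant compactness is handled later in the paper (Section~\ref{defn} and Example~\ref{ex:compactness}) by geometric arguments specific to this fibration, not by invoking Seidel's exact setting. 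These are refinements rather than gaps: your sketch is sound, and in fact supplies content the paper itself omits.
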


\begin{proof}[Proof references] This is unpublished work of Abouzaid-Seidel, e.g.~see \cite[Proposition 119, 120]{math257b}, and Abouzaid-Auroux. \end{proof}

\begin{lemma} The composition of roofs is a roof.\end{lemma}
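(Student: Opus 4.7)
The plan is to verify the calculus of fractions in the directed category. Suppose we are given two composable roofs
\[
R_1 = \bigl( K \xleftarrow{e_K} K^\epsilon \xrightarrow{f} L \bigr), \qquad R_2 = \bigl( L \xleftarrow{e_L} L^\epsilon \xrightarrow{g} M \bigr).
\]
I want to exhibit a single push-off $K^{\epsilon'}$ of $K$ and maps making
\[
K \xleftarrow{e'} K^{\epsilon'} \xrightarrow{\;h'\;} M
\]
into a roof, so that $R_2 \circ R_1$ is represented by this.

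First, I would push the rays of $K^\epsilon$ further upward at infinity to obtain a new object $K^{\epsilon'}$ whose rays outside a compact set lie strictly above those of $L^\epsilon$. This is possible because the U-shape data at infinity can be translated upward by an arbitrary amount without crossing any of the finitely many other Lagrangians in play. This gives $K^{\epsilon'} > L^\epsilon > L > M$ in the directed ordering, so all the required Floer complexes are directly defined (no further localization needed to write $h'$).

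Second, the push-off map $e: K^{\epsilon'} \to K^\epsilon$ is a quasi-unit in the sense of the definition above, and the $\mu^2$-composition $\mu^2(e_K, e)$ is cohomologous to the quasi-unit $e'$ associated to the total push-off $K^{\epsilon'}$ of $K$. (This is standard and follows from the fact that quasi-units are characterized up to $\mu^1$-boundary by the Hamiltonian isotopy class of the push-off at infinity; composing two upward shifts yields a single larger upward shift.) Hence $e' \sim \mu^2(e_K, e)$ realises the left leg of the sought roof.

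Third, I need to produce the right leg $h': K^{\epsilon'} \to M$ that, together with $e'$, represents the same localized morphism as the concatenation $R_2 \circ R_1$. For this I would define
\[
h' := \mu^2\bigl(g,\, h\bigr)
\]
where $h \in CF(K^{\epsilon'}, L^\epsilon)$ is a lift of $\mu^2(f, e)$ along $e_L$, i.e.\ a cocycle satisfying $[\mu^2(e_L, h)] = [\mu^2(f, e)]$ in $HF(K^{\epsilon'}, L)$. Such a lift exists because $e_L$ becomes invertible after localization, so it is a quasi-isomorphism on the relevant Floer cohomology; concretely one can set $[h] = [e_L]^{-1} \circ [\mu^2(f,e)]$ at the cohomological level and then pick any cocycle representative. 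With this choice, the square
\[
\begin{array}{ccc}
 K^{\epsilon'} & \xrightarrow{\;h\;} & L^\epsilon \\
 \downarrow e & & \downarrow e_L \\
 K^\epsilon & \xrightarrow{\;f\;} & L
\end{array}
\]
commutes up to $\mu^1$, which is exactly the equivalence of roofs identifying $R_2 \circ R_1$ with $(K \xleftarrow{e'} K^{\epsilon'} \xrightarrow{h'} M)$.

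Fourth, I would verify that this composition is well-defined on equivalence classes of roofs: any other choice of push-off $K^{\epsilon''}$, or of cocycle representative $h$, differs by either a further common push-up or a $\mu^1$-exact change, and both operations yield equivalent roofs.

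The main obstacle is the third step — producing the lift $h$. This requires knowing that $e_L$ is a cohomological isomorphism, which is the content of the categorical localization theory of Abouzaid--Seidel invoked in the previous lemma. Once that is in hand, the rest is bookkeeping in the directed category.
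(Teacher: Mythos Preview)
The paper does not give a proof here at all; it records only a one-line reference: ``This is a consequence of the naturality of quasi-units with respect to continuation maps, see Abouzaid--Seidel and Abouzaid--Auroux.'' Your proposal is therefore far more detailed than what the paper provides, and the overall architecture --- push $K^\epsilon$ above $L^\epsilon$, compose quasi-units, lift $f$ through $e_L$, check independence of choices --- is the standard calculus-of-fractions verification one would expect if the cited references were unpacked.

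There is one logical wrinkle worth flagging in your third step. You write that the lift $h$ exists ``because $e_L$ becomes invertible after localization, so it is a quasi-isomorphism on the relevant Floer cohomology.'' This reads as circular: the well-definedness of composition in the localized category is exactly what you are trying to establish, so you cannot appeal to the localization to supply $[e_L]^{-1}$. What you actually need is the geometric fact that $\mu^2(e_L,-)\colon HF(K^{\epsilon'},L^\epsilon)\to HF(K^{\epsilon'},L)$ is an isomorphism whenever $K^{\epsilon'}>L^\epsilon>L$, and this is precisely the ``naturality of quasi-units with respect to continuation maps'' that the paper cites. That isomorphism is proved by identifying multiplication by $e_L$ with a continuation map for the isotopy taking $L^\epsilon$ to $L$, which is an honest Floer-theoretic argument independent of any localization. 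Once you rephrase step three in those terms, your outline is sound and matches the content behind the paper's citation.
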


\begin{proof}[Proof references] This is a consequence of the naturality of quasi-units with respect to continuation maps, see Abouzaid-Seidel and Abouzaid-Auroux. \end{proof}

% check energy difference in $\gamma^*$ moduli spaces, cf Hang's Fall talk in s\& t seminar.

%\begin{remark}[Units] In Floer theory $HF(V,V) \cong H(V; \mb{K}/2)$. The cohomology ring for the 2-torus (``height zero" part of a $T^4$ fiber) is $\mb{Z}$ in degree zero and two, and $\mb{Z} \oplus \mb{Z}$ in degree 1 by the K\"unneth formula. We indeed have the same number of generators on the cochain level. \end{remark}

%\begin{proof} proof of unit: hutchings $S^1$-equivariance? What about the $\mu_0$'s? Why can I just exclude the $T_x$? note: $\mu^0$ typically counts Lagrns of Maslov index 0 and in my case they have index 2 but mod 2 that is 0? \end{proof}

\begin{definition} The \emph{Donaldson-Fukaya category} is obtained by passing to cohomology on the morphism chain complexes. \end{definition}
%It is an example of a \emph{topological quantum field theory}, i.e.~a functor from manifolds to vector spaces where cobordisms map to vector space homomorphisms.

% The notion of a Fukaya category was introduced in \cite{fooo1}, and used for toric varieties in \cite{fooo_toric1}, defined with compact Lagrangians. In particular, it's an $A_\infty$ category Such Fukaya categories show up in HMS for compact Calabi-Yau manifolds. For Fano and general type manifolds ($c_1>0$ and $c_1<0$), the mirror is a Landau-Ginzburg model, namely a holomorphic function on a non-compact complex manifold called a superpotential. Hori-Vafa \cite{hori_vafa} describe why this holds from a physics perspective. Mathematically, this means one needs a notion of Fukaya category with admissible non-compact Lagrangians. This framework was defined in \cite{seidel} for exact Lefschetz fibrations, hence was coined the \emph{Fukaya-Seidel} category.

% The Fukaya category for symplectic manifolds, and the Fukaya-Seidel category for symplectic fibrations, are $A_\infty$-categories. In what follows, we consider a subcategory of the Fukaya-Seidel category of $v_0:Y \to \mb{C}$. We then prove, passing to the cohomology level where the $A_\infty$-category becomes a category, that this subcategory is equivalent to the bounded derived category of coherent sheaves on the hypersurface $H = \Sigma_2$.

\begin{remark}[Not exact] One difference of our fibration $v_0: Y \to \mb{C}$ to those in \cite{seidel} is that we are in the non-exact case, because fibers are compact tori. In particular, we will have sphere bubbles which would have been excluded. Furthermore, we do not have a \emph{horizontal boundary} since fibers are tori, but we do have a \emph{vertical boundary} by taking the preimage of the boundary of the base. (Recall that the polytope locally describing $Y$ has the restriction $\eta \leq T^\ell$ for $T \ll 1$, so equivalently $|v_0|\ll 1$ in the base.) 
\end{remark}

\begin{remark}[Not Lefschetz]
In \cite{seidel} the Fukaya category of a Lefschetz fibration has Lagrangians given by thimbles obtained by parallel transporting a sphere to the singular point in the Morse singular fiber where it gets pinched to a point. In our situation, the degenerate fiber over 0 is not a Lefschetz singularity (i.e.~modeled on $\sum_i z_i^2$) but instead the fiber $T^4$ degenerates by collapsing a family of $S^1$'s, and also collapsing a $T^2$ in two points. This produces singular Lagrangians. So instead, we go around this singular fiber in a U-shape as in \cite{ushape}.
\end{remark}

\begin{remark}[Not monotone]
Lastly, a compact symplectic manifold $(P,\omega)$ is \emph{monotone} if $\int c_1(P) = \alpha \int \omega: \pi_2(P) \to \mb{R}$ for some $\alpha>0$. Our setting is not monotone: taking $P:=Y$, which is Calabi-Yau, we have $c_1(Y) = 0$ however $[\omega] \neq 0$ so $\alpha$ would have to be zero, contradiction. Furthermore, the Lagrangians are not monotone, which we now define.
\end{remark}

\begin{definition}[Maslov index of a map] Given $u: (\mb{D},\dd \mb{D}) \to (M,L)$ we first trivialize $u^*TM$ over the contractible disc. Then the \emph{Maslov index} counts the rotation of $u^*TL$ around the boundary in this trivialized pullback.\end{definition}

\begin{definition} A \emph{monotone} Lagrangian $L$ in symplectic manifold $(M,\omega)$ is such that $[\omega] = k \cdot [\mu(u)]$ for all $u \in \pi_2(M,L)$, for some constant $k$. Namely, the areas of discs are proportional to the Maslov indices of those discs.\end{definition}

\begin{example} An example can be found in Oh \cite{oh}, who shows the Clifford torus in $(\mb{CP}^n, \omega_{FS})$ is a monotone Lagrangian submanifold. In this thesis, all discs considered have Maslov index 2, but the areas vary as prescribed by a formula of \cite{cho_oh} which we will elaborate on later.
\end{example}

%To obtain $\pi_2(M,L)$ in the case of $L \cong T^n$ the Clifford torus and $M:=\mb{CP}^n$, we can use the homotopy exact sequence for the pair of manifolds to get a split exact sequence (split because the boundary map has an inverse map by inclusion) and then $\pi_2(\mb{CP}^n)$ follows from the homotopy exact sequence on the fiber bundle $S^1 \to S^{2n+1} \to \mb{CP}^n$.
%
%This gives us a basis of $H^2(M,L)$ on which to check that Maslov indices and area are proportional. In fact, we only need to check on two elements because the rest in the Clifford $T^n$ are the same by symmetry. In $T^n$, the maps $w_k(z) = [1:\ldots:0:z:0\ldots:0]$ for $k>0$ have index 2 because we rotate one full rotation, so the line $\mb{R}$ is rotated twice.
%
%The difference of Maslov index for two maps $u$ and $u'$ with $u|_{\dd \mb{D}} = u'|_{\dd \mb{D}}$ is $\int_{u(\mb{D}^2) \cup \ol{u'(\mb{D}^2))}} 2c_1(M)$. Thus the disc map $w_0: \mb{D} \to M$ defined by $\{[z:0:\ldots:0]\}_{z \in \mb{D}}$ has Maslov index $2c_1(\mb{CP}^n) = 2(n+1)$; this is because we can think of $w_0$ as gluing on a constant disc, or equivalently a sphere attached to $T^n$ by a slender tube, via the image of a generator in $\pi_2(\mb{CP}^n)$ under the homotopy long exact sequence. Ultimately we find 
%$$\frac{\mbox{Area}(w_1)}{\mbox{Area}(w_0)} = \frac{1}{n+1} = \frac{\mu(w_1)}{\mu(w_0)}$$

Now that we've indicated how this set-up differs from those currently in the literature, we give an outline of the subsections of this chapter. In Section \ref{mon} we discuss monodromy around the central fiber in $v_0:Y \to \mb{C}$ because it is used to find intersections of the Lagrangians $L_k$ we defined in the total space. Then Section \ref{moduli} sets up the background needed to define the structure maps and proves the moduli spaces involved have the required conditions to be put into the definition. Finally we show the definition of the DFS-type category is independent of choices, in Section \ref{defn}.

\subsection{Monodromy}\label{mon}

\begin{definition} \emph{Monodromy} is the parallel transport map $\Phi$ of Definition \ref{defn:par_transp} around a loop which goes once around a singularity in the base.\end{definition}

\begin{claim}\label{claim:mon_ind_angle} If $\psi(t)(v_0) = e^{2\pi i t} v_0$ then the symplectic horizontal lift of $d\psi(d/dt)$ is of the form 
$$X_{hor} = \dd/\dd \theta_\eta + f_1(\xi, \eta)\dd/\dd \theta_1 + f_2(\xi, \eta) \dd/\dd \theta_2$$
where $f_i$ are now functions of $\xi_1,\xi_2$, and $\eta$.

\end{claim}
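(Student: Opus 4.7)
The plan is to work on the universal cover $\tilde Y$ where the action coordinates $(\xi_1, \xi_2, \eta)$ are globally defined, express $X_{hor}$ in these action-angle coordinates, and pin down each coefficient using the $T^3$ symmetries of $\omega$. Writing
\begin{equation*}
X_{hor} = a \frac{\dd}{\dd\theta_\eta} + b_1 \frac{\dd}{\dd\xi_1} + b_2 \frac{\dd}{\dd\xi_2} + d \frac{\dd}{\dd\eta} + c_1 \frac{\dd}{\dd\theta_1} + c_2 \frac{\dd}{\dd\theta_2},
\end{equation*}
I would determine $a, b_i, d, c_i$ one at a time.

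First, the projection condition $dv_0(X_{hor}) = \dd/\dd\theta_\eta$ on the base has two components: the angular part gives $a = 1$ immediately, while the radial part gives $X_{hor}(|v_0|) = 0$. Next, using the identity $\iota_{\dd/\dd\theta_i}\omega = d\xi_i$ established for the quasi-Hamiltonian $T^2$-action (the same computation used in the proof of the preceding corollary that $\Phi$ fixes $\xi_1, \xi_2$), horizontality against $\dd/\dd\theta_i \in TF$ gives $b_i = d\xi_i(X_{hor}) = \omega(\dd/\dd\theta_i, X_{hor}) = 0$. Combined with $X_{hor}(|v_0|) = 0$ and the fact that $|v_0|$ is a function of $(\xi_1, \xi_2, \eta)$ with $\dd|v_0|/\dd\eta \neq 0$ in the charts under consideration, this forces $d = 0$.

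It remains to constrain $c_1, c_2$. Since $\omega$ is built as a function of the norms of the toric coordinates (see Definition~\ref{defn of w}), the $T^2$-action $\rho_\alpha$ that rotates $(\theta_1, \theta_2)$ is a symplectomorphism which also preserves $v_0 = xyz$ (the compensating rotation of $z$ by $-(\alpha_1+\alpha_2)$ built into the claimed action preserves $v_0$), hence preserves the horizontal distribution $H = (TF)^\omega$. Since $\rho_\alpha$ covers the identity on the base and fixes the base vector field $\dd/\dd\theta_\eta$, uniqueness of horizontal lifts gives $(\rho_\alpha)_* X_{hor} = X_{hor}$, so $c_1, c_2$ are independent of $\theta_1, \theta_2$. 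An analogous argument for the $S^1$-rotation of $z$ alone, $(x,y,z) \mapsto (x,y,e^{i\beta}z)$, which preserves $\omega$ (being a rotation of a norm-dependent potential) and covers the base rotation $\psi(\beta/2\pi)$, shifts only $\theta_\eta$; invariance of $X_{hor}$ under this rotation yields independence of $c_i$ from $\theta_\eta$ as well. Hence $c_i = f_i(\xi, \eta)$.

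The main obstacle is verifying that the rotations above really preserve $\omega$ globally, given that $\omega$ is patched together from several local K\"ahler potentials with interpolating bump functions $\alpha_3, \alpha_4, \alpha_5, \alpha_6$ and that $\eta$ is only locally defined on $Y$ due to $\Gamma_B$-monodromy. This is handled by working on $\tilde Y$ and observing that each local potential appearing in Definition~\ref{defn of w} depends only on the norms $r_x, r_y, r_z$, and that the arguments $d, \theta$ of the bump functions are themselves built from these norms via the expressions in Equation~\ref{polar_coords}; hence the three torus rotations act chart-by-chart as symplectomorphisms and the invariance patches together on $\tilde Y$. Descent to $Y$ is automatic by uniqueness of the horizontal lift, since both $v_0$ and $\omega$ are $\Gamma_B$-invariant.
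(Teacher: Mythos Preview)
Your proposal is correct and follows essentially the same approach as the paper: both determine the $\theta_\eta$ and $\eta$ coefficients from the projection $dv_0(X_{hor})=\dd/\dd\theta_w$, kill the $\xi_i$ coefficients via horizontality against $\dd/\dd\theta_i$ (exactly the argument of Corollary~\ref{claim:par_fix_xi}), and then use invariance under the fiberwise $T^2$-rotation and the $z$-rotation covering $\psi$ to remove the angular dependence of $c_1,c_2$. The paper packages the first two steps into a single $2\times 6$ matrix equation and defers the angular-independence to the proof of Corollary~\ref{claim:par_fix_xi}, whereas you separate them and spell out explicitly why the torus rotations are global symplectomorphisms (norm-only dependence of $F$ on each chart, passage to $\tilde Y$); this extra care is welcome but not a different idea.
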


\begin{proof}[Proof of Claim \ref{claim:mon_ind_angle}] We saw above in Corollary \ref{claim:par_fix_xi} that $X_{hor}$ does not involve $\dd/\dd \xi_i$. We now show it also preserves $\eta$ because we are considering parallel transport around a circle with fixed $|v_0|$. Let $w$ be the coordinate on the base $\mb{C}$ of $\pi=v_0$. In particular, $X_{hor}$ is defined by the property 
\begin{equation}\label{eqn:horiz_vec} d\pi (X_{hor}) = \dd/\dd \theta_w  \end{equation}

With respect to the action-angle coordinates  $\pi:(\xi_1,\xi_2,\eta,\theta_1,\theta_2,\theta_\eta) \to (|w|,\theta_w):=(|v_0|,\theta_\eta)$ and noting that $|v_0|$ is a function of $(\xi_1,\xi_2,\eta)$ only, Equation (\ref{eqn:horiz_vec}) can be expressed as:
\begin{align*}
& \left( \begin{matrix} \dd |v_0|/\dd \xi_1 & \dd |v_0|/\dd \xi_2 & \dd |v_0|/\dd \eta & 0 & 0 & 0\\ 0 & 0 & 0 & 0 & 0 & 1\end{matrix}\right) \left ( \begin{matrix} 0\\0\\a \\ f_1 \\ f_2 \\b \end{matrix}\right)=\left(\begin{matrix} a \dd |v_0|/\dd \eta\\   b   \end{matrix}  \right)=\left(\begin{matrix} 0 \\ 1 \end{matrix} \right)
\end{align*}
thus $b = 1$ and $a$ must be zero as $\eta$ depends on $|v_0|$. So the horizontal lift is of the form:
$$X_{hor} = \dd/\dd \theta_\eta + f_1 \dd/\dd \theta_1 + f_2 \dd /\dd \theta_2$$

Also we saw above in Claim \ref{claim:par_fix_xi} that for a fixed fiber, the $f_i$ do not depend on $\theta_1$ and $\theta_2$. A similar argument shows they are independent of $\theta_\eta$, i.e.~$\rho_\la \circ  \phi_H^t= \phi_H^t \circ \rho_\la$ for $\la \in S^1$ defining the rotation action on the $v_0$ coordinate. This can be seen by replacing $\rho$ with $\rho_\la$ in the proof of Claim \ref{claim:par_fix_xi}. 
\end{proof}

\begin{example}[One dimension down, 2D local case] 

The case of $\mb{C}^2$ with symplectic fibration $(x,y) \mapsto xy$ is the setting of a Lefschetz fibration, with singular fiber given by two copies of $\mb{C}$ from $x=0$ or $y=0$. The monodromy is a Dehn twist about the $S^1$ given by the belt of the cylindrical fibers. In that case $f(x,y) = xy$ and $S^1$-action is $(e^{i\theta}x, e^{-i\theta}y)$. The holomorphic vector field corresponding to this is $iz_1 \dd_{z_1} - i z_2 \dd_{z_2}$, whose contraction with $\omega = \frac{i}{2}( dz_1 \wedge d\ol{z_1} + dz_2 \wedge d{\ol z_2})$ gives Hamiltonian vector field $-\frac{1}{2}(|z_1|^2 - |z_2|^2)$. We have a new set of coordinates on the two dimensional fiber: the moment map coordinate $\mu$ and the angle coordinate of the action $\theta$. As we approach $xy=0$, the orbit at $\mu$-height 0, namely $|x|=|y|$, goes to zero. That's one way to see how we get the picture of a cylinder with the belt pinching to zero.  
\end{example}

Recall from Claim \ref{claim:mon_ind_angle} that we can describe parallel transport by the graph of a function $T_B \to T_F$ by adding $(f_1(\xi,\eta), f_2(\xi,\eta),1)$ to the angular coordinates $(\theta_1,\theta_2,\theta_\eta)$. The main result of this section is the following Lemma.

\begin{lemma}[Monodromy, see Figure \ref{monodr_pic}]\label{monodr} Over the parallelogram-shaped fundamental domain of the torus $T_B$, the monodromy for a loop around the base is given by $(f_1(\xi),f_2(\xi))$ as follows. They equal $(0,0)$ for $\xi$ in the upper right corner where $r_z^{-1} >> r_x^{-1},r_y^{-1}$, then $(0,1)$ on the right where $r_y^{-1}$ is the largest, $(1,0)$ on the left where $r_x^{-1}$ largest and thus $(1,1)$ in the bottom left corner. 
\end{lemma}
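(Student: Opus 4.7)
The plan is to compute the horizontal lift $X_{hor}$ of $\partial_{\theta_\eta}$ locally and read off $(f_1, f_2)$ region by region. By Claim~\ref{claim:mon_ind_angle}, $X_{hor} = \partial_{\theta_\eta} + f_1\partial_{\theta_1} + f_2\partial_{\theta_2}$ with $f_1, f_2$ functions of $(\xi, \eta)$ only, so the monodromy around one base loop is simply the value of $(f_1, f_2)$ in that region.

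In the standard chart $(x, y, z)$ at the lower-left vertex of the $(0, 0)$-tile, small $|x|, |y|, |z|$ let one use the $\mathbb{C}^3$-approximation $\omega \approx \sum_a d\mu_a \wedge d\theta_a$ with $\mu_a = \tfrac{1}{2}|a|^2$ (as in Section~\ref{convert_polar}). Imposing $\omega(X_{hor}, V) = 0$ for all $V$ tangent to the fibers of $\pi = v_0 = xyz$ (i.e.\ $V \in \ker d|v_0|^2 \cap \ker d\theta_\eta$, which is four-dimensional and cut out by a $d(\mu_x\mu_y\mu_z)$-type constraint on the radial side and $d\theta_x + d\theta_y + d\theta_z = 0$ on the angular side) and normalizing by $d\pi(X_{hor}) = \partial_{\theta_\eta}$ determines
\begin{equation*}
X_{hor} \;=\; \frac{\mu_y\mu_z\,\partial_{\theta_x} + \mu_x\mu_z\,\partial_{\theta_y} + \mu_x\mu_y\,\partial_{\theta_z}}{\mu_x\mu_y + \mu_x\mu_z + \mu_y\mu_z}.
\end{equation*}
In the limit where one $\mu_c$ is much smaller than the other two (equivalently, $r_c^{-1}$ dominates), this simplifies to $X_{hor} \to \partial_{\theta_c}$. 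Rewriting $\partial_{\theta_c}$ (treating the other two $\theta_a, \theta_b$ as fixed) in the basis $(\partial_{\theta_1}, \partial_{\theta_2}, \partial_{\theta_\eta})$ via $\theta_\eta = \theta_x + \theta_y + \theta_z$ together with $\theta_1 = \theta_x$, $\theta_2 = \theta_y$ immediately gives $(f_1, f_2) = (0, 0)$ when $c = z$, $(0, 1)$ when $c = y$, and $(1, 0)$ when $c = x$, matching the upper right, right and left regions of the fundamental domain.

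For the $(1, 1)$ value at the bottom-left corner, the relevant region lies in a $\Gamma_B$-translate of tile $(0, 0)$. Using Definition~\ref{gammab_action} applied to $-\gamma'-\gamma''$, local coordinates there are $(X, Y, Z) = (T^3 v_0 x, T^3 v_0 y, T^{-6} v_0^{-2} z)$ with $\theta_X = \theta_x + \theta_\eta$, $\theta_Y = \theta_y + \theta_\eta$, $\theta_Z = \theta_z - 2\theta_\eta$. Repeating the analysis with $Z$ as the small coordinate and back-substituting yields $(f_1, f_2) = (-1, -1) \equiv (1, 1)$ in $T_F = \mathbb{R}^2/\mathbb{Z}^2$. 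The main obstacle is controlling $X_{hor}$ in the interpolation regions between adjacent $\mathbb{C}^3$ charts where $\omega$ is a bump-function-weighted combination of the $g_{ab}$, but since the bump functions $\alpha_3, \alpha_4, \alpha_5, \alpha_6$ of Definition~\ref{defn of w} depend only on $\xi, \eta$ (through the coordinates $d, \theta$ of Equation~(\ref{polar_coords})) and not on $\theta_\eta$, the $\omega$-orthogonality system remains $\theta_\eta$-independent and produces an $X_{hor}$ whose angular components interpolate continuously between the four regional values.
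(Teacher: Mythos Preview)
Your $\mb{C}^3$-patch computation is correct and agrees with the paper's: the expression
\[
X_{hor}=\frac{\mu_y\mu_z\,\partial_{\theta_x}+\mu_x\mu_z\,\partial_{\theta_y}+\mu_x\mu_y\,\partial_{\theta_z}}{\mu_x\mu_y+\mu_x\mu_z+\mu_y\mu_z}
\]
is exactly the paper's formula (\ref{eqn:par_transport}) rewritten, and your basis conversion $(\theta_x,\theta_y,\theta_z)\to(\theta_1,\theta_2,\theta_\eta)$ yielding the limits $(0,0)$, $(0,1)$, $(1,0)$ is right. Your use of the $\Gamma_B$-translate for the $(1,1)$ region also parallels the paper's coordinate-change argument.

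The gap is in coverage. The $\mb{C}^3$-approximation $\omega\approx\sum d\mu_a\wedge d\theta_a$ is only valid in Region~VII near a vertex, where \emph{all three} of $r_x,r_y,r_z$ are small. The four regions named in the lemma are the interiors of the hexagonal tiles, where the K\"ahler potential is exactly one of $g_{xy},g_{xz},g_{yz}$ and one coordinate (say $r_z$) is small while the other two can be of order $T^{-1}$. Your limiting argument therefore only pins down $(f_1,f_2)$ asymptotically as one approaches the vertex from within the $\mb{C}^3$-patch; it does not show the values persist across each full tile. The final sentence---that bump functions are $\theta_\eta$-independent so $X_{hor}$ ``interpolates continuously''---is true but establishes only continuity of $(f_1,f_2)$, not constancy at the stated integers.

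The paper closes this gap in two further steps you should add. First, in the $\mb{CP}^2(3)$ regions it observes directly that (for instance) $g_{xy}$ involves only $x,y$, so the horizontal distribution is $\mathrm{span}(\partial_{\Re z},\partial_{\Im z})$ and parallel transport moves only $z$; hence $(f_1,f_2)=(0,0)$ \emph{exactly} throughout that tile interior, and similarly for the other tiles. Second, in the intermediate bump-function regions it invokes the Appendix~B estimates to write the metric as $G_1+G_\eps$ with $G_\eps$ negligible, so $X_{hor}$ is the $\mb{C}^3$-formula up to harmless rescaling of $|x|^{-2},|y|^{-2},|z|^{-2}$ by bounded constants. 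Without these two pieces your argument determines $(f_1,f_2)$ only on a set of measure zero in the fundamental domain.
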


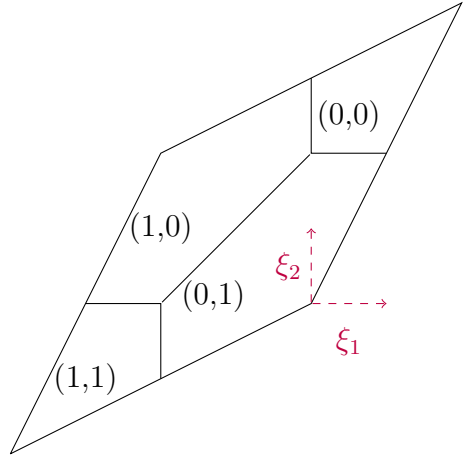
\begin{figure}
\begin{center}
\begin{tikzpicture}[every node/.style={inner sep=0,outer sep=0}]

\draw (-1,-0.5)  -- (1,3.5) -- (5,5.5) -- (3,1.5) node (v3) {} -- (-1,-0.5);

\draw (3,4.5) -- (3,3.5) node (v2) {} -- (4,3.5);
\draw (0,1.5) -- (1,1.5) node (v1) {} -- (1,0.5);
\draw  (v1) edge (v2);

\node at (1.7,1.6) {(0,1)};
\node at (1,2.5) {(1,0)};
\node at (3.5,4) {(0,0)};
\node at (0,0.5) {(1,1)};
\draw[->, purple, dashed] (v3) -- (3,2.5);
\draw[->, purple, dashed] (v3) -- (4,1.5);
\node[purple] at (3.5,1) {$\xi_1$};
\node[purple] at (2.7,2) {$\xi_2$};
\end{tikzpicture}
\caption{Monodromy in fiber, thought of as a section over the parallelogram $(\xi_1,\xi_2) \mapsto (f_1(\xi_1,\xi_2),f_2(\xi_1,\xi_2))$.}\label{monodr_pic}
\end{center}
\end{figure}

\begin{proof}[Proof of Lemma \ref{monodr}]\label{periodicity} First we show the result holds for $\xi$ in the $\mb{C}^3$ patch. Then we show it holds in the middle region of the hexagon where the K\"ahler potential is that of $\mb{CP}^2(3)$. Then we conclude the result by showing the contributions in between are negligible.  

\begin{center}
\fbox{\bf Parallel transport in $\mb{C}^3$-patch with standard metric}  
\end{center}

We want to find a horizontal lift of the angular vector field $\dd/\dd \theta_w$, or with respect to complex coordinates $\dd/\dd w$ this would be the vector field on the complex plane whose value at $w$ is $iw$. Namely $d\pi (X_{hor})= iw$. We can explicitly compute real vectors in the horizontal distribution $H$ in the $\mb{C}^3$-patch. Then we can find a vector parallel to $iw\dd/\dd w$ and scale it suitably so it projects to $iw \dd/\dd w$ and is not just parallel to it.

\begin{claim}[Finding the horizontal subspace]\label{claim:horiz}  $X_{\Re(v_0)}, X_{\Im(v_0)}$ generate the horizontal distribution $H$.\end{claim}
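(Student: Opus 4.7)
The plan is to show the two inclusions $\mathrm{span}\{X_{\Re(v_0)}, X_{\Im(v_0)}\} \subseteq H$ and $H \subseteq \mathrm{span}\{X_{\Re(v_0)}, X_{\Im(v_0)}\}$, using only the defining properties of $H = (TV^\vee)^\omega$ and of Hamiltonian vector fields. Throughout I work at a point where $v_0$ is a submersion, so the generic fiber is smooth; the $\mb{C}^3$-patch with the standard metric is such a setting away from the coordinate axes.

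First I would observe that a tangent vector $Y$ lies in $TV^\vee$ (the tangent space to a fiber of $v_0$) if and only if $dv_0(Y)=0$, equivalently $d\Re(v_0)(Y)=d\Im(v_0)(Y)=0$. For the forward inclusion, apply the defining relation $\iota_{X_{\Re(v_0)}}\omega = d\Re(v_0)$: for any $Y\in TV^\vee$,
\[
\omega\bigl(X_{\Re(v_0)},Y\bigr) = d\Re(v_0)(Y) = 0,
\]
so $X_{\Re(v_0)}\in (TV^\vee)^\omega = H$, and likewise for $X_{\Im(v_0)}$.

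For the reverse inclusion, suppose $X\in H$, so $\iota_X\omega$ annihilates $TV^\vee = \ker d\Re(v_0)\cap \ker d\Im(v_0)$. Any 1-form vanishing on the joint kernel of two independent 1-forms lies in their span; hence there exist real scalars $a,b$ with $\iota_X\omega = a\,d\Re(v_0)+b\,d\Im(v_0) = \iota_{aX_{\Re(v_0)}+bX_{\Im(v_0)}}\omega$. Since $\omega$ is non-degenerate on the $\mb{C}^3$-patch, $X = aX_{\Re(v_0)}+bX_{\Im(v_0)}$.

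Finally I would note the dimension count is consistent: at a regular point of $v_0$ the 1-forms $d\Re(v_0),d\Im(v_0)$ are linearly independent, so $X_{\Re(v_0)}$ and $X_{\Im(v_0)}$ are linearly independent, giving $\dim H = 2 = \dim_\mb{C}\mb{C} \cdot 2$ as required. There is no real obstacle here: the argument is purely linear-algebraic and uses nothing special about the $\mb{C}^3$-patch except that $\omega$ is non-degenerate and $v_0$ is a submersion, so it applies unchanged at every smooth point of the fibration.
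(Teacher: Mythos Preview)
Your proof is correct and follows essentially the same approach as the paper. Both arguments identify $TV^\vee=\ker d\Re(v_0)\cap\ker d\Im(v_0)$, verify $X_{\Re(v_0)},X_{\Im(v_0)}\in H$ via the Hamiltonian relation $\iota_{X_f}\omega=df$, and conclude by a dimension count; the paper phrases the inclusion through the identity $(A\cap B)^\omega=A^\omega+B^\omega$ while you argue it directly, and your explicit reverse-inclusion step (any $1$-form annihilating the joint kernel lies in the span) is a pleasant addition but not needed once the rank of $H$ is known.
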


\begin{proof} The kernel of $d_p v_0$ when $v_0=v_0$ is also the tangent space of a $v_0$-fiber. This allows us to conclude the following. Let $p \in v_0^{-1}(c)$ for some $c \in \mb{C}$ and $|c| \ll T^l$ by Definition \ref{eq:polytope_def_Y}. Then $\ker(d_p v_0) = T_p(v_0^{-1}(c))$. On the other hand $\ker(dv_0) = \ker(d\Re(v_0)) \cap \ker(d\Im(v_0))$. This now allows us to find generators for $H$.
\begin{align*}
 H=\ker(dv_0)^\omega & = \left[ \ker(d \Re(v_0)) \cap  \ker(d\Im(v_0))\right]^\omega\\
& = \ker(d \Re(v_0))^\omega + \ker(d\Im(v_0))^\omega  \\
& \supseteq \mb{R}\cdot X_{\Re(v_0)} + \mb{R}\cdot X_{\Im(v_0)} 
\end{align*}
where the last line follows from the general notation of $X_f$ as $\omega$-dual to $df$ and that $\omega(X_f,X_f)=0$ for alternating form $\omega$. Thus since $H$ is rank two, we have equality in the last line.
\end{proof}

\begin{claim}\label{claim:H_cx} The horizontal distribution $H$ is a complex subspace, i.e.~invariant under multiplication by $i$.
\end{claim}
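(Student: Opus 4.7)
The plan is to exploit two facts about the $\mathbb{C}^3$-patch where we work with the standard K\"ahler form $\omega_{std}$ and the standard complex structure $J = i$: first, that $v_0 = xyz$ is a holomorphic map, so its fibers are complex submanifolds; and second, that $\omega_{std}$ is of type $(1,1)$, so $J$-invariant. From these I would conclude that the $\omega$-orthogonal complement of a $J$-invariant subspace is itself $J$-invariant.

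First I would observe that $\ker(dv_0)$ is a $J$-invariant subspace of the tangent space. Indeed, if $V \in \ker(dv_0)$, then since $v_0$ satisfies the Cauchy--Riemann equations one has $dv_0(JV) = J \cdot dv_0(V) = 0$, so $JV \in \ker(dv_0)$. Equivalently, the smooth fibers $v_0^{-1}(c)$ are complex submanifolds of the $\mathbb{C}^3$-patch, and their tangent spaces are therefore $J$-invariant.

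Next I would invoke that $\omega = \omega_{std}$ is $J$-invariant: $\omega(JX, JY) = \omega(X, Y)$ for all $X, Y$. Given this, the proof of $J$-invariance of $H = \ker(dv_0)^\omega$ is a one-line computation. Let $X \in H$ and $Z \in \ker(dv_0)$; since $\ker(dv_0)$ is $J$-invariant, the vector $W := -JZ$ also lies in $\ker(dv_0)$, and $Z = JW$. Then
\begin{equation*}
\omega(JX, Z) \;=\; \omega(JX, JW) \;=\; \omega(X, W) \;=\; 0,
\end{equation*}
since $X \in H = \ker(dv_0)^\omega$ and $W \in \ker(dv_0)$. Hence $JX \in H$, proving $JH \subseteq H$, and by $J^2 = -\mathbf{1}$ we get equality.

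There is no serious obstacle here: the claim is essentially the standard fact that in a K\"ahler manifold the symplectic orthogonal of a complex subspace is complex, applied to the tangent space of a fiber of a holomorphic function. The only point requiring care is that we are genuinely in the $\mathbb{C}^3$-patch with the standard K\"ahler data, so both the holomorphicity of $v_0$ and the $J$-invariance of $\omega$ are valid; this is precisely the setting announced in the preceding boxed header.
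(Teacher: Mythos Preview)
Your proof is correct and is the clean, general K\"ahler-geometry argument: fibers of a holomorphic map are complex submanifolds, and the $\omega$-orthogonal of a $J$-invariant subspace is $J$-invariant whenever $\omega(J\cdot,J\cdot)=\omega(\cdot,\cdot)$. The paper takes a more hands-on route: using the preceding Claim~\ref{claim:horiz} that $H$ is spanned by the Hamiltonian vector fields $X_{\Re(v_0)}$ and $X_{\Im(v_0)}$, it shows directly that $JX_{\Re(v_0)}=X_{\Im(v_0)}$ via the identities $\grad f=-JX_f$ and $J\,\grad(\Re v_0)=-\grad(\Im v_0)$ (the latter being a restatement of the Cauchy--Riemann equations). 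Your approach is shorter and applies verbatim in any K\"ahler setting; the paper's computation buys the explicit relation between the two generators of $H$, which is what it immediately invokes in the next Corollary to see that $\nabla|v_0|^2$ and its $J$-rotate both lie in $H$. Either argument suffices for that Corollary once combined with $\grad f=-JX_f$, so nothing is lost.
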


\begin{proof} Note $v_0=\Re(v_0) + i\Im(v_0)$. If we consider $v_0$ as a coordinate (rather than a function) then since $J=i$ on vector fields sends $\dd_{\Re(v_0)} \mapsto \dd_{\Im(v_0)}$ and $\dd_{\Im(v_0)} \mapsto -\dd_{\Re(v_0)}$, it does the transpose on the dual differential forms: $J \circ d\Re(v_0) = -d\Im(v_0)$. Thus using the duality from the metric and that the gradient is related to the Hamiltonian vector field (thinking of $v_0$ now as a function) by $-J$:
\begingroup \allowdisplaybreaks \begin{equation}
\begin{aligned}
J \grad(\Re(v_0)) &= -\grad(\Im(v_0))\\
\therefore  -JX_{\Re(v_0)} &=\grad (\Re(v_0))=J\grad(\Im(v_0))=- X_{\Im(v_0)}\\
\therefore JX_{\Re(v_0)}&= X_{\Im(v_0)}
\end{aligned}
\end{equation} \endgroup
This concludes the proof by Claim \ref{claim:horiz}.
\end{proof}

\begin{cor}
$X_{hor} = \frac{i}{g}\nabla_{g_0} |v_0|^2$ up to some scalar function $g$ to be determined. 
\end{cor}

\begin{proof}
First, $\nabla |v_0|^2 = \nabla(\Re(v_0)^2 + \Im(v_0)^2) \in H$ by Claim \ref{claim:H_cx}. In particular, from calculus we know that the gradient of a real function in two variables is perpendicular to its level sets in the plane. Here, that means that $d\pi(\nabla |v_0|^2) \perp \psi(t)(|v_0|)$ (in the notation of Claim \ref{claim:mon_ind_angle}) for $0\leq t < 2\pi$. Multiplication by $i$ turns a vector perpendicular to a circle to a vector tangent to the circle. Mathematically, since $\pi=v_0$ (now thought of as a function) is holomorphic, its derivative commutes with $J$ and we obtain the following statement of two vectors being parallel:
\begin{equation}
    Jd\pi(\nabla |v_0|^2) = d\pi (J\nabla |v_0|^2) \parallel d\psi(d/dt)
\end{equation}
while $X_{hor}$ has the property that $d\pi(X_{hor}) = d\psi(d/dt)$. Hence since both $X_{hor}$ and $J\nabla |v_0|^2$ are in $H$, they must be proportional to each other by some scalar function $g$.
\end{proof}

Now we can compute the monodromy in the $\mb{C}^3$-patch. 
\begin{allowdisplaybreaks}
\begin{align*}
dv_0 &= yz dx + xz dy + xy dz\\
\therefore \nabla_{g_0} |v_0|^2 &= 2\left<x|yz|^2, y|xz|^2, z|xy|^2\right>\\
% |\nabla |f|^2 |^2 &= 4 |xyz|^2(|yz|^2 + |xz|^2 + |xy|^2)\\
% \implies (|yz|^2 + |xz|^2 + |xy|^2) &= \frac{|\nabla |f|^2 |^2 }{4|f|^2}\\
\therefore X_H &= \frac{2i}{g}\left<x|yz|^2, y|xz|^2, z|xy|^2\right> \mbox{ s.t. } dv_0(X_H) = iv_0 \dd_{\eta} 
\end{align*}
\end{allowdisplaybreaks}
This last condition allows us to solve for the scalar function $g$.
 \begin{allowdisplaybreaks}
\begin{align*}
\therefore \frac{2i}{g}xyz(|yz|^2 + |xz|^2 + |xy|^2) &= i(xyz) \implies g ={2(|yz|^2 + |xz|^2 + |xy|^2) }\\
\therefore X_H &= \frac{i}{|yz|^2 + |xz|^2 + |xy|^2}\left<x|yz|^2, y|xz|^2, z|xy|^2\right>
\end{align*}
\end{allowdisplaybreaks}
So integrating, we obtain the following parallel transport map over the $|v_0|$ circle in the base.
 \begin{allowdisplaybreaks}
\begin{align*}
\therefore \phi_\theta^{H}(x,y,z,\theta) &=(e^{i\theta\frac{|y|^2|z|^2}{|y|^2|z|^2 + |x|^2|z|^2+|x|^2|y|^2}}x,e^{i\theta \frac{|x|^2|z|^2}{|y|^2|z|^2 + |x|^2|z|^2+|x|^2|y|^2}}y,e^{i \theta\frac{ |x|^2|y|^2}{|y|^2|z|^2 + |x|^2|z|^2+|x|^2|y|^2}}z)\\
 &=(e^{i\theta\frac{|x|^{-2}}{|x|^{-2} + |y|^{-2}+|z|^{-2}}}x,e^{i\theta \frac{|y|^{-2}}{|x|^{-2} + |y|^{-2}+|z|^{-2}}}y,e^{i \theta\frac{|z|^{-2}}{|x|^{-2} + |y|^{-2}+|z|^{-2}}}z)\stepcounter{equation}\tag{\theequation}\label{eqn:par_transport}
\end{align*}
\end{allowdisplaybreaks}
In the last step we divide by $|v_0|^2$ to more easily see the Dehn twisting behavior. This gives the monodromy result, Lemma \ref{monodr} at the start of this section, in the local model near the origin of $\mb{C}^3$. Namely, as we move counterclockwise across the $x$ axis of the hexagon then $|y|\ll |z|$ becomes $|z|\ll |y|$ and we see that $\theta_1$ stays fixed at a small angle but $\theta_2$ rotates from large to small, by 1. Hence $(\theta_1,\theta_2)$ changes from $(0,0)$ to $(0,1)$ as we move from the $(0,0)$ tile then down to the $(0,-1)$ tile. The other regions are similar.

Note that at first glance, there is more dependence on $x$ once we divide by $g$, which may make a harder ODE to solve for finding the flow. However, recall that symplectic parallel transport preserves the moment map coordinates $\xi_1,\xi_2$, and $\eta$. Thus, since these are monotonic increasing functions in $r_x,r_y, r_xr_yr_z$, we find that norms must also remain constant along the circle. So since we have that $g=|v_0|^2/|\nabla |v_0|^2|^2$ is a function of the norms, we find that this is constant and doesn't contribute to the flow when we integrate.

\begin{center}
\fbox{\bf Away from the $\mb{C}^3$- and $\mb{CP}^2(3)$-patches}    
\end{center}

Elsewhere in $Y$, we may not be able to directly compute $\nabla_{g} |v_0|^2$. All we know from Appendix B is that $g=g_1 + g_\eps$ where $g_1$ is diagonal in polar coordinates and $g_\eps$ can be made to have arbitrarily small entries. Let $V_1$ be $d|v_0|^2$ with respect to polar coordinates, so it will be zero in the last three angular coordinates, let $G_1+G_\eps$ denote the metric, and $V_2$ the vector $\nabla_g |v_0|^2$, all of them with respect to polar coordinates as in Section \ref{convert_polar}. Then if $\bm{O}(\cdot )$ denotes a matrix with entries on the order given in parentheses:
\begin{equation}
\begin{aligned}
    d|v_0|^2 &= (g_1+g_\eps)(\nabla_g |v_0|^2, \cdot)\\
    \implies V_1^t & = V_2^t(G_1 + G_\eps)\\
    \implies V_2 & = (G_1^t + G_\eps^t)^{-1}V_1\\
    &=(\bm{O}(1) + \bm{O}(1/l))^{-1}V_1\\
    \therefore V_2 & \approx (G_1^t)^{-1}V_1
\end{aligned}
\end{equation}
where $(G_1^t)^{-1}$ is the leading order terms of the metric defined in Definition \ref{defn of w}. It is diagonal by Section \ref{convert_polar} and consists of constants because all bump functions are functions of the norms $r_x,r_y,r_z$ which are fixed for $(\xi_1,\xi_2,\eta)$ fixed. Thus the parallel transport map will be similar to that in Equation \ref{eqn:par_transport}, but $|x|^{-2}, |y|^{-2}, |z|^{-2}$ will be scaled by certain constants. 

\begin{center}
\fbox{\bf In the $\mb{CP}^2(3)$ hexagons}    
\end{center}

Recall that in the upper right corner of the fundamental domain parallelogram of Figure \ref{coords in tiling} where $r_z \ll r_x, r_y$, the K\"ahler potential is a sum of the $\mb{CP}^2(3)$ toric potential $g_{xy}$, which only involves the complex coordinates $x$ and $y$ and does not involve $z$, and a term proportional to $|xyz|^2$. The latter is constant on the $v_0$-circle. Hence horizontal vectors are those whose $x$ and $y$ components vanish, namely
$$H=span\left(\frac{\dd}{\dd \Re z}, \frac{\dd}{\dd\Im z}\right)$$

Thus in this upper right corner region, parallel transport varies $z$ while fixing $x$ and $y$. It follows that the monodromy preserves $\theta_1=\arg(x)$ and $\theta_2=\arg(y)$, so $(f_1(\xi,\eta),f_2(\xi,\eta))=(0,0)$ for $(\xi,\eta)$ in the upper right corner, remain the same as near the $\mb{C}^3$-patch.

By the same argument, in the region where the fiberwise potential is $g_{yz}$, the horizontal distribution is parallel to the $x$ coordinate axis, so parallel transport varies $x$ while keeping $y$ and $z$ constant. In particular the angular parallel transport vector field is $(ix, 0, 0)$ so the monodromy increases $\theta_1$ at unit rate while keeping $\theta_2$ constant, and $(f_1,f_2)=(1,0)$. Similarly where we have $g_{xz}$, angular parallel transport is $(0,iy,0)$, monodromy increases $\theta_2$ while keeping $\theta_1$ constant, and $(f_1,f_2)=(0,1)$.

Finally, in the remaining fourth hexagonal tile of Figure \ref{monodr_pic}, the values of $(f_1,f_2)$ are again integer constants, determined by using the change of coordinate transformations described in Definition \ref{def:complex_coordinates}. In particular at the lower-left corner of the parallelogram, in terms of the coordinates $(x''',y''',z''')$ the parallel transport only varies $z'''$ while keeping $\arg(x''')$ and $\arg(y''')$ constant. However, because $x''' = T v_0 x^{-1}$, fixing $\arg(x''')$ implies $\arg(v_0) - \arg(x)$ remains constant. Thus varying $\arg(v_0)$  around the unit circle also varies $\arg(x)$ and so $\theta_1$ increases by 1. Similarly for $y$, and we find $(f_1,f_2)=(1,1)$. This concludes the calculation of monodromy, by $\Gamma_B$-invariance of $\omega$ as proven in Claim \ref{claim:omega_descends}.
\end{proof}

We need the following main result in order to compute the differential of the Fukaya category. Now that we know the monodromy we can prove it. The place where the monodromy is used is bold-faced in the proof below.

\begin{lemma} The parallel transported $\phi^H_{2\pi}(\ell_i)$ is Hamiltonian isotopic to $\ell_{i+1}$.\label{lem:ham_isotop}\end{lemma}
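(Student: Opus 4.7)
The plan is to describe $\phi^H_{2\pi}(\ell_i)$ explicitly via the monodromy of Lemma \ref{monodr}, then check that its defining section and that of $\ell_{i+1}$ differ by a closed 1-form on $T_B$ whose periods lie in the integer lattice, which is the standard flux criterion for Hamiltonian isotopy of Lagrangian sections of $T_B \times T_F$.

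First I would combine Claim \ref{claim:par_fix_xi} with Claim \ref{claim:mon_ind_angle} to conclude that the monodromy acts on the fiber $V^\vee$ over $-1$ by $(\xi,\theta) \mapsto (\xi, \theta + f(\xi))$, where $f : T_B \to T_F$ is the section of Lemma \ref{monodr} and Figure \ref{monodr_pic}. Applied to $\ell_i = \{(\xi, -i\la(\xi))\}$, this gives $\phi^H_{2\pi}(\ell_i) = \{(\xi, -i\la(\xi) + f(\xi))\}$. Subtracting from the defining relation of $\ell_{i+1} = \{(\xi, -(i+1)\la(\xi))\}$, the fiberwise difference is the $T_F$-valued section $g(\xi) := f(\xi) + \la(\xi)$.

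Next, I would appeal to the standard fact that two Lagrangian sections of $T_B \times T_F$ (with symplectic form $d\xi \wedge d\theta$) are Hamiltonian isotopic iff, after lifting the difference to $\mb{R}^2$, the associated closed 1-form has periods along $\gamma'$ and $\gamma''$ in $\mb{Z}^2$ (the integer lattice reflecting the $T_F$-ambiguity). The linear part $\la(\xi)\cdot d\xi$ equals $d(\frac{1}{2}\xi^T \la\, \xi)$ on the universal cover, so its period along the straight segment from $0$ to $\gamma$ is $\frac{1}{2}\gamma \cdot \la(\gamma)$; with $\la(\gamma') = (1,0)$ and $\la(\gamma'') = (0,1)$ this evaluates to $1$ in both directions. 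The periods of $f \cdot d\xi$ are the winding numbers of $f : T_B \to T_F$, and Lemma \ref{monodr} describes $f$ as smoothly interpolating between the integer lattice values $(0,0), (0,1), (1,0), (1,1)$ on the four quadrants of the fundamental parallelogram, so these winding numbers are integers along both generators. Hence $g \cdot d\xi$ has integer periods, $g$ corresponds to the trivial flux class, and $\phi^H_{2\pi}(\ell_i)$ is Hamiltonian isotopic to $\ell_{i+1}$.

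The main obstacle will be handling the piecewise-constant description of $f$: on the interior of each quadrant $f$ limits to a constant integer vector, but the actual smooth monodromy interpolates across the transition strips, and it is this interpolation that produces the correct integer winding numbers. An alternative geometric route, which sidesteps the smoothing issue, would be to decompose $\phi^H_{2\pi}$ as a composition of generalized Dehn twists about the three vanishing $T^2$ cycles in $V^\vee$ arising from the toric coordinates of the central fiber $\mb{CP}^2(3)/\Gamma_B$, and to check directly that the cumulative twisting raises the slope of a linear Lagrangian by one up to Hamiltonian isotopy.
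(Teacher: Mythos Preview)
Your overall strategy matches the paper's: write the monodromy as $(\xi,\theta)\mapsto(\xi,\theta+f(\xi))$, form the difference section $g=f+\la$, and apply a flux criterion. The gap is in the criterion itself and in how you compute the periods. For the linear isotopy $h_t(\xi,\theta)=(\xi,\theta+tg(\xi))$ to be Hamiltonian, the flux $[g\cdot d\xi]\in H^1(T_B;\mb{R})$ must vanish, not merely be integral. The only ambiguity is the choice of lift $\tilde g:T_B\to\mb{R}^2$ (unique up to a constant $n\in\mb{Z}^2$), and changing the lift shifts the pair of periods by $(n\cdot\gamma',\,n\cdot\gamma'')$, which ranges over the index-$3$ sublattice $\left(\begin{smallmatrix}2&1\\1&2\end{smallmatrix}\right)\mb{Z}^2\subsetneq\mb{Z}^2$. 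So ``periods in $\mb{Z}^2$'' is too weak. Moreover, your separate computations for $\la$ and $f$ are not periods of $1$-forms on $T_B$: neither $\la$ nor $f$ individually descends to an $\mb{R}^2$-valued function on $T_B$ (only their sum $g$ does), so the value $1$ you get for the $\la$-part depends on the chosen path on the cover, and the identification of $\int_\gamma f\cdot d\xi$ with a ``winding number'' is incorrect --- the winding of $f$ along $\gamma$ is the element $-\la(\gamma)\in\mb{Z}^2$, not the real number $\int_\gamma f\cdot d\xi$.

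The paper closes this gap by showing the flux is \emph{exactly zero}, via an oddness argument: it proves $f_i(-\xi)=-f_i(\xi)$ using a specific $\mb{Z}/2$-symmetry of the total space, namely that $\phi_-:(\xi,\eta,\theta,\theta_\eta)\mapsto(-\xi,\eta,-\theta,\theta_\eta)$ is a \emph{fiber-preserving symplectomorphism} of $(Y,\omega)$ (this uses the symmetry built into the construction of $\omega$ in Definition~\ref{defn of w} and the coordinate description of $\phi_-$ as $(x,y,z)\mapsto(T^{-2}x^{-1},T^{-2}y^{-1},T^4v_0^2z^{-1})$). Since $\phi_-$ preserves fibers and $\omega$, it preserves $X_{hor}$, which unwinds to the oddness of $f$. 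Then $g=f+\la$ is odd, and integrating $g\cdot d\xi$ along $s\mapsto s\gamma'$ (or $s\gamma''$) over $s\in[-\tfrac12,\tfrac12]$ gives zero. This global symmetry of $(Y,\omega)$ is the non-obvious input your argument is missing; the piecewise-constant description of $f$ in Lemma~\ref{monodr} alone does not supply it, and your alternative Dehn-twist route is not the paper's approach either.
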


\begin{proof} We construct an isotopy $h_t:V^\vee \to V^\vee$ in the fiber in coordinates $(\xi_1,\xi_2,\theta_1,\theta_2)$. On a fiber, $\eta$ is a function of $\xi_1,\xi_2$ so doesn't show up in the notation until the end of the proof, when we consider maps on the total space $Y$. We want $h_t$ to map $\ell_1$ to $\phi_H^{2\pi}(\ell_0)$ where $\phi_H^{2\pi}$ is the monodromy. To prove $h_t$ is a Hamiltonian isotopy, i.e.~$\iota_{\frac{d}{dt}h_t}\omega = dH_t$ for some smooth function $H_t$, a classical result of Banyaga (cf \cite[Theorem 3.3.2]{symp_intro} or \cite[Equation (6)]{pasc_notes}) implies that it suffices to show that the flux of $\omega$ through cylinders traced out by generators of $H_1(V^\vee)$ under $h_t$, is zero. That is:
$$\left<\int_t \iota_{X_t}\omega ,[\gamma]\right> =: \left<\mbox{Flux}(h_t),[\gamma]\right> = \int_{h_t(\gamma)} \omega =0$$
where $X_t:=\frac{d}{dt} h_t$ and the second equality follows by an argument similar to the proof of Claim \ref{area_cyl_2dirns}, where area corresponds to integrating over the height direction $X_t$ and angular direction $\gamma'(t)$ on the cylinder. The isotopy $h_t$ interpolates linearly between $\phi_H^{2\pi}(\ell_0)$ and $\ell_{1}$. Recall that the angular coordinates on $\ell_{1}$ are given by $-\la(\xi)$ which we denote in components as $-(\la_1(\xi), \la_2(\xi))$. Then define the isotopy on the fiber as:
$$h_t(\xi_1,\xi_2,\theta_1,\theta_2):=(\xi_1,\xi_2,\theta_1 + t(f_1(\xi) +\la_1(\xi)), \theta_2 + t(f_2(\xi) + \la_2(\xi))):V^\vee \to V^\vee$$

This is well-defined modulo $\Gamma_B$ in the first two coordinates. \emph{ The isotopy is also well-defined modulo $\mb{Z}^2$ in the second two coordinates by our monodromy computation.} Lemma \ref{monodr} implies that $$(f_1,f_2)(\xi+\gamma) = (f_1,f_2)(\xi) - \la(\gamma)$$
therefore 
$$(f_1,f_2)(\xi+\gamma) + \la(\xi+\gamma)=(f_1,f_2)(\xi) - \la(\gamma)+\la(\xi+\gamma)= (f_1,f_2)(\xi) +\la(\xi)$$
for $\forall \gamma \in \Gamma_B$ so $h_t$ indeed descends to an isotopy on $V^\vee$. 

Note that $H_1(V^\vee)$ has rank four since $V^\vee \cong T_B \times T_F$. If we let $\gamma$ be the loop generated by one of the two angular directions $(0,0,1,0)$, then e.g.~$\gamma(s)=(0,0,s,0)$ and $h_t(\gamma(s))=(0,0,s,0)$ for all $t$ because $\xi=0$ and $f_i(0)=0$ as illustrated in Figure \ref{monodr_pic}. So the integral of $\omega$ over this cylinder of height zero is zero. Now we let 
$$\gamma(s) = (2s,s,0,0), \;\; -\frac{1}{2} \leq s \leq \frac{1}{2}$$
The case of $\gamma(s)=(s,2s,0,0)$ will be similar.
\begin{equation}
\begin{aligned}
\int_{h_t(\gamma)} \omega =\int_\gamma \iota_{X_t} \omega & = \int_\gamma (d\xi_1 \wedge d\theta_1 + d\xi_2 \wedge d\theta_2)( (f_1-\la_1)\dd_{\theta_1}+(f_2-\la_2)\dd_{\theta_2}, -)\\
& = \int_\gamma (f_1-\la_1)d\xi_1 + (f_2-\la_2)d \xi_2\\
& = \int_{s=-1/2}^{1/2}  2f(2s,s)ds + g(2s,s) ds
\end{aligned}
\end{equation}
where we used that $\la$ being a linear map implies anti-symmetry across zero, so the integral from $0$ to $1/2$ cancels the integral from $-1/2$ to 0. It remains to show that what remains is zero, namely that we have anti-symmetry across zero in $f_1$ and $f_2$. It suffices to show that $f_i(-\xi) = -f_i(\xi)$. We do this as follows. Consider the map on $Y$ given by:
$$\phi_{-}: (\xi, \eta, \theta, \theta_\eta) \mapsto (-\xi,\eta, -\theta, \theta_\eta)$$ 
It is a symplectomorphism because $d\xi \wedge d\theta + d\eta \wedge d \theta_\eta \mapsto d(-\xi) \wedge d(-\theta) + d\eta \wedge d \theta_\eta= d \xi \wedge d\theta + d\eta \wedge d \theta_\eta$. It remains to prove that it is fiber-preserving. By monotonicity of $\xi_1,\xi_2,\eta$ on the coordinate norms, the map $\phi_-$ on complex coordinates is
$$(x,y,z) \to (T^{-2} x^{-1}, T^{-2} y^{-1}, T^4 v_0^2 z^{-1})$$
because $\arg(x^{-1})=-\arg(x)$ and $\log |x|$ becomes $-\log|x|$ up to a constant in $T$, and similarly for $|y|$ and $|z|$. That this gives the map $\phi_-$ up to additive constants then follows by Claim \ref{claim:monotonic_mom_map}. It preserves the polytope $\Delta_{\tilde{Y}}$ as seen from the coordinates in Figure \ref{coords in tiling}. Since Claim \ref{claim:monotonic_mom_map} describes $\xi_1,\xi_2,\eta$ only up to additive constants we have a bit more work to do to show $\phi_-$ preserves a fiber. Suppose $\phi_-$ is defined by:
$$(\xi_1, \xi_2,\eta) \mapsto (-\xi_1 + c_1, -\xi_2 + c_2, \eta + c)$$

Since the polytope $\Delta_{\tilde{Y}}$ is preserved, this must map the origin to itself. Hence the constants are zero. Recall from Claim \ref{claim:mon_ind_angle} that $X_{hor} = \frac{\dd}{\dd \theta_\eta} + f_1(\xi, \eta) \frac{\dd}{\dd \theta_1} + f_2(\xi, \eta) \frac{\dd}{\dd \theta_2}$. Also $X_{hor}$ is preserved by any fiber-preserving symplectomorphism of $Y$ as in the proof of Corollary \ref{claim:par_fix_xi}. Since $\phi_-$ is one such, we have: 
\begin{align*}
X_{hor}=(\phi_{-})_*(X_{hor}) &=  X_{hor}\circ \phi_{-}\\
\implies (\phi_{-})_*\left(\frac{\dd}{\dd \theta_\eta} + f_1(\xi, \eta) \frac{\dd}{\dd \theta_1} + f_2(\xi, \eta) \frac{\dd}{\dd \theta_2}\right)&= \frac{\dd}{\dd \theta_\eta} - f_1\circ \phi_-(\xi, \eta) \frac{\dd}{\dd \theta_1} - f_2\circ \phi_-(\xi, \eta) \frac{\dd}{\dd \theta_2}\\
\implies \frac{\dd}{\dd \theta_\eta} + f_1(\xi, \eta) \frac{\dd}{\dd \theta_1} + f_2(\xi, \eta) \frac{\dd}{\dd \theta_2} &= \frac{\dd}{\dd \theta_\eta} - f_1(-\xi, \eta) \frac{\dd}{\dd \theta_1} - f_2(-\xi, \eta) \frac{\dd}{\dd \theta_2}\\
\therefore f_i(-\xi,\eta) &= -f_i(\xi,\eta)
\end{align*}
for $i=1,2$. So the flux of $\omega$ through $h_t(\gamma)$ is zero on generators of $H_1(V^\vee)$. By running this argument repeatedly, we can see that $(\phi_H^{2\pi})^k(\ell_0)$ is Hamiltonian isotopic to $\ell_{k+1}$ for any $k$. Hence $\phi_H^{2\pi}(\ell_i)$ is Hamiltonian isotopic to $(\phi_H^{2\pi})^{i+1}(\ell_0)$ which is Hamiltonian isotopic to $\ell_{i+1}$.
\end{proof}

\subsection{Setup for defining moduli spaces}\label{moduli}

Now that we have computed the monodromy, the next step will be computing the differential between two intersection points in the base. This will involve moduli space considerations. More generally, the Fukaya category is an $A_\infty$-category meaning in addition to objects and morphisms, there are structure maps on $k$ morphisms for any natural number $k$ that satisfy $A_\infty$-relations. These can be thought of as higher order associativity relations on the morphisms, hence the $A$ in $A_\infty$, and in the case of symplectic fibrations they involve counting pseudo-holomorphic discs which project to polygons in the base. The remainder of this chapter will set up the theory to define these counts. The word ``moduli" indicates we look at a set of objects ``modulo" an equivalence relation, and the ``space" refers to equipping this set with a topology. We start by defining the structure on the domains of the pseudo-holomorphic curves we will want to count. These are the \emph{source curves}. The following terminology was learned from \cite{seidel}.

\begin{definition}[Domains] A \emph{punctured boundary Riemann surface $S$} is the data of a compact, connected, Riemann surface with boundary and with punctures on its boundary, as well as the assignment of a Lagrangian $L_i$ to the $i$th component of $\dd S$. We further \emph{``rigidify"} by adding extra structure to $S$; denote punctures as ``positive" or ``negative" and define \emph{strip-like ends} via embeddings $\eps: (-\infty,0]_s \times [0,1]_t \to \mb{D}$ or $\eps: [0, \infty)_s \times [0,1]_t \to \mb{D}$ for the negative and positive punctures $\zeta^\pm$ respectively, such that $\lim_{s \to \pm \infty} \eps(s,t) = \zeta^\pm$. This is called a \emph{Riemann surface with strip-like ends}. \end{definition}

\begin{remark} This ``rigidifies" because any operation on $S$ must preserve the additional data of strip-like ends, thus placing further restrictions. The strips provide a nice set of coordinates near the punctures (namely $s$ and $t$) and give a straightforward way to pre-glue two sections by identifying linearly in the $(s,t)$ coordinates. See \cite[\textsection (8i) and \textsection (9k)]{seidel}. \end{remark}

\begin{example} In this thesis, $S$ will be one of the unit disc $\mb{D}$, two discs glued together at a point on their boundary with two punctures on one disc, or a disc with a marked boundary point identified at one point to a configuration of spheres in the central fiber. An example of a strip-like end we use later on to glue two discs is $(-\infty,0] \times [0,1] \ni (s,t) \mapsto \eps^-(s,t):=\frac{e^{-\pi(s+it)}+i}{e^{-\pi(s+it)}-i} = \frac{z+i}{z-i} \circ e^{-z} \circ \pi \cdot (s+it)\in \mb{D} \backslash\{1\}$. Note that $-\infty$ is the puncture which would map to $\zeta^-:=1$ in the disc. See Figure \ref{str_le}.\end{example}

\begin{figure}
\begin{center}
\includegraphics[scale=0.3]{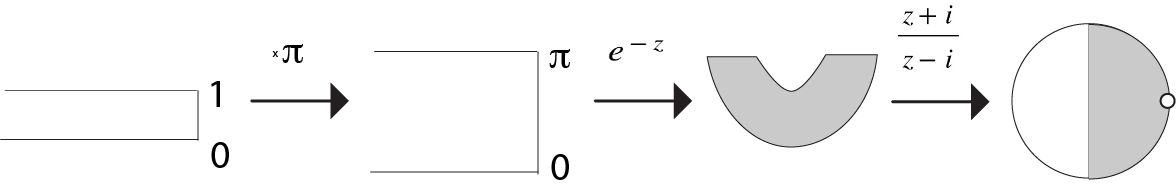}
\caption{Example of strip-like end}
\label{str_le}
\end{center}
\end{figure}

 Next we find the 2-homology of $Y$. This is where the pseudo-holomorphic discs map to in the target. \begin{lemma}[Homology of $Y$] $$H_2(Y)\cong H_2(\mb{CP}^2(3)/\Gamma_B)  \cong \mb{Z}^4$$ where all homology classes will be over $\mb{Z}$.
\end{lemma}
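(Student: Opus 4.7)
The plan is a two-step reduction: first identify $H_2(Y)$ with $H_2(Y_0)$ for $Y_0 := v_0^{-1}(0)$ the central fiber, then compute the latter via a Mayer--Vietoris spectral sequence on the normal-crossings decomposition of $Y_0$ into its toric components.

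For the first step I would use the moment polytope description: $\Delta_Y = \{(\xi,\eta):\phi(\xi)\le\eta\le T^l\}/\Gamma_B$ deformation retracts onto its toric-divisor boundary $\{\eta=\phi(\xi)\}/\Gamma_B$ via the straight-line homotopy $(\xi,\eta)\mapsto(\xi,(1-s)\eta+s\phi(\xi))$. Lifting through the torus fibration, simultaneously collapsing the $\arg(v_0)$-circle as $|v_0|\to 0$, yields a $\Gamma_B$-equivariant retraction of $\tilde Y|_{|v_0|\le T^l}$ onto $\tilde Y_0$; this descends to a retraction $Y\to Y_0$ because $|v_0|=|xyz|$ is $\Gamma_B$-invariant. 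Hence $H_2(Y)\cong H_2(Y_0)$.

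For the second step, $Y_0$ is a union of three irreducible components $D_\alpha\cong\mathbb{CP}^2(3)$ indexed by the three hexagonal tiles in a fundamental domain of $\Gamma_B$ acting on the honeycomb tropical curve $\mathrm{Trop}(s)=0$. Pairwise intersections $D_{\alpha\beta}$ are the nine edge $\mathbb{CP}^1$'s in that fundamental domain, and triple intersections $D_{\alpha\beta\gamma}$ are its six vertex points; the dual graph is a triangulation of $T^2$ (consistent with $3-9+6=0$). The cohomological Mayer--Vietoris spectral sequence
\begin{equation*}
E_1^{p,q}=\bigoplus_{|I|=p+1}H^q(D_I)\ \Longrightarrow\ H^{p+q}(Y_0)
\end{equation*}
has only $q=0,2,4$ rows nonzero. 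The $q=0$ row computes $H^*$ of the dual graph, giving $E_2^{*,0}=H^*(T^2)$; in particular $E_2^{2,0}=\mathbb{Z}$. In the $q=2$ row we have $E_1^{0,2}=\mathbb{Z}^{12}$ (the four classes $H_\alpha,E_{\alpha,1},E_{\alpha,2},E_{\alpha,3}$ per component) and $E_1^{1,2}=\mathbb{Z}^9$ (point classes on the edge $\mathbb{CP}^1$'s), with $d_1^{0,2}$ the signed restriction of divisor classes to the bounding $\mathbb{CP}^1$'s. The expected answer requires $d_1^{0,2}$ to be surjective with kernel of rank $3$, so $E_2^{0,2}=\mathbb{Z}^3$ and $H^2(Y_0)=E_2^{2,0}\oplus E_2^{0,2}=\mathbb{Z}^4$. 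Together with $E_2^{1,0}=H^1(T^2)=\mathbb{Z}^2$ and $E_2^{0,1}=0$ (so $H_1(Y_0)=\mathbb{Z}^2$ is free), the universal coefficient theorem then gives $H_2(Y)\cong\mathbb{Z}^4$.

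The main obstacle is the surjectivity of $d_1^{0,2}:\mathbb{Z}^{12}\to\mathbb{Z}^9$: one must verify that arbitrary integer labels on the nine edges are realized by signed restrictions of the twelve divisor classes, using that the six boundary divisors $E_{\alpha,i},L_{\alpha,i}$ of each $\mathbb{CP}^2(3)$ satisfy the relation $L_{\alpha,i}=H_\alpha-E_{\alpha,j}-E_{\alpha,k}$ and that the $\Gamma_B$-identifications of edges across hexagons are dictated by the coordinate transitions in Definition \ref{def:complex_coordinates}. A consistency check via the Euler characteristic, $\chi(Y_0)=3\cdot 6-9\cdot 2+6\cdot 1=6=1-2+4-0+3$, matches the predicted Betti numbers $(b_0,b_1,b_2,b_3,b_4)=(1,2,4,0,3)$ and confirms $H_2(Y)\cong\mathbb{Z}^4$.
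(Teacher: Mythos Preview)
Your first step (deformation retracting $Y$ onto the central fiber $Y_0$) is fine and matches the paper's approach. The second step contains a genuine error in the combinatorics: the central fiber $Y_0$ has only \emph{one} irreducible component, not three. The hexagonal tiles in the honeycomb are indexed by $(m_1,m_2)\in\mb{Z}^2$ with the $(m_1,m_2)$-tile centred at $m_1\gamma'+m_2\gamma''\in\Gamma_B$, so $\Gamma_B$ acts simply transitively on the set of hexagons. Equivalently, the area of one hexagon equals $\det\begin{pmatrix}2&1\\1&2\end{pmatrix}=3$, which is exactly the area of a fundamental domain for $\Gamma_B$. Your count of three tiles likely comes from confusing the hexagon lattice with the standard $\mb{Z}^2$, in which $\Gamma_B$ has index $3$; but that is not the indexing here. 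Consequently $Y_0$ is a single $\mb{CP}^2(3)$ with its six boundary $\mb{P}^1$'s identified in opposite pairs, yielding three $\mb{P}^1$'s meeting in two points (the ``banana manifold''). A quick sanity check: the correct Betti numbers are $(b_0,\dots,b_4)=(1,2,4,2,1)$ with $\chi(Y_0)=2$, not your $(1,2,4,0,3)$ with $\chi=6$.

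The paper does not use a normal-crossings spectral sequence at all. Instead it runs a single ordinary Mayer--Vietoris on $Y_0=A\cup B$, where $B\cong D^2\times T^2$ is the preimage of a small disc in the interior of the hexagon and $A$ retracts onto the banana manifold (so $H_2(A)=\mb{Z}^3$, $H_1(A)=\mb{Z}^2$), with $A\cap B\simeq T^3$. This avoids the self-gluing subtleties of your approach, since with only one component the ``normal-crossings'' picture is not simple normal crossings and one must work with the normalisation. Your spectral-sequence method can be salvaged with the corrected data $(1\text{ component},\,3\text{ edges},\,2\text{ vertices})$: then $E_1^{0,2}=H^2(\mb{CP}^2(3))=\mb{Z}^4$, $E_1^{1,2}=\mb{Z}^3$, and the restriction map $d_1^{0,2}$ has rank $1$ (each edge class receives the same contribution from both sides of the self-gluing with opposite signs from one direction), but verifying the differentials carefully is more work than the paper's direct Mayer--Vietoris.
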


\begin{proof} Note that $Y$ deformation retracts onto the central fiber over the contractible base, determined by parallel transport in the inward radial direction. Therefore the homology of $Y$ is the homology of the central fiber, a degeneration of a $T^4$ fiber. 

%We compute homology using the theory of toric varieties from \cite{cls}. Inside the toric variety $\mb{CP}^2(3)$ there is a dense $(\mb{C}^*)^n$. This is $n$ cylinders which can be retracted to $n$ circles so $\pi_1$ is $\mb{Z}^n$. Moreover, this surjects onto $\pi_1(X)$, which will hence always be some quotient of $\mb{Z}^3$. For example in this case we $\mb{Z}^4$ before quotienting: we have one $\mb{P}^1$ from the center of the polytope and three more from the blow-ups. Once we quotient we find that the three $\mb{CP}^1$'s in the banana manifold add up to a fiber of the moment map and a pinched $T^3$, in particular the fourth generator is a sum of the first three.
%
%Let $Z:=\widetilde{\mb{CP}^2(3)/\sim}$ which has polytope given by an infinite tiling of hexagons, i.e.~the central fiber of $\tilde{Y}$. The reason this is the universal cover is that $\pi_1(Z)=0$ from toric theory. As a result, Hurewicz's theorem implies $\pi_2(Z) = H_2(Z)$ and $\pi_2(\mb{CP}^2(3)/\sim)=\pi_2(Z)$. Thus alternatively 

To get $H_2(\mb{CP}^2(3)/\sim)$ we may use Mayer-Vietoris as follows. Cut out an open set projecting to a disc at the center of the hexagon in the hexagonal picture, which then retracts onto the boundary. This gives two open sets, which in the moment map picture are a disc and its complement, intersecting in an $S^1$.

Let $B:=D^2 \times T^2$ correspond to the disc in the moment polytope. Let $\tilde{A}$ be the complement, a string of 6 $\mb{CP}^1$'s in a circle and $A:=\tilde{A}/\Gamma_B$ the banana manifold. In particular, $\tilde{A}\cap B$ retracts onto $S^1 \times T^2 = T^3$. Then we have 

\begin{align*}
& 0=H_3(\tilde{A}) \oplus H_3(B) \to H_3(Z) = 0\\
&\to H_2(\tilde{A} \cap B)=\mb{Z}^3 \xrightarrow{(*)} H_2(\tilde{A}) \oplus H_2(B) = \mb{Z}^6 \oplus \mb{Z} \to H_2(\mb{CP}^2(3)) \to 0\\
& H_1(\tilde{A} \cap B) = \mb{Z}^3 \xrightarrow{\cong} H_1(\tilde{A}) \oplus H_1(B) = \mb{Z} \oplus \mb{Z}^2 \to H_1(\mb{CP}^2(3)) = 0
\end{align*}
where {$(*) = \left( \begin{matrix} 1 & 0 & 0 \\ 0 & 1 & 0 \\ -1 & 1 & 0 \\ -1 & 0 & 0 \\ 0 & -1 & 0\\ 1 & -1 & 0 \\ 0 & 0 & 1\end{matrix} \right)$} and the second to last map is the 3-by-3 identity matrix with respect to the three generators of $H_1(\tilde{A} \cap B) = H_1(S^1 \times T^2)$ (call them $b$ for the $S^1$ in the moment polytope and $T_1$, $T_2$ for the two loops in the 2-torus fiber). Then passing to the quotient $\mb{CP}^2(3)/\sim$ we find that
\begin{equation}
\begin{aligned}
& 0=H_3(A) \oplus H_3(B) \to H_3(\mb{CP}^2(3)/\sim) \\
&\to H_2(A \cap B)=\mb{Z}^3 \xrightarrow{(**)} H_2(A) \oplus H_2(B) = \mb{Z}^3 \oplus \mb{Z}\\
& \to \textcolor{red}{H_2(\mb{CP}^2(3)/\sim) = \mb{Z}^4} \to  \mb{Z} \cdot b \subset H_1(A \cap B) = \mb{Z}^3\\
&  \xrightarrow{(***) } H_1(A) \oplus H_1(B) = \mb{Z}^2 \oplus \mb{Z}^2 \to H_1(\mb{CP}^2(3)/\sim) = \mb{Z}^2
\end{aligned}
\end{equation}
where $(**) = \left( \begin{matrix} 0 & 0 & 0 \\ 0 & 0 & 0 \\ 0 & 0 & 0 \\ 0 & 0 & 1\end{matrix} \right)$ and $(***) =\left(\begin{matrix} 0 & 0 & 0 \\ 0 & 0 & 0 \\ 0 & 1 & 0\\ 0 & 0 & 1 \end{matrix}\right)$. Note that the final term is consistent with $Y$ being the quotient of its universal cover $\tilde{Y}$ by $\Gamma_B = \mb{Z}^2$, where $\pi_1(Y) = \pi_1(CP^2(3)/\sim)$ should be $\mb{Z}^2$. So ultimately we look at the projection of the 6 $\mb{P}^1$'s mapping to the three glued $\mb{P}^1$'s. Again we find that the homology is $\mb{Z}^4$.
\end{proof}

%See Feb 13, 2018 notes for the calculation worked out.

Now that we know what the second homology class of $Y$ is, we are better equipped to classify the possible choices for $\beta=[u]$ for pseudo-holomorphic maps $u$. We proceed to define the class of almost complex structures. Then we can define $J$-holomorphic curves. The almost complex structures we consider are compatible with the $\omega$ defined above and equal the standard $J_0$ induced from the complex toric coordinates near the boundary. Say $J=J_0$ outside of the open set 
\begin{equation}\label{defn: of_U_set}
U:= v_0^{-1}\left(B\left(\frac{1}{2}\right)\right),    
\end{equation} 
the preimage of a disc radius $1/2$ about the origin. In particular, U-shaped Lagrangians lie outside of $U$. We will denote this set as $\mc{J}_\omega(Y, U)$. It is non-empty because it contains $J_0$, and contractible by the same argument as for the set of all $\omega$-compatible almost complex structures. 

\begin{definition}[Maps]\label{defn:maps} Let $J \in \mc{J}_\omega(Y, U)$. A \emph{pseudo-holomorphic map} is a $J$-holomorphic map $u:(S,\dd S) \to (Y, \sqcup_{i\in \pi_0(\dd S)} L_i)$. A \emph{$J$/pseudo-holomorphic curve} is the image of such a map. We require
$$\lim_{s \to \pm \infty} u(\eps_{ij}(s,t)) \in L_i \cap L_j$$
where $\eps_{ij}$ is a strip-like end attached at an intersection point $L_i \cap L_j$ on the boundary.
\end{definition}

\subsection{Moduli spaces for a fibration} 
\begin{definition}[Section-like maps of a symplectic fibration]
We can think of the previous setup as a section of a trivial fibration with fixed Lagrangian boundary condition. For non-trivial fibrations, the Lagrangian boundary condition now consists of fibered Lagrangians. These are the ones described above obtained from parallel transporting a linear Lagrangian in a fiber over a U-shaped arc.  

Let $J_0$ be multiplication by $i$ in the toric coordinates. Given a $J_0$-holomorphic map to the total space $u: S \to Y$, we can compose with the holomorphic projection $v_0 \circ u: S \to \mb{C}$ to obtain a biholomorphism onto its image by the Riemann mapping theorem. In particular, if we identify $S$ with $v_0 \circ u(S)$, then $v_0 \circ u = \bm{1}_{S}$ and $u$ is actually a holomorphic section of $v_0$ which projects to an embedded holomorphic polygon in the base. 

On the other hand for generic $J$ close to $J_0$, whose existence we prove in the next section, pseudo-holomorphic maps $u$ are sections only outside $U$, where $J=J_0$. They still have algebraic intersection number 1 with fibers of $v_0$ in the region where $J_0$ has been perturbed to $J$. We will refer to these as \emph{section-like maps}.
\end{definition}

\begin{example} Let $t_x$ be the preimage of a moment map value $(c_1,c_2) \in T_B$, intersected with a fiber. So $t_x=\{c_1,c_2,\theta_1,\theta_2\}_{\theta_i \in [0,2\pi)}$ which in particular is invariant under parallel transport because that map rotates the angles.  Since $\ell_i \cap t_x$ is the one point of $\ell_i$ with $(\xi_1,\xi_2) = (c_1,c_2)$, any pseudo-holomorphic section $u$ must limit to that one point over the corresponding puncture in the base.
\end{example}

\begin{center}
    \fbox{\bf Count curves, not parametrizations:~stable maps quotient by reparametrization}
\end{center}

We have now discussed the Lagrangian boundary condition, the homology class, and the almost complex structure. So we next quotient by automorphisms of the domain. In this thesis, we will be concerned with stable maps to $Y$ where the domain is a disc with one boundary marked point, or a disc union sphere with one boundary marked point on the disc and one interior marked point on the disc and sphere each where they meet. These are stable maps (in particular they have finite reparametrization action) but not stable domains. So stabilization plays the role of quotienting by the automorphism group. One reference for the topology of the moduli space of pseudo-holomorphic maps is \cite{fuk_lectures}. 

\begin{definition}[Stable map, {see \cite[\textsection 9]{seidel}}] A \emph{stable map} is given by a tree of pseudo-holomorphic maps, where each vertex $\alpha$ is a sphere bubble, except for the vertex corresponding to original disc curve. The \emph{stable} refers to the absence of continuous families of nontrivial automorphisms; in particular if we fix three points on every constant component there are no nontrivial automorphisms. A tree encodes information for how to glue, where interior edges are assigned a gluing length, and semi-infinite edges at either end give the resulting marked points of the final glued disc. We have a family of discs, parametrized in the base by possible cyclic configurations of these points. We also have a gluing parameter for each interior edge where a sphere bubble is glued. More interior edges increases the codimension of the moduli space of that configuration in the moduli space of a disc. We can encode the possible degenerations in the \emph{Stasheff associahedron}.
\end{definition}
%We then compactify the base of this family by adding in all $d$-leafed trees. If we have second countable and Hausdorff on $\overline{\mc{R}}^{d+1}$ then we can define continuity, a topology and smoothness. Then \cite{fooo} describe how it's inherited from being a subset of real locus (because the gluing parameter is real) of the Deligne-Mumford space $\ol{\mc{M}}_{0,d+1}$. See also \cite[\textsection 13]{seidel} for a discussion of regularity of polygons and a sheaf-theoretic interpretation.
%Would be nice to show Hausdorffness directly. Given two points, are there open sets separating them? On a manifold with corners, which in this case we have, open sets look like $\mb{R}^d \times \mb{R}^{n-d}_{\geq 0}$ because we get some half-spaces.
%Also ``dual graph" is used page 39 of the above paper (KL was above?).
\begin{center}
   \fbox{{\bf Topology and compactification on set of curves}} 
\end{center}
Stable maps are defined in Chapter 5 of \cite{jholbk} in the \underline{case of spheres}, where they also prove compactification. The compactification of the moduli space of unparametrized curves (i.e.~equivalence classes) is a union over possible bubble trees of unparametrized curves, c.f.~{\cite[Equation (5.1.5)]{jholbk}}. The reason is that these are the possible limit configurations. Following {\cite[Chapter 5]{jholbk}}, if the derivatives in a fixed homology class are bounded in $W^{1,p}$ for some $p>2$, then we have a Bolzano-Weierstrass type result that any sequence has a convergent subsequence. However, in the borderline $p=2$ case, as is the case here, the limit may not be in original class of curves. The moduli space of \emph{stable} maps exhibits \emph{Gromov convergence}; a limit of a sequence of $J$-holomorphic curves is either in the original moduli space or not. If it is not, then in the limit we could have sphere bubbling. 

Gromov convergence to a stable map means: for each vertex $\alpha$ of the tree we have a family of reparametrizations $\phi^\nu_\alpha$ such that if we remove the bubbling points, then the sequence of curves converges uniformly on compact subsets to the main component. To obtain $C^\infty_{\text{loc}}$ convergence to the main component we use the Bolzano-Weierstrass theorem on the energy. And the reason the main component is fully defined on domain $S$ (even though we removed the bubble points) is because of the removable singularity property of pseudo-holomorphic maps. To find the other bubbled-off components, reparametrize the source curve by $z/R_n$, where $R_n \to \infty$, and include a sequence of points tending to the bubble point. Energies of the main component plus that of the bubbles should add up to the original energy since energy remains the same in the limit.
\begin{example}[A sphere bubble]
For example $\mb{CP}^1 \to \mb{CP}^2$ given by $[x:y] \mapsto [x^2: \eps y^2: xy]$ parametrizes the holomorphic curve in $\mb{CP}^2$ given by $ab = \eps c^2$. As $\eps \to 0$, we only see the curve $[x:0:y]$. However we can reparametrize and obtain a different limit; the image is actually two spheres, $ab=0$ and not just $b=0$, because of rescaling in $\mb{CP}^2$. In $ab = \eps c^2$, let $\eps \to 0$ to obtain $ab=0$.
\end{example}
Conversely, any such configuration can be viewed as a limit of $J$-holomorphic curves by pregluing the maps, i.e.~pasting them together (which may not give something $J$-holomorphic, hence it is called pregluing). However it is suitable as a compactification of a topological space. 

Here we are in the \underline{case of stable discs}. The paper \cite{Fra} proves compactness for pseudo-holomorphic discs in the sense of Chapter 4 of \cite{jholbk} (i.e.~not involving marked points). Lectures 5--7 of \cite{fuk_lectures} discuss the topology, Hausdorffness, and that sequential compactness implies compactness when the limit configurations are included. Limits of disc curves include two additional possible configurations to sphere bubbling: disc bubbling and strip-breaking. 

\begin{example}[Gromov compactness in our setting]\label{ex:compactness} For fixed $\beta$ and Lagrangian boundary condition, there is no disc bubbling or strip breaking under reparametrization. Only sphere bubbling cannot be excluded. Also, there are no multiply covered discs. \end{example}

\begin{proof}[Proof of example] The projection of a hypothetical disc bubble under $v_0$, if non-constant, would satisfy the open mapping principle outside of $U$ because it is a $J_0$-holomorphic section there. A disc bubble would have boundary in a single Lagrangian. However, all Lagrangians considered project to a U-shaped curve which does not enclose any bounded region of the complex plane. So any such disc would have to lie entirely in a fiber. However, linear Lagrangians in tori have zero relative $\pi_2$ i.e.~they don't bound discs. So there can be no disc bubbling in a fiber and hence no disc bubbling in general.  

Similarly if a strip breaks, any component that breaks off must either lie entirely within a fiber of $v_0$, or the degeneration must be visible in the projection to $v_0$. Namely, the polygonal region of the complex plane over which the section-like map projects also decomposes into a union of polygons with boundary on the given arcs. Since Maslov indices add, and all discs have intersection 1 with the central fiber, there is no room for a Maslov zero disc since it would have to be a bigon in a fiber (as it can't pass through the central fiber) and linear Lagrangian in tori cannot bound bigons. 

%A disc can't form that wraps around the boundary of a disc over an open set around the origin, because it must be 1-1 near $-1$. So another disc or strip would have to lie over a proper subset of the circle in the base, for which the fibration is trivializable. Assuming the boundary of the disc lies outside of $U$ (but then need to show invariant under Hamiltonian isotopy), or by continuing the 1-1 property near $-1$ of discs to the rest of the boundary of the disc, we find that such a disc must have boundary given by a point because of the 1-1 property, but there are no spheres in the torus, so it must be constant (or again invoke 1-1 property).  

If we did have multiple covers, we may require that $J$ depend on $z \in S$, in which case we replace $J$ with $J_z$. The text \cite{jholbk} lays this foundation for Lagrangian boundary conditions case which is described in \cite{fuk_intro} and will involve an $\mb{R}$-family of $J$'s on a strip. We don't need to do that here since we are considering section-like discs, which are injective near their boundary. They are \emph{somewhere injective} and only wrap once around the boundary.\end{proof}

So in summary we define the resulting moduli space by taking a collection of maps, quotienting by reparametrization, and then compactifying. For Hausdorffness and compactness of this space, see \cite[\textsection 5.3]{ccliu}.

%Over the strip-like ends, the acs depends only on ``how far around fan we are" and not how close we are to the puncture. Namely, it is independent of $s$ and depends only on $t$. This can be described as ``translation-invariant" in the sense of shifting the domain of pseudo-holo curves as follows: we can shift in the $s$-direction, which is the infinite direction, because limit points are only constraint. We can't shift in the $t$-direction because of the Lagr boundary conditions. Stuff about how 1-1 even when not a section and deforming $J$ from $J_0$, see stuff later on about this. 
\begin{center}
\fbox{{\bf Dimension and Maslov class}}     
\end{center}
\begin{definition}The \emph{virtual dimension} of the moduli space of section-like maps is given by the Maslov index of the chosen $\beta$ homology class, minus twice the number of edges in the stable tree. That is, each bubbled off sphere reduces dimension by 2. Thus we expect to have a \emph{pseudocycle}, i.e.~the image of the boundary of the moduli space has codimension at least two in the total space. This can be achieved in the \emph{semipositive} case, which means there are no $J$-holomorphic spheres of negative Chern number for generic $J$ -- this is our setting because $Y$ is Calabi-Yau. 
\end{definition}

\begin{remark}[Analogue of dimension in Morse theory] With Morse-Smale data we know the dimension of the moduli space by counting eigenvalues. We require transverse intersection of the unstable manifold of $x_{-}$ with the stable manifold of $x_+$. We have a projection map from the unstable manifold to directions perpendicular to the stable manifold. On the other hand, the index of a Fredholm section is computed from the \emph{spectral flow} of a loop of symmetric matrices, see \cite[\textsection 3.2]{wendl}. \end{remark}

% of the Hessian (which reduces to counting eigenvalues of a Hessian matrix in the Morse case). One way to determine the Fredholm index for the moduli space of holomorphic strips $\mb{R} \times [0,1] \to (Y,L)$ is to consider the spectral flow of $(J_0 \dd_t + S_s)_{s \in \mb{R}}$, where $D_u$ is this expression plus $\dd_s$ plus a compact perturbation.

Given a Fredholm section, we can compute the expected dimension of the manifold of parametrized curves, and then the moduli space of unparametrized curves will be three less from quotienting by $\Aut(\mb{D})$. A Spin structure on the Lagrangian determines an orientation on the moduli space of parametrized curves, so quotienting by $\Aut(\mb{D})$ induces an orientation on the moduli space of unparametrized curves. \label{dimension} 
%
%Semi-Fredholm: kernel is finite dimensional and image is closed. Fredholm is equivalent to image having finite co-dimension. Semi-Fredholm is equivalent to existence of a compact operator so norm of $x$ is bounded by sums of norms of operator on $x$ and the compact operator on $x$, up to multiplicative constant.
%
%Kato stability: semi-fredholm continuous family of operators that starts out Fredholm stays Fredholm. Follows from equivalence statement above.

\begin{example}[Deligne-Mumford moduli space of domains,{see {\cite[\textsection 5, \textsection 6]{jholbk}} and \cite[Lectures1--4]{fuk_lectures}}] A {stable disc} is a {stable map} to a point. The Deligne-Mumford space $\mc{M}_{0,d+1}$ arises from considering degenerations of stable discs with the additional data of $d+1$ boundary marked points, e.g.~ possible limits under reparametrization in Gromov compactness, see \cite[\textsection 5.5]{jholbk}. The real dimension for genus $g$ curves with $l$ marked points is $6g-6+2l$.
\end{example}

In general we can compute dimension by considering the Maslov class of the disc, which in our setting can be computed as an intersection number with toric divisors. The following is based on \cite{t_duality} and \cite{cho_oh}. 

\begin{definition}[Maslov class of a Lagrangian and of a disc] Suppose $2c_1(M)=0$, so the square of the anticanonical bundle is trivializable by some section $s$. We have a map from $LGr$ to the unit bundle of $K^{-2}$ by taking $\det^2$ of a basis for each Lagrangian. We can identify that unit bundle with $S^1$ using the trivializing section $s$ mentioned above. The upshot is that we get a map from $LGr \to S^1$.  The ``Maslov class" of the Lagrangian is the pullback of $[S^1]$, i.e.~a homology class in $LGr$. If it is zero (meaning we can lift to $\mb{R}$), then we can define $\mu(\beta)$ as the evaluation of this homology class on $\beta$. This measures the obstruction to extending the square of this normalized section on $L$ to one on a disc representing 2-homology class $\beta$.
\end{definition}

%$HF(L_0,L_1)$ has a $\mb{Z}/N$ grading, up to shifts. Can do relative (up to shifts) or absolute (no shifts). For example, $N=2$. 

\begin{definition}\label{sLag}
A special Lagrangian $L$ is defined to be one such that if $\Omega$ trivializes $K_{X\backslash D}$ on the complement of an anticanonical divisor $D$, then $\Omega|_L = e^{-i\phi}dvol_L$ for some constant phase $\phi$.
\end{definition}

\begin{lemma}[{\cite[\textsection 3.1]{t_duality}}]\label{maslov_intersec} Let $(X,\omega,J)$ be a smooth, compact and K\"ahler manifold. Let $\Omega \in \mc{M}^0(X,({T^*}^{(1,0)}X)^n)$ be a global meromorphic $n$-form, with poles along an anti-canonical divisor $D$, e.g.~using $\log$ coordinates. In other words, $\Omega^{-1}$ is a nonzero holomorphic section of the anti-canonical bundle on $X \backslash D$. Let $L$ be a special Lagrangian submanifold in $X \backslash D$. Let $\beta \in \pi_2(X,L)$ be nonzero. Then $\mu(\beta)$ is twice the algebraic intersection number $\beta \cdot [D]$, where $\mu(\beta)$ denotes the Maslov class.\end{lemma}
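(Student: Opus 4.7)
The plan is to use the meromorphic top form $\Omega$ to trivialize $K_X^{-2}$ away from $D$, rewrite the Maslov index of a disc as a winding number on its boundary, and then apply the argument principle to turn this winding number into an intersection count with $D$.

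First I would set up the squared phase function. Because $\Omega$ is a meromorphic section of $K_X$ whose poles saturate its divisor class $-[D]$, its poles along $D$ are simple, so $\Omega^{-2}$ is a holomorphic section of $K_X^{-2}$ that is nowhere-vanishing on $X\setminus D$ and vanishes to order $2$ along $D$. For a Lagrangian plane $V\subset T_pX$ with $p\in X\setminus D$, the real line $\det_{\mb{R}}^2 V$ sits inside the complex line $(\det_{\mb{C}}T^{1,0}_pX)^{\otimes 2} = K_X^{-2}|_p$, and comparing with $\Omega^{-2}(p)$ yields a canonical element of $S^1$. This defines a smooth phase map $\phi_L:L\to S^1$, and the special Lagrangian hypothesis in Definition \ref{sLag} is precisely the statement that $\phi_L\equiv e^{-2i\phi_0}$ is constant on $L$.

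Next I would rephrase the Maslov index in terms of this trivialization. Pick a representative $u:(\mb{D},\dd\mb{D})\to(X,L)$ of $\beta$ and choose a nowhere-vanishing holomorphic trivialization $\sigma$ of $u^*K_X^{-2}$ on the contractible disc. By the definition of the Maslov class recalled in the paper, $\mu(\beta)$ is the degree of the map $\dd\mb{D}\to S^1$ sending $z$ to the phase of $\det_{\mb{R}}^2(u^*TL)(z)$ relative to $\sigma(z)$. On $\mb{D}\setminus u^{-1}(D)$ we may write $u^*\Omega^{-2}=g(z)\,\sigma(z)$ for a holomorphic function $g$. Because $\Omega^{-2}$ vanishes to order $2$ along $D$, at each point $z_i\in u^{-1}(D)$ the function $g$ has a zero of order $2k_i$, where $k_i$ is the local intersection multiplicity of $u$ with $D$, so the total vanishing order of $g$ in the interior of $\mb{D}$ equals $2(\beta\cdot[D])$.

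Finally I would combine the two steps. Along $\dd\mb{D}$ the $\Omega^{-2}$-trivialization is available, and the special Lagrangian condition gives $\det_{\mb{R}}^2(u^*TL)(z) = r(z)\,e^{-2i\phi_0}\,u^*\Omega^{-2}(z) = r(z)\,e^{-2i\phi_0}\,g(z)\,\sigma(z)$ for a positive real function $r(z)$. Hence the phase of $\det_{\mb{R}}^2(u^*TL)(z)/\sigma(z)$ is $-2\phi_0+\arg g(z)\pmod{2\pi}$, and its winding number along $\dd\mb{D}$ coincides with the winding of $\arg g$. By the argument principle this equals the total vanishing order of $g$ inside $\mb{D}$, namely $2(\beta\cdot[D])$, so $\mu(\beta)=2(\beta\cdot[D])$.

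The main obstacle I expect is the case when $u$ is not transverse to $D$, or when a component of $u(\mb{D})$ is contained in $D$, in which case $u^{-1}(D)$ is not a finite set of points and the argument principle does not apply verbatim. I would handle this by perturbing $u$ to a smooth (not necessarily holomorphic) map $u'$ homotopic to $u$ rel boundary with $u'^{-1}(D)$ a finite set of transverse intersections, and using that both $\mu(\beta)$ and the algebraic intersection $\beta\cdot[D]$ depend only on the relative homotopy class of $u$.
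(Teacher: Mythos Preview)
Your proposal is correct and follows essentially the same approach as the paper's proof: both use the special Lagrangian condition to see that the phase of $\det^2(TL)$ relative to $\Omega^{-2}$ is constant along $L$, and then identify $\mu(\beta)$ with the degree of $\Omega^{-2}$ over the disc, which equals $2(\beta\cdot[D])$. The paper's argument is only a few lines and simply asserts the final step $\deg(\Omega^{-2}|_\beta)=2D\cdot\beta$; your version spells this out via the argument principle and also flags the transversality issue, neither of which the paper addresses explicitly.
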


\begin{proof}[Proof from {{\cite[\textsection 3.1]{t_duality}}}]
The tangent space to $L$ is real since being Lagrangian is defined by $(TL)^\perp = JTL$ with respect to $\omega(-,J-)$. Taking a real basis gives a nonvanishing section of $K_X^{-1}|_L$ which we can scale to unit length. Since we've normalized, this section is independent of choice of basis. In particular, its square also trivializes the square of the anticanonical bundle over $L$. Since $L$ is special Lagrangian, $\Omega^{-1}$ which defines the divisor of $D$ is equal to this volume form on $L$, up to a constant phase factor, by Definition \ref{sLag}. The Maslov class of $\beta$ is then $\deg(\Omega^{-2}|_\beta)=2D \cdot\beta$.
\end{proof}

\begin{claim}[{\cite{cho_oh}, \cite{t_duality}}]\label{one_disc_only} Consider the moduli space of $J_0$-holomorphic discs with boundary in $L=\bigcup_{circle}t_x$ for a circle around the origin in the base of $v_0$. This is a product torus in a toric variety, and $\beta$ is a class of Maslov index 2, i.e. by Lemma \ref{maslov_intersec} the class of a disc intersecting the central fiber once. Then we claim that there is only one $J_0$-holomorphic disc in each $\beta$ class.
\end{claim}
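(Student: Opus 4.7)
My plan is to follow the classical argument of Cho--Oh for product tori in toric varieties, specialized to the local toric chart. Since $L = \bigcup_{\text{circle}} t_x$ lies over a circle in the base of $v_0$ and each fiber $t_x$ is the preimage of a moment map value $(c_1, c_2)$, the entire Lagrangian is a standard product torus $\{|x| = r_1, |y| = r_2, |z| = r_3\}$ inside the toric variety $\tilde Y$ (or its quotient), for some fixed radii determined by the moment map values together with $|v_0|$ on the chosen base circle. The Maslov index $2$ hypothesis, combined with Lemma~\ref{maslov_intersec}, forces the algebraic intersection of $\beta$ with the toric divisor $D = \{xyz = 0\}$ to equal $1$, so exactly one of the three toric coordinate hyperplanes is hit once and the others are hit zero times.

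First, I will compose a $J_0$-holomorphic map $u : (\mathbb{D}, \partial \mathbb{D}) \to (Y, L)$ with each of the three toric coordinate functions $x, y, z$ (working in a local $\mathbb{C}^3$ chart into which the image of $u$ can be lifted, using that $L$ is simply connected in these coordinates up to the $\Gamma_B$-action). Each coordinate $x \circ u : \mathbb{D} \to \mathbb{C}$ is holomorphic in the ordinary sense, with boundary mapping into the circle $\{|x| = r_1\}$, and similarly for $y$ and $z$. Thus each coordinate function is either identically zero (impossible, since the image is not contained in the toric divisor as $u$ has intersection number $1$ with $D$) or else a finite Blaschke product times a unimodular constant, by the standard classification of bounded holomorphic self-maps of the disc with circular boundary values.

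Second, I will use the intersection constraint: the multiplicity with which $x \circ u$, $y \circ u$, $z \circ u$ vanish in the interior of $\mathbb{D}$ equals the algebraic intersection of $u(\mathbb{D})$ with the toric divisors $\{x=0\}$, $\{y=0\}$, $\{z=0\}$, respectively. In our Maslov~$2$ class, exactly one of these intersections equals $1$ and the other two are $0$. So, after relabeling coordinates, $x \circ u$ is a degree-$1$ Blaschke product (i.e.\ a M\"obius transformation sending $\mathbb{D}$ to the disc of radius $r_1$) while $y \circ u$ and $z \circ u$ are constants of modulus $r_2$ and $r_3$. This completely determines $u$ in the toric chart up to three real parameters: the location of the zero of the Blaschke factor in $\mathbb{D}$ and the unimodular constants. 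Two of these parameters are absorbed by the $\mathrm{Aut}(\mathbb{D}) = PSL(2,\mathbb{R})$ action on the domain, and the remaining one-parameter family is killed by fixing the boundary marked point. Hence modulo reparametrization there is a unique $J_0$-holomorphic disc in the given class.

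The main obstacle to be careful about is ensuring that the image of $u$ actually lies in a single toric chart (so that the coordinates $x, y, z$ make sense globally on the image), and that the $\Gamma_B$-quotient does not introduce spurious extra discs. For the first point, the image of $u$ has intersection $1$ with $D$, so it meets only a single toric boundary stratum of the central fiber, and this forces the image into the closure of one $\mathbb{C}^3$ chart. For the second point, the $\Gamma_B$-action is free on $Y$ near the support of $L$ and $L$ itself lies in a fundamental domain for the action, so lifts of $u$ to the universal cover $\tilde Y$ are in bijection with maps to $Y$ in the given relative class, and the above uniqueness argument applies verbatim to the lift.
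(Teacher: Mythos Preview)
Your approach is correct and is essentially the same as the paper's: both lift the disc to the universal cover $\tilde{Y}$, observe that a Maslov~$2$ disc meets exactly one irreducible component of the toric boundary and hence lands in a single chart $\mathbb{C}^*\times\mathbb{C}^*\times\mathbb{C}$, and then classify the map coordinate-wise. The paper phrases the last step as ``constant in the two $\mathbb{C}^*$ factors by the maximum principle, and unique in the remaining $\mathbb{C}$ factor by the Riemann mapping theorem,'' which is exactly the degree-$0$ and degree-$1$ cases of your Blaschke product argument.

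The only slip is in your parameter count. A degree-$1$ Blaschke map carries three real parameters (the interior zero $a\in\mathbb{D}$ and a phase), and the two constant coordinates carry one phase each, for a total of five real parameters, not three; and $\mathrm{Aut}(\mathbb{D})$ is three-dimensional, not two. The correct bookkeeping is: $5$ parameters, minus $3$ for $\mathrm{Aut}(\mathbb{D})$, gives a $2$-dimensional family of unparametrized discs; adding a boundary marked point gives $3$, and constraining its evaluation to a fixed point of the $3$-dimensional torus $L$ brings the count to $0$. This matches the index computation in the paper and does not affect the validity of your argument.
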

\begin{proof}

%This is the real counterpart of the index theorem for spheres where the index is 2n + 2c_1(TM).\beta.

Using Lemma \ref{maslov_intersec}, we can interpret the dimension geometrically from intersection numbers. Since we're considering $J_0$-sections $u$, they pass once through the central fiber at 0, which is also the divisor $D$ in our setting.  $D$ is the union of toric divisors in $v_0^{-1}(0)$. They intersect $D$ transversely once, hence they have Maslov index 2 by the above result. There are no nontrivial Maslov zero discs by the fact that linear Lagrangians in the fiber do not bound discs. If we require one boundary marked point to map to a given point of $L$, this cuts down the dimension of the moduli space of holomorphic curves by $n$, but we add one dimension from the choice of marked point. (The collection of discs with boundary on $\bigcup_{circle} t_x$ and a point constraint on the boundary is a zero dimensional family, otherwise we could rotate the disc by the $T^3$ action and obtain a family of discs.) Since the real dimension of the Lagrangian $n=3$, $\mu(\beta)=2$, we subtract three dimensions from fixing three complex-valued boundary points (only the identity automorphism on a disc fixes 3 boundary points so this quotients by the automorphism group), and then including a marked point on the Lagrangian boundary, we find that the virtual dimension of the moduli space of holomorphic curves in class $\beta$ intersecting the toric divisor $D$ once is 
$$\ind(D_u) - 3+1= n \chi(D^2)+ \mu(\beta)-2=3+ 2 (\beta \cap [D])-2=3$$ 
by the index theorem in Fredholm theory, where $\chi$ denotes the Euler characteristic and $n$ is the real dimension of the Lagrangian, see \cite[Theorem C.1.10(ii)]{jholbk}. Thus, if we constrain the evaluation map at the marked point to lie at a particular point in the 3-dimensional Lagrangian, this will cut down the dimension to 0. This results in an expected dimension of zero. We now show that there is only one disc in each homology class.

%
%  Also this excludes multiple covering phenomena.
%

% Some sentences: to orient the moduli spaces means to pick a spin structure on $L$ and a spin structure is a 1-form cohomology class on $L$ with $\mb{Z}/2$ coefficients.
%
% Once we have orientation we have an evaluation map, apparently. This is a map on the moduli space that sends a curve $u$ to $u(pt)$ where pt is the 1 marked point. Note: \emph{marked point is on the domain} so there are multiple outputs for its image, which trace out the Lagrangian in the fiber over that marked point?

% The base of an SYZ fibration has two affine structures. What this means is that it looks like vector space + constant. It induces coordinates on the total space by $H^1$ and area of propagated loops.

The linear fiber Lagrangian $t_x$ can be thought of as corresponding to the skyscraper sheaf under mirror symmetry. Recall that the definition of $\ell_k$ involved rotating an amount $2\pi k$ along the two angle directions as we traverse one loop in each of the base $\xi_i$ moment map directions. Rotating only the angle directions and not in the base defines $t_x$, namely, we fix the moment map coordinates and let the angles vary. This gives the preimage of a moment map coordinate $A: = (a_1,a_2,a_3)=(\xi_1,\xi_2,\eta)$ and we let $|z|:= \tau^{A}$ denote the exponentiated coordinates. The reason for choosing the letter $A$ is that the formula for counting such discs is discussed in a paper of \cite{cll} and that notation follows theirs. 

Because each disc considered intersects the central fiber only once, its lift to the universal cover $\tilde{Y}$ can intersect only one of the toric divisors of $\tilde{Y}$. Which divisor it intersects is determined by the class $\beta$. The image of the disc is contained in the union of the open stratum of $\tilde{Y}$ and the open stratum of the component of $D$ that the disc intersects, which by standard toric geometry is the image of a toric chart $\mb{C}^* \times \mb{C}^* \times \mb{C}$ inside $\tilde{Y}$.

Thus we think of the disc as mapping to the chart $\mb{C}^* \times \mb{C}^* \times \mb{C}$. Then a disc with boundary in $S^1(r_1) \times S^1(r_2) \times S^1(r_3)$ implies it is constant in the first two components (by the maximum principle) and we obtain a disc in the last $v_0$ coordinate. And this is the only disc by the Riemann mapping theorem. It cannot be multiply covered because it must be Maslov index 2 by the previous paragraph. 

So the discs we count correspond to selecting a point $A$ in the moment polytope and drawing a line to a facet in that polytope. Geometrically, fixing $A$ implies each disc has the same $T^3 \cong S^1(r_1) \times S^1(r_2) \times S^1(r_3)$ Lagrangian boundary condition, namely the moment map preimage of $A$. As we allow $|v_0| \to 0$, we find that $r_1,r_2$ remain constant and the third coordinate goes to zero along the disc. (In the moment polytope this corresponds to a path, depending on $\beta$, from $A$ to a facet. And $\xi_1,\xi_2,\eta$ may vary along the path.) We count each disc over the possible $\beta$, weighted by area, in Theorem \ref{theorem:disc_count_s}.

This concludes the proof that there is only one $J_0$-holomorphic disc in each $\beta$ class.
\end{proof}

%What \emph{can} happen is that we get Maslov zero sphere bubbles.

\subsection{Existence of regular choices to define compact moduli spaces}\label{defn}

The existence of regular choices is theory that is known in numerous cases; for completion we include the proofs because they are in the setting of Lagrangian boundary condition, versus the case of no Lagrangian boundary condition used in references cited here. Also, the proof of compactification is specific to the geometry of our set-up, so details are provided there as well.

\begin{definition}
We say that an almost complex structure $J \in \End(TY)$ is \emph{regular} if, for all $J$-holomorphic maps $u: (\Sigma, j) \to (Y,J)$ on the complex curve $\Sigma$, the linearization (or derivative) of the Cauchy-Riemann operator $\ol{\dd}_J$ is surjective.
\end{definition}

\begin{remark}
``Regularization" refers to perturbing the $\ol{\dd}_J$ operator to be equivariantly transverse to the zero section of a Fredholm bundle which we can build so that the operator is a section of the bundle. ``Geometric regularization" means the perturbations are obtained by perturbing the almost complex structure $J$, so are geometric in nature. Namely the perturbations of $\ol{\dd}_J$ are $\ol{\dd}_{J'} - \ol{\dd}_J$ as $J'$ varies. In this setting equivariance will be automatic, as described below. Note that later on, we will need to use a non-regular $J$ for computations and in that case we will use ``abstract regularization" by adding abstract perturbations $p$ which are sections of an ``obstruction" bundle built from the non-surjectivity of $D_u$. They are not necessarily of the form $\ol{\dd}_{J'} - \ol{\dd}_J$. As intuition one can recall that the preimage of a regular value of the function $f(\bm{x}) = x_1^2+x_2^2+x_3^2$ is a manifold, namely a 2-sphere. Here we consider an infinite-dimensional analogue.
\end{remark}

% \begin{remark}[Intuition]
% We claim that a choice of regular $J$ gives rise to a smooth structure on a moduli space. The concept is analogous to why, e.g.~, the sphere $x^2+y^2+z^2=1$ is smooth when thought of as a regular value of $f(\ul{x}) = x^2+y^2+z^2$. Namely in standard coordinates $df = (2x, 2y, 2z): \mb{R}^3 \to \mb{R}$ is surjective whenever $x$, $y$ and $z$ are not all zero. For such a choice of $(x,y,z)$, $f(\ul{x})\neq 0$ is the radius of the sphere. So a regular choice of $(x,y,z) \rightsquigarrow f-f(x,y,z)$ is playing the role of a regular choice of $J \rightsquigarrow \ol{\dd}_J$, and $(f-f(x,y,z))^{-1}(0)$ is a smooth manifold analogous to how $\ol{\dd}_J^{-1}(0)$ will be a smooth manifold. The difference is that $f$ is a function between finite-dimensional spaces, whereas $J$ and $\ol{\dd}_J$ live in infinite-dimensional function spaces.
% \end{remark}

\begin{center}
\fbox{\bf Finite rank geometric regularization intro from \cite[Lecture 1A]{wehr_reg}}
\end{center}

For a section of a finite rank bundle over a finite dimensional manifold, its zero set is automatically compact. However, we don't necessarily have that over an infinite-dimensional manifold. In the finite dimensional case, there exists a space of perturbations $\mc{P}$ so that $s+p$ is transverse to the zero section for all $p \in \mc{P}$, i.e.~the derivative $D_u(s+p)$ is surjective for all $u \in (s+p)^{-1}(0)$. We can guarantee that $\mc{P}$ is non-empty, the perturbed $s$ is transverse to the zero section, $(s+p)^{-1}(0)$ is compact, and we can construct cobordisms between zero sets for different choices of $p$. 

These properties allow us to define a fundamental class for $\ol{\mc{M}} := s^{-1}(0)$, denoted $[\ol{\mc{M}}]$, even if it is singular from lack of transversality to the zero section (which would have ensured it is a manifold). We view the fundamental class of the singular moduli space as the intersection of nested open sets $\mc{W}_k$, which are subsets of points in the base of the bundle that are $1/k$ away from $s^{-1}(0)$. In particular, by non-triviality of $\mc{P}$, we can take a sequence of perturbations so $(s+p_k)^{-1}(0) \subset \mc{W}_k$ stays the same. A fundamental class is necessary to count the zero set of $s$, i.e.~compute an intersection of $s$ with the zero section. We build a fundamental class using that \v{C}ech homology of the singular manifold is isomorphic to an inverse limit of rational \v{C}ech homology of the nested open sets. Each $\mc{W}_k$ has a fundamental class in the top degree of \v{C}ech homology, which will limit to the fundamental class we are looking for.

Fortunately, Banach spaces still come equipped with all the necessary tools to obtain smooth structures analogous to the finite-dimensional case. We follow the arguments of \cite[Chapters 3--10]{jholbk}, adapting their $S=\mb{CP}^1$ setting to our $S=\mb{D}$ setting with Lagrangian boundary conditions; this is also discussed in \cite{seidel} and \cite{math257b}.  

%\begin{lemma}[Lazzarini] We can decompose a somewhere injective disc as the union of simple discs and multiply covered discs. \end{lemma}
%
%\begin{remark} In some cases we can prove all curves are simple by applying homology restriction and showing we can't write a map as a composition with something of degree bigger than 1, using the multiplicative property of degree for composition of maps. 
%\end{remark}

\begin{example}An example of the geometric regularization theory is implemented in \cite{wehr_reg0} for Gromov non-squeezing, which involves illustrating how to show a family of moduli spaces varying $J_t$ is 1 dimensional (Fredholm), a manifold (transversality/regularity), compact (Gromov compactness) and has boundary.
\end{example}

\begin{remark}
$J$-curves have some analogous properties as complex curves, such as the Carlemann similarity principle and unique continuation (if two $J$-holomorphic maps agree on an open set, or all derivatives agree at a point, then the maps are equal). Other properties include that there are only finitely many points in the preimage of a point and only finitely many critical points. Furthermore simple curves (or their analogue in the Lagrangian boundary condition case, somewhere injective curves) have an open dense set of injective points.
\end{remark}

%wehr_reg0 3 steps first two:has 3 parts: further embedding into $\mb{CP}^1$ and a torus, find a specific holo curve (part 2 of part 2 involves stuff that may apply to me. Lastly after 3rd part above,apply monotonicity lemma

\begin{center}
\fbox{{\bf Existence of regular $\bm{J}$ in 4 steps}}
\end{center}

Let $U$ be the open neighborhood of the singular fiber of $v_0$ defined in the paragraph before Definition \ref{defn:maps}. Recall
\begin{equation}\label{eq:set_of_J}
    \mc{J}_\omega (Y,U) := \{J \in \Omega^0(Y,\End(TY)) \mid J^2 = -\bm{1}, \omega(\cdot, J \cdot) \mbox{is a metric}, J\mid_{Y \backslash U} \equiv J_0 \}
\end{equation}
In particular, this set is non-empty because it contains $J_0$, and is contractible by the same argument as in the case of no boundary. This set of $J$ is what's needed to prove geometric regularization in the boundary case, see \cite[Remark 3.2.3]{jholbk}. Furthermore, let $\gamma_i$ denote curves in the base of $v_0$ and $\bigcup_{\gamma_i}\ell_i$ for parallel transport of $\ell_i$ over $\gamma_i$. Recall from Definition \ref{defn:fuk_cat} that $L_i$ is the Lagrangian given by parallel transporting $\ell_i$ from the $-1$-fiber in a U-shape. Then define the notation
\begin{equation}\label{eq:lagr_bdry_notation}
    L|_\gamma := \bigcup_{i \in I \subset \mb{Z}} \bigcup_{\gamma_i}\ell_i
\end{equation}
Since all maps to $Y$ we consider are polygons when projected to the base of $v_0$, all discs pass through the zero fiber.

\begin{remark}[Notation] The notation $\dot{J}$ does not mean we can only vary $J$ in one direction as is usually the case with the dot notation. We use the notation as a symbolic way to denote tangent vectors to the space of complex structures; it's denoted $Y$ in \cite{jholbk}.
\end{remark}

\begin{lemma}[Geometric regularization]\label{geom_reg_lemma} There exists a dense set $J \in \mc{J}_{reg}^1 \subset \mc{J}_\omega (Y,U)$ such that, for all $J$-holomorphic maps $u: (\mb{D},\dd\mb{D})\backslash \{z_1,\ldots,z_{|I|}\} \to (Y, L|_{\gamma})$ as in Definition \ref{defn:fuk_cat}, the linearized $\ol{\dd}$-operator $D_u$ is surjective.\end{lemma}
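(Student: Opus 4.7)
The plan is to follow the standard Sard--Smale/universal moduli space argument as in \cite[Chapter 3]{jholbk}, adapted to the boundary condition setting of \cite{seidel}, with the one wrinkle that $J$ is constrained to agree with $J_0$ outside of $U$. First I would fix $p>2$, $k\geq 1$ and, for each choice of asymptotic intersection points and homology class $\beta$, form the Banach manifold $\mc{B}$ of $W^{k,p}$-maps $u:(\mb{D},\dd\mb{D})\backslash\{z_i\}\to(Y,L|_{\gamma})$ with the prescribed limits on the strip-like ends. The space $\mc{J}^\ell$ of $C^\ell$-regular elements of $\mc{J}_\omega(Y,U)$ is a Banach manifold whose tangent space at $J$ consists of sections $\dot J\in C^\ell(\End(TY))$ supported in $U$ satisfying $\dot J J+J\dot J=0$ and the compatibility with $\omega$. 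Then I would define the universal moduli space
\begin{equation}
\mc{M}^{univ}:=\{(u,J)\in \mc{B}\times \mc{J}^\ell \mid \ol{\dd}_J u=0\},
\end{equation}
viewed as the zero set of the section $(u,J)\mapsto \ol{\dd}_J u$ of the Banach bundle whose fiber at $(u,J)$ is the space of $(0,1)$-forms on $\mb{D}$ with values in $u^*TY$.

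The next step is to show that $\mc{M}^{univ}$ is a Banach manifold; this amounts to proving that the universal linearized operator
\begin{equation}
\mc{D}_{(u,J)}(\xi,\dot J)=D_u\xi+\tfrac{1}{2}\dot J(u)\circ du\circ j
\end{equation}
is surjective at every $(u,J)\in \mc{M}^{univ}$. The operator $D_u$ is Fredholm, so its image is closed and it suffices to show $\mathrm{coker}\,D_u$ is annihilated by varying $\dot J$. The hard step is to produce an injective point of $u$ lying inside $U$: indeed, any section-like $J$-holomorphic disc in our setting has algebraic intersection $1$ with the singular fiber $v_0^{-1}(0)\subset U$, so there exists $z_0\in \mathrm{int}(\mb{D})$ with $u(z_0)\in U$ and $du(z_0)\neq 0$; combined with the fact that $u$ is somewhere injective near $z_0$ (established as in Example \ref{ex:compactness}, since the projection $v_0\circ u$ is a holomorphic polygon and near a smooth point of its boundary $u$ cannot factor through a branched cover), one can localize a perturbation $\dot J$ to a neighborhood of $u(z_0)$ disjoint from the rest of the image, and then reproduce the standard pairing argument of \cite[Proposition 3.2.1]{jholbk} to pair any nonzero $\eta\in\mathrm{coker}\,D_u$ against an appropriate $\dot J$. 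Unique continuation for $J$-holomorphic curves implies the set of injective points is dense in $u^{-1}(U)$, which is enough.

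Once surjectivity of $\mc{D}_{(u,J)}$ is established, the implicit function theorem makes $\mc{M}^{univ}$ a Banach manifold and the projection $\pi:\mc{M}^{univ}\to\mc{J}^\ell$ is Fredholm with index equal to the virtual dimension of the moduli space of $J$-holomorphic maps. By the Sard--Smale theorem, provided $\ell$ is chosen larger than this index, the set of regular values of $\pi$ is of the second category in $\mc{J}^\ell$; regular values are precisely those $J$ for which $D_u$ is surjective for all $u\in \pi^{-1}(J)$. Taking the countable union over all choices of asymptotics and homology classes $\beta$ preserves the Baire property. Finally, the standard Taubes trick passes from the $C^\ell$ statement to a $C^\infty$-dense subset $\mc{J}_{reg}^1\subset \mc{J}_\omega(Y,U)$, completing the argument.

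The principal obstacle is the somewhere-injectivity step: we must guarantee that every relevant $J$-holomorphic curve has an injective point lying inside the perturbation region $U$. This is precisely where the geometry of our setup helps: every section-like curve is forced through the central fiber, and there it is somewhere injective by the Riemann mapping argument for its projection under $v_0$ together with unique continuation. Multiply covered sphere bubbles in the singular fiber would be a potential problem for higher index strata, but by the semipositivity of $Y$ (Calabi--Yau) these contribute in codimension at least two and can be handled separately when one needs regularity for stable maps with sphere components.
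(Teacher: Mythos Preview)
Your proposal is correct and follows essentially the same universal moduli space / Sard--Smale route as the paper's proof, including the identification of somewhere injectivity as the crux. One point worth noting: you are actually more careful than the paper about the constraint that $\dot J$ must be supported in $U$, and you correctly localize the injective point \emph{inside} $u^{-1}(U)$ via the intersection with the central fiber; the paper instead argues somewhere injectivity from the section-like behavior \emph{outside} $U$ (the map cannot wrap twice around the boundary), which also works since once a curve is somewhere injective the injective points are dense and hence plentiful in the open set $u^{-1}(U)$. You also make explicit the countable intersection over asymptotics/classes and the Taubes trick for passing from $C^\ell$ to $C^\infty$, which the paper leaves implicit.
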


\begin{remark} We use the superscript 1 because later we will want existence of the slightly smaller set $\mc{J}_{reg}^2$ of $J$ regular for a disc attached to a sphere with similar Lagrangian boundary conditions. These will require not only surjectivity of the linearized operator but also compatible behavior when evaluating at the intersection point.\end{remark}

\begin{proof}[Roadmap adapting the 2nd edition book {\cite[pg 55, proof of Theorem 3.1.6 (ii)]{jholbk}}] We adapt McDuff-Salamon to the setting with Lagrangian boundary conditions. Note Theorem C.1.10 in \cite{jholbk} already proves we have a Fredholm problem for the case of boundary. The background for this was also learned from \cite[Lecture 9]{wehr_reg}. 
%as stated in See remark 9.2.3 in McDuff-Salamon regarding disc setting. Details also discussed in Denis meeting on Sept 8, 2017.

Take a homology class $\beta \in \pi_2(Y, L|_{\gamma})$. We make the following definitions
\begin{equation}
\begin{aligned}
    \mc{B}_\beta^{1,p}&:=\{u \in W^{1,p}((\mb{D},\dd \mb{D}), (Y, L|_{\gamma})) \mid [u] = \beta, \lim_{z \to z_i} u(z) = p_i\}\\
        \mc{J}^\ell_\omega (Y,U)& := \{J \in C^\ell(Y,\End(TY)) \mid J^2 = -\bm{1}, \omega(\cdot, J \cdot) \mbox{ is a metric}, J\mid_{Y \backslash U} \equiv J_0 \}\\
        \hat{\mathring{\mc{M}}}_\mc{J}&:=\{(u, J)\mid J \in \mc{J}^{\ell}_\omega(Y, U),  u \in  \mc{B}_\beta^{1,p}, \ol{\dd}_J(u) = 0\} 
    \end{aligned}
    \end{equation}
where the hat indicates we haven't yet quotiented by automorphisms of the source curve and the ring indicates we haven't compactified yet. Then we claim that 
\begin{equation}
\mc{B}_\beta^{1,p}\times \mc{J}^\ell_\omega(Y,U) \ni (u,J) \mapsto \ol{\dd}_J(u) \in L^{p}(\mb{D}, \Lambda^{0,1}\otimes u^*TY)
\end{equation}
is a Fredholm section $s$ of a Banach bundle, with surjective derivative, hence it also has a right inverse and we can invoke the Inverse Function Theorem to deduce that $ \hat{\mathring{\mc{M}}}_\mc{J}= s^{-1}(0)$ is a $C^{\ell-1}$ Banach submanifold of the base. We will now justify this. We use $L$ to denote $L|_{\gamma}$ for ease of notation.

\begin{center} \fbox{Regular $J$ step 1 of 4: Banach manifold structure on $\mc{B}^{1,p}$}
\end{center}
A map $u \in \mc{B}^{1,p}$ has a local Banach chart from exponentiating $T \Gamma(u^*TY, u^*TL) \ni \xi$ via $u \mapsto \exp_u \xi$. The map $\exp_u \xi$ still has the correct boundary condition; there exists a metric so that one Lagrangian is totally geodesic \cite[Lemma 4.3.4]{jholbk} which can be adapted to the argument for two transversely-intersecting Lagrangians as is done in \cite[Lemma 6.8]{milnor_hcobord}, \cite{miln_notes} for submanifolds of complementary dimensions. Namely, take a convex combination of the two metrics defined for each Lagrangian separately, and by uniqueness of geodesics given a starting point and direction, as well as the definition of totally geodesic, we see that the result still holds in a neighborhood of an intersection point. And at any point considered there are at most two Lagrangians intersecting, so this suffices. Once we have the metric for which both Lagrangians are totally geodesic, then for $p \in\dd \mb{D}$ we see that $\exp_{u(p)}\xi$ is a point on the geodesic which starts in $L$ and travels in the direction of a vector tangent to $L$, so it must remain in $L$. 

\begin{center}\fbox{{Regular $J$ step 2 of 4: Banach manifold structure on $\mc{J}^{\ell}_\omega(Y, U)$}}
\end{center}

The second factor on the base of the Banach bundle we are constructing, $\mc{J}^{\ell}_\omega(Y, U)$, has Banach charts around elements $J$ as described in \cite[\textsection 3.2]{jholbk}. We linearize the conditions on $J$ to obtain the conditions for vectors $\dot J$ in the tangent space, and local charts can then be recovered from the tangent space by exponentiating. Three conditions on $J$ become linearized: 1) $J|_{Y \backslash U} \equiv J_0$ implies $\dot J|_{Y \backslash U} \equiv 0$, 2) $J^2 = -\bm{1}$ implies $\dot{J}J + J \dot{J} = 0$, and 3) $\omega(\dot{J}\cdot , \cdot)+\omega(\cdot , \dot{J} \cdot) =0$ arises from $\omega$-compatibility. Equivalently, the second and third conditions impose that $\dot{J} = J \dot{J} J$ and $\dot{J}$ is self-adjoint with respect to metric $\omega(\cdot, J \cdot)$. So a chart centered at $J$ is constructed via $\dot{J} \mapsto J \exp(-\dot{J} J)$. 

A fiber of the bundle will not have boundary conditions because it is given by the space where $(du)^{0,1} = \frac{1}{2}(du + J \circ du \circ j)$ lands in, namely $L^{p}(\mb{D}, \Lambda^{0,1}\otimes u^*TY)$, and that doesn't concern the boundary. Note that $J \circ du \circ j$ does not a priori have the same behavior as $du$ in the direction tangent to the boundary, so $(du)^{0,1}$ does not satisfy any particular boundary condition. Hence the structure of the Banach bundle over open sets in the base will mimic the case of \cite[Proof of Proposition 3.2.1]{jholbk} of no Lagrangian boundary conditions. We again use the exponential map to trivialize the bundle over a neighborhood $\mc{N}(u)$ in the first factor of $\mc{B}^{1,p} \times\mc{J}^{\ell}_\omega(Y, U)$, and we use parallel transport to trivialize over a neighborhood $\mc{N}(J)$. This trivializes the bundle over a neighborhood in the base, and a composition of these gives transition maps that satisfy conditions for a Banach bundle, \cite[Proof of Proposition 3.2.1]{jholbk} and \cite[\textsection (9k)]{seidel}.

%One doesn't know what happens to directions tangent to the boundary since $J$ and $j$ could twist them around. Denis' comment :D "One doesn't know what happens to directions tangent to the boundary since J and j could twist them around" sounds somehow very ominous... will the tangent directions somehow make it unharmed? (just kidding). :'D
% - c.f.~Seidel and May 1 2018 Denis meeting. 

\begin{center}\fbox{Regular $J$ step 3 of 4: Key regularity argument}\label{reg argument}\end{center}

We have that $s(u,J) := \ol{\dd}_J(u)$ is a Fredholm section of this Banach bundle by \cite[Theorem C.1.10]{jholbk}. Furthermore, its derivative at any $(u,J)$ such that $s(u,J)=0$ and $u$ is somewhere injective (guaranteed by $u$ being $J_0$-holomorphic outside of $U$, so a section of $v_0$), is surjective, as follows. See \cite[Proof of Proposition 3.2.1]{jholbk}.  
\begin{equation}
(d_{(u,J)}s)(\xi, \dot{J}) =  D_u\xi + \frac{1}{2}\dot{J} du j: W^{1,p}(\mb{D}, u^*TY) \times C^{\ell}(Y, \End(TY, J, \omega)) \to L^{p}(\mb{D}, \Lambda^{0,1} \otimes_j u^*TY)   
\end{equation}

where
\begin{equation}
\begin{aligned}
    D_u:=d\mc{F}_u(0): W^{1,p}(\mb{D},\dd \mb{D}; u^*TY, u^*TL)& \to L^{p}(\mb{D},\Lambda^{1,0} \otimes_j u^*TY)\\
    \mc{F}_u: W^{1,p}(\mb{D},\dd \mb{D}; u^*TY, u^*TL)& \to L^{p}(\mb{D},\Lambda^{1,0} \otimes_j u^*TY)\\
\xi & \mapsto \ol{\dd}_J(\exp_u \xi)
\end{aligned}
\end{equation}
see \cite[Proposition 3.1.1]{jholbk}. In words, $D_u$ is defined by, for a nearby $u'$ in the exp neighborhood of $u$, parallel transport back to the origin of the chart at $u$, take $\ol{\dd}_J$, then map forward again on the fiber under parallel transport; the linearized operator $D_u\xi$ will be the derivative of this operation at the point 0. This is well-defined because of the totally geodesic condition above. The $D_u$ term is only from varying $u$. Varying $J$ as well we get (e.g.~see \cite[Lecture 9]{wehr_reg}):
\begin{equation}\label{eq:deriv_univ}
(d_{(u,J)}s)(\xi, \dot{J}) =  D_u\xi + \frac{1}{2}\dot{J} du j   
\end{equation}

Given that a $J$-holomorphic map $u$ must be a section of $v_0$ outside of $U$ (since $J$ is $J_0$ there), we see that $u$ cannot wrap more than once around the boundary. Hence the curve is somewhere injective. We claim that the operator $ds$ is surjective with continuous right inverse, so is regular. Suppose by contradiction the image is not dense. Then we can construct a nonzero linear functional that is orthogonal to $im(ds)$ and locally lies in a fiber of the Banach bundle, namely a non-zero $L^q$ form $\eta$ that annihilates $D_u\xi + \frac{1}{2}\dot{J} du j$ over all $W^{1,p}$ tangent vectors $\xi$ and $\dot{J}$, in particular $\dot{J}=0$. So $D_u\xi=0$ for all $\xi$ by Equation \ref{eq:deriv_univ}. This implies that $\eta$ has $W^{1,p}_{loc}$ regularity by the elliptic bootstrapping result of \cite{jholbk}, proven for Lagrangian boundary conditions. 

Once we have regularity, we can integrate and prove $D_u^* \eta=0$. Via integration by parts (with evaluation on the 1-form and inner product on the bundles) we have $D^*\eta = 0$, because taking the adjoint leaves a boundary term $d\left<\eta, J \xi\right>_\omega$. Then using Stokes' theorem and that the test vectors $\xi$ are tangent to the Lagrangian at the boundary, this equals zero.

However since $\eta \neq 0$, using bump functions we may construct a perturbation $\dot{J}$ as in \cite[page 65]{jholbk} so $\eta$ integrated on $\dot{J}$ is nonzero, contradicting that $D_u^*\eta=0$. The construction in \cite[page 65]{jholbk} still works in the Lagrangian boundary setting because the constructed $\dot{J}$ is supported in a small neighborhood around a somewhere injective point, so will be zero near the boundary as required. It is as follows. We've assumed $\eta \neq 0$ so pick point $p$ so that $\eta|_p \neq 0$. Somewhere injective points are dense so find a neighborhood of them around $p$. Use bump functions to construct a $\dot{J}$ so that $\int_{\mb{D}} \eta(\dot{J} du j) >0$. This is a contradiction. So $\eta$ vanishes on the open set of injective points, hence vanishes identically by unique continuation \cite[Theorem 2.3.2]{jholbk}. This is again a contradiction since $\eta \neq 0$. So the annihilator of $ds$ is zero and the Hahn-Banach theorem implies that the image of $ds$ is dense. So combining that property with the image being closed from the Fredholm property of the operator, we find that the operator surjects onto $L^p$. This will allow us enough freedom to find the vectors $\dot{J}$.  

%(The operator with the variation of J is not Fredholm, it typically has infinite-dimensional kernel; it is only D_u that is Fredholm).

\begin{center}\fbox{Regular $J$ step 4 of 4: Implicit and inverse function theorem find dense set of regular $J$}
\end{center}

Since $D_u$ is Fredholm by \cite[Theorem C.1.10]{jholbk} and $ds= D_u \oplus B$ for bounded linear operator $B=\frac{1}{2}\dot{J} du j$, is surjective, \cite[Lemma A.3.6]{jholbk} implies that $ds$ has a right inverse. Thus 0 is a regular value of $s(u,J)=\ol{\dd}_J(u)$ and by the Implicit Function Theorem \cite[Theorem A.3.3]{jholbk} $s^{-1}(0) = \hat{\mathring{\mc{M}}}_\mc{J}$ is a $C^{\ell-1}$-Banach submanifold of $\mc{B}_\beta^{1,p} \times \mc{J}^\ell_\omega(Y,U)$. Separability of $\hat{\mathring{\mc{M}}}_\mc{J}$ is inherited.  

Now consider the projection $\pi: \hat{\mathring{\mc{M}}}_\mc{J} \to \mc{J}^\ell$ given by $(u,J) \mapsto J$. This is Fredholm because it has the same kernel and cokernel as $D_u$ from \cite[Lemma A.3.6]{jholbk}. Also its linearization is surjective since it's a projection. So we have regularity of $\pi$ at $(u,J)$ whenever $J$ is regular. Hence we have the hypothesis of \cite[Theorem A.5.1 (Sard-Smale Theorem)]{jholbk} (which relies on the infinite-dimensional inverse function theorem, \cite[Theorem A.3.1 (Inverse Function Theorem)]{jholbk}). The result of Sard-Smale implies that these regular $J$ values are dense, i.e.~$\mc{J}^1_{reg}$ is dense in $\mc{J}_\omega(Y,U)$ as we wanted. So in particular we have existence of regular $J$.

This concludes the proof of existence of regular $J$.
\end{proof}

This type of problem shows up often so it has a name.

\begin{definition}
A \emph{Fredholm problem} concerns the zero set of a \emph{Fredholm section} of a Banach bundle. I.e.~a section whose linearization is a Fredholm operator, namely $\dim \ker$ - $\dim \coker$ is finite and whose image is closed. It's the set-up for the moduli spaces in question (the main part or fiber products of moduli spaces that show up when Gromov compactifying) as the zero sets of Fredholm sections. Regular values of a Fredholm problem put additional structure on the moduli spaces, enabling us to count them.
\end{definition}

\begin{lemma} The set of parametrized (i.e.~before quotienting) $J$-holomorphic discs $u:(\mb{D},\dd \mb{D}) \to (Y,L|_\gamma)$ for $J \in \mc{J}^1_{reg}$ is a manifold of finite dimension given by index$(D_u)$.\end{lemma}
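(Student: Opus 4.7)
The plan is to apply the implicit function theorem fiberwise to the Fredholm section $s(u,J) = \ol{\dd}_J u$ that was already built in the proof of Lemma \ref{geom_reg_lemma}. Almost all of the analytic groundwork is in place: we already know that $\mc{B}^{1,p}_\beta$ carries a Banach manifold structure (Step 1 of the previous proof), that $u \mapsto \ol{\dd}_J u$ defines a $C^{\ell-1}$ section of a Banach bundle over it (Step 2), and that the fiberwise linearization $D_u$ is Fredholm by \cite[Theorem C.1.10]{jholbk}. What the present lemma asks is, having done the universal-moduli argument, to conclude the corresponding statement for a single $J$.

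First, I would fix $J \in \mc{J}^1_{reg}$ and a homology class $\beta \in \pi_2(Y, L|_\gamma)$, and consider the restriction of the section $s$ to the Banach manifold $\mc{B}^{1,p}_\beta \times \{J\}$. For any $u$ with $s(u,J) = 0$, the vertical derivative at $u$ is precisely $D_u$, which by the definition of $\mc{J}^1_{reg}$ is surjective. Being Fredholm and surjective, $D_u$ admits a continuous right inverse by \cite[Lemma A.3.6]{jholbk}, so the infinite-dimensional implicit function theorem \cite[Theorem A.3.3]{jholbk} applies and gives a local $C^{\ell-1}$ Banach manifold chart for $s^{-1}(0)$ near $u$, with model $\ker D_u$. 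Since $D_u$ is Fredholm and surjective, $\dim \ker D_u = \mathrm{index}(D_u)$, which is finite. Taking the disjoint union over homology classes $\beta$ (and noting that $[u]$ is locally constant in $u$) yields the whole parametrized moduli space as a manifold of dimension $\mathrm{index}(D_u)$.

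Finally, I would upgrade regularity of the maps: elements of $\mc{B}^{1,p}_\beta$ are only $W^{1,p}$ a priori, but elliptic bootstrapping for the $\ol{\dd}_J$-equation with totally geodesic Lagrangian boundary conditions (\cite[\textsection B.4]{jholbk}, already invoked in the previous lemma) promotes any $J$-holomorphic $u$ to $C^\ell$, so the manifold structure is independent of the initial Sobolev exponent $p$. The one point that is not purely formal is verifying that the Banach chart near a boundary-touching $u$ respects the Lagrangian condition at intersection points and at the punctures with strip-like ends; this was already handled in the previous lemma by choosing a metric making the two transversely meeting Lagrangians simultaneously totally geodesic, and I would simply cite that construction.

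I do not anticipate a main obstacle here beyond bookkeeping, since the hard analytic content, namely surjectivity of the universal derivative and the Sard--Smale argument, has been carried out in Lemma \ref{geom_reg_lemma}; the present lemma is essentially the fiberwise corollary.
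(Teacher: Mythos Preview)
Your proposal is correct and follows essentially the same route as the paper: fix $J\in\mc{J}^1_{reg}$, use surjectivity of $D_u$ together with its Fredholm property to apply the implicit function theorem \cite[Theorem A.3.3]{jholbk} in the chart $\mc{F}_u(\xi)=\ol{\dd}_J(\exp_u\xi)$, deduce local charts modeled on $\ker D_u$, and conclude the dimension equals $\mathrm{index}(D_u)$ since $\mathrm{coker}\,D_u=0$; the paper likewise invokes elliptic regularity with totally real boundary conditions to remove dependence on $p$. Your additional remarks on the right inverse via \cite[Lemma A.3.6]{jholbk} and on the totally geodesic metric near intersection points are consistent with, and slightly more explicit than, the paper's treatment.
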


\begin{proof}[Proof from {\cite[Theorem 3.1.6 (i)]{jholbk}}]
Charts are given by
\begin{align*}
\mc{F}_u: W^{1,p}(\mb{D},\dd \mb{D};u^*TY, u^*TL) & \to L^{p}(\mb{D},\Lambda^{1,0} \otimes_j u^*TY)\\
\xi & \mapsto \ol{\dd}_J(\exp_u \xi)
\end{align*}
using the exponential map to obtain a diffeomorphism of an open set around 0 in $\mc{F}_u^{-1}(0)$ to a neighborhood of $u$ in the space of parametrized $J$-holomorphic discs. Regularity of $J$ implies $d\mc{F}_u(0) = D_u$ is surjective. The implicit function theorem \cite[Theorem A.3.3]{jholbk} implies these are smooth manifold charts after restricting to potentially smaller open sets. This does not depend on $p$ because $J$-holomorphic maps $u$ are smooth by elliptic regularity \cite[Proposition 3.1.10]{jholbk}, when $J$ is smooth. Note that \cite[Appendix B (Elliptic Regularity)]{jholbk} covers the necessary background with \emph{totally real boundary conditions} e.g.~the Lagrangian boundary condition case here. See also \cite[Lectures 4--6]{wehr_reg0}. The dimension statement follows from the tangent space of the moduli space being given by $\ker(D_u)$, and also that $\coker(D_u)=0$, so their difference i.e.~ the Fredholm index, is also the dimension of the moduli space. \end{proof}

\begin{remark} Even for non-regular $J$, we can still construct a Fredholm problem by \cite[Theorem C.1.10 (Riemann-Roch)]{jholbk}, which is proven in the case of Lagrangian boundary condition. How we get the smooth structure will be a different matter though, because $J$ is not regular. This is where abstract perturbations of the $\ol{\dd}_J$-operator are used.
\end{remark}

\begin{center}\fbox{\bf Compactification}\label{dim_claim}\end{center}

We will show that the moduli space, which now has a smooth manifold structure, is already compact. Then we take the zero dimensional part, which as a compact 0-manifold is now something we can count. The more general case of compactifying and putting on a smooth structure is by gluing, e.g.~\cite[\textsection 3.4, Proposition 6.2.8]{jholbk},  \cite{wehr_reg0}, and \cite{fuk_lectures}. In particular, we have compactness up to sphere bubbling in the setting of moduli spaces in this paper by Example \ref{ex:compactness}.

% in checking Gromov compactness one finds that the possible limit configurations of discs which are not discs are disc bubbling at a boundary point, sphere bubbling, and strip-breaking. Also with boundary marked points a limit can be a union of discs when two marked points collide. We will be able to exclude the first three behaviors with regular $J$ for geometric reasons. 

The intuition for why there are no sphere bubbles is as follows: the union of all points in a zero-dimensional family of spheres is two, and that of discs in a one-dimensional family is three, so generically these two don't intersect in a six dimensional manifold. Here by `generic $J$' we mean that transversality should hold for evaluation maps at marked points on discs and spheres. 

% Even without knowing that the moduli space is already compact since we have no new limit configurations, one can still put a smooth structure on the compactified moduli space. After pregluing, we obtain a smooth structure near the boundary by Newton iteration on the pre-glued map to obtain a $J$-holomorphic map. The upshot is that the boundary of the unparametrized moduli spaces (namely $\ol{\hat{\mc{M}}/Aut}\backslash$ $\hat{\mc{M}}/Aut$) is a fiber product of moduli spaces which agree at their intersection point. E.g.~we will have a component for moduli spaces of discs fiber product with moduli spaces of spheres over their common intersection point, to account for sphere bubbling.

%Gromov compactness follows from results such as the isoperimetric inequality: $|a(\gamma)| \leq C |\text{length}(\gamma)|^2$, the removable singularity theorem and energy decay. This tells us what the
So we excluded disc bubbling and strip breaking. We now show that, for regular $J$, the moduli space of a somewhere injective disc union a simple sphere is a manifold of negative dimension, meaning it is empty and can be excluded.

\begin{lemma}[Excluding bubbling in our setting] There exists a dense set $\mc{J}^2_{reg}(Y, \dd Y; \mb{D} \cup \mb{P}^1)$ of $J$ regular for the moduli space of maps with domain a simple sphere attached to a somewhere injective disc with one boundary marked point. The maps are section-like hence somewhere injective. \label{Jreg2}
\end{lemma}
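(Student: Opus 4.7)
The strategy is to upgrade the universal moduli space argument of Lemma~\ref{geom_reg_lemma} to the nodal domain $\mb{D}\cup_{pt}\mb{P}^1$. I would set up a Banach manifold parametrizing tuples $(u_D,u_S,w_D,w_S,J)$ where $u_D\in\mc{B}^{1,p}_{\beta_D}$ is a section-like disc in the class $\beta_D$ with the U-shape Lagrangian boundary condition, $u_S\in W^{1,p}(\mb{P}^1,Y)$ is a simple sphere in some class $\beta_S$, $w_D\in\mb{D}$, $w_S\in\mb{P}^1$ are the nodal marked points, and $J\in\mc{J}^\ell_\omega(Y,U)$ as in Equation~\eqref{eq:set_of_J}. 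The universal section is
\[
\hat s(u_D,u_S,w_D,w_S,J) \;=\; \bigl(\ol{\dd}_J u_D,\;\ol{\dd}_J u_S,\;ev_D(u_D,w_D)-ev_S(u_S,w_S)\bigr),
\]
a section of the direct sum of the two $L^p$-bundles $\Lambda^{0,1}\otimes u_D^*TY$ and $\Lambda^{0,1}\otimes u_S^*TY$ together with the evaluation difference taking values in $Y$. The Banach chart structure on the disc factor and on $\mc{J}^\ell_\omega(Y,U)$ is exactly as in Steps~1--2 of the proof of Lemma~\ref{geom_reg_lemma}; the sphere factor carries the standard $W^{1,p}$ structure of \cite[\textsection 3]{jholbk}, and Theorem~C.1.10 there guarantees the Fredholm property of each $\ol{\dd}_J$-factor.

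The crucial step is surjectivity of $d\hat s$ at each zero. The disc factor $D_{u_D}\xi_D+\tfrac12\dot J\,du_D\,j$ is surjective by the argument of Step~3 of Lemma~\ref{geom_reg_lemma}: $u_D$ is section-like and hence somewhere injective, so any nonzero $L^q$-cokernel element $\eta$ would be annihilated by a bump-supported perturbation $\dot J$ near a disc-injective point lying in $U$ (using the totally-geodesic trick and unique continuation as before). The sphere factor is handled identically: any non-constant $J$-holomorphic sphere in $Y$ must lie in the singular fiber $v_0^{-1}(0)\subset U$ by the open mapping principle applied to $v_0\circ u_S$, so we may freely vary $\dot J$ near sphere-injective points; simplicity of $u_S$ provides the dense open set of such points needed to kill any cokernel, as in \cite[Proof of Theorem~3.1.5]{jholbk}. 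The two linearizations can be surjected onto independently by choosing the supports of the two perturbations $\dot J$ disjoint, which is possible because, after shrinking the open sets of injective points, the images $u_D(\mb{D})\cap U$ and $u_S(\mb{P}^1)$ intersect transversely only along a thin set near the node.

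To incorporate the nodal matching condition, I would then show that the combined linearization including the evaluation difference is still surjective onto the extra factor $T_{u_S(w_S)}Y$. This uses the freedom to vary $w_S$ along $\mb{P}^1$ together with a $\dot J$-perturbation supported in a small neighborhood of $u_S(w_S)$ chosen disjoint from the support used for $u_D$; since $d\,ev_S$ at a sphere-injective point is surjective on directions transverse to the image of $du_S(w_S)$, and tangent directions are covered by varying $w_S$, the combined operator surjects. Applying the Sard-Smale theorem \cite[Theorem~A.5.1]{jholbk} to the projection of $\hat s^{-1}(0)$ to $\mc{J}^\ell_\omega(Y,U)$ yields a dense set of $C^\ell$-regular $J$, and the Taubes trick \cite[Remark~3.2.7]{jholbk} upgrades this to a dense subset $\mc{J}^2_{reg}\subset\mc{J}_\omega(Y,U)$ of smooth regular $J$.

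The main obstacle will be the bookkeeping for the simultaneous transversality at the two components and at the node: one must verify that the $\dot J$-perturbations needed for the disc can be chosen disjoint from those needed for the sphere and from those needed to make $ev_D-ev_S$ transverse to $0$. This reduces to the geometric statement that generically a sphere $u_S$ in $v_0^{-1}(0)$ meets the section-like disc $u_D$ only at the node, which follows since $u_D\cap v_0^{-1}(0)$ is a single algebraic point by the section property, together with the density of injective points on each simple component.
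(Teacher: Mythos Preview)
Your approach is essentially the one in the paper: build a universal Fredholm problem over $\mc{J}^\ell_\omega(Y,U)$ for the disc and sphere components, prove surjectivity of the linearization using bump-supported $\dot J$'s at injective points, handle the nodal matching via transversality of the evaluation map, and apply Sard--Smale. The paper packages this as in \cite[Chapter~6]{jholbk}: first obtain the product universal moduli space, then show the evaluation $\mc{U}\to Y\times Y$ is transverse to the diagonal. Your single section $\hat s$ with an $ev_D-ev_S$ summand is an equivalent repackaging.

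Two points need adjustment. First, for the evaluation transversality the $\dot J$-perturbation must be supported at an injective point of the sphere (resp.\ disc) \emph{away} from the node, not in a neighborhood of $u_S(w_S)$. The node lies on both components, so any $\dot J$ supported there feeds into both Cauchy--Riemann factors simultaneously and does not decouple the problem. The paper's argument (following \cite[\S 6.7]{jholbk}) is the standard one: to realize an arbitrary value $\xi_S(w_S)=v$ while staying tangent to the universal sphere moduli, one supports $\dot J$ at a sphere-injective point far from $w_S$ and off the disc image, so that the equation $D_{u_S}\xi_S+\tfrac12\dot J\,du_S\,j=0$ imposes no constraint on $\xi_S$ near $w_S$. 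Varying $w_S$ alone gives only the tangent line $\mathrm{im}\,du_S(w_S)$, not the full $T_pY$.

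Second, the assertion that every non-constant $J$-holomorphic sphere lies in $v_0^{-1}(0)$ via the open mapping principle is not quite right for $J\in\mc{J}_\omega(Y,U)$: the map $v_0$ is $J$-holomorphic only outside $U$, so $v_0\circ u_S$ need not be globally holomorphic. What you actually need, and what suffices, is that any non-constant simple $J$-sphere has injective points inside $U$. This does hold: if the image avoided $U$ entirely the sphere would be $J_0$-holomorphic, $v_0\circ u_S$ would be constant, and the sphere would sit in a smooth fiber $V^\vee\cong T^4$, which carries no rational curves. Hence the sphere meets $U$, and density of injective points on a simple curve gives the injective points in $U$ needed to vary $\dot J$.
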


\begin{cor}  The moduli space of stable configurations consisting of a disc with one marked boundary point identified at its center to one or more sphere bubbles for regular $J$ has negative dimension, which is empty. In particular, the moduli space of any somewhere injective disc passing through the open set $U$ union any configuration of multiply-covered and simple spheres can be excluded.\label{exclude_cor} \end{cor}

\begin{proof}[Proof of Corollary \ref{exclude_cor}] The Riemann-Roch theorem \cite[Appendix]{jholbk} implies the dimension of the manifold cut out by the regular $J$ is of negative dimension, specifically dimension $-2$. Lazzarini's result \cite{lazz2} implies any disc can be decomposed into simple discs and his paper \cite{lazz1} shows that any $J$-holomorphic disc contains a simple $J$-holomorphic disc. Thus if we had a nonempty configuration as in the statement of the corollary, we would have a non-constant map in the case of a simple disc union a simple sphere, by factoring through the multiple covers and taking one simple disc that goes through the sphere. But this is a contradiction, so there couldn't have been any such nonempty moduli spaces to begin with.
\end{proof}

\begin{proof}[Proof of Lemma \ref{Jreg2}] The proof is similar to the previous proof of existence of regular $J$, however we have an additional constraint which is the point of attachment between the disc and the sphere. We have dense sets of $J$ regular for each component (the disc and the sphere); the disc was described earlier and the sphere situation is done in \cite[Chapter 3]{jholbk}. This proof will involve checking that there is still a dense set of $J$ in the intersection of these two dense sets which interact well at the point where the disc and sphere intersect. 

Intersect the dense sets of regular $J$ for the sphere and disc separately. Consider $\mc{U} \subset \bigcup_{J \in \hat{\mathring{\mc{M}}}(Y,\dd Y)} \mc{M}(A_{\mb{D}}; J) \times \hat{\mathring{\mc{M}}}(A_{\mb{P}^1};J)$ where $A$ denotes the respective homology classes and $u_\mb{D}(\mb{D}) \nsubseteq u_{\mb{P}^1} (\mb{P}^1)$ (in contrast with the case of just spheres where we require that the images not be equal). Namely, $\mc{U}$ is the subset of pairs of maps where the disc image isn't contained in the $\mb{CP}^1$ image. We have the pointwise constraint that the sphere and disc are attached at a point. So we need transversality of the evaluation map $\mc{U} \to Y \times Y$. More specifically, the disc and sphere must intersect at a marked point which we place at 0 in the domain. We also fix a point on the boundary of the disc so there are no nontrivial automorphisms. 

Using Sard-Smale we can deduce that $\mc{U}$ is a manifold after an additional check at the intersection point. This is from \cite[Chapter 6]{jholbk}. In order to  show the evaluation map at 0 is transverse, we want the linearized evaluation map to be surjective. So we select any two tangent vectors in the codomain at $(0_{\mb{D}},0_{\mb{P}^1})$, and then construct two $\dot{J}$ supported in two disjoint small balls, one on each component, to ensure surjectivity. Each ball should not intersect the other component. This is possible because the Lagrangian boundary condition implies $\dot{J}$ is zero near the boundary and so if it's only supported on a small ball in the interior it is of this form; then we can extend each $\dot{J}$ by zero and simply add them. 

We now construct the $\dot{J}$. We follow \cite[\textsection 3.4]{jholbk}, then add the additional information from \cite[\textsection 6]{jholbk} for transversality of the evaluation map. Note that the reference considers the case of a sphere, and the argument works for the case of a disc because we have less to test due to more geometric constraints. Recall the construction of $\eta$ on page \pageref{reg argument} in {Step 3: Key Regularity argument.} We can construct such an $\eta$ in both cases of sphere or disc, and also require that $\xi$ now vanish on the point of intersection of the sphere and disc. We are still working on a single component, $\mb{P}^1$ or $\mb{D}$. This surjection tells us that we can find a $\xi$ pointing in a specified direction at a specified point \emph{and} tangent to its respective disc or sphere moduli space. Moreover one can do this in a small neighborhood around a specified point that is not the intersection point. Hence we can add vectors that work on \emph{different components} using bump functions to extend each one by zero.

The implicit function theorem puts the structure of a Banach submanifold on 
$$\bigcup_{J \in \mc{J}(Y,\dd Y)} \hat{\mathring{\mc{M}}}(A_{\mb{D}}; J) \times \hat{\mathring{\mc{M}}}(A_{\mb{P}^1};J)$$ 
We want to prove that $\mc{U}$ (the preimage under the evaluation map at 0 of the diagonal) is a submanifold. Elliptic bootstrapping and the implicit function theorem in Appendices of \cite{jholbk} apply because they are proven for totally real boundary conditions, such as Lagrangian boundary conditions. Since the linearized evaluation map is surjective on vectors as shown in the previous paragraphs, by Sard-Smale the subset of the universal space where maps respect the pointwise constraint is a manifold and we have existence of a dense set of regular $J$ for the disc and sphere so that the evaluation map at their intersection is transverse. Note also that we have omitted discussion of the asymptotic behavior at strip-like ends in our sketch of the functional analysis setup; the function spaces we consider and their topology need to be appropriately modified in the strip-like ends to enforce the asymptotic conditions, see \cite{seidel}. This concludes the proof of Lemma \ref{Jreg2}.
\end{proof}

Now that we have a regular $J$ for the disc union sphere configuration, we have that the moduli space of such configurations is a manifold. In particular, the manifold has dimension given by the Fredholm index, which is $< 0$ (a sphere bubble is codimension 2 and a disc with one marked boundary point is in a zero dimensional space). So the moduli space is empty. We have now excluded all types of bubbling behavior. So the moduli space $\hat{\mathring{\mc{M}}}_J((\mb{D},\dd\mb{D}), (Y,L); J, \beta, pt)$ of discs with one marked boundary point is already \emph{rigid} i.e.~dimension zero by the point constraint so we don't need to quotient by automorphisms, and it is also already compact as we've excluded limit behavior lying outside this moduli space. Hence we can write $\hat{\mathring{\mc{M}}}_J((\mb{D},\dd\mb{D}), (Y,L); J, \beta, pt)$ as $\mc{M}_J((\mb{D},\dd\mb{D}), (Y,L); J, \beta, pt)$.

\subsection{Quasi-invariance of the Fukaya category on regular choices}\label{section:q_invc_choices}
 
 We pre-face descriptions with ``quasi" when the descriptions hold on the cohomology level. 

\begin{lemma}  Let $J_1$ and $J_2$ be two regular almost complex structures. Then they define quasi-equivalent Fukaya $A_\infty$-categories, i.e.~isomorphic Donaldson-Fukaya categories.\end{lemma}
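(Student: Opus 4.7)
The plan is to construct a continuation-type quasi-isomorphism between the two Fukaya $A_\infty$-categories via a parametrized moduli space argument, mirroring the structure of the $\mu^2$ cobordism sketch given earlier in the proof of Lemma \ref{lem:fuk_subcat}. Since $\mc{J}_\omega(Y,U)$ is contractible, I can choose a smooth path $\{J_t\}_{t\in[0,1]}\subset \mc{J}_\omega(Y,U)$ connecting $J_1$ to $J_2$. By a Sard--Smale argument analogous to the one for Lemma \ref{geom_reg_lemma}, but carried out on a one-parameter family, there is a dense set of such paths for which the parametrized $\bar{\partial}$-operator on $\bigsqcup_{t\in[0,1]}\{t\}\times\mc{B}^{1,p}_\beta$ is transverse to the zero section (the key regularity step goes through because $u$ is still somewhere injective, being $J_t$-holomorphic and $J_t\equiv J_0$ on $Y\setminus U$).

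First I would define, for each tuple of objects $(L_{i_0},\dots,L_{i_k})$ in correct position and each collection of intersection points with total Maslov index giving virtual dimension one, the parametrized moduli space $\mc{M}(L_{i_0},\dots,L_{i_k};\{J_t\},\beta)$. Regularity of $\{J_t\}$ gives this the structure of an oriented 1-manifold. I then identify its boundary. The two ``horizontal'' boundary pieces at $t=0,1$ give precisely $\mc{M}(\cdot;J_1,\beta)$ and $\mc{M}(\cdot;J_2,\beta)$ (after the usual sign). The remaining possible boundary contributions are the Gromov/SFT-type degenerations: disc bubbles, strip breaking, and sphere bubbles. Disc bubbles and strip breaking are excluded exactly as in Example \ref{ex:compactness}: any such component would have to lie in a single fiber or project to a bounded region, but linear Lagrangians in $T^4$ bound no bigons and no Maslov $0$ discs, and the U-shape in the base encloses no bounded region.

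The main obstacle, just as in the static case, is sphere bubbling, which in the non-exact setting $Y$ (Calabi--Yau with $[\omega]\neq 0$) cannot be ruled out on topological grounds. Here I would upgrade Lemma \ref{Jreg2} and Corollary \ref{exclude_cor} to the parametrized setting: require $\{J_t\}$ to be regular also for configurations consisting of a somewhere injective disc with a marked interior point attached to a simple $J_t$-holomorphic sphere. Since a generic sphere moduli space already has dimension zero and a disc-with-interior-marked-point in a given class contributes $-2$ to the virtual dimension via the pointwise incidence, the parametrized moduli space of disc$\cup$sphere configurations has virtual dimension $1 + 0 - 2 = -1$, hence is empty for generic paths. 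Lazzarini's decomposition handles multiply-covered cases as in Corollary \ref{exclude_cor}.

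With boundary identified as $\mc{M}(\cdot;J_2,\beta) - \mc{M}(\cdot;J_1,\beta)$, the signed count of points yields, for $k=1$, a chain homotopy between the two differentials, and for $k\geq 2$ the standard relations producing an $A_\infty$-continuation functor $\Phi_{12}:FS(Y,v_0;J_1)\to FS(Y,v_0;J_2)$ which acts as the identity on objects. Constructing $\Phi_{21}$ symmetrically and the usual ``square'' parametrized moduli over $[0,1]^2$ shows $\Phi_{21}\circ\Phi_{12}$ is $A_\infty$-homotopic to the identity, so $\Phi_{12}$ is a quasi-equivalence. Passing to $H^0$, the induced functor on Donaldson-Fukaya categories is an isomorphism as claimed.
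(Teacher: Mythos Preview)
Your proposal is correct and follows essentially the same approach as the paper: a continuation-map argument using a generic path $\{J_t\}\subset\mc{J}_\omega(Y,U)$ obtained via Sard--Smale, together with the compactness arguments from Example \ref{ex:compactness} and Corollary \ref{exclude_cor}. The paper's own proof is a brief references-only sketch (citing Seidel \cite[\S(10c)]{seidel} and Salamon's lecture notes), so you have actually filled in more detail than the paper does, in particular the parametrized upgrade of Lemma \ref{Jreg2} for sphere bubbling and the explicit $A_\infty$-continuation functor with homotopy inverse.
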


\begin{proof}[References for proof] We need to show that the Donaldson-Fukaya categories have isomorphic object and morphism spaces. We also need to show that there exists a functor between them, namely that it respects composition. The Lagrangians depend only on the symplectic form, so remain unchanged upon changing the almost complex structure. Likewise for the Floer complexes, which are generated by their intersection points. Note that Seidel in \cite[\textsection (10c)]{seidel} discusses upgrading this equivalence to the $A_\infty$-category, in particular for the product or composition map. See \cite[\textsection 8, \textsection (10c)]{seidel}. 

To show the morphism groups are isomorphic, we show that the differential $\dd$ is the same for each $J_1$ and $J_2$. This will follow from the use of a continuation map. (In general this argument only shows that the Floer complexes with the Floer differentials for $J_1$ and $J_2$ are quasi-isomorphic, hence have isomorphic cohomology.) This is the moduli space from solutions of a single PDE that is the usual Cauchy-Riemann equation however instead of $J$ we use $J_t$ where $J_t$ at time 0 is $J_1$ and $J_t$ at time 1 is $J_2$. This defines what the functor does on morphisms. In particular, this will require the existence of a path of regular $J$ in the space of all almost complex structures. This is discussed in \href{https://people.math.ethz.ch/~salamon/PREPRINTS/floer.pdf}{Lectures on Floer Homology} by D.~Salamon.
 
That existence of $J_t$ holds follows from a Sard-Smale argument, as in \cite[Theorem 3.1.8]{jholbk} in their second edition book. The difference here is that our Riemann surface has boundary (a disc with $k$ punctures on the boundary corresponding to the moduli space in defining the structure map $\mu^{k-1}$). So the base and fiber of the Banach bundle will be the same as in \cite[pg 55]{jholbk} however the spaces of almost complex structures will restrict to ones that are identically $J_0$ outside of the open set $U$ from Equation (\ref{defn: of_U_set}) around the origin and moduli spaces will consist of maps on discs instead of spheres. The Sard-Smale theorem and elliptic regularity proven in the Appendices of \cite{jholbk} already incorporate Lagrangian boundary conditions since they assume totally real boundary conditions.  
\end{proof}

\section{Computing the differential on $(Y,v_0)$}\label{section:differential}

The main result of this section is the computation of the differential.  We will use capital $M^k$ to denote structure maps on the total space of $Y$ and lowercase $\mu^k$ to denote structure maps on the torus fiber. We take two steps to reduce the calculation of $M^1$ to something that is computable: first in Lemma \ref{lemma: seidel_htpy_cob} we construct a cobordism between $M^1: CF(\ell_{i+1}, t_x) \to CF(\ell_i, t_x)$ and another disc count, then Corollary \ref{cor: use_c} allows us to calculate that differential in Lemma \ref{lemma:final_diffl_computation}, and lastly in Lemma \ref{leib_lemma} we will prove that $M^1: CF(\ell_{i+1},\ell_j) \to CF(\ell_i,\ell_j)$ can be computed from the data of $M^1: CF(\ell_{i+1}, t_x) \to CF(\ell_i, t_x)$ over all $x \in V$. This will finally allow us to prove the main theorem.

\subsection{Cobordism between generic choice and specific choice for computation}\label{section:cobord}

In this section we will discuss obstruction bundles, whose definition can be found in \cite[\textsection 7.2]{jholbk}. Consider the homotopy of Lagrangians in the total space, given by an isotopy of curves in the base of $v_0$, depicted in Figure \ref{seidel_htpy}. The left side, with a generic regular $J$, indicates the $M^1(p_{\infty,i+1}')$ we want to calculate. The right side with the standard $J_0=i$ indicates something we can compute. Note that since the set of $J$ regular for all configurations (discs and discs union spheres) is nonempty and dense, and the maps are $J_0$ sections away from a neighborhood of zero we can claim that $J_0$ is a limit of such $J$ by the denseness of the regular $J$. I.e.~we have a path of $J$'s limiting to $J_0$, so these $J$'s perturb $J_0$ near the zero fiber. Then with the construction of a cobordism, we can compute the left by computing the right.

\begin{figure}
\begin{center}
\includegraphics[scale=0.45]{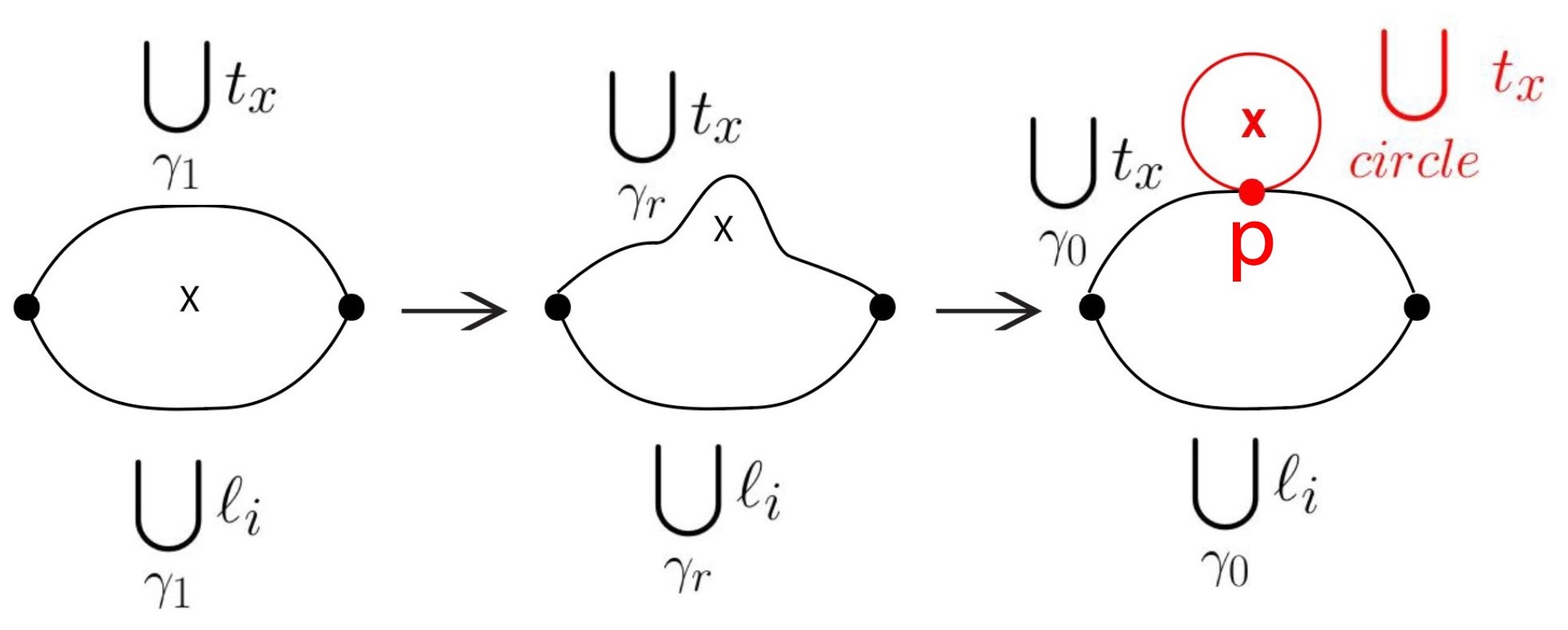}
\end{center}
\caption{The homotopy between $\dd$ (left) and the count of discs we compute (right)}
\label{seidel_htpy}
\end{figure}

\begin{figure}
\begin{center}
\includegraphics[scale=0.8]{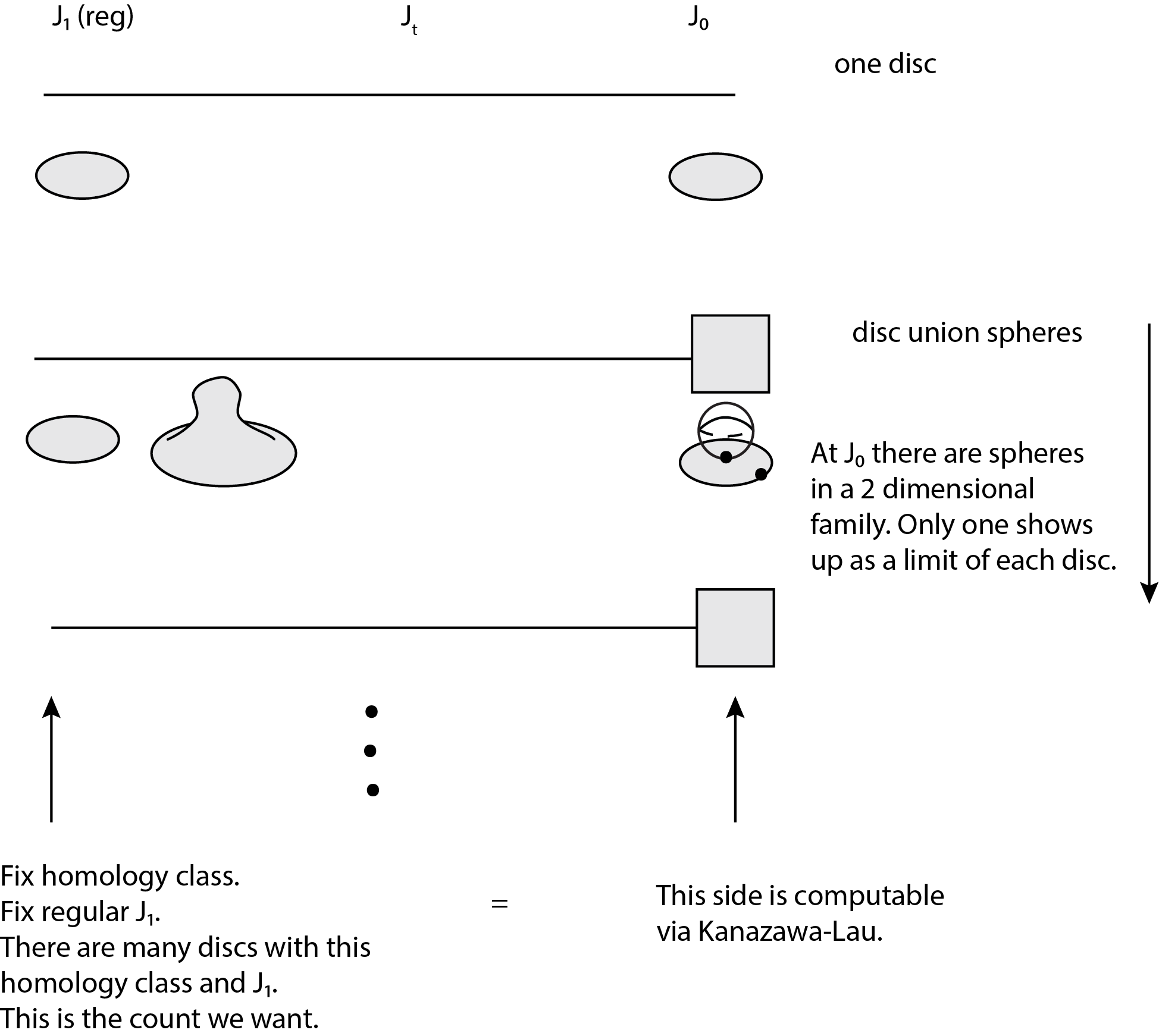}
\caption{Gromov compactification}
\label{hms_computation}
\end{center}
\end{figure}

There is existing theory for computing open Gromov-Witten invariants of $J_0$-holomorphic discs with boundary on a moment map fiber, one marked point, and passing through the singular fiber of $Y$ once (as is the case here because they are 1-1 in general and $v_0$-sections with $J_0$). This is the setting with $\bigcup_{circle} t_x$ that we see on the right side of Figure \ref{seidel_htpy} when we allow $J$ to vary to $J_0$ at the right end. However, $M^1$ as is counts bigons through the singular fiber with boundary over a bigon as in the left of Figure \ref{seidel_htpy}, instead of a circle. So following \cite[\textsection 17g]{seidel} we deform $M^1$ to $M^2(c p, \cdot)$ where $c$ counts $J_0$-holomorphic discs with boundary on $\bigcup_{circle} t_x$ and marked point $p$. This deformation constructs the homotopy. Note that in the book, he deforms the fibrations. However in this setting, the fibration stays the same, while the Lagrangian boundary conditions are deformed via an automorphism of $Y$ relative to the boundary. So we will need a gluing argument. 

See Figure \ref{hms_computation} for a pictorial depiction of the analytic and algebraic steps involved. In particular, we must abstractly perturb $\ol{\dd}_{J_0}$ in order to see the configurations we want to count on the right hand side as sitting in a moduli space. At the moment, the moduli space with $J_0$ has too many elements, a two-dimensional family of elements for each configuration that we only want to count once. Let $\bigcup_{\gamma_r} \ell$ denote the Lagrangian boundary condition at time $r$ depicted in Figure \ref{seidel_htpy}.

\begin{lemma}\label{lemma: seidel_htpy_cob} Choose $\beta_0$ and let $\beta_r={\phi_r}_*(\beta_0)$ where $\phi_r: (Y, \bigcup_{\gamma_1} \ell) \xrightarrow{\cong} (Y, \bigcup_{\gamma_r} \ell)$ is a diffeomorpism inducing an isomorphism ${\phi_r}_*$ on homology for $0< r \leq 1$. Then for a suitable family $J_r$ described in the proof,
$$\overline{\bigcup_{r \in (0,1]} \hat{\mathcal{M}}((Y,\bigcup_{\gamma_r} \ell);\beta_r; J_{r})/Aut}$$ 
where $Aut$ denotes strip-translation, has the structure of a compact topological 1-manifold. In particular, the signed count of its boundary is 0.
\end{lemma}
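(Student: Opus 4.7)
The strategy is to invoke the standard parametric moduli space machinery in Floer theory, adapted to this concrete geometric situation. I would proceed in four main steps.

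\textbf{Step 1: Banach manifold structure.} Choose a generic path $J_r$ in $\mathcal{J}_\omega(Y,U)$ such that it meets the dense subsets $\mathcal{J}^1_{reg}$ and $\mathcal{J}^2_{reg}$ transversely, paralleling the Sard--Smale argument used in Lemma \ref{geom_reg_lemma} and Lemma \ref{Jreg2}. Since the deformation $\bigcup_{\gamma_r}\ell$ is realized by an ambient diffeomorphism $\phi_r$ (so the Lagrangian boundary condition can be pulled back to a single reference manifold), we set up a universal Fredholm problem whose base has an extra $(0,1]$-factor, and whose linearized Cauchy--Riemann operator at $(u,r)$ is the sum of the usual $D_u$ together with a new term arising from $\partial_r \phi_r^{-1}$. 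Surjectivity of the universal linearization follows by the same somewhere-injective/annihilator argument as in Step 3 of the regularity proof; existence of a dense set of regular $J_r$ then follows from Sard--Smale. The resulting parametric moduli space is a smooth 1-manifold of dimension equal to the parameter count.

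\textbf{Step 2: Gromov compactness up to endpoint phenomena.} Apply Gromov compactness to any sequence $(u_n, r_n)$ in the parametric moduli space. Interior bubbling in $r$ is excluded exactly as in Example \ref{ex:compactness} and Corollary \ref{exclude_cor}: disc bubbles do not exist because linear Lagrangians in tori have trivial relative $\pi_2$; strip-breaking is excluded because there are no bigons between two linear Lagrangians on $V^\vee$ and the index count forbids breaking off a Maslov-zero disc across the central fiber; sphere bubbles attached to the somewhere-injective section-like main component lie in a moduli space of negative virtual dimension by the argument in Corollary \ref{exclude_cor}. Hence all non-trivial limit phenomena must occur as $r_n$ tends to $0$ or $1$.

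\textbf{Step 3: Identifying the boundary at $r=1$ and $r=0$.} At $r=1$, by construction the parametric moduli space restricts to $\mathcal{M}((Y,\bigcup_{\gamma_1}\ell);\beta_1;J_1)/Aut$, which is exactly the moduli space whose signed count is $\langle M^1(p'_{\infty,i+1}),\cdot\rangle$. At $r=0$, the bigon in the base of $v_0$ degenerates into a circle, and the pseudo-holomorphic configurations limit to nodal pairs consisting of a strip-like component joined to a Maslov-index-$2$ disc with boundary on $\bigcup_{circle}t_x$ through the marked point. This identifies the $r\to 0$ boundary with the configurations computing $M^2(cp,\cdot)$, where $c$ records the $J_0$-disc count of \cite{cho_oh} via Claim \ref{one_disc_only}. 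This step is the main obstacle: $J_0$ is not regular for the limit configuration, so the identification requires an obstruction-bundle/gluing argument in the spirit of \cite[\textsection 17g]{seidel}, adapted from a deformation of fibrations to a deformation of Lagrangian boundary conditions through an ambient automorphism $\phi_r$.

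\textbf{Step 4: Conclusion.} Combining Steps 1--3, the closure of the parametric moduli space is a compact topological 1-manifold whose boundary consists precisely of the $r=1$ and $r=0$ endpoint configurations. Since the signed count of the boundary of a compact oriented 1-manifold vanishes, this yields the cobordism identity, which in subsequent lemmas will be used to equate $M^1$ with the computable count $M^2(cp,\cdot)$. The hard part is Step 3; the other three steps are routine adaptations of arguments already established in the paper.
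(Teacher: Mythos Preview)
Your proposal follows essentially the same route as the paper's proof: set up a parametric Fredholm problem over $(0,1]$ by pulling back via $\phi_r$, exclude interior bubbling by the arguments of Example~\ref{ex:compactness} and Corollary~\ref{exclude_cor}, and compactify at $r=0$ by a gluing/obstruction-bundle argument because $J_0$ is not regular there. The paper is somewhat more explicit than your sketch on two points. First, rather than relying on a purely geometric generic path $J_r$ as in your Step~1, the paper chooses from the outset a path of \emph{abstract} perturbations $p_r$ (sections of the obstruction bundle, since $\coker D_u$ has constant dimension at $J_0$) interpolating between $0$ at $r=1$ and a specific $p_0$ at $r=0$, and then applies a further abstract perturbation of the parametric section vanishing on the boundary; this is how the non-regularity of $J_0$ is absorbed into the construction rather than treated only at the endpoint. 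Second, the paper carries out the pregluing at $r=0$ concretely: strip-like-end gluing on domains, and linear interpolation in the action-angle coordinates $(\xi_1,\xi_2,\eta,\theta_1,\theta_2,\theta_\eta)$ on the target, which guarantees that the preglued boundary lies on $\bigcup_{\gamma_r}\ell$ for the family $\gamma_r$ produced by that same interpolation. These are refinements of exactly the steps you outline; your identification of Step~3 as the substantive analytic input is correct.
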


\begin{proof}
{\bf The $(0,1]$ manifold structure.} We use background from {\cite[Lecture 9]{wehr_reg}}, {\cite[Lecture 14]{wehr_reg0}}, and {\cite[\textsection 10.9]{jholbk}}. Define
$$\tilde{\mc{B}}^{k,p} :=\{(\phi_r^{-1} \circ u, r) \mid u: \mb{R} \times [0,1] \to (Y,\bigcup_{\gamma_r} \ell) \in W^{k,p}\}$$

We can classify the tangent space to a path in $\tilde{B}$ as one where the derivative of the path at the boundary is a vector that is a sum of a vector in the tangent space to the Lagrangian boundary and a vector corresponding to the flow of the isotopy $\phi_r$. As in \cite{jholbk}, this will give us a 1-manifold structure on the set of maps $u$ so that $\ol{\dd}_{J_r}(\phi_r^{-1} \circ u)=0$. Namely, given a Fredholm problem $E \to B$ describing the $r=1$ moduli space, we can pull back:
\begin{diagram}
(\phi_r \circ pr_1)^*E & \rTo & E\\
\dTo & & \dTo\\
\tilde{\mc{B}}^{k,p} & \rTo^{\phi_r \circ pr_1} & B
\end{diagram}

The bundle of the right vertical arrow admits a regular $J$ by Lemma \ref{geom_reg_lemma}. We can choose a path of perturbations given by sections of the obstruction bundle, since at $J_0$ the cokernel of $D_u$ is greater than zero but has constant dimension, see Claim \ref{spheres_not_regular}. See \cite[\textsection 7.2]{jholbk}. In particular, we choose the path so at $r=1$ the perturbation is 0 and at $r=0$ the perturbation $p_0$ is such that $(\ol{\dd}_{J_0} + p_0)^{-1}(0)$ counts the limiting curves on the right in Figure \ref{hms_computation}. This path gives us a section of the Fredholm problem in the left downward arrow. We then also abstractly perturb this section, with a perturbation which vanishes on the boundary of $\tilde{\mc{B}}^{k,p}$, analogous to \cite[Theorem 5.8]{hwz} for the polyfold setting. Continuous families of perturbations (CF-perturbation) are discussed in \cite[\textsection 7]{kur_strs}. Now we look at the zero set of this perturbed operator. Since there is no disc bubbling for each $r$, the boundary of the zero-set is just the $r=0$ end (and $r=1$ end once we compactify). So we obtain a 1-manifold structure on ${\bigcup_{r \in (0,1]} \hat{\mathcal{M}}((Y,\bigcup_{\gamma_r} \ell);\beta_r; J_{r})/Aut}$. Now we equip the $r=1$ end with a topology. 

% By choosing a path of almost complex structures from J_0 to J_1 and a path of perturbations from p_0 to 0, we obtain a sc-Fredholm section B \times [0,1] ----> E' which restricts to J_0 + p_0 on B \times {0} and which restricts to J_1 on B \times {1}. Then, apply Theorem 5.8 from the HWZ book (really, you need to apply the ep-groupoid / polyfold generalization of Theorem 5.8, since Thm. 5.8 is stated just for M-polyfolds. This generalization may be stated in the HWZ book somewhere, or else it can be deduced from other stated results in the same way as the proof of Theorem 5.8 -- see the bottom of page 639).  You end up with an admissible perturbation of the sc-Fredholm section B \times [0,1] ----> E' that vanishes near the boundary B \times {0} and B \times {1}, and hence its zero set is the desired cobordism. (Exercise: Why doesn't this argument work when B has boundary? Hint: The boundary of the perturbed zero set in B \times [0,1] sits on the boundary of B \times [0,1]).

{\bf Topological 1-manifold structure.} The next step will be to Gromov compactify at the $r=0$ end. Note that $\phi_r$ does not have a limit at $r=0$, as it becomes very degenerate and is not a diffeomorphism. So instead we consider elements as $u$ in this moduli space instead of $\phi_r^{-1}\circ u$. In order to preglue, which gives the topology at the $r=0$ end, we trivialize the normal bundle in a neighborhood of where we want to glue, and then interpolate linearly between the two maps. See \cite[lec 3, 1 hr]{wehr_reg}. Note that we take the gluing parameter to be $e^{-l}$ which goes to zero as the gluing length $l$ goes to infinity, where we have the configuration of two discs in the right side of Figure \ref{seidel_htpy}. (Note that even without trivializing, there are scaling functions on the normal bundle.  E.g.~the gluing parameter for the two discs in the base could be a cross ratio of four points around the belt that is getting pinched to a point.) 

{\bf Preglue the domains.} We remove a neighborhood of the puncture first. In the $(s,t)$ coordinates on strip-like ends, we glue $(s-l,t)$ to $(s,t)$. That is, we  place an amount $l$ in the $\mb{R}$ direction on one strip overlapping onto the other strip. The two parts separately give the $r=0$ case and the two parts glued together is the $r=\eps>0$ case. The embedding that gives the strip-like end embedding is as follows: 1) map $(-\infty,0] \times [0,1] \to (-\infty,0] \times [0,\pi]$ by $\cdot \pi$. Then map to the lower half of an annulus by $e^{-z}$, and then lastly to the right half of a disc with a puncture at 1 by $\frac{z+i}{z-i}$. See Figure \ref{str_le}. The reason why the preglued map is close to the $J_r$-holomorphic glued map one would obtain by Newton iteration is because by continuity $\ol{\dd}_{J_r}$ of the glued map is still small; if it were constant on the glued part then it would actually be holomorphic. We interpolate slowly so it is still close to constant.  

% Next we apply Newton iteration to the preglued disc to obtain a $J$-holomorphic curve. The process of gluing is as follows: take the exponential at the preglued map of the pullback of the tangent bundle. The tangent vectors $\xi$ are obtained by Newton iteration (a contraction mapping principle argument), which is applicable on Banach spaces.

{\bf Preglue the maps.} We define a new preglued map $u^0 \#^{R} u^\infty$ on the preglued domain defined above, where $u^0$ and $u^\infty$ denote the two maps on discs at the $r=0$ end. We know how to interpolate in the base $v_0 \in \mb{C}$ coordinate using $(1-\rho)u^0 + \rho u^\infty -i$. Then we apply this same linear interpolation in the moment map coordinates $(\xi_1,\xi_2,\eta,\theta_1,\theta_2, \theta_\eta)$ to preglue maps $u$ to the total space. This choice of interpolation for the pregluing ensures that, when pregluing the disc bounded by $\bigcup_{circle} t_x$ and a strip with boundary on $\bigcup_{\gamma_0}t_x$ and $\bigcup_{\gamma_0} \ell_i$, the resulting preglued map has boundary on $\bigcup_{\gamma_r}\ell_i\cup t_x$ as in Figure \ref{seidel_htpy}. For in the fiber direction, the values of $(\xi_1,\xi_2)$ on the two components agree along the boundary and the interpolation preserves them. In the base direction, we can choose the family of paths $\gamma_r$ to be the family of paths obtained from $\gamma_0$ and the circle centered at the origin by our interpolation procedure.

{\bf Gromov compactness, maps limit to preglued maps.} With $J_0$ and no $\alpha$ spheres in the homology class $\beta_1$, we know the moduli space has one disc, by Claim \ref{one_disc_only}. All discs by themselves are regular for $J_0$. Then we look at a limit of $J_r$-holomorphic discs $u_r$ as $r$ goes to $0$, namely they solve the Cauchy-Riemann equation with $J_r$. After possibly passing to a subsequence, then $\lim_{r \to 0} u_r  =: u^0 \#^{R} u^\infty$ because of the exclusion of disc bubbling and strip breaking for a fixed Lagrangian by the geometry of $(Y,v_0)$. This is also discussed in \cite[Proposition 4.30]{cll}. Note that there are more pseudo-holomorphic discs with $J_1$ than $J_0$. The latter only has one in each homology class. This is because with $J_1$ some of the discs must converge to a disc union bubbles as $r \to 0$ for a path from $J_1$ to $J_0$. See Figure \ref{hms_computation}.

{\bf Deducing result.} We have constructed a cobordism between $\ol{\dd}_{J_1}^{-1}(0)$ and $(\ol{\dd}_{J_0}+ p_0)^{-1}(0)$ for an admissible perturbation $p_0$ given by e.g.~a section of the obstruction bundle (equivalently, a Kuranishi structure with one chart since the cokernel has constant dimension). So their counts are equal, by taking the signed boundary of this topological 1-manifold, which will be zero, and also the difference of these two counts. This completes the proof. 
\end{proof}

\begin{definition}\label{def: c} Let $c$ denote this open Gromov-Witten invariant of $J_0$-holomorphic curves with boundary on $\bigcup_{circle} t_x$ and marked point $p$, for the K\"ahler parameters $q_1=q_2=q_3=\tau$ of this paper.
\end{definition}

\begin{cor}\label{cor: use_c} Taking the boundary of the topological 1-manifold constructed in Lemma \ref{lemma: seidel_htpy_cob}, we may calculate  $M^1: CF(\ell_{i+1}, t_x) \to CF(\ell_i, t_x)$ by calculating $M^2(c p, \cdot)$ instead, where $c$ is as defined in Definition \ref{def: c}.

\end{cor}

\begin{proof}
This is a corollary of Lemma \ref{lemma: seidel_htpy_cob}, from which we deduced that $\#\ol{\dd}_{J_1}^{-1}(0)=\# (\ol{\dd}_{J_0}+ p_0)^{-1}(0) $. In particular, one can count the moduli space $(\ol{\dd}_{J_0}+ p_0)^{-1}(0)$ by taking the Euler number of a section of its obstruction bundle. This is done in \cite{kl} where they use the result of \cite{chan}, who shows that one can add an additional ray to the fan $\Sigma_{\tilde{Y}}$ to compactify a configuration of disc union sphere to a configuration of only spheres, and the Kuranishi chart on this closed Gromov-Witten invariant is isomorphic to that on the original open Gromov-Witten invariant. This now-closed curve count, given by the Euler number, can be counted by the Picard-Fuchs equation from algebraic geometry. This, in turn, can be done using the mirror theorem of Givental. See Figure \ref{kl_flowchart} for an outline of these steps with references. 
\end{proof}

\subsection{Count of discs regular for $J_0$}
In this section we take $J=J_0$ and consider moduli spaces of discs only, for which $J_0$ is regular. In other words, we only consider homology classes $\beta$ that arise from discs. The homology classes in $\pi_2(Y,L)$ that we consider cover a disc in the base of $v_0$ around 0, and pass through the central fiber in one point. Equivalently, taking their real part, they can be depicted in $\Delta_{\tilde{Y}}$ as a line from a fixed point on the interior of the polytope to a facet. Varying the facet allows one to enumerate all the homology classes, done in \cite{cho_oh}. This will finish the disc-only count since recall there is only one disk in each homology class by Claim \ref{one_disc_only}. 

\begin{theorem}\label{theorem:disc_count_s} Assume the set-up in the previous paragraph. Then the disc count equals the defining theta function $$s(x) =  \sum_{n \in \mb{Z}^2} x_1^{-n_1} x_2^{-n_2}\tau^{\frac{1}{2} n^t\left(\begin{matrix} 2 &1\\ 1& 2\end{matrix}\right)n}$$ 
up to a coordinate change.\end{theorem}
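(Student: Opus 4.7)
The plan is to parametrize the Maslov index $2$ discs by elements $\gamma\in\Gamma_B$, compute each disc's contribution as a monomial in $x$ weighted by $\tau^{\mathrm{area}}$, and match the resulting series with the defining theta function after absorbing a global scaling.

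First, using Claim~\ref{one_disc_only} together with the toric enumeration of Cho--Oh~\cite{cho_oh}, each Maslov index $2$ disc with boundary on $t_x$ and passing once through the central fiber lies in a toric chart of $\tilde Y$ and intersects exactly one facet of $\Delta_{\tilde Y}$. Since the facets of $\Delta_{\tilde Y}$ are in bijection with elements $\gamma\in\Gamma_B$ via the defining inequality $\eta\ge\kappa(\gamma)+\langle\xi,\lambda(\gamma)\rangle$ coming from Equation~(\ref{trop_eq}), each $\gamma$ gives rise to exactly one homology class $\beta_\gamma\in\pi_2(\tilde Y,\bigcup_{\mathrm{circle}}t_x)$, and by the Riemann mapping argument in the proof of Claim~\ref{one_disc_only}, exactly one $J_0$-holomorphic representative.

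Next, I would compute the two ingredients of the contribution of $\beta_\gamma$. For the symplectic area, I use the claim~\ref{area_cyl_2dirns} description of parallel-transport area as a difference of moment map values, which identifies the area of $\beta_\gamma$ with the affine distance from the moment map point $(\xi,\eta)$ to the facet indexed by $\gamma$, i.e.
\begin{equation*}
\mathrm{area}(\beta_\gamma)=\eta-\kappa(\gamma)-\langle\xi,\lambda(\gamma)\rangle.
\end{equation*}
For the holonomy, I track the winding of the boundary loop around the $T^2$-factor in $V^\vee=T_B\times T_F$. Working in the relevant toric chart $\mathbb{C}^\ast\times\mathbb{C}^\ast\times\mathbb{C}$ in which the disc is a holomorphic section of $v_0$, the two $\mathbb{C}^\ast$-factors correspond to the weight vectors of the facet indexed by $\gamma$, which by the construction of Section~\ref{section: toric_recap} are precisely the components of $\lambda(\gamma)$. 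Hence the boundary winds with exponent $-\lambda(\gamma)$ in the $(\theta_1,\theta_2)$ coordinates, contributing the monomial $(x/|x|)^{-\lambda(\gamma)}$.

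Combining the two factors, and using $|x_i|=\tau^{\xi_i}$ so that $\tau^{-\langle\xi,\lambda(\gamma)\rangle}(x/|x|)^{-\lambda(\gamma)}=x^{-\lambda(\gamma)}$, the total disc count over all $\gamma\in\Gamma_B$ equals
\begin{equation*}
\tau^{\eta}\sum_{\gamma\in\Gamma_B}\tau^{-\kappa(\gamma)}x^{-\lambda(\gamma)}=\tau^{\eta}\cdot s(x),
\end{equation*}
which agrees with $s(x)$ up to the overall factor $\tau^{\eta}$ and the reindexing $n=\lambda(\gamma)$ that sends the quadratic form $-\kappa(\gamma)=\tfrac{1}{2}\langle\gamma,\lambda(\gamma)\rangle$ to $\tfrac{1}{2}n^tMn$ with $M=\bigl(\begin{smallmatrix}2&1\\1&2\end{smallmatrix}\bigr)$. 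The ``up to a coordinate change'' in the statement absorbs the $\tau^\eta$ prefactor (together with the choice of base point and any constants from the normalization of $\xi_1,\xi_2,\eta$ in Claim~\ref{claim:monotonic_mom_map}).

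I expect the main obstacle to be the bookkeeping of the holonomy: verifying that the winding number of the boundary of $\beta_\gamma$ in the $(\theta_1,\theta_2)$ directions is exactly $-\lambda(\gamma)$ requires identifying the correct toric chart at the facet indexed by $\gamma$ and matching its two $\mathbb{C}^\ast$-coordinates with the generators of the lattice $\Gamma_F^\ast$ dual to $\Gamma_F$. Once this identification is in place, the remaining computation is a direct application of the toric-geometric area formula and the monodromy analysis of Lemma~\ref{monodr}.
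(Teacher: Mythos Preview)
Your proposal is correct and follows essentially the same route as the paper: parametrize Maslov index $2$ disc classes by facets of $\Delta_{\tilde Y}$ (equivalently by $\gamma\in\Gamma_B$, or by $(m_1,m_2)=\lambda(\gamma)$ as the paper writes it), invoke Cho--Oh to get a unique disc per class, compute its area as the affine distance to the facet, and sum. Your area formula $\eta-\kappa(\gamma)-\langle\xi,\lambda(\gamma)\rangle$ is exactly the paper's $\langle\underline a,\nu(F_{m_1,m_2})\rangle+\alpha(F_{m_1,m_2})$ once one unwinds $\nu=(-m_1,-m_2,1)$ and $\alpha=-\kappa(\gamma)$.

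The one place where you go slightly further than the paper is the holonomy step: you try to extract the phase $(x/|x|)^{-\lambda(\gamma)}$ directly from the winding of $\partial\beta_\gamma$ in the toric chart, whereas the paper stops at $|x_i|^{\pm n_i}$ and simply remarks that including local systems on the Lagrangians removes the absolute values. Your identification of the winding number with $-\lambda(\gamma)$ is the right idea, but note that in the chart used in Claim~\ref{one_disc_only} the disc is constant in the two $\mathbb C^\ast$-factors, so the winding in $(\theta_1,\theta_2)$ for the $\gamma=0$ disc is zero; the nontrivial winding for general $\gamma$ only appears after the $\Gamma_B$-change of coordinates between charts, which is where the $\lambda(\gamma)$ enters. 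This is exactly the bookkeeping you flagged as the main obstacle, and it is precisely what the paper absorbs into the phrase ``up to a coordinate change'' together with the local-systems remark.
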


\begin{proof}
The count in this setting of discs in a toric variety is given in \cite{cho_oh}. Recall that we weight by $\tau^{-\int \omega}$ in the count. (Note that $\tau$ corresponds to the complex structure on the genus 2 curve, so it corresponds to the symplectic structure on the mirror. The complex structure on the mirror was in terms of $T$, see Figure \ref{coords in tiling}.) If $x_i$ are coordinates on the complex side on $V$ and $|x_i|:=\tau^{\xi_i}$ where the point in the polytope we measure from is $(a_1,a_2,a_3)=(\xi_1,\xi_2,\eta)$, then from \cite{cho_oh} the area of the disc intersecting the $(m_1,m_2)$ facet is $2\pi \left( \left<\underline{a},\nu(F_{m_1,m_2}\right>  + \alpha(F_{m_1,m_2})\right)$. 

The facet equations are determined from Equation (\ref{eq:polytope_def_Y}). In particular, $(\xi_1,\xi_2,\eta) = (0,0,0)$ is a point on the facet in the $\eta=0$ plane. Denote the facets by $F_{m_1,m_2}$ and let $\nu(F_{m_1,m_2})$ and $\alpha(F_{m_1,m_2})$ denote the normal and constant defining the plane the facet lies in. Recall that $\eta \geq \vp(\xi)$ where $\vp(\xi+\gamma) = \vp(\xi) - \kappa(\gamma) + \left<\la(\gamma), \xi\right>$. Suppose $(\xi_1,\xi_2,\eta) \in F_{m_1,m_2}$. We know from Claim (\ref{gamma_acts}) that $\Gamma_B$ acts on the moment map coordinates in the following way:
\begingroup \allowdisplaybreaks 
\begin{equation}
\begin{aligned}
& (-m_1\gamma' - m_2 \gamma'') \cdot (\xi_1,\xi_2,\eta) \\
& = ( \xi_1 - 2m_1-m_2, \xi_2-m_1-2m_2, \eta - \kappa(m_1\gamma' + m_2 \gamma'') - m \cdot \xi)\\
& =  ( \xi_1 - 2m_1-m_2, \xi_2-m_1-2m_2, \eta + m_1^2 + m_1m_2 +m_2^2 - m_1\xi_1-m_2\xi_2)
\end{aligned}
\end{equation} 
\endgroup
In particular, this point must be in $F_{0,0}$. We also know $(\xi_1,\xi_2,\eta) \in F_{m_1,m_2}$. Plugging each point into the equation of the corresponding facet, we find that:
 \begingroup \allowdisplaybreaks
\begin{align*}
 &\left<\nu(F_{m_1,m_2}),\left(\begin{matrix} \xi_1 \\ \xi_2 \\ \eta \end{matrix}\right) \right> + \alpha(F_{m_1,m_2}) =0\\
  \implies & \left<\nu(F_{0,0}),\left(\begin{matrix} \xi_1 - 2m_1-m_2 \\ \xi_2 -m_1-2m_2 \\ \eta + m_1^2 + m_1m_2 +m_2^2 - m_1\xi_1-m_2\xi_2 \end{matrix}\right) \right> + \alpha(F_{0,0})\stepcounter{equation}\tag{\theequation} \\
 &=  \eta + m_1^2 + m_1m_2 +m_2^2 - m_1\xi_1-m_2\xi_2 =0\\
  \implies & \nu(F_{m_1,m_2})=(-m_1,-m_2,1)^t, \;\; \alpha(F_{m_1,m_2})=m_1^2 + m_1m_2 +m_2^2
\end{align*}
\endgroup

So comparing the series from counting discs weighted by area for the differential, and that of the theta function, we find that
\begingroup \allowdisplaybreaks
\begin{align*}
\mbox{$\theta$-function $=$ } & \sum_{n \in \mb{Z}^2} x_1^{-n_1} x_2^{-n_2}\tau^{\frac{1}{2} n^t\left(\begin{matrix} 2 &1\\ 1& 2\end{matrix}\right)n}\stepcounter{equation}\tag{\theequation}\\
\mbox{disc count by area $=$ } & \tau^\eta \sum_{n \in \mb{Z}^2} |x_1|^{n_1}|x_2|^{n_2} \tau^{\frac{1}{2} n^t\left(\begin{matrix} 2 &1\\ 1& 2\end{matrix}\right)n}\\
\end{align*}
\endgroup
which agree up to a change of coordinates when we include local systems on the Lagrangians, (which removes the absolute value signs in the disc count). 
\end{proof}

\subsection{Count of spheres not regular for $J_0$}

\begin{claim}\label{spheres_not_regular} $J_0$ is not a regular almost complex structure for disc + sphere configurations.\end{claim}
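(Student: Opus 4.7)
The strategy is to exhibit an explicit $J_0$-holomorphic disc-union-sphere configuration and then observe that its mere existence contradicts the Fredholm dimension count underlying Corollary \ref{exclude_cor}. Since regularity of $J_0$ would force the local dimension of the moduli space at such a configuration to agree with the (negative) expected dimension, producing any such configuration at all will force the cokernel of the linearized operator to be non-trivial.

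Concretely, fix a homology class $\beta_D \in \pi_2(Y, L)$ of Maslov index $2$. By Claim \ref{one_disc_only} there is a unique $J_0$-holomorphic disc $u_D : (\mathbb{D}, \partial \mathbb{D}) \to (Y, L)$ representing $\beta_D$; it meets the singular fiber $v_0^{-1}(0)$ transversely in a single point $p$, which lies on one of the toric components $\mathbb{P}^1_i \subset v_0^{-1}(0)$. Let $u_S$ be the tautological embedding $\mathbb{CP}^1 \xrightarrow{\cong} \mathbb{P}^1_i \hookrightarrow Y$; this is $J_0$-holomorphic because its image is a complex submanifold of $Y$. Marking the unique interior point $z_D \in u_D^{-1}(p)$, choosing $z_S \in u_S^{-1}(p)$, and attaching along $p$, one assembles a stable $J_0$-holomorphic disc-union-sphere configuration of the type considered in Lemma \ref{Jreg2}.

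Now Corollary \ref{exclude_cor} records that the Fredholm/virtual dimension of the moduli space of such configurations (with the boundary marked point constrained to a prescribed point of $L$) is $-2$. By contrast, the actual local dimension of this moduli space at $(u_D, u_S)$ is at least $0$: the disc is rigid in its class, the sphere is unique up to $\mathrm{Aut}(\mathbb{CP}^1)$, and the marked-point constraints absorb all remaining automorphisms, leaving an isolated configuration. Since the actual dimension exceeds the virtual dimension by at least $2$, the linearized $\bar{\partial}_{J_0}$-operator at $(u_D, u_S)$ necessarily has cokernel of real dimension $\geq 2$, and therefore fails to be surjective; hence $J_0 \notin \mathcal{J}^2_{\mathrm{reg}}$. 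The main obstacle in making this fully rigorous is identifying the source of the cokernel, which I would do by splitting $u_S^* TY = T \mathbb{P}^1_i \oplus N_{\mathbb{P}^1_i / Y}$ and examining $H^1(\mathbb{CP}^1, u_S^* TY)$ together with the matching condition at $p$; geometrically the failure is because $u_S$ is trapped in a complex $1$-dimensional subvariety of the $6$-real-dimensional target, so its infinitesimal deformations cannot cover all normal directions needed for transversality of the gluing map.
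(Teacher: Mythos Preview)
Your contrapositive-of-Corollary~\ref{exclude_cor} strategy is sound and genuinely different from the paper's route. The paper does not argue by dimension comparison; instead it observes directly that for the sphere component $u_S$, since $J_0$ is integrable the cokernel of $D_{u_S}$ is the Dolbeault group $H^{0,1}(\mathbb{CP}^1, u_S^* TY) \cong H^1(\mathbb{CP}^1, u_S^* TY)$, and asserts (via Riemann--Roch) that this is nonzero. So the paper locates the obstruction on the sphere alone, without ever assembling a full disc-plus-sphere configuration.

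One correction to your explicit construction: the unique Maslov-$2$ disc in class $\beta_D$ meets the central fiber in the \emph{open} stratum of a two-dimensional toric divisor $D_j \cong \mathbb{CP}^2(3)$, not on a one-dimensional edge $\mathbb{P}^1_i$. (Recall from the proof of Claim~\ref{one_disc_only} that in suitable toric coordinates the disc is constant in the $(\mathbb{C}^*)^2$ factor and sweeps out the $\mathbb{C}$ factor, so its intersection point with $v_0^{-1}(0)$ has both torus coordinates nonzero.) Hence you cannot attach the tautological embedding $\mathbb{CP}^1 \xrightarrow{\cong} \mathbb{P}^1_i$ at $p$. The fix is easy: take instead any irreducible rational curve inside $D_j$ passing through $p$, for instance the proper transform of a line in $\mathbb{CP}^2$; such a curve is $J_0$-holomorphic because $D_j \subset Y$ is a complex submanifold, and your dimension argument then runs unchanged. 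Note also that what you flag as the ``main obstacle'' is not an obstacle for the claim as stated: once the moduli space is nonempty and the virtual dimension is negative, non-surjectivity of the combined linearized operator is immediate, without needing to pin down which summand carries the cokernel.

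What each approach buys: yours is cleaner for the bare non-regularity statement, needing no cohomology computation. The paper's identification of the cokernel with Dolbeault cohomology, however, is precisely what feeds into the obstruction-bundle setup in Lemma~\ref{lemma: seidel_htpy_cob}, where one needs the cokernel to have constant rank so that the obstruction bundle is an honest vector bundle.
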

\begin{proof} The standard $J_0$ is multiplication by $i$ in the toric coordinates. Recall that $H_2(v_0^{-1}(0))\neq 0$ and $v_0$ is holomorphic with respect to $J_0$, so submanifolds representing classes in $H_2$ of the central fiber are holomorphic. Then nonzero Dolbeault cohomology implies the spheres are not regular; this follows by the Riemann-Roch theorem and the fact that the cokernel of the $\ol{\dd}_{J_0}$ operator is Dolbeault cohomology. Since the cokernel is nonzero we find that $D_u$ is not surjective for maps $u$ arising from these holomorphic spheres.
\end{proof}

% \begin{claim} There are spheres that are not regular for $J_0$.\end{claim}

% %$T^3$-orbits away from the divisor over 0 are special Lagrangian with respect to $\wedge_i \mathrm{d} \log x_i$ form. Note that if $J$ is integrable then $\ol{\dd}_J$ is complex linear so the derivative $D_u$ is the same. 

% \begin{proof} See Claim \ref{spheres_not_regular}. For another proof, the index operator implies that moduli spaces of spheres in $\tilde{Y}$ have expected dimension $n-3 + c_1$ which is $3-3+0 = 0$. This would be the dimension if they were regular spheres. However, one can obtain holomorphic spheres in $\mb{CP}^2$ which lift to ones in the blowup $\mb{CP}^2(3)$ and project to ones in $Y$ under the group quotient map. In particular, we can take arbitrarily many to obtain higher dimensional families. So the spheres cannot be regular. Note that only some configurations may appear as a limit of nonempty moduli spaces while varying $J_t$ as $t \to 0$.
% \end{proof}

\begin{remark} \emph{Closed Gromov-Witten} theory counts spheres, for which we can use the algebraic geometry of stacks. Chapter 10 of \cite{cox_katz} gives the stack definition of moduli spaces. A reference for an introduction to stacks is \cite{stacks}.  \emph{Open Gromov-Witten} theory involves counting discs with a Lagrangian boundary condition, and this boundary condition is why we introduce analysis into definitions and use Fredholm problems to count the moduli spaces. 
\end{remark}

\begin{definition}\label{terminology}
We will denote sphere classes in the central $v_0$-fiber by $\alpha$ and the class of the disc passing through the divisor $D_{ij}$ corresponding to the $I:=(i,j)$ facet by $\beta_{ij}$. Let $n_{\beta_I + \alpha}$ denote the count of the following moduli space:
{$${\mc{M}}_{\beta_I + \alpha}(J_0):=\{(u,\ul{v}): (\mb{D}, (S^2)^k) \to Y \mid k \in \mb{N}\cup\{0\}, u(\dd \mb{D}) \subset \cup_{circ} t_x,$$
$$ [u \# \ul{v}] = \beta_I + \alpha, (u,\ul{v}) \in C^\infty, ev_{0}(u) = ev_0(v_1), \mu([D_I])=2, \ol{\dd}_{J_0}(u,\ul{v}) = 0\} \times \{p\}/\Aut(Y, p)$$}
\end{definition}

%\begin{cor} By equivariance, a constant factor appears in our setting which won't affect the quasi-isomorphism class of the resulting Fukaya category.\end{cor}
%
%\begin{proof}[Proof of Corollary] Equivariance is discussed in \cite{kl}. Also, any set-up that produces the moduli spaces will be invariant under the group action Note that \cite{kl} define a toric K\"ahler form than here. The complex structures i.e.~$J_0$ are the same. We will be computing the area of spheres in the boundary of the infinite-type toric variety. So a different symplectic form just means that the $\mb{CP}^1$'s could be scaled differently. Varying the symplectic parameter would incorporate the K\"ahler parameters that show up in Gromov-Witten theory, and are called $q$ in \cite{kl}. Since we fix the symplectic form here, we fix the K\"ahler parameters $q_i$.
%
%\end{proof}

\begin{theorem}[Open mirror theorem proved in {\cite[Theorem 3.10]{kl}}]\label{open_mirror}
$\tilde{Y}$ is a toric Calabi-Yau manifold of infinite-type. Then
$$\sum_\alpha n_{\beta_I + \alpha} q^\alpha(\check{q}) = \exp(g_I(\check{q}))$$
where $q$ denotes the K\"ahler parameters, $\check{q}$ the complex parameters, $q(\check{q})$ the mirror map and
$$g_I(\check{q}) := \sum_d \frac{(-1)^{(D_I \cdot d)}(-(D_I \cdot d) - 1)!}{\prod_{I' \neq I} (D_{I'} \cdot d)!} \check{q}^d$$
where the summation over $d$ is taken over all $d \in H^{\mbox{\footnotesize eff}}_2(\tilde{Y}, \mb{Z})$ such that $-K_{\tilde{Y}} \cdot d = 0$, $D_{I} \cdot d < 0$, and $D_{I'} \cdot d \geq 0$ for all $I' \neq I$.
\end{theorem}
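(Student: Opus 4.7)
The plan is to follow the strategy pioneered by Chan and further developed by Chan--Lau--Leung--Tseng (the reference \cite{kl}), which reduces the open Gromov--Witten count on the left-hand side to a closed Gromov--Witten invariant on an auxiliary compact toric variety, and then evaluates the latter via mirror symmetry of Givental type. First I would replace the non-compact toric Calabi--Yau $\tilde{Y}$ of infinite type by a suitable exhaustion $\tilde{Y}_N$ by finite-type toric Calabi--Yau subvarieties, so that each homology class $\beta_I+\alpha$ relevant to the series is supported in some $\tilde{Y}_N$; this reduces the statement to a family of finite-type claims, with all sums being locally finite in $\check{q}$ because each class in $H_2^{\text{eff}}$ with Maslov index $2$ can be represented inside $\tilde{Y}_N$ for $N$ large. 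On each $\tilde{Y}_N$ I would apply Chan's compactification trick: choose a toric compactification by adding one new ray to the fan in the direction dual to the meridian disc of the Lagrangian torus fiber, producing a compact toric manifold $Z_N$ in which the open disc of class $\beta_I$ bounded by the moment fiber becomes, together with the added divisor, a closed rational curve.

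Next I would prove the key identification of moduli spaces: the Kuranishi chart on the open moduli space $\mc{M}_{\beta_I+\alpha}(J_0)$ described in Definition \ref{terminology} is isomorphic (as Kuranishi structures, so in particular the virtual counts agree) to the Kuranishi chart on a corresponding moduli space of stable maps from genus-zero closed curves to $Z_N$ representing the ``capped'' class $\hat{\beta}_I+\alpha$, with one marked point constrained to lie in the added divisor. This requires (i) that passing to the compactification does not introduce new components of the moduli space (guaranteed because the added divisor has negative self-intersection along the relevant direction, so no sphere bubbles can escape into it), and (ii) that the obstruction bundles match, which follows from the explicit description of normal bundles of the added divisor in $Z_N$ together with the fact that $J_0$ is integrable so the $\bar\partial$-operator has only Dolbeault cohomology contributions. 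This step is the main technical obstacle: one has to handle the non-regularity of $J_0$ (documented in Claim \ref{spheres_not_regular}) by constructing the perturbation data equivariantly on both sides and checking that the Euler class of the obstruction bundle is computed by the same algebraic intersection number on $Z_N$.

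Once the open count is identified with a closed one-point Gromov--Witten invariant on $Z_N$, I would invoke Givental's toric mirror theorem (or the toric Calabi--Yau version due to Lian--Liu--Yau/Coates--Givental) to express the generating series $\sum_\alpha n_{\hat{\beta}_I+\alpha}q^{\alpha}$ in terms of Givental's $I$-function for $Z_N$. Extracting the component corresponding to the divisor class $D_I$ from the $I$-function yields precisely a hypergeometric series in $\check{q}$, and an explicit residue/logarithm computation identifies this series, after the change of variables given by the mirror map $q=q(\check{q})$, with $\exp(g_I(\check{q}))$ where $g_I$ has exactly the binomial-coefficient shape in the theorem statement. The combinatorial identity that isolates the coefficient $(-1)^{(D_I\cdot d)}(-(D_I\cdot d)-1)!/\prod_{I'\neq I}(D_{I'}\cdot d)!$ from the $I$-function is a standard manipulation of Pochhammer symbols restricted to the effective cone with $-K_{\tilde Y}\cdot d=0$ and $D_I\cdot d<0$, $D_{I'}\cdot d\ge 0$.

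Finally I would take $N\to\infty$: because the sums on both sides are locally finite in $\check{q}$ (each effective class with $-K\cdot d=0$ involves only finitely many divisor intersections), and because the finite-type identifications on $\tilde{Y}_N$ are compatible with the natural inclusions $\tilde{Y}_N\hookrightarrow\tilde{Y}_{N+1}$, the identity passes to the limit and yields the stated equality on $\tilde{Y}$. The expected main obstacle, beyond the Kuranishi matching above, is ensuring that the mirror map $q(\check{q})$ is the same on both sides; this is subtle because the mirror map is defined via the $I$-function of the compactification $Z_N$, but one must verify that its restriction to the Calabi--Yau subspace of K\"ahler parameters of $\tilde{Y}$ agrees with the open-string mirror map used to weight the disc counts, and that it stabilises as $N\to\infty$.
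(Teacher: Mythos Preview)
Your proposal is correct and follows essentially the same approach as the paper's treatment. The paper does not prove this theorem itself but cites it from \cite{kl} and provides a flow chart (Figure \ref{kl_flowchart}) outlining exactly the strategy you describe: reduce the infinite-type toric Calabi--Yau to finite-type pieces via a limit argument, apply Chan's capping-off trick \cite{chan} to convert open Gromov--Witten invariants to closed ones via an isomorphism of Kuranishi structures, and then evaluate the closed invariants using the Givental-type toric mirror theorem of \cite{ccit} (building on \cite{givental}) to obtain the hypergeometric expression $\exp(g_I(\check q))$; the open mirror theorem for finite-type toric Calabi--Yau manifolds is attributed to \cite{cclt}, and \cite{kl} supplies the passage to the infinite-type limit.
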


\begin{remark}\label{rem: choice of Kahl param} Note that here we consider only a one-parameter family of values of K\"ahler parameters, because we've fixed the symplectic form so that the three toric divisors $x=0$, $y=0$, and $z=0$ have symplectic area 1 (these form the ``banana manifold"). Namely, $q=\tau \in \mb{R}$ and $q^\alpha$, in our notation, is $\tau^{\omega(\alpha)}$.
\end{remark}

\begin{center} {\bfseries Flow chart details} \end{center}

Figure \ref{kl_flowchart} is a flow chart indicating the necessary background for understanding the sphere count in Kanazawa-Lau \cite{kl}. Note that they use $J=J_0$ as we are using here.

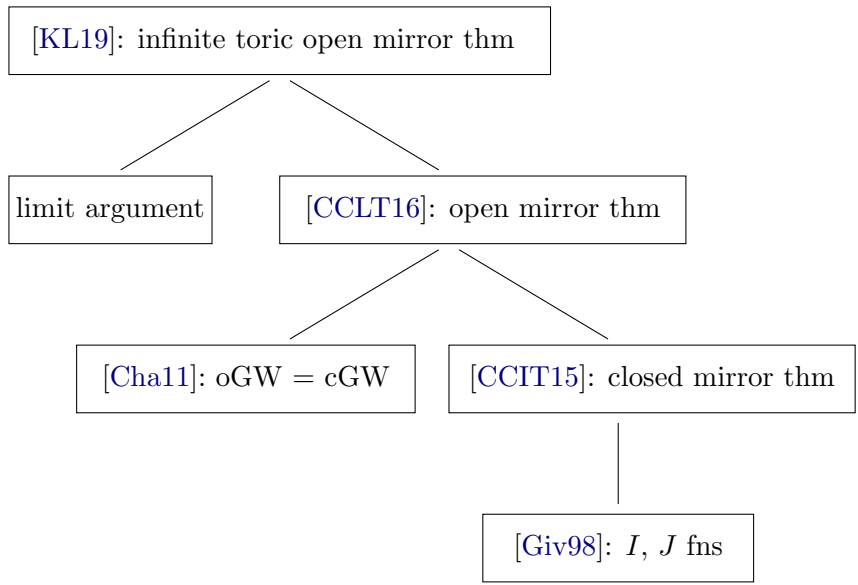
\begin{figure}[h]
\begin{center}
\begin{tikzpicture}[scale=0.9]
{\small \draw  (-7.5,5) rectangle (0.5,4);
\node at (-3.6,4.5) {\cite{kl}: infinite toric open mirror thm};
\node (v1) at (-3.5,4) {};
\node (v2) at (-6,2.5) {};
\draw  (v1) edge (v2);
\draw  (-7.5,2.5) rectangle (-4.5,1.5);
\node at (-6,2) {limit argument};
\node (v3) at (-1,2.5) {};
\draw  (-3.5,2.5) rectangle (2.5,1.5);
\node at (-0.5,2) {\cite{cclt}: open mirror thm};
\draw  (v1) edge (v3);
\node (v6) at (-1,0) {};
\node (v5) at (-3.5,0) {};
\node (v7) at (1.5,0) {};
\draw  (-6.5,0) rectangle (-1.5,-1);

\draw  (-1,0) rectangle (5,-1);
\node at (-4,-0.5) {{\cite{chan}:~oGW = cGW}};
\node at (2,-0.5) {\cite{ccit}: closed mirror thm};
\node (v4) at (-1,1.5) {};
\draw  (v4) edge (v5);

\draw  (v4) edge (v7);
\node (v8) at (1.5,-1) {};
\node (v9) at (1.5,-2.5) {};
\draw  (-0.5,-2.5) rectangle (3.5,-3.5);
\node at (1.5,-3) {\cite{givental}: $I$, $J$ fns};
\draw  (v8) edge (v9);}\end{tikzpicture}
\caption{Gromov-Witten theory background for mirror symmetry of toric varieties}
\label{kl_flowchart}
\end{center}
\end{figure}

{\bfseries Givental:~\cite{givental}.} The Picard-Fuchs differential equation describes the behavior of periods arising from Hodge structures on the complex side. Givental introduced the $I$ and $J$ functions, where $I$ computes solutions of the Picard-Fuchs equation and $J$ computes the Gromov-Witten invariants. He proved a relation between these two functions, i.e.~a mirror theorem.

{\bfseries Closed mirror theorem:~\cite{ccit}.} The closed mirror theorem relates the $I$ and $J$ function (defined in e.g.~\cite[\textsection 2.6.2]{cox_katz}) and builds on Givental's paper. Varying the complex moduli gives a variation of the Hodge structure on the complex manifold. The $J$ function on the symplectic manifold corresponds to the $I$ function on the complex manifold. These are functions of the K\"ahler and complex moduli, which are isomorphic. The mirror map goes between neighborhoods of a K\"ahler large limit point and a complex large limit point (maximally unipotent monodromy), see \cite[\textsection 6.3, p 151]{cox_katz}. Chapter 7 of \cite{cox_katz} defines GW invariants and Proposition 10.3.4 gives the relation between the $J$-function and the GW potential. 

%The mirror map between the two is the coefficient on $1/z$. See section 6.3 of \cite{cox_katz}. Coefficient on $1/z$ statement: \cite[page 151]{cox_katz}, $z$ in \cite{kl} is $y_0$ in \cite{cox_katz}. Note: Givental mirror theorem, equivariance, wasn't complete in the original paper \cite{givental}, according to \cite{cox_katz}. This is what \cite{ccit} discuss. 

In particular, \cite[Equation (10.4)]{cox_katz} gives the relation between differentials, intersection theory, and Gromov-Witten theory. The Picard-Fuchs equation \cite[\textsection 5.1.2]{cox_katz} is for complex moduli $\check{q}$ near maximally unipotent monodromy (denoted $y_k$ in \cite{cox_katz}). The K\"ahler moduli $q$ is denoted $q_k$ in the same reference.

% One connection to quantum mechanics is that of \cite[Theorem 10.3.5]{cox_katz} that a differential operator is a quantum mechanical operator iff it is a Picard-Fuchs operator.  

Givental's mirror theorem for toric complete intersections is described in \cite[\textsection 11.2.5]{cox_katz}. Specific to the toric setting is the GKZ system, see \cite[\textsection 5.5]{cox_katz}.  An example of the mirror theorem is \cite[11.2.1.3]{cox_katz}. In the case of toric varieties, we have an equivariance under the toric action of the moduli spaces, discussed in \cite{ccit}. 

{\bfseries The result of \cite{chan}.} In the Fukaya category one would like to compute open GW invariants, i.e.~discs with Lagrangian boundary conditions, so we would like to be able to count these as well. There is a notion of ``capping off" introduced in \cite{chan} where, for a toric variety $X_\Sigma$, one adds an additional ray to the fan $\Sigma$ to define a partial compactification $\ol{X_\Sigma}$. This is done so the discs in the open GW count are ``capped off" to become spheres. See \cite[\textsection 6.1]{cclt} for a construction in the toric CY setting. The result in \cite{chan} implying that these open and closed GW invariants are equal is that they have isomorphic Kuranishi structures.

\begin{definition}[Kuranishi chart] A Kuranishi neighborhood of $p$ on moduli space $X$ is the following data:
\begin{itemize}[\textbullet]
\item $V_p$ smooth finite-dimensional \emph{manifold}, possibly with corners
\item $V_p \times E_p \to V_p$ is the \emph{obstruction bundle}, where $E_p$ is a finite-dimensional real vector space.
\item $\Gamma_p$ is a finite \emph{group} which acts smoothly and effectively (no non-trivial element acts trivially) on $V_p$, and $E_p$ linearly represents the group.
\item {Kuranishi map} $s_p$ is a smooth \emph{section} of $V_p \times E_p$ (smooth map $V_p \to \Gamma_p$), and is $\Gamma_p$ equivariant.
\item $\psi_p$ is a \emph{topological chart} which is a homeomorphism from the local model $s_p^{-1}(0)/\Gamma_p$ to a neighborhood of $p$ in $X$.
\item $V_p/\Gamma_p$ or $V_p$ may also be referred to as a \emph{Kuranishi neighborhood} (rather than the collection of all these pieces of data). 
\item $o_p$ is a point which the Kuranishi map sends to zero and the chart maps to $p$. 
\item For references in the literature on gluing such charts, see \cite[Fukaya, Tehrani]{new}.
%Existence? I guess could find where it's centered and translate (I think ok because smooth and equivariant) so goes to zero? We can also assume it's fixed by all of $\Gamma_p$ -- why? Seems like need some injectivity condition on $s_p$ because although action preserves zero on $E_p$ (linear representation condition) and $s_p$ is equivariant so all $g \cdot o_p$ for $g \in \Gamma_p$ must map to 0, we'd need injectivity to ensure that there was only one point in that orbit. Comes from that we have a section? \sout{In my case: some holo curve stays the same under all reparametrization?}
\end{itemize}

%Let's unpack this: this appears to put a topology on $X$, that is modeled on vanishing of sections quotiented by group action. But we already started with a compact metrizable space so it's both compact and has a metric. So what's the additional piece of information here? We know that moduli spaces locally have this description, so I guess the idea is to glean properties of the moduli space and try to show there is a general framework in which we can get what we would like, so it should work in the special case too. But unsure how to get the compact metrizable space $X$ to start with, in my case. Especially the compact part. 
%
%Obstruction bundle: locally it's the trivial bundle where moduli spaces are zero sets of sections of these bundles. Isotropy.  

\end{definition}
%Concludes Chan proof outline.
%
%\end{proof}

%\begin{lemma}[CCLT: open mirror theorem] There is a notion of equivariance in the toric case as mine is. Can count open GWI using Chan + CCIT.
%
%There is a CCLT paper which is like KL but with toric orbifolds. Seems to be at the intersection of KL and CCL. They explain why the exponent of some functions called $g_l$ are the actual Gromov-Witten invariants.
%\end{lemma}

%\begin{example}[Example of space with a Kuranishi structure] Gromov-Witten invariants count moduli spaces which are an example of spaces that have Kuranishi structures, since the definition of the structure describes the local picture of such moduli spaces. See also \cite{new}.
%\end{example}

{\bfseries Kanazawa-Lau apply \cite{cclt} to the infinite toric setting.} In \cite{kl} there is a notion of taking a limit to arrive at the infinite toric case, which is our setting as well before we quotient by the $\Gamma_B$-action. They build on the open mirror theorem of \cite{cclt} and compute the sphere count as the coefficient of $1/z$ in the mirror map. This concludes the outline for the flow chart.

\begin{definition}\label{defn: sphere_count_C}
Define the sphere count
\begingroup \allowdisplaybreaks \begin{equation} \label{defn_C_spheres}
 C(x):=  \sum_\alpha n_{\beta_0+\alpha} \tau^{\omega(\alpha)}
\end{equation} \endgroup
where the $n_{\beta_0+\alpha}$ are defined in Definition \ref{terminology} and computed by Theorem \ref{open_mirror}, and $\beta_0$ denotes the disk class which projects to the moment polytope in the first two coordinates as a curve from the point 
$$A=(\log_\tau|x_1|, \log_\tau |x_2|, \mu_X(\bm{x},y))$$
(where $\mu_X$ is defined at the end of the paragraph in {\bf How to view $\eta$ as a moment map coordinate} on page \pageref{page_mu}) to the $(0,0)$-th facet.

\end{definition}

\subsection{Computation of intermediary differential with $t_x$}

\begin{lemma}\label{lemma:final_diffl_computation} We have the following set-up:
\begin{compactitem}[\textbullet]
\item Fix $A \in \Delta_{\tilde{Y}}$ and select a point $(x_1,x_2,y) \in X$ such that $A=(\log_\tau |x_1|, \log_\tau |x_2|, \mu_X(\bm{x}, y))$ (for $\mu_X$ defined in \cite[Equation (4.1)]{AAK}).
\item Let the Lagrangian boundary condition be $\bigcup_{circle}t_x$, lying over circle of fixed radius around the origin in the base of $v_0$.
\item Fix a point $pt_{constraint}$ on this Lagrangian.
\end{compactitem}

Then the $c$ from Corollary \ref{cor: use_c}, (which states that $M^1: CF(\ell_{i+1}, t_x) \to CF(\ell_i, t_x)$ equals $M^2(c p, \cdot)$), is given by
\begingroup \allowdisplaybreaks \begin{equation}
c = C(x)\cdot \left( \sum_\gamma \tau^{\omega(\gamma_* \beta_0)}\right)
\end{equation} \endgroup
where $C(x)$ is the sphere count from Definition \ref{defn: sphere_count_C}.
\end{lemma}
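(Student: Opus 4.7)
The strategy is to enumerate the homology classes of discs contributing to $c$ by lifting to the universal cover $\tilde{Y}$, and then exploit the $\Gamma_B$-equivariance of the entire geometric setup (the symplectic form $\omega$ by Claim \ref{claim:omega_descends}, the almost complex structure $J_0$ by construction, and the Lagrangian $\bigcup_{circle} t_x$ since it is the preimage of a $\Gamma_B$-periodic moment map value) to factor the resulting weighted count as a product.

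First I would write $c = \sum_{\beta} n_\beta \, \tau^{\omega(\beta)}$ where the sum ranges over relative homology classes $\beta \in \pi_2(Y, \bigcup_{circle} t_x)$ representable by curves that pass through the central fiber exactly once. Lifting the domain to $\tilde{Y}$, each such class is represented by a configuration $(u, \underline{v})$ of a section-like disc $u$ meeting a (possibly empty) tree of sphere bubbles $\underline{v}$ contained in the central fiber, and the disc component crosses a single toric divisor $D_I$ indexed by a facet $F_I$ of $\Delta_{\tilde{Y}}$. The facets of $\Delta_{\tilde{Y}}$ are in bijection with $\mb{Z}^2 \cong \Gamma_B$ via the $\Gamma_B$-action on tiles established in the proof of Lemma \ref{monodr} and Figure \ref{coords in tiling}, so each lift decomposes uniquely as $\beta_I + \alpha = \gamma_*\beta_0 + \alpha$ for a unique $\gamma \in \Gamma_B$ and a sphere class $\alpha$.

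Next I would show $n_{\gamma_*\beta_0 + \alpha} = n_{\beta_0 + \gamma^{-1}_* \alpha}$ for every $\gamma \in \Gamma_B$. Since $\gamma$ acts by a biholomorphism that preserves $\omega$, $J_0$, and the Lagrangian $\bigcup_{circle} t_x$ (which only depends on the $\Gamma_B$-invariant norms), it induces an isomorphism of Kuranishi structures on the relevant moduli spaces $\mc{M}_{\beta_I + \alpha}(J_0)$ from Definition \ref{terminology}. Combined with the additivity $\omega(\gamma_*\beta_0 + \alpha) = \omega(\gamma_*\beta_0) + \omega(\alpha)$ and the reindexing $\alpha \mapsto \gamma_*\alpha$, the sum defining $c$ then factors as
\begin{equation*}
c \;=\; \sum_{\gamma \in \Gamma_B} \sum_{\alpha} n_{\beta_0 + \alpha}\, \tau^{\omega(\gamma_*\beta_0)}\tau^{\omega(\alpha)} \;=\; \left(\sum_{\gamma \in \Gamma_B} \tau^{\omega(\gamma_*\beta_0)}\right) \cdot \left(\sum_{\alpha} n_{\beta_0 + \alpha}\,\tau^{\omega(\alpha)}\right),
\end{equation*}
which by Definition \ref{defn: sphere_count_C} is exactly $\left(\sum_\gamma \tau^{\omega(\gamma_*\beta_0)}\right) \cdot C(x)$.

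The main obstacle will be step two: rigorously checking that the $\Gamma_B$-action identifies the Kuranishi data of $\mc{M}_{\beta_I + \alpha}(J_0)$ with that of $\mc{M}_{\beta_0 + \gamma^{-1}_*\alpha}(J_0)$, including the obstruction bundle contributions required since $J_0$ is not regular for disc+sphere configurations by Claim \ref{spheres_not_regular}. This reduces to checking that the perturbation scheme used in Lemma \ref{lemma: seidel_htpy_cob} and the isomorphism of Kuranishi charts from \cite{chan} (invoked in the flow chart of Figure \ref{kl_flowchart}) can be chosen $\Gamma_B$-equivariantly; this is compatible with the framework because all the auxiliary data can be pulled back from $Y$ to $\tilde{Y}$. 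Once the equivariance is established, the factorization is automatic and the result follows.
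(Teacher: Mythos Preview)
Your overall strategy---enumerate disc classes by the $\Gamma_B$-indexed facets of $\Delta_{\tilde Y}$, use a symmetry to show $n_{\beta_I+\alpha}$ is independent of the facet $I$, and then factor the double sum---is exactly the paper's strategy, and your factorization at the end is correct. There is, however, a genuine gap in the symmetry step.

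You assert that the $\Gamma_B$-action on $\tilde Y$ preserves the Lagrangian $\bigcup_{circle}t_x$ because it ``only depends on the $\Gamma_B$-invariant norms.'' This is not right: on $\tilde Y$ the moment-map coordinates $(\xi_1,\xi_2,\eta)$ are \emph{not} $\Gamma_B$-invariant (Claim~\ref{gamma_acts} gives $\gamma\cdot(\xi,\eta)=(\xi-\gamma,\eta-\kappa(\gamma)+\langle\xi,\lambda(\gamma)\rangle)$), so $\gamma$ carries the specific lift of the Lagrangian at $A$ to the lift at $\gamma\cdot A\neq A$. On $Y$ the action is trivial and gives you nothing. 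Either way, the bare $\Gamma_B$-action does not produce the required isomorphism of moduli spaces with a \emph{fixed} Lagrangian boundary condition. The paper repairs this by introducing a combined $\Gamma_B\times(\mathbb{C}^*)^3$-action on $\tilde Y$: for each $\gamma$ choose $c_\gamma\in(\mathbb{C}^*)^3$ (toric rescaling) so that $c_\gamma\circ\gamma$ returns the moment-map value to $A$. This composite is a biholomorphism (so $(c_\gamma\circ\gamma)^*J_0=J_0$) that fixes the lift of the Lagrangian at $A$ while permuting the toric divisors $D_I$, yielding the desired identification of moduli spaces. Since $c_\gamma$ does not preserve $\omega$, the paper then closes the argument by choosing a single $J_{reg}$ regular for both classes and invoking invariance of the count under change of regular almost complex structure (Section~\ref{section:q_invc_choices}), rather than appealing to $\Gamma_B$-equivariant Kuranishi data as you propose. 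Once you insert the $c_\gamma$ correction, the rest of your argument goes through.
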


\begin{proof}
Define a $\Gamma_B \times (\mb{C}^*)^3$-action on $\tilde{Y}$ where $ (\gamma, c_{\gamma})$ acts by $c_{\gamma} \circ \gamma $ and $c_{\gamma}$ is complex multiplication. Specifically, $\gamma \in \Gamma_B$ sends $D_{ij}$ to some $D_{i'j'}$, and $c_{\gamma}$ is defined by requiring the point $\gamma_*(A)$ in the moment polytope to map back to $A$ via the $(\mb{C}^*)^3$ toric action (while fixing the divisors). Thus $\Gamma_B \times (\mb{C}^*)^3$ acts on moduli spaces of curves with a fixed Lagrangian boundary condition, varying homology classes, and a marked point $pt$, by post-composition. Fix a disc homology class $\beta_{ij}=[D_{ij}]$ in $H^2(\tilde{Y}, \cup_{circ} t_x)$. Then we have an isomorphism of moduli spaces:
{\footnotesize \begingroup \allowdisplaybreaks \begin{equation}\label{invariance_argument_final}
 \begin{aligned}
 & \{(u,pt), u: \mb{D} \to Y, pt\in \dd \mb{D} | u(\dd \mb{D}) \subset \bigcup_{circ} t_x, u(pt) = pt_{\tiny{\mbox{constraint}}}, [u] =\beta_{ij}, \ol{\dd}_{J_{reg}}(u) = 0\}  \cong \\
&\{(u,pt), u: \mb{D} \to Y, pt\in \dd \mb{D} | u(\dd \mb{D}) \subset \bigcup_{circ} t_x, u(pt) = c_{\gamma} \circ \gamma(pt_{\tiny{\mbox{constraint}}}), [u] =(c_{\gamma} \circ \gamma)_*\beta_{ij}, \ol{\dd}_{(c_{\gamma} \circ \gamma)^*J_{reg}}(u) = 0\}
\end{aligned}
\end{equation} \endgroup}
In particular, for a regular $J_{reg}$ as exists by Lemma \ref{geom_reg_lemma}, (so moduli spaces are manifolds) and introducing the point constraint (so they are zero dimensional), counting points in these moduli spaces produces an infinite series of discs and no sphere bubbles (because we excluded them). By denseness, we can choose $J_{reg}$ sufficiently close to $J_0$ so a limit of regular $J$'s limits to $J_0$. The disc will either converge to a disc or to a disc with a sphere bubble configuration as we saw in Figure \ref{hms_computation}. This count of discs for $J_{reg}$ is hence proportional to the differential in the Fukaya category. 

By \cite[Proposition 4.30]{cll}, we know that the only homology clases that can appear in the compactification are stable trees of the form $\beta_{ij} + \sum_i n_i \alpha_i$ for some integers $n_i$ and spheres $\alpha_i$.  Note that $(c_{\gamma} \circ \gamma)^*J_0=J_0$ since multiplication by scalars is a holomorphic map. The claim we want to prove is that the defined moduli spaces are isomorphic as we vary the homology classes. Applying these group actions should produce isomorphic moduli spaces, and we know then that the curve count for a particular homology class $D_{ij} + \alpha$  will be the same for all others and the counts will be the same so we can factor out the common factor. Namely the counts $n_{\beta + \alpha}$ do not depend on the disc class $\beta$ since there is a 1-1 bijection between moduli spaces of sphere configurations showing up with $D_{ij}$ and with any other $D_{i'j'}$, via the map $c_{\gamma}\circ \gamma$. That is because it has an inverse $(c_{\gamma}\circ \gamma)^{-1}$ given by multiplication by the inverse scalars. 

Suppose we write for an arbitrary disc and sphere configuration $\beta + \alpha' =: \gamma_*(\beta_0+\alpha)$ for fixed $\beta_0$ with a suitable $\gamma$ which then determines $\alpha$. Then we can denote all $n_{\beta + \alpha'}$ independent of $\beta$ and only depending on $\gamma, \alpha$ as $n_{\beta+\alpha'} = n_{\gamma_*(\beta_0+\alpha)}=n_{\beta_0+\alpha}$. The last equality is true as follows. We streamline notation below and use $\gamma$ to incorporate both actions of $\gamma$ and $c_{\gamma}$. We choose $J_{reg}$ to also be regular for the homology class $\gamma_*(\beta_I + \alpha)$ so $\gamma^* J_{reg}$ is regular for the class $\beta+\alpha$. Invariance on regular $J$ by a continuation map argument (see Section \ref{section:q_invc_choices}) then implies
$$\mc{M}(\beta_I +\alpha, J_{reg}) \cong \mc{M}(\gamma(\beta_I+\alpha),\gamma^*J_{reg})\cong \mc{M}(\gamma(\beta_I+\alpha),J_{reg})$$
\begin{equation}
\begin{aligned}
c&= \sum_{\beta, \alpha} n_{\beta + \alpha'} \tau^{\omega(\beta) + \omega(\alpha')}  = \sum_{\alpha, \gamma \in \Gamma_B} n_{\gamma_*(\beta_0 + \alpha)} \tau^{\omega(\gamma_*\beta_0) + \omega(\alpha)}\\
& = \sum_{\gamma \in \Gamma_B} \left(\sum_\alpha n_{\beta_0+\alpha} \tau^{\omega(\alpha)} \right) \cdot \tau^{\omega(\gamma_* \beta_0)} = \left(\sum_\alpha n_{\beta_0+\alpha} \tau^{\omega(\alpha)} \right) \cdot \left( \sum_{\gamma \in \Gamma_B} \tau^{\omega(\gamma_* \beta_0)}\right)
\end{aligned}
\end{equation}
The first factor is $C(x)$ which we can put in front, and the second is the multi-theta function described above in the computation of $J_0$-discs. Note that $n_{\beta_0} = 1$ by Claim \ref{one_disc_only}, hence $C(x)=1+$ (higher order terms) and is invertible.
\end{proof}

\subsection{Computation of the differential in general, using the Leibniz rule} Now we put everything together. Figure \ref{Leibniz} serves as a pictorial depiction of how we use the Leibniz rule to compute the differential $M^1$. As illustrated in Figure \ref{Leibniz}, $\hom_Y(L_i,L_j)$ decomposes into two hom groups on the fiber, $\hom_{\text{\tiny right}}(\ell_{i+1},\ell_j)[-1] \oplus \hom_{\text{\tiny left}}(\ell_i,\ell_j) = CF(\ell_{i+1},\ell_j)[-1] \oplus CF(\ell_i,\ell_j)$. In particular, $M^1$ will map from $\hom_{\text{\tiny right}}$ to $\hom_{\text{\tiny left}}$ in the Floer differential.

\begin{figure}[h]
\begin{center}
\includegraphics[scale=0.2]{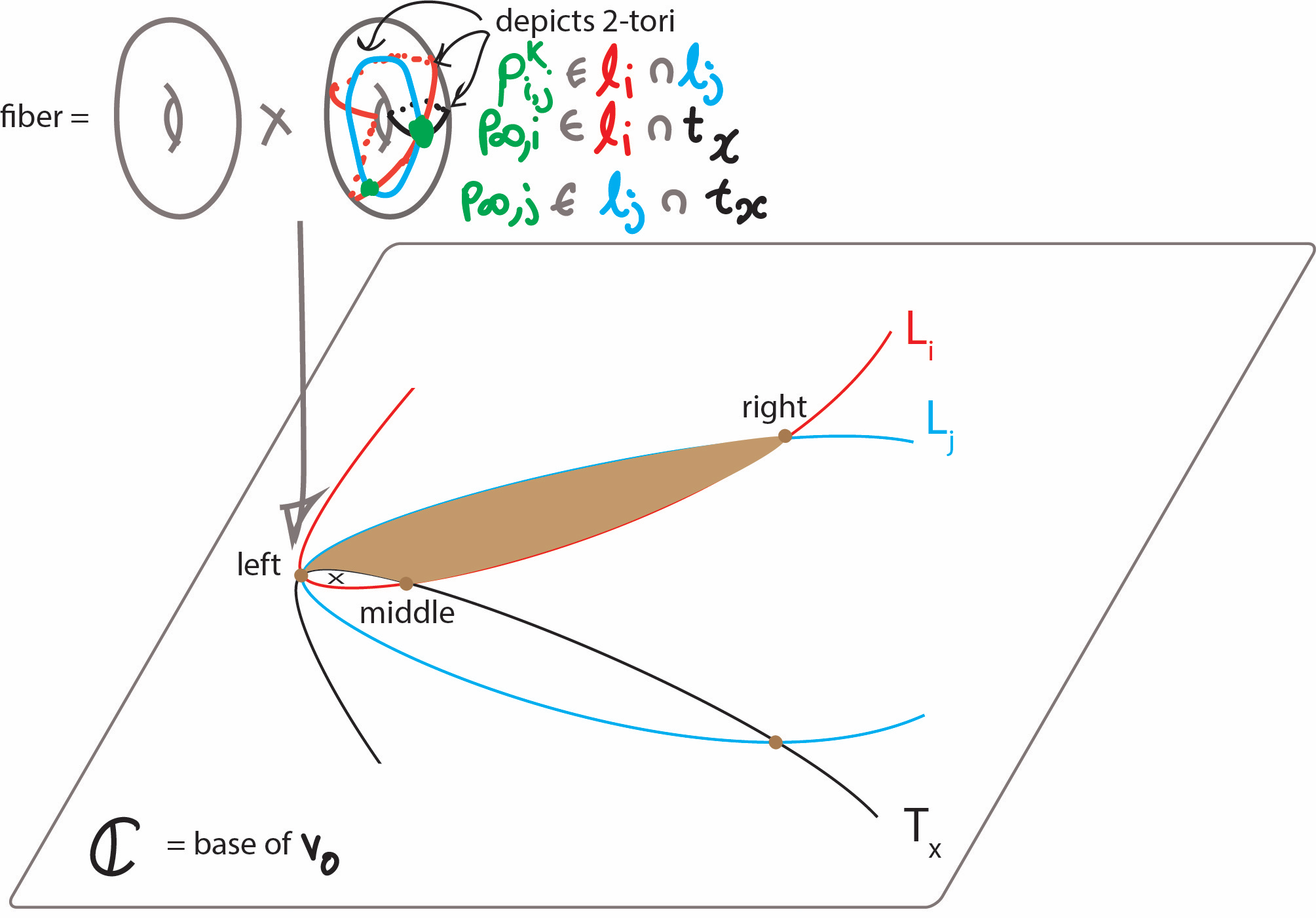}
\caption{Leibniz rule}
\label{Leibniz}
\end{center}
\end{figure}

The reason we use the Leibniz rule is because we would like to use $\bigcup_{circle} t_x$ as the Lagrangian boundary condition and not $\bigcup_{circle} \ell_j$, as this will allow us to count discs with boundary in the preimage of a moment map, as in \cite{cho_oh}. Note that 
$$M^1: CF(\ell_i, t_x) \to CF(\phi^H_{2\pi}(\ell_i), t_x) \xrightarrow{CF((\phi^H_{2\pi})^{-1})} CF(\ell_i, (\phi^H_{2\pi})^{-1}(t_x)) = CF(\ell_i, t_x) \cong \mb{C}$$ 
where $\phi^H_{2\pi}$ is the monodromy from Section \ref{mon}. We've used that applying the diffeomorphism $(\phi^H_{2\pi})^{-1}$ gives a bijection between intersection points, and  $t_x = \{(\xi_1,\xi_2,\theta_1,\theta_2)\})_{\theta_1,\theta_2 \in [0,2\pi)}$ is invariant under parallel transport as it only rotates angles. So we land in $CF(\ell_i, t_x)$, which has only one intersection point.

\begin{lemma}[Leibniz rule]\label{leib_lemma} \label{eval_at_pt} $M^1: CF(\ell_{i+1},\ell_j) \to CF(\ell_i,\ell_j)$ can be computed from the data of $M^1: CF(\ell_{i+1}, t_x) \to CF(\ell_i, t_x)$ over all $x \in V$. \end{lemma}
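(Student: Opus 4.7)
The plan is to invoke the $A_\infty$-Leibniz relation for $\mu^2$ applied to the three-Lagrangian configuration $(L_{i+1}, L_j, T_x)$, and then to use that pairing against $T_x$ as $x$ varies over $V$ separates intersection points in the fiber. The key geometric input is Lemma \ref{lem:fuk_subcat} and Equation~(\ref{eq:abel_symp_basis}): $\ell_j \cap t_x$ is a single point for generic $x$, so $CF(L_j,T_x) \cong \mathbb{C}\cdot q_{j,x}$ and similarly $CF(L_i,T_x) \cong \mathbb{C}\cdot q_{i,x}$ are one-dimensional, which makes $T_x$ a natural ``testing'' object.

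First I would arrange the U-shapes so that the cyclic order places $T_x$ appropriately relative to $L_i, L_{i+1}, L_j$ (after categorical localization the choice of U-shape representative is immaterial). For $p \in L_{i+1} \cap L_j$, the $A_\infty$-relation coming from the boundary strata of the Gromov-compactified moduli of $J$-holomorphic quadrilaterals gives
\begin{equation}
M^1\bigl(M^2(q_{j,x},p)\bigr) \;=\; \pm\, M^2(M^1(q_{j,x}),p) \;\pm\; M^2(q_{j,x},M^1(p)).
\end{equation}
The left-hand side lies in the one-dimensional space $CF(L_i,T_x)$ and can be computed directly (as in Lemma \ref{lemma:final_diffl_computation}, via the $J_0$-model after the Seidel-type homotopy of Lemma \ref{lemma: seidel_htpy_cob}), and likewise the first term on the right is accessible from the hypothesis of the lemma since it only involves Lagrangians of the form $\ell_\bullet$ paired with $t_x$. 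Solving, this pins down $\langle M^2(q_{j,x},M^1(p)),\, q_{i,x}\rangle$ for every $x$.

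Next I would show that the family of pairings $\{M^2(q_{j,x},\,\cdot\,)\}_{x \in V}$ separates the basis of $CF(L_i,L_j)$ indexed by $\ell_i \cap \ell_j = (j-i)^{-1}\Gamma_B/\Gamma_B$. Writing $M^1(p) = \sum_e c_e\, p_e$ with $p_e \in \ell_i \cap \ell_j$, each coefficient $M^2(q_{j,x},p_e)$ is the weighted triangle sum computed in Equation~(\ref{eqn:compn_triangles}), whose $\tau$-exponents (the areas $\kappa(\cdot)$) depend on $x$ in an explicit, $e$-distinguishing way via the moment-map coordinate of $q_{j,x}$. Concretely, as $x$ sweeps out $V$ the vector $\bigl(M^2(q_{j,x},p_e)\bigr)_e$ traces out a family which is a theta-like function in $x$ indexed by $e$; linear independence of these theta components (in the same spirit as the theta-function computation in Lemma \ref{functor_ok} and Corollary \ref{theta_basis}) then implies that the $c_e$ are uniquely determined by the collection $\{\langle M^2(q_{j,x},M^1(p)),q_{i,x}\rangle\}_x$.

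The main obstacle I anticipate is verifying the separation/linear-independence step rigorously, since it requires knowing that the theta-like expressions in $x$ obtained from the triangle counts $M^2(q_{j,x},p_e)$ form a linearly independent family --- morally this is the statement that sections of $\mc{L}^{j-i}$ separate the $l^2$ points of $\ell_i \cap \ell_j$. A secondary technical point is the accounting of signs and of the homotopy terms generated by having to deform the U-shaped curves of $L_{i+1}$, $L_j$, $T_x$ into admissible directed position before applying the $A_\infty$-relation; after localization these amount to composition with quasi-units $e_L$, which are by definition invertible on cohomology and hence do not change $M^1$ at the Donaldson-Fukaya level. Once separation is established, the lemma follows by linearity.
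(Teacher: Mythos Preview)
Your approach is essentially the same as the paper's: both apply the $A_\infty$ Leibniz relation to the triple $(L_i, L_j, T_x)$ and then use that pairing against the family $\{t_x\}$ separates the basis of $CF(\ell_i,\ell_j)$. Two points are worth noting.

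First, the paper disposes of the term $M^2(M^1(p_{\infty,j}),\,\cdot\,)$ more cleanly than you do: rather than saying this term is ``accessible from the hypothesis,'' the paper observes that $p_{\infty,j}$ lies in the degree-$0$ summand $\hom_{\text{left}}(\ell_j,t_x)$ and there is nothing in degree $1$ for $M^1$ to hit, so $M^1(p_{\infty,j})=0$ outright. This is not merely a convenience---your phrasing suggests the term would require the data $M^1:CF(\ell_{j+1},t_x)\to CF(\ell_j,t_x)$, which is not literally part of the hypothesis as stated (the indices are $i{+}1,i$, not $j{+}1,j$), so the degree argument is what actually closes the loop.

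Second, the separation step you flag as the ``main obstacle'' is handled in the paper by an explicit computation rather than an abstract linear-independence argument: the triangle count $n_{\tilde e}(l,x)$ is computed directly (as in Lemma~\ref{lem:fuk_subcat}) and identified with $s_{\tilde e,l}[\log_\tau|x|](1)$, the shifted theta basis evaluated at $1$. Linear independence then follows from Corollary~\ref{theta_basis}. So your instinct that ``sections of $\mc{L}^{j-i}$ separate the $l^2$ points'' is exactly right, and the paper makes it concrete via this formula. Your handling of the localization/quasi-unit bookkeeping is fine and matches the paper's implicit treatment.
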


\begin{proof} First note that although $M^1$ is an automorphism of $CF(L_i, L_j) = CF(\ell_{i+1},\ell_j)[-1] \oplus CF(\ell_i,\ell_j)$, for degree reasons and geometric considerations the only nonzero contribution is from the the first summand to the second summand.

Let $p_{\infty, j}$ denote the intersection point of $\ell_j$ and $t_x$, and let $p^{k}_{i,j}$ denote the $k$th intersection point of $\ell_i \cap \ell_j$, where $0 \leq k < (i-j)^2$ as described in the definition of the Fukaya-Seidel category above. We now use the diagram in Equation (\ref{leibniz}). The crux of this argument relies on the Leibniz rule, one of the associativity relations in an $A_\infty$-category. The homs in the first diagram, Equation (\ref{leibniz}) are in the total space $Y$. We can reduce some calculations to the fiber, because each hom on the total space is a direct sum of Floer groups over two fibers, namely over the points of intersection of the curves given by their projection to the base. This is illustrated in Figure \ref{Leibniz}. 

Thus we can simplify the diagram of Equation (\ref{leibniz}) to involve homs only in the fiber, as done in the diagram of Equation (\ref{simplify}). Though the domains for $M^1$ and $M^2$ have more terms, we can just list the ones not mapping to zero. The rest map to zero because there are no bigons on two Lagrangians in the fiber, hence the only bigons must be over one in the base. Also note $t_x$ is invariant under the monodromy, since the latter affects only the angles and $t_x$ consists of all angles $(\theta_1,\theta_2)$ for a fixed $(\xi_1,\xi_2)$. Note that the subscripts in the simplified diagram indicate which fiber the hom intersection points lie in. We refer to them in Figure \ref{Leibniz} as left, middle, and right respectively, going from left to right. Lastly, $p_{\infty, j}$ refers to intersection points in the left fiber and $p'_{\infty,j}$ refers to intersection points in the middle fiber.
\begin{figure}[h]
\begingroup \allowdisplaybreaks \begin{equation}\label{leibniz}
\begin{aligned}
\begin{diagram}
\hom_Y(L_j, T_x) \otimes \hom_{Y}(L_i, L_j)  & \rTo^{M^2} & \hom_{Y}(L_i, T_x)\\
\dTo^{M^1 \otimes \bm{1} + \bm{1} \otimes M^1}  & & \dTo^{M^1}\\
\hom_Y(L_j, T_x)  \otimes \hom_{Y}(L_i, L_j) &  \rTo^{M^2} & \hom_Y(L_i, T_x)
\end{diagram}
\end{aligned}
\end{equation} \endgroup
\caption{Diagram illustrating Leibniz rule}
\end{figure}

\begin{figure}[h]
\begingroup \allowdisplaybreaks \begin{equation}\label{simplify}
\begin{aligned}
\begin{diagram}
\hom_{\text{\tiny{left}}}(\ell_j, t_x) \otimes \hom_{\text{\tiny right}}(\ell_{i+1},\ell_j) \ni p_{\infty, j} \otimes p^k_{i+1,j}  & \rTo^{M^2} & \hom_{\text{\tiny middle}}(\ell_{i+1},t_x)\\
\dTo^{M^1 \otimes \bm{1} + \bm{1} \otimes M^1}  & & \dTo^{M^1}\\
\hom_{\text{\tiny left}}(\ell_j, t_x)  \otimes \hom_{\text{\tiny left}}(\ell_i,\ell_j) &  \rTo^{M^2=\mu^2_{\text{\tiny left}}} & \hom_{\text{\tiny left}}(\ell_{i},t_x)
\end{diagram}
\end{aligned}
\end{equation} \endgroup
\caption{Simplified diagram on fibers}
\end{figure}

Differentiating the product $M^2$, the Leibniz rule implies (where $l=j-i$ and so $p^k_{i,j}$ indexed over $k$ can instead be written as $p_{e,l}$ indexed over $e$)
\begingroup \allowdisplaybreaks \begin{equation}
M^1(M^2(p_{\infty, j}, p_{e,l-1})) =  M^2(M^1(p_{\infty, j}), p_{e,l-1}) + M^2(p_{\infty, j}, M^1(p_{e,l-1})) = \mu_{\text{\tiny left}}^2(p_{\infty, j}, M^1(p_{e,l-1}))
\label{leib_eqn}
\end{equation} \endgroup
where the last step follows because $M^1(p_{\infty, j}) = 0$ as $p_{\infty, j}$ is of degree 0 and there is nothing in degree 1 at the other intersection points of the two Lagrangians. Note that $p_{\infty, i}$ is of degree $-1$.

Here are the steps which allow us to arrive at the conclusion of the lemma. 

\begin{enumerate}[(i)]
\item RHS of Equation (\ref{leib_eqn}): $M^1(p_{e,l-1}) := \sum_{\tilde{e}} \alpha_{\tilde{e}}(e,l)  p_{\tilde{e},l}$ where $\alpha_{\tilde{e}}(e,l)$ is the count of bigons between $p_{e,l-1}$ and $p_{\tilde{e},l}$ weighted by their area. (In particular, since $\tau$ was the complex parameter on the $B$-side, it is the Novikov parameter here on the symplectic side.) This is the differential we are looking for. It goes from the right fiber to the left fiber.

\item Then: $\mu_{\text{\tiny left}}^2(p_{\infty, j}, M^1(p_{{e},l-1})) =\sum_{\tilde{e}} \alpha_{\tilde{e}}(e,l) \mu_{\text{\tiny left}}^2(p_{\infty, j},  p_{\tilde{e},l})=\left(\sum_{\tilde{e}} \alpha_{\tilde{e}}(e,l) n_{\tilde{e}}(l)\right) p_{\infty, i}$ where $n_{\tilde{e}}(l)$ is the weighted count of triangles (in the left fiber, for degree reasons) with vertices at $p_{\infty, i}, p_{\tilde{e},l}$, and $p_{\infty, j}$. Since it's in the fiber, we can compute $n_{\tilde{e}}(l)$ directly as in Lemma \ref{functor_ok}, as follows. 

\begin{figure}[h]
\begin{tikzpicture}[every node/.style={inner sep=0,outer sep=0, scale = 0.8}]

\draw (-2.5,0.5) node (v1) {} -- (-0.5,-1.5) -- (1.5,-1.5) -- (v1) -- cycle;
\node at (-1,-2) {$p^{}_{\tilde{e},l}$};
\node at (2,-2) {$p_{\infty, i}$};
\node at (-3,0.6) {$p_{\infty, j}$};
\node at (0.5,-2) {$\ell_i$};
\node at (0,0) {$t_x$};
\node at (-2,-1) {$\ell_j$};
\end{tikzpicture}
    \caption{A triangle in $V^\vee$ contributing to $\mu^2$, viewed in $\xi_1,\xi_2$ plane in the universal cover $\mb{R}^4$}
    \label{fig: triangle}
\end{figure}

\noindent The three vertices of the triangle are on lifts of $\ell_i \cap t_x\ni p_{\infty, i}$, $\ell_j \cap t_x \ni p_{\infty, j}$, and $\ell_i \cap \ell_j \ni p_{\tilde{e},l}$. Translate $p_{\tilde{e},l}$ so that it lies in the fundamental domain for the $\Gamma_B$-action. 
$$p_{\tilde{e},l} = \left( \frac{\gamma_{i \cap j}}{j-i}, -i\la\left(\frac{\gamma_{i \cap j}}{j-i} \right) \right)$$

\noindent Along $t_x$, the $\xi$ stay constant at some translate of $a$, say $a+\gamma$. Thus the sum of the $\xi$ going around the triangle equaling zero implies the amount we add to the $\xi$ coordinate, moving along the $\ell_i$ and $\ell_j$ directions from $p_{\tilde{e},l}$, are equal. Thus:
\begin{equation}
    \begin{aligned}
    a + \gamma & = \frac{\gamma_{j \cap i}}{j-i} + \xi\\
        p_{\infty, i} & = \left( \frac{\gamma_{i \cap j}}{j-i}, -i\la\left(\frac{\gamma_{i \cap j}}{j-i} \right) \right) + (\xi, -i\la(\xi))\\
        p_{\infty, j} & = \left( \frac{\gamma_{i \cap j}}{j-i}, -i\la\left(\frac{\gamma_{i \cap j}}{j-i} \right) \right) + (\xi, -j\la(\xi))
    \end{aligned}
\end{equation}

\noindent Thus the triangle is half of the parallelogram spanned by:
\begin{equation}
        \left<a + \gamma - \frac{\gamma_{i \cap j}}{j-i} =:\xi, -i\la(\xi)\right> \mbox{ and }\left< \xi, -j\la(\xi )\right>
\end{equation}

\noindent This is in the 2-plane spanned by $\left<\xi,0 \right>$ and $\left<0, \la(\xi)\right>$. As before, the area of the triangle is half of $\left<\xi, \la(\xi)\right>$ times 
$$\det \left( \begin{matrix} 1 & -i \\1 & -j \end{matrix}\right)= (j-i).$$

\noindent That is
\begin{equation}\label{eq:n_s_count_equal}
    n_{\tilde{e}}(l) \equiv n_{\tilde{e}}(l, x) = \sum_{\gamma \in \Gamma_B} \tau^{-(j-i)\kappa\left( \log_\tau |x| + \gamma - \frac{\gamma_{i \cap j}}{j-i} \right)}=s_{\tilde{e},l}[\log_\tau |x|](1)
\end{equation}
 
\noindent where $[\log_\tau |x|]$ denotes a shift in the $\Gamma_B$-lattice by $\xi=\log_\tau |x|$.
 
\item LHS of Equation (\ref{leib_eqn}): $M^2(p_{\infty, j}, p_{e,l-1}) = n_e(l-1)\cdot p'_{\infty, i+1}$, noting that the fibration is trivializable in the beige region of Figure \ref{Leibniz}. So this $M^2$ inputs points in the left and right fibers, and outputs points in the middle fiber.

\item\label{item_where_count_used} Then going from the middle fiber to the left fiber counts bigons over the region we deformed using Seidel's homotopy. Namely, recall $M^1(p'_{\infty, i+1})= C(x)s(x) \cdot p_{\infty,i}$ by Corollary \ref{cor: use_c} (deforming the differential by a cobordism to a disk count) and Lemma \ref{lemma:final_diffl_computation} (the disk count).
\end{enumerate}

Thus Equation (\ref{leib_eqn}) becomes
$$C(x)s(x) \cdot n_e(l-1) \cdot p_{\infty,i} = M^2(M^1(p_{e,l-1},p_{\infty, j}))= \left(\sum_{\tilde{e}} \alpha_{\tilde{e}}(e,l) n_{\tilde{e}}(l)\right) p_{\infty, i}$$
so comparing the coefficients on $p_{\infty, i}$ we find that
\begin{equation}\label{eq: t_x to l_i}
    C(x)s(x) \cdot n_e(l-1) =\left(\sum_{\tilde{e}} \alpha_{\tilde{e}}(e,l)n_{\tilde{e}}(l)\right)
\end{equation}
In conclusion, we see that the differential is given by
\begin{equation}
    \begin{aligned}
    M^1: CF(\ell_{i+1},\ell_j) & \to CF(\ell_i,\ell_j)\\
    M^1(p_{e,l-1})& =\sum_{\tilde{e}} \alpha_{\tilde{e}} p_{\tilde{e},l}
    \end{aligned}
\end{equation}
where $\alpha_{\tilde{e}}$ is calculated by Equation (\ref{eq: t_x to l_i}), which also indicates how it depends on $x$. This proves the lemma.
\end{proof}

Now finally we can compute the differential explicitly, which was the goal of this chapter.

\begin{lemma}\label{lem: the real final diffl calcn} Consider the following two maps:
\begin{equation}
    \begin{aligned}
    \Hom_V(\mc{L}^{i+1},\mc{L}^j)& \xrightarrow{s \otimes} \Hom_V(\mc{L}^i,\mc{L}^j)\\
    \Hom_{V^\vee}(\ell_{i+1}, \ell_j) & \xrightarrow{\dd} \Hom_{V^\vee}(\ell_i, \ell_j) 
\end{aligned}
\end{equation}
Then $\dd = C(x)\tau^{\kappa(\log_\tau x)} \cdot s\otimes$ as linear maps on vector spaces, where recall 
$$s(x)=\sum_{n \in \mb{Z}^2} x_1^{-n_1} x_2^{-n_2}\tau^{\frac{1}{2} n^t\left(\begin{matrix} 2 &1\\ 1& 2\end{matrix}\right)n}$$ 
is the theta function defining the genus 2 curve in the abelian variety $V$, and $C(x)$ was defined in Definition \ref{defn: sphere_count_C}.
\end{lemma}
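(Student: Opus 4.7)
The plan is to extract the coefficients $\alpha_{\tilde e}$ of the differential from Equation (\ref{eq: t_x to l_i}) of Lemma \ref{leib_lemma} and identify them with the coefficients that appear when one multiplies the theta function $s$ against the section $s_{e,l-1}$ in the basis of Corollary \ref{theta_basis}. Concretely, I will start from
\begin{equation*}
C(x)\,s(x)\cdot n_e(l-1,x)\;=\;\sum_{\tilde e}\alpha_{\tilde e}(e,l)\,n_{\tilde e}(l,x),
\end{equation*}
which holds for every $x\in V$, and feed in Equation (\ref{eq:n_s_count_equal}), rewriting $n_e(l-1,x)=s_{e,l-1}[\log_\tau|x|](1)$ and $n_{\tilde e}(l,x)=s_{\tilde e,l}[\log_\tau|x|](1)$. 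A direct application of the periodicity identity (\ref{phi_periodicity}) together with the factor of automorphy from Claim \ref{defn:line_bundle} lets me undo the shift in the argument: for any $l$, evaluating a shifted basis section $s_{\tilde e,l}[\log_\tau|x|](1)$ produces $\tau^{l\kappa(\log_\tau|x|)}\cdot x^{-l\lambda(\log_\tau|x|)}\cdot s_{\tilde e,l}(x)$ (modulo $\Gamma_B$-book-keeping already baked into the index $\tilde e$). After cancelling the common prefactor on both sides I obtain an identity of the form
\begin{equation*}
C(x)\,\tau^{\kappa(\log_\tau x)}\,s(x)\cdot s_{e,l-1}(x)\;=\;\sum_{\tilde e}\alpha_{\tilde e}(e,l)\,s_{\tilde e,l}(x)
\end{equation*}
as sections of $\mathcal L^{\otimes l}$.

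The next step is to invoke Corollary \ref{theta_basis}: the $s_{\tilde e,l}$ form a basis of $H^0(V,\mathcal L^{\otimes l})$, so the identity above determines the $\alpha_{\tilde e}$ uniquely as the structure constants of the multiplication map $s\otimes:H^0(V,\mathcal L^{\otimes (l-1)})\to H^0(V,\mathcal L^{\otimes l})$, rescaled by the single global factor $C(x)\tau^{\kappa(\log_\tau x)}$. To see rigorously that the coefficients may be extracted this way I will take the $\tilde e$-th Fourier-type coefficient with respect to the $\Gamma_B$-periodicity that characterizes the basis $\{s_{\tilde e,l}\}$ (this is exactly the same decomposition used in the proof of Lemma \ref{functor_ok}, cf.\ the change of variables producing Equation (\ref{eq:main_transformn})). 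The multiplication formula on the B-side was already worked out there, giving precisely the triangle counts appearing in the Fukaya-category composition on $V^\vee$; substituting that formula gives the $\alpha_{\tilde e}$ explicitly.

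Finally, since $C(x)$ and $\tau^{\kappa(\log_\tau x)}$ are global scalars (independent of $e$ and $\tilde e$), the map of vector spaces $\dd:\Hom_{V^\vee}(\ell_{i+1},\ell_j)\to\Hom_{V^\vee}(\ell_i,\ell_j)$ equals $C(x)\tau^{\kappa(\log_\tau x)}$ times the B-side multiplication map $s\otimes$, which is the claim. I expect the main technical obstacle to be careful handling of the shift $[\log_\tau|x|]$ in the evaluation $s_{\tilde e,l}[\log_\tau|x|](1)$: one must check that the $\Gamma_B$-twisting it induces matches exactly the factor of automorphy of $\mathcal L^{\otimes l}$, so that after cancellation we land in the untwisted section $s_{\tilde e,l}(x)$ with the advertised prefactor $\tau^{\kappa(\log_\tau x)}$, without any extra $x$-dependent discrepancy. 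Once that bookkeeping is done, the identification of $\alpha_{\tilde e}$ with the B-side multiplication coefficients is immediate from the uniqueness of the basis expansion and the calculation already performed in Lemma \ref{functor_ok}.
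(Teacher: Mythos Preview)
Your proposal is correct and follows essentially the same route as the paper: start from the Leibniz identity of Lemma~\ref{leib_lemma}, unwind the shift $[\log_\tau|x|]$ using the quadratic identity for $\kappa$ to pass from $s_{\tilde e,l}[\log_\tau|x|](1)$ to $\tau^{-l\kappa(\log_\tau|x|)}s_{\tilde e,l}(|x|)$, cancel the common power of $\tau$ on both sides (the mismatch between $\tau^{-(l-1)\kappa}$ on the left and $\tau^{-l\kappa}$ on the right is exactly the $\tau^{\kappa(\log_\tau x)}$ in the statement), and then read off the $\alpha_{\tilde e}$ from the basis expansion of Corollary~\ref{theta_basis} and the product formula already computed in Lemma~\ref{functor_ok}. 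One small correction: your intermediate formula $s_{\tilde e,l}[\log_\tau|x|](1)=\tau^{l\kappa(\log_\tau|x|)}x^{-l\lambda(\log_\tau|x|)}s_{\tilde e,l}(x)$ has the sign on $\kappa$ reversed and an extraneous $x^{-l\lambda(\log_\tau|x|)}$ factor; the paper's computation gives simply $\tau^{-l\kappa(\log_\tau|x|)}s_{e,l}(|x|)$ (for positive real $x$, with the general case handled by local systems), but since you already flagged this bookkeeping as the main obstacle and your final displayed identity is correct, this does not affect the argument.
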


\begin{proof}
Let $\mc{B}$ denote the basis $s_{e,l}$ and $p_{e,l}$ from \textsection \ref{section:fff_T4}, for some choice of $(i,j)$. We know from Equations (\ref{eq:n_s_count_equal}) and (\ref{eq: t_x to l_i}) is that
$n_{\tilde{e}}(l, x) =s_{\tilde{e},l}[\log_\tau |x|](1)$ and for all $x \in (\mb{C}^*)^2/\Gamma_B$,
\begin{equation}\label{eq:known_leib_trick}
C(x)s(x) \cdot s_{l-1,{e}}[\log_\tau |x|](1) =\left(\sum_{\tilde{e}} \alpha_{\tilde{e}}(e,l)s_{\tilde{e},l}[\log_\tau |x|](1)\right).    
\end{equation}
We can re-arrange some terms as follows:
    \begin{allowdisplaybreaks}
    \begin{align*}
        s_{e,l}[\log_\tau |x|](1) & = \sum_{\gamma \in \Gamma_B} \tau^{-l\kappa(\gamma + \log_\tau |x| - \gamma_{e,l}/l)}\\
        & = \sum_{\gamma \in \Gamma_B} \tau^{-l\kappa(\gamma- \gamma_{e,l}/l) -l \kappa(\log_\tau |x|) +l\left<\log_\tau |x|, \la(\gamma - \gamma_{e,l}/l) \right>}\\
        &= \tau^{-l \kappa(\log_\tau |x|)} \sum_{-\gamma \in \Gamma_B}\tau^{-l \kappa(-\gamma + \gamma_{e,l}/l)} |x|^{-l \la(-\gamma + \gamma_{e,l}/l)}\stepcounter{equation}\tag{\theequation}\\
        &=\tau^{-l \kappa(\log_\tau |x|)} \sum_{\gamma \in \Gamma_B}\tau^{-l \kappa(\gamma + \gamma_{e,l}/l)} |x|^{-l \la(\gamma + \gamma_{e,l}/l)}\\
        &=\tau^{-l \kappa(\log_\tau |x|)} s_{e,l}(|x|)
    \end{align*}
        \end{allowdisplaybreaks}
Hence from Equation (\ref{eq:theta_prod}) for $x$ positive real we obtain  (again we remove this restriction by adding local systems on the Lagrangians):
\begin{equation}
    \begin{aligned}
        s(x)\cdot s_{l-1,e}[\log_\tau x](1) & = s_{1,1}(x) \cdot \tau^{-(l-1) \kappa(\log_\tau x)} s_{l-1,e}(x)\\
        &= \tau^{-(l-1) \kappa(\log_\tau x)} \sum_{\tilde{e}} \left( \sum_\eta \tau^{-\left( 1 + \frac{1}{l-1}\right) \kappa \left(\eta - \frac{(l-1)\gamma_{\tilde{e}}}{l}\right)}\right)s_{\tilde{e},l}.
    \end{aligned}
\end{equation}
Then multiplying by $C(x)$ and using Equation (\ref{eq:known_leib_trick}) the LHS also equals
\begin{equation}
        \sum_{\tilde{e}} \alpha_{\tilde{e}}(e,l) s_{\tilde{e},l}[\log_\tau x](1)= \tau^{-l \kappa(\log_\tau x)}\sum_{\tilde{e}} \alpha_{\tilde{e}}(e,l)s_{\tilde{e},l}.
\end{equation}
Thus 
\begin{equation}
    C(x)\tau^{-(l-1) \kappa(\log_\tau x)} \sum_{\tilde{e}} \left( \sum_\eta \tau^{-\left( 1 + \frac{1}{l-1}\right) \kappa \left(\eta - \frac{(l-1)\gamma_{\tilde{e}}}{l}\right)}\right)s_{\tilde{e},l}=\tau^{-l \kappa(\log_\tau x)}\sum_{\tilde{e}} \alpha_{\tilde{e}}(e,l)s_{\tilde{e},l}
\end{equation}
and equating coefficients on the B-side basis gives
\begin{equation}
    C(x)\tau^{-(l-1) \kappa(\log_\tau x)} \left( \sum_\eta \tau^{-\left( 1 + \frac{1}{l-1}\right) \kappa \left(\eta - \frac{(l-1)\gamma_{\tilde{e}}}{l}\right)}\right)=\tau^{-l \kappa(\log_\tau x)} \alpha_{\tilde{e}}(e,l).
\end{equation}
This implies what we originally wanted:
\begin{allowdisplaybreaks}
    \begin{align*}
        \dd(p_{e,l}) = \sum_{\tilde{e}}\alpha_{\tilde{e}}(e,l)p_{\tilde{e},l}&=C(x) \tau^{\kappa(\log_\tau x)}\sum_{\tilde{e}}\left( \sum_\eta \tau^{-\left( 1 + \frac{1}{l-1}\right) \kappa \left(\eta - \frac{(l-1)\gamma_{\tilde{e}}}{l}\right)}\right)p_{\tilde{e},l}\stepcounter{equation}\tag{\theequation}\label{eq:diffl_count}\\
        s\otimes s_{e,l}& = \sum_{\tilde{e}}\left( \sum_\eta \tau^{-\left( 1 + \frac{1}{l-1}\right) \kappa \left(\eta - \frac{(l-1)\gamma_{\tilde{e}}}{l}\right)}\right)s_{\tilde{e},l}
    \end{align*}
\end{allowdisplaybreaks}
which completes the proof that $\dd = C(x)\tau^{\kappa(\log_\tau x)} \cdot s(x)\otimes $.
\end{proof}

\section{Right arrow of main theorem: fully-faithful embedding $D^b_{\mc{L}}Coh(H) \into H^0FS(Y,v_0)$}\label{section: proof}

%Explanation of Kuranishi structure on GW invariants explained in new VFC book. Seems intuitive though, description of GW invariant is that of a Kuranishi structure?

In this section we prove that the right vertical arrow of Theorem \ref{theorem: me} is a fully faithful embedding, namely that the arrow is indeed a functor and that the morphism groups between objects and images of those objects are isomorphic.

% \begin{remark}[Terminology] We will be concerned with the \emph{Donaldson-Fukaya category} in this thesis, namely passing to $H^0$ on the morphism cochain complexes. In particular we will need to compute the differential. The full $A_\infty$-categorical equivalence is a future direction. In physics the functor $(Y,v_0) \mapsto H^0(FS(Y,v_0))$ may be referred to as a \emph{topological quantum field theory} or TQFT. That is, the first part of a TQFT is a functor from certain manifolds to vector spaces. For more background, see Segal's lecture notes \cite{segal}. 
% \end{remark}

On objects we map $\mc{L}|_H^{\otimes i} \mapsto L_i$. If $\phi^H_{2\pi}$ is the monodromy of the symplectic fibration $v_0:Y \to \mb{C}$ around the origin, then the symmetry of our definition of $\omega$ ensures that $\phi^H_{2\pi}(\ell_i)$ is Hamiltonian isotopic to $\ell_{i+1}$ by Lemma \ref{lem:ham_isotop}. Since Floer cohomology is invariant under Hamiltonian isotopy, we can consider linear Lagrangians in the fibers. This allows us to obtain the bottom row of the following diagram, whenever $j \geq i+2$. When $j< i+2$ there are also Ext groups to consider and we get a long exact sequence instead, namely the last horizontal map is not surjective anymore.
\begin{figure}[h]
\begin{diagram}
& Hom(\mc{L}^{i+1},\mc{L}^{j}) &  \xrightarrow{\otimes s} & Hom(\mc{L}^i,\mc{L}^j) & \to & Hom(\mc{L}^i,\mc{L}^{j}\otimes \iota_* \mc{O}_H) &  \to & 0\\
& \dTo_{\cong}^{\cdot \left(\tau^{\kappa(\log_\tau x)}C(x)\right)^{-1}} & & \dTo^{\cong} & & \dTo \\
& CF(\ell_{i+1},\ell_j) & \xrightarrow{\dd} & CF(\ell_i,\ell_j) & \to & HF(L_i,L_j) & \to & 0
\end{diagram}
\caption{Proof of Main Theorem}
\label{proof_diag}
\end{figure}

\noindent Ext groups here are computed from injective resolutions:

\begin{diagram}
( 0 & \rTo & \mc{L}^{-1} & \rTo & \mc{O}_V & \rTo & \mc{O}_H & \rTo & 0 ) \otimes \mc{L}^{j-i}\\
0 & \rTo & \mc{L}^{j-i-1} & \rTo & \mc{L}^{j-i} & \rTo & \mc{L}_H^{j-i} & \rTo & 0
\end{diagram}
so taking the cohomology long exact sequence we obtain:
\begin{equation}
    \begin{aligned}
    0 &\to H^0(\mc{L}^{j-i-1}) \to H^0(\mc{L}^{j-i}) \to H^0(\mc{L}^{j-i}\mid_H) \to H^1(\mc{L}^{j-i-1}) \to H^1(\mc{L}^{j-i}) \\
    &\to H^1(\mc{L}^{j-i}\mid_H) \to H^2(\mc{L}^{j-i-1}) \to H^2(\mc{L}^{j-i}) \to H^2(\mc{L}^{j-i}\mid_H)
    \end{aligned}
\end{equation}

The crux of the argument is that the left-side square in the diagram in Figure \ref{proof_diag} commutes, which then implies that the rightmost vertical arrow is an isomorphism as well. This follows from Lemma \ref{lem: the real final diffl calcn}.

% To summarize, as seen in Lemma \ref{functor_ok} the generators $p^{k'}_{i,j}$ of $CF(\ell_i,\ell_j)$ correspond under homological mirror symmetry for $V$ to a basis of sections $s^{k'}_{i,j}\in \hom_V(\mathcal{L}^i,\mathcal{L}^j)$. Still using HMS for $V$, the coefficient $D^{k}_{i+1,j}$ in the expression $D_{k}^{i+1,j} \cdot p'_{\infty,i+1}$ corresponds to the value of $s^{k}_{i+1,j}$ at the point $x \in V$ in some trivialization of $\mathcal{L}^{j-i-1}$ at $x$ (by considering homs with a skyscraper sheaf). Thus, the expression $C D_k^{i+1,j}(x)$ in the above equation can be written as a linear combination whose coefficients $\alpha^{ijk}_{k'} $ express $M^1(p^k_{i+1,j})$ in the basis of $\hom_{left}(\ell_i, \ell_j)$ given by the $p^{k'}_{i,j}$.

% We would like to use $t_x$ as a boundary condition and not an $\ell_j$ because this will allow us to count discs with boundary in the preimage of a moment map, as in \cite{cho_oh}. The Lagrangian $T_x$ obtained by parallel-transporting $t_x$ around U-shaped curves corresponds to evaluation sections at the point $x$ on the mirror side. So what the calculation above says is that once we know the morphisms groups for all $t_x$, we know homs between any $\ell_i$ and $\ell_j$. 

% The $x$ subscript in $t_x$ indicates the fiber Lagrangian $\{(\xi_1,\xi_2,\theta_1,\theta_2)\}_{0 \leq \theta_1,\theta_2 < 2 \pi}$ where $\xi_1$ and $\xi_2$ are fixed (we're on a fiber so $\eta$ is fixed too) hence $|x|,|y|,|z|$ is a fixed value. So $t_x$ depends only on the norms of the toric coordinates. 

 More precisely, we've shown  in Lemma \ref{lem: the real final diffl calcn} that under the chosen isomorphisms of Lemma \ref{functor_ok}, the Floer differential $\dd$ agrees with multiplication by $s \in H^0(V,\mc{L})$ up to the multiplicative factor $\tau^{\kappa(\log_\tau x)}C(x)$, where $C(x)$ was the open Gromov-Witten invariant in \cite{kl} for the particular choice of K\"ahler parameter in Remark \ref{rem: choice of Kahl param}. So scaling the first arrow on morphisms by $\left(\tau^{\kappa(\log_\tau x)}C\right)^{-1}$ gives a commutative diagram on the left, which we can do since $C(x)=1+\ldots$. 

Furthermore, this map on objects induces a functor because it respects composition. This is because the product structures on the groups in the right-hand vertical arrow of the diagram in Figure \ref{proof_diag} are those naturally induced on the quotients, and the left two vertical maps are functorial by Lemma \ref{functor_ok}. So the induced isomorphisms on the cokernels of the horizontal maps are also functorial, and composition on the complex side versus the Floer product on $HF$ match under the constructed isomorphisms. 

Hence this provides the desired isomorphism between the morphisms groups for the functor $D^b_{\mc{L}}Coh(H) \to H^0(FS(Y,v_0))$ and completes the proof of the main Theorem \ref{theorem: me}.

\section{Appendix A: Enough space to bound derivatives}
\begin{lemma}[Estimates on bump function derivatives] The definition corresponding to Figure \ref{delineation_pic} provides enough space to make $\log_T$-derivatives as small as needed.
\end{lemma}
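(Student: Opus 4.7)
The plan is to show that in each delineated region of Figure \ref{delineation_pic}, the relevant variable ($d_I$, $\theta_I$, $d_{II}$, or $\theta_{II}$) traverses an interval whose width, measured logarithmically in base $T$, is of order $l$, while the corresponding bump function $\alpha_i$ only needs to change by $O(1)$ across that interval. Since $\log_T$-derivatives compute as $\alpha'(\mu)\cdot \mu \approx \Delta \alpha /\Delta\log_T\mu$, this will give an $O(1/l)$ bound, which can be made arbitrarily small by taking $l$ large.

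First I would fix the Novikov parameter regime $|v_0|=T^l$ with $T\ll 1$ and $l$ large, so that $r_xr_yr_z = T^l$ and in the red-labeled regions of Figure \ref{interpolate} each $Tr_\ast$ is of the form $T^{l_\ast}$ with $l_x+l_y+l_z=l$ and all $l_\ast\geq 0$. Using the leading-order approximations of Equation (\ref{polar_coords}),
\[
d_I \approx (Tr_x)^2-\tfrac12\bigl((Tr_y)^2+(Tr_z)^2\bigr),\qquad \theta_I\approx (Tr_y)^2-(Tr_z)^2,
\]
I would compute $\log_T d_I$ and $\log_T |\theta_I|$ at the boundaries of Region I as labeled in Figure \ref{delineation_pic}. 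At the inner boundary (adjacent to Region VII) the three norms are comparable, so $\log_T d_I$ and $\log_T|\theta_I|$ are close to $2\min(l_x,l_y,l_z)$, which is $O(l)$. At the outer boundary (adjacent to a $g_{\ast\bullet}$ region) one norm dominates and the relevant $\log_T$ value drops to $O(1)$. Thus $\Delta \log_T d_I$ and $\Delta \log_T|\theta_I|$ across Region I are both of order $l$.

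Next, since $\alpha_3$ varies only between $\tfrac23$ and $1$, $\alpha_4$ between $-\tfrac12$ and $\tfrac12$, $\alpha_5$ between $0$ and $1$, and $\alpha_6$ between $0$ and $1$, each total variation is at most $1$. I would then choose these bump functions to be smooth, monotone, and affine on the bulk of each interval, with quadratic roundings only near the endpoints (so that second $\log_T$-derivatives are also controlled). This gives the uniform estimates
\[
\Bigl|\frac{d^k\alpha_i}{d(\log_T\mu)^k}\Bigr|\leq \frac{C_k}{l^k}
\]
for $k=1,2$ and each $\mu\in\{d_I,\theta_I,d_{II},\theta_{II}\}$. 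For Regions II, III, IV, V, VI the argument is identical after the permutation symmetry $x\leftrightarrow y\leftrightarrow z$, and the intermediate sectors where only $\alpha_3,\alpha_5$ or only $\alpha_4$ are active are already contained in the Region I estimates.

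The main obstacle is verifying that the widths of the narrow transition strips (for instance the region IIB where $\alpha_6\equiv 1$ but $r_x,r_y$ cross through equality) are still of order $l$ rather than $O(1)$; for this I would use that $|v_0|=T^l$ forces the sum $l_x+l_y+l_z$ to be $l$, so even narrow transition sectors between two vertex charts have at least one of the $l_\ast$ growing linearly with $l$, keeping the $\log_T$-width large. Once these width bounds are established, plugging the resulting $O(1/l)$ and $O(1/l^2)$ estimates into the expressions for $i\dd\ol{\dd}F$ in Section \ref{convert_polar} justifies discarding all bump-function-derivative contributions in the non-degeneracy computation carried out in Appendix B.
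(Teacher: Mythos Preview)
Your overall strategy---show each interpolating variable traverses a $\log_T$-interval of order $l$ while the bump function varies by $O(1)$---matches the paper's, and your treatment of $d_I$, $d_{II}$, and $\theta_{II}$ is essentially correct in spirit (though the paper is more explicit: it fixes a parameter $p$, defines Region I by $d_I\in[T^{l/4},T^{l/4-l/p}]$, computes coordinates of specific corner points $P_1,P_2,Q_1,Q_2$, and derives the concrete constraint $p>16$ needed so that $r_x\gg r_y$ throughout Region IIA).

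There is, however, a genuine gap in your handling of $\alpha_4(\theta_I)$. The angular variable $\theta_I\approx (Tr_y)^2-(Tr_z)^2$ passes through zero inside Region I along the line $r_y=r_z$, so $\log_T|\theta_I|$ is not merely large at the boundaries---it blows up in the interior. More to the point, the quantities that actually appear in Appendix B are not just $\alpha_4'(\theta_I)\theta_I$ but products like $\alpha_4^{'\log}\cdot\dfrac{T^2 r_\bullet r_\star}{\theta_I}$, and the factor $\dfrac{r_y^2}{r_y^2-r_z^2}$ diverges as $r_y\to r_z$. Your width estimate does nothing to control this. The paper's fix is structural: declare $\alpha_4\equiv 0$ on the sliver where $T<(r_y/r_z)^2<T^{-1}$, so that all terms containing $\alpha_4'$ vanish where $1/\theta_I$ is dangerous, and then verify that \emph{outside} this sliver there is still a $\log_T$-interval of order $l$ available for $\alpha_4$ to transition (the paper computes this residual width explicitly and finds it is a positive multiple of $l$ plus $1/2$). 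Without this step your argument does not close, because the smallness of the log-derivative alone does not bound the terms that feed into non-degeneracy.
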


\begin{proof} {\bf Region VII} contains the point in its center where $r_x=r_y=r_z=T^{l/3}$ and its boundary circle is $d=T^{l/4}$ for whichever region $d$ we are using. So $\Delta \log_T r_x = O(l)$ is of order $l$.

{\bf Region I} is delineated by a rectangle in $(d_I,\theta_I)$ coordinates of length $2T^{-l/p}$ by $T^{-l/p}$ for a variable $p$ which we constrain below. Namely, 
\begin{equation}
    \mbox{Region I} := \{(d_I, \theta_I) \mid (d_I, \theta_I) \in [T^{l/4}, T^{l/4 - l/p}] \times [-T^{l/p}, T^{l/p}]\}
\end{equation}
Thus $\Delta \log_T d_I = O(l/p)$. The angular coordinate requires a bit more to analyze since it goes to zero when $r_y=r_z$.

The curves delineating region II are $\theta_{II}$ constant for the radially outward lines, and $d_{IIA}$ or $d_{IIB}$ constant along angular curves. Note that $d_{IIA}$ constant here is approximately $r_x$ constant.

Label the points of Figure \ref{delineation_pic} as follows.

{\bf Region II} Radial lines are $d_{IIx}$ constant, for $x \in \{A,B,C\}$. Two curves around the origin through $P_1, Q_1$ and $P_2, Q_2$ respectively are $\theta_{II}$ constant.

\begin{itemize}
\item $P_1=B \approx ({l/4}, {3l/8-l/p}, {3l/8 + l/p})$

\item $P_2 \approx ({l/4 - l/p}, {3l/8 - 2l/p}, {3l/8 + 3l/p})$. 
This follows because the sliver from $P_1$ to $P_2$ is $\theta_{II} = \log_T r_y - \log_T r_x$ constant, while $P_2$ is obtained by moving $E$ up along constant $d_{I}   \approx (Tr_x)^2$ so $r_x(P_2) \approx T^{l/4 - l/p}$. Then constant $\theta_{II}$ implies:
\begin{align*}
\theta_{II}(P_1)&\approx 3l/8 - l/p - l/4 = l/8 - l/p\\
=\theta_{II}(P_2) &\approx \log_T r_y - (l/4 - l/p) \\
 \implies \log_T r_y& \approx 3l/8 - 2l/p
 \end{align*}

\item $Q_1 \approx ({l/4} , {3l/8 - 2l/p}, {3l/8 + 2l/p})$
From $P_1$ to $Q_1$ along $d_{IIA}$ constant we increase $r_y$ by a factor of $T^{-l/p}$ keeping $r_x$  approximately constant. 

\item $Q_2 \approx ({l/4 - l/p}, {3l/8 - 3l/p}, {3l/8 + 4l/p})$
From $Q_1$ to $Q_2$ we have $\theta_{II}$ is constant and at $Q_1$ we have $\theta_{II}(Q_1) \approx 3l/8 -2l/p -l/4  = l/8 - 2l/p$. From $P_2$ to $Q_2$ we have $r_x$ approximately constant hence $\log_T r_x$ at $Q_2$ is approximately $l/4-l/p$ so
$$\log_T r_y \approx (l/8 - 2l/p) + (l/4 - l/p) = 3l/8 -3l/p$$
The $r_z$ coordinate is determined by $r_xr_yr_z = T^l$.

\end{itemize}

Now finally we get a condition on $p$. We want $r_x >> r_y$ everywhere in region IIA so that contour lines for $d_{IIA}$ look roughly as they are drawn and approximations for $d_{IIA}$ are valid.  Looking at $Q_1$ this means $T^{l/4} >> T^{3l/8 - 2l/p}$ and $Q_2$ gives  the same constraint. Hence we need $1/8 - 2/p > 0$ or $p>16$. E.g.~take $p = 17$.  

Recall Figure \ref{delineation_pic} and that from $A$ to $B$ $r_y$ moves through $l/p$ orders of $T$ magnitude while from $D$ to $E$ it moves through $3l/2p$ which is a lot more for small $T$. 

{\bf Region I: $\alpha_3, \alpha_5$.} Since $\alpha_3$ is a function of $d_I \approx T^2[r_x^2 - \frac{1}{2}(r_y^2 + r_z^2)] \approx (Tr_x)^2$ in region I because $r_x$ is many orders of magnitude bigger than $r_y$ and $r_z$ get in that region. We need to see how $\log_T (Tr_x)^2$ changes. Recall that $r_x$ changed by $l/p$ orders of magnitude, so $\log_T (Tr_x)^2$ changes by approximately $2l/p$. Thus the log derivative can be made as small as possible, as explained above.

For $\alpha_3^{'' \log}$ we want to know the change in slope over $\log d$ time. The derivative goes from 0 to $\frac{1}{3l}$ in approximately $l$ log time. Think of a bump function from $2/3$ to 1. Hence all terms involving a derivative of $\alpha_3$ are bounded by a constant times $T^2/l$.

{\bf Region I: $\alpha_4$.} However, in the calculation for $\alpha_4$, we end up dividing by $\theta$. So $\alpha_4$ will need to be constant for a short while in the middle, so we don't divide by zero. In the calculations above, we have cut out a region where $T < (r_y/r_z)^2 < T^{-1}$. In this region $\alpha_4 \equiv 0$. To do this, we need to make sure that we have at least one order of magnitude difference between $r_y$ and $r_z$. This is fine in region I because we have $r_y$ and $r_z$ many orders of magnitude apart at $B$ and $C$, with even more discrepancy as we move out to $E$ and $F$. (Note however, the reverse would have happened in region II. In other words, $r_z$ gets smaller as we move out, so $r_x$ and $r_y$ get bigger, and constant $r_y^2-r_x^2$ means they will get closer and closer together as we move out.) 

This one is a function of $\theta_I \approx T^2[r_y^2 - r_z^2]$ or its negative. Furthermore, we're taking out a sliver around the axis where $T < (r_y/r_z)^2 < 1/T$. So we need to check there's enough space left. At the bottom where $C$ and $F$ are, $\theta_I \approx -(Tr_z)^2$ which is on the order of $-T^{2\left(\frac{3l}{8} - \frac{l}{p}+1\right)}$ as seen above. Then we stop when $r_z/r_y = 1/\sqrt{T}$. At this point $r_z$ and $r_y$ are basically equal, since they are only about one $T$-order of magnitude apart and both really small. At some fixed $d_I$, we have $r_x \approx T^{l/4 - tl/p}$ where $t$ is fixed at some number between 0 and 1. So $r_z = \sqrt{T^{-1}} r_y$ and:
$$r_xr_yr_z = T^{l} \implies T^{l/4 - tl/p} \cdot r_y \cdot \sqrt{T^{-1}} r_y = T^{l} \implies r_y^2 = {\sqrt{T}} \cdot T^{3l/4  + tl/p}.$$
Hence 
$$\theta_I \approx T^2[r_y^2 - r_z^2] = T^2r_y^2 \left(1 - \left(\frac{r_z}{r_y}\right)^2\right) \approx T^2{\sqrt{T}} \cdot T^{3l/4  + tl/p}(1 - T^{-1})$$
$$ \approx -T^{3l/4  + tl/p+3/2}\because T<<1$$

We are checking how $\theta_I$ changes when we cut out this sliver to make sure $\alpha_4$ has enough space for the log derivatives. We find $\theta_I$ decreases from order of magnitude $3l/4 - 2l/p+2$ to order of magnitude $3l/4  + tl/p+3/2$. This means a net change of 
$$(3l/4 - 2l/p+2) - (3l/4  + tl/p+3/2)  = -l \left( \frac{2}{p}+\frac{t}{p} \right) + 1/2$$
with a multiple of $l$, so we're still okay for the log derivative of $\alpha_4$ because this is the denominator of the log derivative which can be made very large because of the $l$. Note that we only care about $\alpha_4$ in region I since it's constant at $1/2$ when we exit the region and move into region II. 

{\bf Region II.} Also, there is enough space in $\theta_{II}$ and $\log d_{IIA}$ for $\alpha_6$ and $\alpha_3, \alpha_5$ respectively. Recall
$$d_{IIA} \approx T^2[r_x^2 - \frac{1}{2}(r_y^2 + r_z^2) + \frac{3}{2}\alpha_6\cdot r_y^2]$$
Since everywhere in region IIA we have $r_x>>r_y$, the leading order term in $d_{II}$ is $(Tr_x)^2$. Likewise since $\theta_{II}$ is linear in $\log_T r_y - \log_T r_x$, for it to stay constant we find that both $\log_T r_y$ and $\log_T r_x$ increase the same amount along contour lines. At $P_1$ and $P_2$, $\theta_{II} \approx \log_T( T^{l/8 - l/p}) $. At $Q_1$ and $Q_2$, $\theta_{II} \approx \log_T(T^{l/8 - 2l/p})$. Thus the change is $l/p$ and $\alpha_6^{'} \approx \Delta \alpha_6/\Delta \theta_{II} \approx p/l$. This can be made small by taking $l$ large.

Now we check $d_{IIA}$ for $\alpha_3$ and $\alpha_5$. At the smaller value we have $d_{IIA} \approx (Tr_x)^2$ at $P_1$ where $r_x^2$ has exponent $l/2$. At the larger value we get $l/2 - 2l/p$. So the change is $2l/p$ and the derivatives in $\alpha_3$ and $\alpha_5$ are approximately $p/2l$ times whatever the full range of $\alpha_i$ is. Again this can be made small.

Second-order derivatives also have enough space. In the bump functions $\alpha_3,\ldots,\alpha_6$ the variable we're taking the derivative with respect to has space $k\cdot l$ for some $k>0$ to move while each of the bump functions moves through an amount $1/3$ or 1. By making $l$ really big, we can ensure that while this is happening, the second derivative doesn't get too big. The graph of the bump functions won't be linear because they have to level off at the endpoints of their support. But with enough space, we can make sure they don't turn too quickly from horizontal to linear. 

{\bf Other regions covered by symmetry.} The argument that there is enough space in IIC follows from IIA by swapping $r_x$ and $r_y$ in the calculations. The only variable in region IIB is $d_{IIB}$ and we take the two radial curves to be where $d_{IIB}$ is constant, with the same amount of change in the variable as in $d_{IIA}$. Note that when $r_x>>r_y$ we see that $d_{IIB} \approx T^2[ r_x^2 +r_y^2 - \frac{1}{2}r_z^2] \approx (Tr_x)^2$. So initially, constant $d_{IIB}$ is approximately the same thing as constant $r_x$. At some point we increase $r_y$ enough to equal $r_x$. Likewise, coming from region IIC we have that constant $d_{IIB}$ means, initially, approximately the same thing as constant $r_y$. So in the middle the curve interpolates between vertical (constant $r_x$) and horizontal (constant $r_y$). The derivatives for functions of $d_{IIB}$ have enough space because $d_{IIB}$ goes through the same amount as $d_{IIA}$ and $d_{IIC}$.  

{\bf Outside $\mb{C}^3$ patch.} Note that we will have enough space for log derivatives because $\theta \approx  (1+|Tz|^2)T^2(r_x^2 - r_y^2)$ for $r_x,r_y$ very small and this was already checked earlier when $\theta \approx T^2(r_x^2-r_y^2)$.\end{proof}

\section{Appendix B: Negligible terms in defining the symplectic form}

\subsection{Region I}
{\bf Negligible terms.} The terms that produce derivatives of the bump functions are $\alpha_3(d_I)d_I$ and $\alpha_4(\theta_I) \alpha_5(d_I)\theta_I$. Note that $d$ in region $I$ close to  where $r_x = r_y = r_z$ is approximately linear in $(Tr_x)^2,(Tr_y)^2$, and $(Tr_z)^2$.

Some notation: Let $d \equiv d_I$ in this section. We want to allow the variable we're taking the derivative with respect to to vary among $\{r_x,r_y,r_z\}$. So we denote those variables to be $\{r_\star, r_\bullet\} \in \{r_x,r_y,r_z\}$. Furthermore, $' \log$ means we take the log derivative $d\alpha_3/d (\log(d_I)) = \alpha_3' \cdot d$. The following calculation for the second derivative applies to $\alpha_5$ as well, and $\alpha_4$ if we replace $d$ with $\theta$.
\begin{align*}
\frac{d^2 \alpha_3}{d (\log d)^2} & = (\alpha_3' \cdot d)' \cdot d = (\alpha_3'' \cdot d + \alpha_3')d\\
\implies \alpha_3'' \cdot d + \alpha_3' & = \frac{1}{d} \cdot \frac{d^2 \alpha_3}{d (\log d)^2}\\
\implies \alpha_3'' & = \frac{1}{d^2} \cdot \left(\frac{d^2 \alpha_3}{d (\log d)^2} - \frac{d \alpha_3}{d (\log d)}\right)
\end{align*}

{\bfseries Diagonal terms for $\alpha_3(d) \cdot d$:} 
{\small
\begin{allowdisplaybreaks}
\begin{align*}
d & \approx T^2[r_x^2   - \frac{1}{2} \left( r_y^2 + r_z^2  \right)] \implies d_{r_*}  \approx \la T^2 r_*, \;\; d_{r_* r_*} \approx \la T^2,\;\; \la \in \{2, -1\}\\
 \left(\frac{1}{r_*}\dd_{r_*} + \dd^2_{r_* r_*}\right)(\alpha_3(d) \cdot d) &= \frac{1}{r_*}( \alpha_3' d_{r_*} d + \alpha_3(d) \cdot d_{r_*})\\
 & + (\alpha_3''d_{r_*}^2 d + \alpha_3' d_{r_* r_*} d + 2 \alpha_3'(d) d_{r_*}^2 + \alpha_3(d) \cdot d_{r_* r_*})\\
& \approx   \frac{1}{r_*}( \alpha_3'  \la T^2 r_* d + \alpha_3(d) \cdot  \la T^2 r_*) \\
& + (\alpha_3''( \la T^2 r_*)^2 d + \alpha_3' \la T^2 d + 2 \alpha_3'(d) ( \la T^2 r_*)^2 + \alpha_3(d) \cdot \la T^2)\\
& = \la T^2[\alpha_3' d + \alpha_3 + \la T^2\alpha_3''r_*^2 \cdot d + \alpha_3' d + 2\la T^2\alpha_3' r_*^2 +\alpha_3(d)]\\
& = \lambda T^2 [ \alpha'_3 ( 2d + \lambda T^2 r_*^2) + \lambda T^2 r_*^2(\alpha''_3 d + \alpha'_3)] + 2\lambda T^2 \alpha_3 \\
& = \la T^2[ \alpha_3^{' \log}(2 + \frac{\la T^2r_*^2}{d} ) + \la\frac{T^2r_*^2}{d}\alpha_3^{'' \log}]+2\la T^2 \alpha_3 
\end{align*}
\end{allowdisplaybreaks}

\begin{align*}
\alpha_3^{' \log} & \approx \frac{\Delta \alpha_3}{\Delta \log d} \approx \propto \frac{1}{l}\\
\frac{T^2 r_*^2}{d}& \approx \frac{T^2 r_*^2}{T^2[r_x^2   - \frac{1}{2} \left( r_y^2 + r_z^2  \right)] } = \frac{r_*^2}{r_x^2   - \frac{1}{2} \left( r_y^2 + r_z^2  \right)} = \frac{r_*^2/r_x^2}{1-\frac{1}{2}\left(\frac{r_y^2}{r_x^2} + \frac{r_z^2}{r_x^2}\right)}\approx 1 \mbox{ or } 0 \because r_x>>r_y, r_z\\
\alpha_3^{'' \log} & \approx \propto \frac{1}{l^2}
\end{align*}
}

Thus all terms involving bump function derivatives can be made much smaller than $2\la T^2 \alpha_3$ for $l$ sufficiently large.

{\bfseries Off-diagonal terms for $\alpha_3(d)\cdot d$ where $* \neq \star$:}
\begin{align*}
d_{r_*r_\star} & = 0\\
\dd^2_{r_* r_\star} (\alpha_3 \cdot d) & = \dd_{r_*} (\alpha_3' d_{r_\star} d + \alpha_3(d) \cdot d_{r_\star})\\
&=\alpha_3''d_{r_*}d_{r_\star}d + \alpha_3'd_{r_\star r_*}d + 2\alpha_3'd_{r_\star}d_{r_*} + \alpha_3 \cdot d_{r_\star r_*}\\
& \approx T^4 \la \mu r_*r_\star \alpha_3''d + 0 + 2T^4 \la\mu r_*r_\star \alpha_3'+0,\qquad \la, \mu \in \{2,-1\}\\
& = T^4 \la \mu \frac{r_*r_\star}{d}d(\alpha_3''d + \alpha_3' + \alpha_3')\\
& = T^4 \la \mu \frac{r_*r_\star}{d}(\alpha_3^{'' \log} + \alpha_3^{' \log})\\
 \frac{T^2r_*r_\star}{d}&\approx \frac{r_* r_\star/r_x^2}{1 - \frac{1}{2}((r_y/r_x)^2 + (r_z/r_x)^2)} \approx 0 \mbox{ or } 1
\end{align*}
Again we see that all the derivatives of the bump functions are log derivatives and all the terms are bounded by a constant times $T^2/l$. 

{\bf Diagonal terms $\frac{1}{r_\bullet}\dd_{r_\bullet} + \dd^2_{r_\bullet r_\bullet}$ and off-diagonal terms $\dd^2_{r_\bullet r_\star}$ for $\alpha_4\alpha_5\theta_I$.}
\begin{align*}
\theta & \approx T^2(r_y^2 - r_z^2) \implies \theta_{r_\bullet }  \approx \la_\bullet  T^2 r_\bullet , \qquad \la_\bullet  \in \{0,\pm 2\} \\
d& \approx T^2(r_x^2 - \frac{1}{2}(r_y^2+r_z^2)) \implies d_{r_\bullet } \approx \mu_\bullet  T^2 r_\bullet , \qquad \mu_\bullet  \in \{2, -1\}\\
\therefore \frac{1}{r_\bullet }\dd_{r_\bullet }(\alpha_4\alpha_5 \theta) & = \frac{1}{r_\bullet }(\alpha_4' \theta_{r_\bullet } \alpha_5 \theta + \alpha_4 \alpha_5' d_{r_\bullet } \theta + \alpha_4 \alpha_5 \theta_{r_\bullet }) \approx T^2(\la_\bullet  \alpha_4^{' \log} \alpha_5 + \mu_\bullet  \alpha_4 \alpha_5^{' \log} \frac{\theta}{d} + \la_\bullet  \alpha_4 \alpha_5)
\end{align*}

{\scriptsize \begin{align*}
\dd^2_{r_\star r_\bullet }(\alpha_4\alpha_5\theta) & \approx T^2\dd_{r_\star}[\textcolor{red}{r_\bullet \la_\bullet  \alpha_4' \theta \alpha_5 }+ {r_\bullet \mu_\bullet  \alpha_4 \alpha_5'\theta} + \textcolor{blue}{r_\bullet \la_\bullet  \alpha_4 \alpha_5} ]\\
& = T^2[\textcolor{red}{\dd_{r_\star} (r_\bullet)  \la_\bullet  \alpha_4'\theta \alpha_5 +  r_\bullet \la_\bullet  \alpha_4'' \theta_{r_\star} \theta \alpha_5 + r_\bullet \la_\bullet  \alpha_4'\theta_{r_\star}  \alpha_5 + r_\bullet  \la_\bullet  \alpha_4' \theta \alpha_5' d_{r_\star}} \\
&+{ \dd_{r_\star}(r_\bullet )\alpha_4 \alpha_5' \theta    + r_\bullet  \mu_\bullet  \alpha_4' \theta_{r_\star} \alpha_5' \theta + r_\bullet  \mu_\bullet  \alpha_4 \alpha_5'' d_{r_\star} \theta + r_\bullet  \mu_\bullet  \alpha_4 \alpha_5' \theta_{r_\star}}\\
 &+\textcolor{blue}{ \dd_{r_\star}(r_\bullet ) \la_\bullet  \alpha_4 \alpha_5 +   r_\bullet  \la_\bullet  \alpha_4' \theta_{r_\star} \alpha_5 + r_\bullet  \la_\bullet  \alpha_4  \alpha_5' d_{r_\star} }  ]\\
 & \approx T^2 [\textcolor{red}{\delta_{\bullet \star}\la_\bullet  \alpha_4^{' \log} \alpha_5 + T^2[\la_\bullet \la_\star \frac{r_\bullet r_\star}{\theta}(\alpha_4^{'' \log} - \alpha_4^{' \log}  ) \alpha_5 + \la_\bullet \la_\star \frac{r_\bullet  r_\star}{\theta}\alpha_4^{' \log} \alpha_5 + \la_\bullet  \mu_\star \frac{ r_\bullet r_\star}{d}\alpha_4^{' \log}\alpha_5^{' \log}]}\\
 &+\delta_{\star \bullet } \frac{\theta}{d}\alpha_4 \alpha_5^{' \log}  + \mu_\bullet \la_\star \frac{T^2r_\bullet  r_\star}{d} \alpha_4^{' \log} \alpha_5^{' \log} + \mu_\bullet \mu_*\frac{T^2r_\bullet r_\star \theta}{d^2} \alpha_4( \alpha_5^{'' \log} - \alpha_5^{' \log}) + \mu_\bullet  \la_\star \frac{T^2 r_\bullet  r_\star }{d}\alpha_5^{' \log}\alpha_4\\
 & +\textcolor{blue}{ \delta_{\star \bullet } \la_\bullet  \alpha_4 \alpha_5 + \la_\bullet  \la_\star \frac{T^2r_\bullet r_\star}{\theta}\alpha_4^{' \log} \alpha_5 + \la_\bullet  \mu_\star \frac{T^2r_\bullet  r_\star}{d} \alpha_4\alpha_5^{'\log}     }     ]
\end{align*}}

We've already seen above that $T^2\displaystyle{\frac{r_\star r_\bullet}{d}}$ and hence $\displaystyle{\frac{\theta}{d}}$ are bounded. So it remains to check that $\displaystyle{T^2\frac{r_\star r_\bullet}{\theta}}$ is bounded. Also, we only need to consider the cases where the numerator does not involve $r_x$ by the comment above that we get zero otherwise and that second-order partial derivatives are symmetric. So we have to bound the following expressions:
$$\frac{r_y^2}{r_y^2-r_z^2},\frac{r_z^2}{r_y^2-r_z^2},\frac{r_yr_z}{r_y^2-r_z^2}.$$
We are considering the top half of region I, where $r_y>r_z$. We declare that $\alpha_4$ is constant in the region $1<\left( \frac{r_y}{r_z} \right)^2 < \frac{1}{T}$. So the support of $\alpha_4$ is where $\left( \frac{r_y}{r_z} \right)^2 > \frac{1}{T}$. In particular, we see that
\begingroup \allowdisplaybreaks \begin{equation}
\label{eqn:bound_loc_constant}\frac{1}{\left( \frac{r_y}{r_z} \right)^2-1} < \frac{1}{T^{-1}-1} = \frac{T}{1-T}\approx T.
\end{equation} \endgroup
So these terms are bounded, which can be seen dividing top and bottom by $r_z^2$. So we've seen $\lambda_\bullet \lambda_* \frac{T^2 r_\bullet r_*}{\theta},  \frac{T^2r_\bullet r_\star}{d}, \frac{\theta}{d}$ are bounded, and multiply log derivatives which can be made sufficiently small for $l$ large.

\subsection{Region IIA}

{\bf Negligible terms.} Bump function terms in $F$ are: $\alpha_6\cdot (Tr_y)^2,\; \alpha_3\cdot d_{IIA},\; \{\alpha_5(Tr_*)^2\}_{*=y,z},\; \alpha_5\alpha_6\cdot (Tr_y)^2$

\begin{center}\fbox{ \bfseries 1st term for region IIA: $\alpha_6(\log(r_y/r_x))\cdot (Tr_y)^2$}\end{center}

{\bfseries First derivative} divided by $r_\star$: 
\begin{align*}
\frac{1}{r_x}\dd_{r_x}[\alpha_6(\log r_y - \log r_x))(Tr_y)^2] & = -\frac{1}{r_x^2} \alpha_6' \cdot (Tr_y)^2 = -T^2\cdot \frac{r_y^2}{r_x^2}\alpha_6', \; \left| \frac{r_y^2}{r_x^2} \right| <1\\
\frac{1}{r_y}\dd_{r_y}[\alpha_6(\log r_y - \log r_x))(Tr_y)^2] & = \frac{1}{r_y^2} \alpha_6' \cdot (Tr_y)^2 + \alpha_6\cdot (2T^2)=T^2 \alpha_6' + 2T^2 \alpha_6\\
\frac{1}{r_z}\dd_{r_z}[\alpha_6(\log r_y - \log r_x)(Tr_y)^2] & = 0\\
\alpha_6' & \approx \propto \frac{1}{l}
\end{align*}
So all bump function derivative terms from first order derivatives of this term can be made small.

{\bfseries Second derivative} $\dd^2_{\star \bullet}$: 
{\footnotesize \begin{align*}
\dd^2_{r_xr_x}(\alpha_6(Tr_y)^2) & = \dd_{r_x}(-\frac{(Tr_y)^2}{r_x} \alpha_6') = \frac{(Tr_y)^2}{r_x^2}(\alpha_6' +\alpha_6'') < T^2(\alpha_6' + \alpha_6'') \approx \propto T^2(\frac{1}{l} + \frac{1}{l^2}), \; \mbox{ small }\\
\dd^2_{r_xr_y}(\alpha_6(Tr_y)^2) & = \dd_{r_y}(-\frac{(Tr_y)^2}{r_x}\alpha_6') = \frac{-T^2r_y}{r_x}(2\alpha_6' +\alpha_6''), \mbox{ norm } < T^2(2\alpha_6'+\alpha_6'')\\
\dd^2_{r_yr_y}(\alpha_6(Tr_y)^2) & = \dd_{r_y}(T^2 \alpha_6'r_y + 2T^2 \alpha_6r_y) = T^2(3\alpha_6' + \alpha_6'' + 2\alpha_6)
\end{align*}}

Note the first derivative $\alpha_6'$ goes from 0 to a maximum of $1/l + \eps$ at the half way point of $l/2$ so the change in slope is roughly $1/l^2$, still small. (Strictly speaking, $(1/l + \eps) (2/l)$.)  Thus the derivatives of $\alpha_6 r_y^2$ can be made small by taking $l$ sufficiently large. 

\begin{center}\label{2nd term} \fbox{\bfseries 2nd term for region IIA: $\alpha_3(d_{IIA}) d_{IIA}$ }\end{center}

{\bfseries First derivative.} $\frac{1}{r_\star}\dd_{r_\star} (\alpha_3d_{IIA}) = (\alpha_3' d_{IIA} + \alpha_3)\cdot \frac{{d_{IIA}}_\star}{r_\star}$. 
Here are the partial derivatives of $d_{IIA}$.
\begingroup \allowdisplaybreaks \begin{equation}\label{d_expressions}
\begin{aligned}
d_{IIA} & \approx T^2[r_x^2 - \frac{1}{2}(r_y^2 + r_z^2)] +\frac{3}{2} \alpha_6(\log r_y - \log r_x) \cdot (Tr_y)^2\\
\frac{(d_{IIA})_x}{r_x} &= \frac{T^2}{r_x}[2r_x + \frac{3}{2}(\alpha_6'\cdot \frac{-1}{r_x}\cdot r_y^2)] = T^2[2 + \frac{3}{2}(\alpha_6' \cdot \frac{-r_y^2}{r_x^2})]\\
\frac{(d_{IIA})_y}{r_y} & = \frac{T^2}{r_y}[-r_y + \frac{3}{2}(\alpha_6'\cdot \frac{1}{r_y} \cdot r_y^2 + 2r_y \alpha_6)]=T^2[-1 + \frac{3}{2}(\alpha_6' + 2\alpha_6)]\\
\frac{(d_{IIA})_z}{r_z} & = \frac{T^2}{r_z}[-r_z] = -T^2
\end{aligned}
\end{equation} \endgroup

Thus derivative terms are either $\alpha_3^{' \log} = \alpha_3'd_{IIA}$ or a regular derivative of $\alpha_6$, which may be multiplied by $(r_y/r_x)^2$ but $r_x>r_y$ in region IIA. So derivative terms of $\alpha_3(d_{IIA}) d_{IIA}$ can be made small for $l$ sufficiently large.

{\bfseries Second derivative terms.} We differentiate each of the first derivative terms. Let's say $P$ is a term in Equation \ref{d_expressions} above. Then we want to differentiate $r_\star P$ because above we divided by $r_\star$. Thus using the product rule with a differential operator $D = \dd_{r_\bullet}$ we get $D(r_\star)P + r_\star D(P)$. The first term gives 0 or 1 times $P$, which we already know is small. So we'll only need to consider $r_\star D(P)$ for 8 choices of $P$. 

\begin{enumerate}
\item $P=\alpha_3'd_{IIA}$ :
{\footnotesize 
\begingroup \allowdisplaybreaks 
\begin{align*}
r_\star\dd_{r_\bullet}(\alpha_3'd_{IIA}) & = r_\star \alpha_3''\cdot (d_{IIA})_{\bullet}\cdot d_{IIA} + r_\star \alpha_3'(d_{IIA})_\bullet  = \frac{r_\star (d_{IIA})_\bullet}{d_{IIA}} \alpha_3^{'' \log} \\
(d_{IIA})_\bullet \mbox{ terms}:\; & \{T^2r_\bullet,\; T^2\alpha_6 r_y,\; T^2\alpha_6' r_y,\;T^2 \alpha_6'  \frac{r_y^2}{r_x}\}< T^2 r_\bullet, \because \alpha_6 \leq 1, \alpha_6' \approx \propto \frac{1}{l}, \frac{r_y}{r_x}<1\\
\frac{T^2 r_\star r_\bullet}{d_{IIA}} & \approx\frac{ r_\star r_\bullet}{r_x^2 - \frac{1}{2}(r_y^2 + r_z^2) + \frac{3}{2}\alpha_6 r_y^2} = \frac{r_\star r_\bullet/r_x^2}{1-\frac{1}{2}((r_y/r_x)^2 + (r_z/r_x)^2) +\frac{3}{2}\alpha_6(r_y/r_x)^2}\stepcounter{equation}\tag{\theequation}\label{anything}\\
& \approx \frac{r_\star r_\bullet}{r_x^2}\approx \in \{0,1\} \because r_x >> r_y, r_z \mbox{ in region IIA}\\
& \therefore r_\star\dd_{r_\bullet}(P) =  \frac{r_\star (d_{IIA})_\bullet}{d_{IIA}} \alpha_3^{'' \log} \approx \mbox{(bounded)}\cdot\frac {1}{l^2}
\end{align*}
\endgroup}

\item\label{first y} $P=\alpha_3'd_{IIA}\alpha_6$ term from differentiating $F$ first wrt $y$, i.e.~$\dd_{r_y}(F)$ (same calculation works replacing $\alpha_6$ with $\alpha_6'$)
\begin{align*}
r_y\dd_{r_\bullet}(\alpha_3'd_{IIA}\alpha_6) & = r_y\dd_{r_\bullet}(\alpha_3'd_{IIA}) \cdot \alpha_6 +r_y (\alpha_3' d_{IIA}) \cdot \dd_{r_\bullet}(\alpha_6)\\
&= (\mbox{above case})\cdot \alpha_6 \pm \alpha_3^{' \log} \alpha_6' \cdot cr_y,\qquad  c \in \{\frac{1}{r_x}, \frac{1}{r_y}, 0\}\\
& = \mbox{small} + \mbox{small}
\end{align*}

\item\label{xterm} $P=\alpha_3'd_{II}\alpha_6' \cdot \left(\frac{r_y}{r_x}\right)^2$ from differentiating first wrt $x$ (same argument for $\alpha_3\alpha_6'(r_y/r_x)^2$ replacing $\alpha_3'd_{II}\alpha_6'$ with $\alpha_3\alpha_6'$)
{\footnotesize \begin{align*}
r_x \dd_{r_\bullet}(\alpha_3'd_{II}\alpha_6' \cdot \left(\frac{r_y}{r_x}\right)^2) & = r_x\dd_{r_\bullet}(\alpha_3'd_{II}\alpha_6' ) \cdot \left(\frac{r_y}{r_x}\right)^2 + r_x(\alpha_3'd_{II}\alpha_6' ) \cdot\dd_{r_\bullet} \left(\frac{r_y}{r_x}\right)^2 \\
& = \frac{r_y}{r_x} \cdot r_y\dd_{r_\bullet}(\alpha_3'd_{II}\alpha_6' )  \pm (\alpha_3^{' \log}\alpha_6' ) \cdot cr_x, \qquad c \in \{2r_y^2/r_x^3, 2r_y/r_x^2, 0\}\\
& = (\mbox{small})(\mbox{previous case}) + (\mbox{small})(r_y/r_x)^{i},\qquad i \in \{1,2\}\\
& = \mbox{small} \because r_x>>r_y
\end{align*}}

\item $P=\alpha_3$: shows up in first derivative of $F$ wrt any variable, $r_\star\dd_{r_\bullet}\alpha_3=r_\star \alpha_3' (d_{IIA})_\bullet=\alpha_3^{'\log} \cdot (r_\star (d_{IIA})_\bullet)/d_{IIA}$, and $(r_\star (d_{IIA})_\bullet)/d_{IIA}$ bounded by Equation (\ref{anything}).

\item $P=\alpha_3\alpha_6$ (same argument for $\alpha_3\alpha_6'$: shows up in first derivatives of $F$ wrt $y$, $r_y\dd_{r_\bullet}(\alpha_3 \alpha_6)=r_y((\alpha_3)_\bullet \alpha_6 + \alpha_3 (\alpha_6)_\bullet)$. First term with $\alpha_3$ derivatives ok by previous item, second term gives $\alpha_3\alpha_6'$ times one or zero, since $r_y/r_x \ll 1$ or $1$. So that term is a bounded term times a small term hence also small.
\end{enumerate}
This concludes our check of the first and second order derivatives of $\alpha_3(d_{IIA})d_{IIA}$.

\begin{center} \fbox{\bfseries 3rd term for region IIA: $\alpha_5 \cdot (Tr_*)^2$ for $* \in \{y,z\}$ }\end{center} 

We run through the same argument as with $\alpha_3 \cdot d_{IIA}$ above, replacing $d_{IIA}$ with $(Tr_*)^2$ in the second term.

{\bfseries First derivative.} $\frac{1}{r_\star}\dd_{r_\star} (\alpha_5(Tr_*)^2) = \frac{1}{r_\star}\alpha_5' (d_{IIA})_\star (Tr_*)^2 + \alpha_5\cdot \frac{((Tr_*)^2)_\star}{r_\star}$. 
The bump function derivative term is $\frac{1}{r_\star}\alpha_5' (d_{IIA})_\star (Tr_*)^2=\alpha_5^{' \log}\frac{T^2r_*^2 (d_{IIA})_\star }{ d_{IIA}r_\star}$. Note that $\frac{T^2r_*^2}{d_{IIA}}$ and $\frac{(d_{IIA})_\star}{r_\star}$ are bounded, the latter by Equation (\ref{d_expressions}) and the former since:
{\footnotesize \begingroup \allowdisplaybreaks \begin{equation}\label{anything2}
\begin{aligned}
\frac{r_*^2}{r_x^2 - \frac{1}{2}(r_y^2+r_z^2) + \frac{3}{2}\alpha_6r_y^2} & = \frac{r_*^2/r_x^2}{1 - \frac{1}{2}((r_y/r_x)^2 + (r_z/r_x)^2) +\frac{3}{2}\alpha_6(r_y/r_x)^2} \ll 1 \because r_x>>r_y, r_z
\end{aligned}
\end{equation} \endgroup}

Note that $* \in \{y,z\}$. So first derivatives are bounded for sufficiently large $l$ in $\alpha_5 \cdot (Tr_*)^2$ since they are either non-bump function derivatives or a bounded quantity multiplied by a small log derivative. 

{\bfseries Second derivatives.} Differentiating first derivatives $\alpha_5' (d_{IIA})_\star (Tr_*)^2$ and $2T^2r_\star \alpha_5$:

1) \vspace{-24pt} \begin{equation*}
\begin{aligned}
r_\star\dd_{r_\bullet}(\alpha_5'(Tr_*)^2) & = r_\star \alpha_5''\cdot (d_{IIA})_{\bullet}\cdot (Tr_*)^2 + r_\star \alpha_5'((Tr_*)^2)_\bullet\\
&  = (\alpha_5^{'' \log} - \alpha_5^{' \log})\frac{r_\star\cdot(d_{IIA})_\bullet \cdot (Tr_*)^2 }{d_{IIA}^2} + \alpha_5^{' \log} \frac{2T^2r_\bullet r_\star}{d_{IIA}}\\
\mbox{Already checked bounded: } & \frac{r_\star (d_{IIA})_\bullet}{d_{IIA}} \because  (\ref{anything}), \; \frac{(Tr_*)^2}{d_{IIA}}\because (\ref{anything2}),\; \frac{T^2 r_\bullet r_\star}{d_{IIA}} \because(\ref{anything})\\
\therefore r_\star\dd_{r_\bullet}(\alpha_5'(Tr_*)^2)  & = \mbox{(small)(bounded)} + \mbox{(small)(bounded)}
\end{aligned}
\end{equation*}

2) $\alpha_5' (Tr_*)^2\alpha_6$ (same for $\alpha_5' (Tr_*)^2\alpha_6'$): term from differentiating $F$ first wrt $y$, i.e.~$\dd_{r_y}(F)$
\begin{align*}
r_y\dd_{r_\bullet}(\alpha_5' (Tr_*)^2\alpha_6) & = r_y\dd_{r_\bullet}(\alpha_5' (Tr_*)^2) \cdot \alpha_6 +r_y (\alpha_5' (Tr_*)^2) \cdot \dd_{r_\bullet}(\alpha_6)\\
&= (\mbox{above case})\cdot\alpha_6 \pm \alpha_5^{' \log}\frac{(Tr_*)^2}{d_{IIA}} \alpha_6' \cdot cr_y,\qquad  c \in \{\frac{1}{r_x}, \frac{1}{r_y}, 0\}\\
& = \mbox{(small)(bounded)} + \mbox{(small)(bounded)}
\end{align*}

3) $\alpha_5' (Tr_*)^2\alpha_6' \cdot \left(\frac{r_y}{r_x}\right)^2$: from differentiating first wrt $x$
{\footnotesize \begin{align*}
r_x \dd_{r_\bullet}(\alpha_5' (Tr_*)^2\alpha_6' \left(\frac{r_y}{r_x}\right)^2) & = r_x\dd_{r_\bullet}(\alpha_5' (Tr_*)^2\alpha_6') \cdot \left(\frac{r_y}{r_x}\right)^2 + r_x(\alpha_5' (Tr_*)^2\alpha_6' ) \cdot\dd_{r_\bullet} \left(\frac{r_y}{r_x}\right)^2 \\
& = \frac{r_y}{r_x} \cdot r_y\dd_{r_\bullet}(\alpha_5' (Tr_*)^2\alpha_6')  \pm (\alpha_5^{' \log}\frac{(Tr_*)^2}{d_{IIA}}\alpha_6' ) \cdot cr_x, \; c \in \{\frac{2r_y^2}{r_x^3}, \frac{2r_y}{r_x^2}, 0\}\\
& = (\mbox{small})(\mbox{previous case}) + (\mbox{small})(r_y/r_x)^{i},\qquad i \in \{1,2\}\\
& = \mbox{small} \because r_x>>r_y
\end{align*}}
4) $2T^2 \alpha_5$: shows up in first derivative of $F$ wrt any variable. Taking another derivative gives $r_\star\dd_{r_\bullet}2T^2 \alpha_5$ =
 $r_\star 2T^2 \alpha_5' (d_{IIA})_\bullet$ = $2T^2\alpha_5^{'\log} \cdot (r_\star (d_{IIA})_\bullet)/d_{IIA}$. So we want $(r_\star (d_{IIA})_\bullet)/d_{IIA}$ to be bounded. This was checked in Equation (\ref{anything}).

This concludes our check of the first and second order derivatives of $\alpha_5(d_{IIA})(Tr_*)^2$ for $* \in \{y,z\}$. We have one more remaining type of term showing up in $F$ to check.

\begin{center} \fbox{\bfseries 4th term for region IIA: $\alpha_5\alpha_6\cdot (Tr_y)^2$ } \end{center}

\begin{align*}
&\frac{1}{r_\star} \dd_{r_\star}(\alpha_5(Tr_y)^2) \cdot \alpha_6 + \alpha_5(Tr_y)^2 \cdot \frac{1}{r_\star} \dd_{r_\star}\alpha_6  = \mbox{(previous)} \cdot \alpha_6 \pm \alpha_5(Tr_y)^2 \cdot \frac{c}{r_\star^2}\alpha_6'\\
&\mbox{2nd term: } \star = z \implies c=0, \;\; \star = x \implies \approx 0 \because r_x >> r_y,\;\; \star = y \implies \frac{r_y^2}{r_\star^2} = 1
\end{align*}

So again first derivatives may be made small by taking $l$ sufficiently large. Finally, we check second order derivatives.
\begin{align*}
\dd_{r_\bullet}[\dd_{r_\star}(\alpha_5(Tr_y)^2) \cdot \alpha_6 + \alpha_5(Tr_y)^2 \cdot \dd_{r_\star}(\alpha_6)] & = \dd^2_{r_\bullet r_\star} (\alpha_5 (Tr_y)^2) \cdot \alpha_6 + \dd_{r_\star}(\alpha_5(Tr_y)^2) \dd_{r_\bullet} \alpha_6\\
& + \dd_{r_\bullet}(\alpha_5(Tr_y)^2) \dd_{r_\star}\alpha_6 + \alpha_5(Tr_y)^2 \cdot \dd_{r_\bullet}\dd_{r_\star}(\alpha_6)
\end{align*}
1st term ok by previous check on $\alpha_5(Tr_y)^2$.
{\footnotesize \begin{align*}
\mbox{Terms 2 \& 3: } \dd_{r_\star}(\alpha_5(Tr_y)^2) \cdot \dd_{r_\bullet } \alpha_6 & = (\alpha_5^{' \log} \frac{(d_{IIA})_{\star}}{d_{IIA}} (Tr_y)^2 + 2\alpha_5 T^2 \delta_{y \star} r_y) \cdot \frac{\pm 1}{r_x \mbox{ or } r_y} \cdot \alpha_6'\\
& = \pm [T^2 \alpha_5^{' \log} \frac{(d_{IIA})_\star r_y}{d_{IIA}} \cdot \frac{r_y}{r_x \mbox{ or } r_y}\cdot \alpha_6' + 2\alpha_5 T^2 \frac{r_y}{r_x \mbox{ or } r_y} \cdot \alpha_6']
\end{align*}}
\begin{align*}
\mbox{4th term: } \alpha_5(Tr_y)^2 \dd_{r_\bullet}(\frac{\alpha_6'}{r_x \mbox{ or } r_y}) & = \alpha_5(Tr_y)^2 \left[ \alpha_6'' \cdot \frac{\pm 1}{r_x \mbox{ or } r_y} \cdot \frac{1}{r_x \mbox{ or } r_y} - \alpha_6' \frac{1}{r_x^2 \mbox{ or } r_y^2} \right]\\
& = T^2 \alpha_5 \alpha_6'' \frac{\pm r_y^2}{r_x^2, r_xr_y, \mbox{ or } r_y^2} - T^2 \alpha_5 \alpha_6' \frac{r_y^2}{r_x^2 \mbox{ or } r_y^2}\\
& = \mbox{small}, \qquad \because r_x >> r_y\\
\implies \dd_{r_\bullet}\dd_{r_\star}(\alpha_5\alpha_6\cdot (Tr_y)^2)  & = \mbox{(small) }
\end{align*}
where the final line follows from the calculations for the 1st term of region IIA, which was $\alpha_6 (Tr_y)^2$, and the third term of region IIA, which was $\alpha_5(Tr_y)^2$. So the upshot is: all terms involving derivatives of bump functions can be made arbitrarily small because they are multiplied by expressions which are bounded (taking either log derivatives of $\alpha_3,\alpha_5$ or regular derivatives of $\alpha_6$.) So we get a positive-definite form for $l$ sufficiently large, because the terms not involving derivatives of bump functions are $O(1)$ so they dominate, and we already showed they give something positive-definite. 

\subsection{Region IIB}

The characteristics in region IIB which we did not have in regions IIA and C are 1) $r_x$ and $r_y$ go from $r_x>>r_y$ to $r_y>>r_x$, passing through $r_x = r_y$ and 2) $\alpha_6 \equiv 1$. All of $r_x,r_y,r_z$ are still small so we still have an approximation for the K\"ahler potential:
\begin{align*}
F & \approx T^2r_z^2 +\alpha_3(d_{IIB})d_{IIB} + \frac{1}{2}\alpha_5(d_{IIB})\cdot(-(Tr_z)^2)\\
d_{IIB} & \approx T^2[r_x^2+r_y^2 - \frac{1}{2}r_z^2]
\end{align*}

Let's repeat the calculations above for $\alpha_3(d_{IIA})\cdot d_{IIA}$ and $\alpha_5 \cdot (Tr_z)^2$ with region IIB, and see if they relied on $r_x>>r_y$. What we know in region IIB is that $r_x, r_y>> r_z$.

\begin{center}\fbox{\bfseries 1st term for region IIB: $\alpha_3(d_{IIB}) d_{IIB}$ }\end{center}

{\bfseries First derivative.} $\frac{1}{r_\star}\dd_{r_\star} (\alpha_3d_{IIB}) = (\alpha_3' d_{IIB} + \alpha_3)\cdot \frac{{d_{IIB}}_\star}{r_\star}$. 
Here are the partial derivatives of $d_{IIB}$.
\begin{align*}
d_{IIB} & \approx T^2[r_x^2+r_y^2 - \frac{1}{2}r_z^2]\implies \frac{(d_{IIB})_x}{r_x} &\approx \frac{T^2}{r_x}(2r_x)=\frac{T^2}{2}, \ \ \frac{(d_{IIB})_y}{r_y}  \approx \frac{T^2}{2}, \ \ \frac{(d_{IIB})_z}{r_z}  \approx - \frac{T^2}{4}
\end{align*}
Hence the terms in $\frac{1}{r_\star}\dd_{r_\star} (\alpha_3d_{IIB}) =(\alpha_3' d_{IIB} + \alpha_3)\cdot \frac{{d_{IIB}}_\star}{r_\star}$ are proportional to $\alpha_3' d_{IIB}$ (a log derivative, so small) and $\alpha_3$ (not a derivative). So first derivatives of $\alpha_3(d_{IIB}) d_{IIB}$ may be made small for $l$ sufficiently large. 

{\bfseries Second derivative terms.}\label{page: Pdefn} We differentiate each of the first derivative terms. Let's say $P$ is a term in the list above. Then we want to differentiate $r_\star P$ because above we divided by $r_\star$. Thus using the product rule with a differential operator $D = \dd_{r_\bullet}$ we get $D(r_\star)P + r_\star D(P)$. The first term gives 0 or 1 times $P$, which we already know is small by the above item for each $P$ on the list. So we'll only need to consider $r_\star D(P)$ for the 2 choices of $P$ listed above. 

\begin{enumerate}
\item $P=\alpha_3'd_{IIB}$. Then this term contributes to $\dd^2_{r_\star r_\bullet}(F)$ via $r_\star\dd_{r_\bullet}(P)$ i.e.
$$r_\star\dd_{r_\bullet}(\alpha_3'd_{IIB}) = r_\star \alpha_3''\cdot (d_{IIB})_{\bullet}\cdot d_{IIB} + r_\star \alpha_3'(d_{IIB})_\bullet  = \frac{r_\star (d_{IIB})_\bullet}{d_{IIB}} \alpha_3^{'' \log}$$
\begin{align*}
(d_{IIB})_\bullet \mbox{ terms}:\; & T^2r_\bullet \\
\frac{T^2 r_\star r_\bullet}{d_{IIB}} & \approx\frac{ r_\star r_\bullet}{r_x^2 +r_y^2- \frac{1}{2} r_z^2 } = \frac{r_\star r_\bullet/r_x^2}{1+(r_y/r_x)^2 -\frac{1}{2}(r_z/r_x)^2} \approx  \frac{r_\star r_\bullet/r_x^2}{1+(r_y/r_x)^2 } \\
(\star, \bullet) & \in \{(r_x,r_x),(r_x,r_z), (r_z,r_z)\} \implies \frac{r_\star r_\bullet}{r_x^2}\in \{1, \mbox{ small}\} \because r_x >> r_z\\
(\star, \bullet) & = (r_y,r_z) \implies \frac{r_y r_z}{r_x^2} < \frac{r_y}{r_x}
\end{align*}
\begin{align*}
(\star, \bullet) & \in \{(r_x,r_y),(r_y,r_y)\} \mbox{ suffices to bound: } \frac{a}{1+a^2}, \frac{a^2}{1+a^2}, \;\; a = r_y/r_x\\
\frac{a^2}{1+a^2} \leq 1,\;\; & a<1 \implies \frac{a}{1+a^2} < 1, \;\; a \geq 1 \implies \frac{a}{1+a^2} \leq \frac{a^2}{1+a^2} \leq 1\\
\therefore r_\star\dd_{r_\bullet}(\alpha_3'd_{IIB}) & = \mbox{(bounded)(small)}
\end{align*}

\item $P=\alpha_3$. Taking another derivative gives $r_\star\dd_{r_\bullet}\alpha_3=r_\star \alpha_3' (d_{IIB})_\bullet=\alpha_3^{'\log} \cdot (r_\star (d_{IIB})_\bullet)/d_{IIB}$. So we want $(r_\star (d_{IIB})_\bullet)/d_{IIB}$ to be bounded. This was just checked above.

\end{enumerate}

\begin{center} \fbox{\bfseries 2nd term for region IIB: $\alpha_5 \cdot (Tr_z)^2$ } \end{center} 

We run through the same argument as with $\alpha_3 \cdot d_{IIB}$ above, replacing $d_{IIB}$ with $(Tr_z)^2$ in the second term.

{\bfseries First derivative.} $\frac{1}{r_\star}\dd_{r_\star} (\alpha_5(Tr_z)^2) = \frac{1}{r_\star}\alpha_5' (d_{IIB})_\star (Tr_z)^2 + \alpha_5\cdot \frac{((Tr_z)^2)_\star}{r_\star}$. 
$$\frac{1}{r_\star}\alpha_5' (d_{IIB})_\star (Tr_z)^2=\alpha_5^{' \log}\frac{T^2r_z^2 (d_{IIB})_\star }{ d_{IIB} \cdot r_\star}$$
Note that $\frac{T^2r_z^2}{d_{IIB}}$ and $\frac{(d_{IIB})_\star}{r_\star}$ are bounded. The latter is approximately constant because $d_{IIB}$ is approximately linear in $r_x^2,r_y^2$ and $r_z^2$. For the former:
\begin{align*}
\frac{r_z^2}{r_x^2 +r_y^2 - \frac{1}{2}r_z^2 } & = \frac{r_z^2/r_x^2}{1 +\frac{r_y^2}{r_x^2}- \frac{1}{2}( (r_z/r_x)^2) } \approx \frac{r_z^2/r_x^2}{1 +\frac{r_y^2}{r_x^2} } \approx 0 \because r_x>> r_z
\end{align*}
So first derivatives of $\alpha_5 \cdot (Tr_z)^2$ may be made small by taking $l$ sufficiently large. 

{\bfseries Second derivative terms.} We differentiate each of the first derivative terms.
They are $\alpha_5' (d_{IIB})_\star (Tr_z)^2$ and $2T^2r_\star \alpha_5$. $P$ is defined as in the second derivative calculation on page \pageref{page: Pdefn}.  
\begin{enumerate}
\item $P=\alpha_5' (Tr_z)^2 \frac{(d_{IIB})_\star}{r_\star}$. It suffices to consider $\alpha_5' (Tr_z)^2$ because $\frac{(d_{IIB})_\star}{r_\star}$ is a constant. 
\begin{align*}
r_\star\dd_{r_\bullet}(\alpha_5'(Tr_z)^2) & = r_\star \alpha_5''\cdot (d_{IIB})_{\bullet}\cdot (Tr_z)^2 + r_\star \alpha_5'((Tr_z)^2)_\bullet\\
&  = (\alpha_5^{'' \log} - \alpha_5^{' \log})\frac{r_\star\cdot(d_{IIB})_\bullet \cdot (Tr_z)^2 }{d_{IIB}^2} + \alpha_5^{' \log} \frac{2T^2r_z r_\star}{d_{IIB}}\\
\mbox{Already checked bounded: } & \frac{r_\star (d_{IIB})_\bullet}{d_{IIB}}, \; \frac{(Tr_z)^2}{d_{IIB}}\because(\mbox{above})\\
 \frac{T^2 r_z r_\star}{d_{IIB}} & \approx \frac{r_z r_\star/r_x^2}{1+(r_y/r_x)^2-\frac{1}{2}(r_z/r_x)^2} \approx \frac{r_z r_\star/r_x^2}{1+(r_y/r_x)^2}< \frac{r_\star/r_x}{1+(r_y/r_x)^2}\\
& \star = x \implies \mbox{bounded}\\
& \star = y  \implies a/(1+a^2) \mbox{ bounded as above}\\
  & \star = z \implies \mbox{small} \\
\therefore r_\star\dd_{r_\bullet}(\alpha_5'(Tr_z)^2)  & = \mbox{(small)(bounded)} + \mbox{(small)(bounded)}
\end{align*}

\item $P=2T^2 \alpha_5$: shows up in first derivative of $F$ wrt any variable. Taking another derivative gives {\small $r_\star\dd_{r_\bullet}2T^2 \alpha_5=r_\star 2T^2 \alpha_5' (d_{IIB})_\bullet=2T^2\alpha_5^{'\log} \cdot (r_\star (d_{IIB})_\bullet)/d_{IIB}$}. So we want $(r_\star (d_{IIB})_\bullet)/d_{IIB}$ to be bounded, which was already checked above.
\end{enumerate}

This completes the calculation of positive definiteness in region IIB.

\subsection{The remainder of the $\mb{C}^3$ patch}

Recall from the construction of coordinate charts on $Y$ (Definition \ref{def:complex_coordinates} and Lemma \ref{symm_lemma}) that the charts $U_{0,g^0}$ with coordinates $(x,y,z)$ and $U_{(-1,0),g^{-1}}$ with coordinates $(x''',y''',z''')$ are glued to each other via $(x''',y''',z''') = (Tv_0 x^{-1}, Tv_0 y^{-1}, T^{-2} z^{-1})$. Also recall from Equation \ref{def_s_form} that along the $z$-axis, where $\alpha_3 = \alpha_5 = 1$, $\omega$ is defined by the K\"ahler potentials
$$F = \frac{1}{2} (g_{xz} + g_{yz}) + \alpha_4(\theta_V) \theta_V,$$ where $\theta_V = \phi_x - \phi_y$, and
$$F'''=\frac{1}{2} (g'''_{xz} + g'''_{yz}) + \alpha_4(\theta'''_V) \theta'''_V,$$ which differs from $F$ by a harmonic term (see proof of Claim \ref{claim:sympl_area_1}).

By symmetry it suffices to check that $\omega$ is positive definite on just half of the $z$-axis. Namely, we can assume that $|z| \leq T^{-1}$, since otherwise $|z'''|=|T^{-2} z^{-1}| \leq T^{-1}$. Although $|z| \leq T^{-1}$, we may assume that $|x|$ and $|y|$ are very small. They are at most of the order of $T^{3l/8-l/p}$, see Figure \ref{delineation_pic}; otherwise $\alpha_4$ is constant and equal to $\pm 1/2$, so $F$ agrees with either $g_{xz}$ or $g_{yz}$. Thus $|T^2xz| \leq |Tx|$ and $|T^2yz| \leq |Ty|$ hence these quantities are very small and we have again small scale approximations for their logarithms. Differentiating $\alpha_4(\theta)\theta$ once and twice:
\begin{align*}
\theta &= \phi_x - \phi_y = \log_T(1+|Tx|^2) - \log_T(1+|T^2yz|^2) -\log_T(1+|Ty|^2) + \log_T(1+|T^2xz|^2)\\
& \approx (1+|Tz|^2)(|Tx|^2 - |Ty|^2)
\end{align*}
\begin{align*}
\frac{1}{r_*}\frac{\dd}{\dd r_*}(\alpha_4\theta) & = (\alpha_4' \theta + \alpha_4)\frac{\theta_*}{r_*}\\
\frac{\dd}{\dd r_\bullet}(\alpha_4' \theta \theta_*) &= \alpha_4'' \theta_\bullet \theta \theta_* + \alpha_4' \theta_\bullet \theta_* + \alpha_4' \theta \theta_{*\bullet} = (\alpha_4'' \theta^2  + \alpha_4' \theta) \cdot \frac{\theta_\bullet \theta_*}{\theta} + \alpha_4^{' \log} \theta_{* \bullet}\\
\frac{\dd}{\dd r_\bullet}(\alpha_4 \theta_*) & = \alpha_4'\theta_\bullet \theta_* + \alpha_4 \theta_{* \bullet} = \alpha_4^{' \log} \frac{\theta_\bullet \theta_*}{\theta} + \alpha_4 \theta_{* \bullet}.
\end{align*}

We see we'll need to bound terms as follows:
\begin{compactitem}[\textbullet]
\item $\frac{\theta_*}{r_*}$ has estimates $ 2(1+|Tz|^2) T^2$ for $* \in \{x,y\}$ and for $*=z$ we have the estimate $\frac{\theta_z}{r_z} \approx T^4(r_x^2-r_y^2) \approx 0$
(using the fact that $r_x, r_y \ll r_z$).
\item $ \frac{\theta_\bullet \theta_*}{\theta}$ implies we'll need to consider $\frac{r_\bullet r_\star}{r_x^2 - r_y^2}$, which is approximately zero unless both $*$ and $\bullet$ are not $z$, because $\theta$ is approximately $(1+T^2r_z^2)T^2(r_x^2-r_y^2)$ and $r_x,r_y \ll r_z$. Otherwise recall from Equation \ref{eqn:bound_loc_constant} that we make $\alpha_4$ constant equal to zero on a sliver $T < (r_x/r_y)^2 < 1/T$. We divide top and bottom by $r_x^2$ or $r_y^2$ depending on which variable is larger, and then the numerator is at most 1 while the denominator is bounded below from the constraint $T < (r_x/r_y)^2 < 1/T$.

\item $\theta_{* \bullet}$ is approximately zero unless $* = \bullet$ in which case it's a constant, so bounded.\newline
\end{compactitem}

This concludes the proof that the bump function derivatives can be made sufficiently small so that the defined $\omega$ is non-degenerate.
\newpage

\section{Notation}
{\bf Chapter 1: Context and main result}
\begin{itemize}
\item $(\gamma_1, \gamma_2)$ = coordinates of $\gamma$ with respect to the standard $\{(1,0), (0,1)\}$ basis
\item $  \gamma':=\left( \begin{matrix} 2 \\ 1 \end{matrix} \right), \gamma'' := \left( \begin{matrix} 1 \\2 \end{matrix} \right)$
    \item $\Gamma_B=\mb{Z}\left<\gamma', \gamma''\right>$
    \item $(n_1,n_2)$ = coordinates of $\gamma$ with respect to $\{\gamma', \gamma''\}$ basis
    \item $V = (\mb{C}^*)^2/\Gamma_B$
    \item $V^\vee$ generic fiber of $(Y,v_0)$ and SYZ mirror abelian variety of $V$
    \item $\tau \in \mb{R}$ parametrizes family of complex structures on the complex genus 2 curve
    \item $\Sigma_2$ = genus 2 curve
    \item $\mc{L} \to V$ is holomorphic line bundle defined in Chapter 2
    \item $H$ is hypersurface defined by section $s:V \to \mc{L}$, a theta function
    \item $D^b_{\mc{L}}Coh$ is generated by powers and shifts of $\mc{L}$
\end{itemize}

{\bf Chapter 2: HMS for abelian variety}
\begin{itemize}
\item $x_1,x_2$ are complex coordinates on $V$
    \item $T_B := \mb{R}^2/\Gamma_B$
    \item $ \la(n_1\gamma' + n_2 \gamma'') := \left(\begin{matrix} n_1 \\ n_2 \end{matrix} \right) = \left(\begin{matrix} 2 & 1\\ 1 & 2 \end{matrix} \right)^{-1}(\gamma)$
    \item $\kappa(\gamma) := -\frac{1}{2}\la(\gamma)^t \left(\begin{matrix} 2 & 1\\ 1 & 2\end{matrix} \right) \la(\gamma)=-\frac{1}{2}\la(\gamma)^t \gamma$
    \item $\ell_k:=\{\left(\xi_1, \xi_2,-k \left( \begin{matrix} 2 & 1 \\ 1 & 2 \end{matrix}\right) ^{-1}\right) \}_{(\xi_1,\xi_2) \in T_B}$ are linear $T^2$-Lagrangians in $V^\vee$ of slope $k$
    \item $t_x:=\{(\log_\tau |x|, \theta)\}_{\theta \in [0,1)^2} \subseteq V^\vee$
    \item $s_{e,l}$ denotes a basis of sections for $H^0(\mc{L}^l)$ of size $l^2$ (see page \pageref{theta_basis})
    \item $p_{e,l}$ denotes a basis of intersection points for $HF_{V^\vee}(\ell_i,\ell_j)$ where $j-i=l$, and there are $l^2$ such points (see page \pageref{def:symp_basis_fiber})
    \item $\gamma_{i\cap j}$ denotes the remainder modulo $\Gamma_B$ for a given intersection point of $\tilde{\ell}_i \cap \tilde{\ell}_j$ (see page \pageref{def:symp_basis_fiber})
    \item $(\xi_1,\xi_2,\eta)$ are the moment map coordinates where $|x_i| = \tau^{\xi_i}$ and $\eta=\mu_X(\bm{x},y)$ from \cite{AAK}, see page \pageref{page_mu}
    \item $\theta_i = \arg(x_i)$ for $i=1,2$
\end{itemize}

{\bf Chapter 3: Construction of symplectic fibration $(Y,v_0)$ and $\omega$}
\begin{itemize}
    \item $T$ is the complex parameter on $Y$/Novikov parameter on genus 2 curve
    \item $\tau$ is the Novikov parameter on $Y$/complex parameter on the genus 2 curve
    \item $(x_1,x_2,y)\in {\mb{C}^*}^2/\Gamma_B \times \mb{C}$
    \item $\Delta_{\tilde{Y}}$ defines the universal cover of $Y$ and a toric variety of infinite type
    \item $x,y,z$ are the complex toric coordinates on $Y$ (different $y$ from $X=Bl_{H \times \{0\}} V \times \mb{C}_y$)
    \item $r_x,r_y,r_z$ denotes their norms
    \item $v_0=xyz$ is the superpotential
    \item $F$ denotes the K\"ahler potential
    \item $(d_I,\theta_I)$ denotes coordinates on delineated region I
    \item $U_{\ul{0}, g^k}$ in Definition \ref{def:complex_coordinates} denotes the complex charts around each hexagon vertex in $\Delta_{\tilde{Y}}$, indexed by $\mb{Z}^6= \left< g \right>$
\end{itemize}

{\bf Chapter 4: Definition of the DFS-type category}
\begin{itemize}
    \item DFS - Donaldson-Fukaya-Seidel
    \item $H$ is the symplectic horizontal distribution
    \item $F$ briefly at the start denotes a fiber $V^\vee$
    \item $\Phi$ is the parallel transport map
    \item $X_{hor}$ denotes the horizontal vector field over a given curve in the base of $v_0$
    \item $\phi_H^t$ is the flow of $X_{hor}$
    \item $\bigcup_\gamma \ell_k$ or $\bigcup_\gamma t_x$ denotes parallel transport of the fiber Lagrangian over $\gamma$ in $(Y,v_0)$
    \item $L_k$ is $\bigcup_\gamma \ell_k$ over U-shaped $\gamma$
    \item $(f_1,f_2)$ denote amount we add to $(\theta_1,\theta_2)$ from monodromy 
    \item $\pi$ denotes $v_0$ when we think of it as a fibration in Section \ref{mon} (versus as a function)
    \item $\phi^H_{2\pi}$ denotes monodromy
    \item $\mc{J}_\omega(Y,U)$ denotes the class of compatible almost complex structures which are identically $J_0$ outside open set $U$ (defined in Equation (\ref{defn: of_U_set})) about the origin in $v_0$ base
    \item $D$ denotes the anti-canonical divisor 
    \item $A=(a_1,a_2,a_3)$ is fixed value for $(\xi_1,\xi_2,\eta)$ in the polytope $\Delta_{\tilde{Y}}$
    \item $p$ is either an abstract perturbation or a fixed point in a Lagrangian
    \item $\mc{J}^1_{reg}$ is the set of $J$ regular for disc configurations
    \item In Key Regularity argument: $\eta$ used briefly as annihilator to image of linearized operator on universal Fredholm problem, $\xi$ for tangent vectors on space of maps, and $\dot{J}$ for tangent vectors on space of almost complex structures
    \item $\mc{J}^2_{reg}$ denotes set of $J$ regular for disc attached to sphere configuration
    \item $\mc{B}$ is base of a Fredholm problem
\end{itemize}

{\bf Chapter 5: Computing the differential}
\begin{itemize}
    \item $M^k$ are structure maps on $Y$
    \item $\mu^k$ are structure maps on fiber $V^\vee$, $\dd = \mu^1$
    \item $p^k_{i,j}, p_{e,l}$ both denote intersection points in $\ell_i \cap \ell_j$ where $l=j-i$, indexed by $k$ and $e$ respectively
    \item $p_{\infty, i} \in \ell_i \cap t_x$
    \item $\alpha_{\tilde{e}}(e,l)$ is weighted count of bigons between $p_{e,l-1}$ and $p_{\tilde{e},l}$
    \item $n_{\tilde{e}}(l)$ is weighted count of triangles between $p_{\infty, i}, p_{\tilde{e},l}$ and $p_{\infty, j}$ where $l=j-i$
    \item $C(x)$ is the sphere bubble count
    \item $c$ is the disc count times the sphere bubble count
    \item $\beta_{i,j} + \alpha$ denotes $(i,j)$-th disc class plus a sphere configuration class $\alpha$
    \item $n_{\beta+\alpha}$ denote curve counts
\end{itemize}

\newpage
\bibliographystyle{halpha.bst}
\bibliography{Cannizzo_thesis.bib}
\end{document}